\documentclass[11pt,reqno]{amsart}
\usepackage[margin=2.6cm]{geometry}
\usepackage{amsmath,amssymb,amsthm,mathrsfs}
\usepackage[T1]{fontenc}
\usepackage[utf8]{inputenc}
\usepackage{lmodern}
\usepackage{microtype}
\usepackage{xcolor}
\usepackage{longtable,array,booktabs}

\usepackage[unicode,colorlinks=true,linkcolor=blue!50!black,citecolor=blue!45!black,urlcolor=blue!60!black]{hyperref}

\theoremstyle{plain}
\newtheorem{theorem}{Theorem}[section]
\newtheorem{proposition}[theorem]{Proposition}
\newtheorem{lemma}[theorem]{Lemma}
\newtheorem{corollary}[theorem]{Corollary}
\theoremstyle{definition}

\theoremstyle{remark}
\newtheorem{remark}[theorem]{Remark}

\numberwithin{equation}{section}

\DeclareMathOperator{\Div}{div}
\DeclareMathOperator{\grad}{grad}
\DeclareMathOperator{\curl}{curl}
\DeclareMathOperator{\Curl}{Curl}
\DeclareMathOperator{\tr}{tr}
\DeclareMathOperator{\supp}{supp}

\newcommand{\breg}{\mathfrak b}
\newcommand{\sreg}{\mathfrak s}
\newcommand{\preg}{\mathfrak p}
\newcommand{\rpar}{\mathfrak r}

\newcommand{\Ufield}{\mathsf U}
\newcommand{\Kmat}{\mathsf K}
\newcommand{\Mmat}{\mathsf M}
\newcommand{\Ccoef}{\mathcal G}   
\newcommand{\Hess}{\mathsf H}     
\newcommand{\eframe}{\mathsf e}   
\newcommand{\Vfield}{\mathsf V}
\newcommand{\vLag}{\mathsf v}
\newcommand{\BLag}{\mathsf B}
\newcommand{\bLag}{\mathsf b}
\newcommand{\gmetric}{\mathbf g}
\newcommand{\Bzero}{\mathbf B_0}
\newcommand{\Binf}{\mathbf B_\infty}

\newcommand{\Cprop}{\mathsf C}
\newcommand{\Sprop}{\mathsf S}
\newcommand{\Real}{\mathbb R}
\newcommand{\Rn}{\Real^n}
\newcommand{\Rst}{\Real^{1+n}}
\newcommand{\Riesz}{\mathrm R}

\makeatletter
\def\l@section{\@tocline{1}{6pt plus1pt}{1.2pc}{2.2pc}{}}
\def\l@subsection{\@tocline{2}{0pt}{3.4pc}{2pc}{}}
\makeatother

\title[Low-regularity solutions of Ideal MHD]{Ideal MHD below the classical well-posedness threshold}
\author{Matteo Giardi}
\thanks{M.G. was supported by the International Max Planck Research School Mathematics
in the Sciences (IMPRS MiS) and thanks his advisor L\'aszl\'o
Sz\'ekelyhidi Jr. for helpful comments on an earlier version of this
manuscript.}

\date{}

\begin{document}
\begin{abstract}
We establish local existence and uniqueness of solutions for the ideal incompressible magnetohydrodynamics system posed on
$[0,T]\times\Rn$, $n\ge2$, with a nonzero constant initial magnetic field $\Bzero$ and arbitrary divergence-free velocity data $v_0\in H^s$, in the range $(n+1)/2<s\le n/2+1$. The proof uses a Lagrangian wave--Hodge reformulation and exploits an Alfvén null--structure hidden in the pressure forcing. In particular, the constructed Eulerian solutions are induced by a bi-Lipschitz measure-preserving flow map. Zhang first identified this null-structure in \cite{Zhang2024}; the present work
provides a self-contained bridge from that Lagrangian theory to the Eulerian Cauchy
problem.
\end{abstract}

\maketitle

\tableofcontents

\section{Introduction}
\label{sec:introduction}

We study the ideal incompressible magnetohydrodynamics (MHD) system set on $[0,T]\times\Rn_x$.  In Eulerian variables
$(t,x)$ this reads 
\begingroup
\renewcommand{\theHequation}{MHD}
\begin{equation}\label{eq:mhd}\tag{MHD}
\begin{cases}
\partial_tv+\nabla_v v-\nabla_B B+\nabla p=0,\\
\partial_tB+\nabla_v B-\nabla_B v=0,\\
\Div_x v=\Div_x B=0,
\end{cases}
\end{equation}
\endgroup
with data $(v,B)|_{t=0}=(v_0,\Bzero)$. Throughout this paper,
$\Bzero\equiv\Binf\in\Rn\setminus\{0\}$ is a constant non-zero vector, and
we write $v_{\rm A}:=|\Binf|$ for the corresponding Alfv\'en speed.

Let $(u\otimes z)^{ij}=u^iz^j$. For a tensor $T$ we take the
divergence in its second index,
\[
 (\Div_xT)^j:=\partial_iT^{ji}.
\]
Put $b:=B-\Binf$. A pair $(v,B)$ with
$v,b\in C([0,T],L^2_{\rm loc})$ is a distributional solution of \eqref{eq:mhd} with data
$(v_0,\Binf)$ if $\Div_{x}v=\Div_{x}B=0$ in $\mathcal D'$ and
\[
 \int_0^T  \int_{\Rn}
 \Big(v\cdot\partial_t\varphi+(v\otimes v-B\otimes B):\nabla\varphi\Big) dx dt
 +\int_{\Rn}v_0\cdot\varphi(0) dx=0
\]
for every divergence-free
$\varphi\in C_c^\infty([0,T)\times\Rn;\Rn)$, while
\[
 \int_0^T  \int_{\Rn}
 \Big(B\cdot\partial_t\psi+(B\otimes v-v\otimes B):\nabla\psi\Big) dx dt
 +\int_{\Rn}\Binf\cdot\psi(0) dx=0
\]
for every $\psi\in C_c^\infty([0,T)\times\Rn;\Rn)$.\\

\textbf{Brief notation overview.} We denote by $x$ and $y$ the two variables given by the change of coordinates $x=X(t,y)$, with $X$ the Lagrangian flow of the velocity $v$, and call $w:=X-\mathrm{id}$ the \emph{displacement} associated to $X$. The
Eulerian vector fields $v,B$ determine the following vector fields along $X$, written in the fixed Cartesian frame:
\begin{equation}
    \vLag:=v\circ X=\partial_tX, \quad \BLag:=B\circ X=\mathcal BX, \quad \bLag:=b\circ X=\BLag-\Binf.
\end{equation}
We sometimes call $y$ the label coordinate and $x$ the Cartesian or Eulerian coordinate. We reserve $d=d_{y}$ for exterior differentiation in the labels and for the componentwise
differential of label maps; thus $dX$, $dw$, $dF$ and
$d(\Box X)$ are componentwise label differentials.
We reserve $\nabla$ for the flat Eulerian derivative, so that $\nabla_v B=(v\cdot\nabla)B$ has its classical meaning, write $\nabla^{y}$ for the flat
label derivative, and use $\eframe_i=\left.\partial_{x^i}\right|_{X}$ for the Cartesian vector fields along $X$ and $E_i=dX^{-1}\eframe_i=X^*\partial_{x^i}$ for their genuine pullbacks to label space, defined in \eqref{eq:component-derivative}. 
Fourier transforms are taken with the unitary normalisation. We use a hat for the spatial
transform in $y$ and a tilde for the spacetime transform in $(t,y)$; a transform in a single
scalar variable is identified explicitly where it occurs. Harmless dimensional constants are
denoted by $c_n$ or $C$. 
Matrix and tensor norms are Frobenius norms unless stated otherwise; $\mathrm{id}$ denotes
the identity map and $\mathrm{Id}$ the identity matrix. For reference, we collect the most recurrent objects in \S\ref{ssec:table}.\\

\textbf{Comparison with the classical local theory and literature.}
Writing $b=B-\Binf$, the standard Kato--Ponce commutator energy estimate for smooth
solutions gives
\begin{equation}\label{eq:standardHsintro}
    \frac{d}{dt}
 \Bigl(\|v(t)\|_{H^s}^2+\|b(t)\|_{H^s}^2\Bigr)
 \lesssim
 \Bigl(\|\nabla v(t)\|_{L^\infty}
       +\|\nabla b(t)\|_{L^\infty}\Bigr)
 \Bigl(\|v(t)\|_{H^s}^2+\|b(t)\|_{H^s}^2\Bigr).
\end{equation}
Here the principal magnetic terms cancel between the two equations, the
constant-field contribution is skew-adjoint, and the pressure term vanishes
by incompressibility. When $s>\frac n2+1$, the Sobolev embedding
$H^s\hookrightarrow W^{1,\infty}$ closes this estimate and yields the
classical local well-posedness theory. In particular, the velocity is
Lipschitz and its flow is well defined.

This work is concerned with the low-regularity range
\[
 \frac{n+1}{2}<s\leq\frac n2+1,
\]
where the $H^s$ norm no longer controls the Lipschitz norm of the velocity and
the classical transport argument does not close. The decisive advance in this
direction is due to Zhang~\cite{Zhang2024}. For a nonzero constant initial
magnetic field, Zhang identified in Lagrangian coordinates a degenerate
one-dimensional wave system with an Alfv\'en null structure, introduced function spaces adapted to that structure, and proved the
corresponding bilinear null-form estimate. This led to the first construction below the classical threshold, namely local well-posedness for an auxiliary Lagrangian system when $2\leq n\leq4$ and
$\frac{n+1}{2}<s\leq\frac n2+1$. This is the
analytic starting point for the half-derivative Eulerian improvement considered here.

The wave structure mentioned above is the mathematical manifestation of the
\emph{Alfv\'en waves} discovered in \cite{Alfven1942}. Its role in the long-time dynamics of ideal MHD was already exploited by Bardos--Sulem--Sulem~\cite{BardosSulemSulem1988}, who proved global existence and scattering for small perturbations of a strong constant magnetic field. The global dynamics near constant magnetic backgrounds have since been studied extensively; see, for instance, He--Xu--Yu~\cite{HeXuYu2018}, Cai--Lei~\cite{CaiLei2018}, and the references therein. The Lagrangian
approach of Andersson and Kapitanski~\cite{AnderssonKapitanski2023} and the
wave-map formulation for neo-Hookean elasticity developed by
Zhang~\cite{Zhang2023elastic} are then precursors of \cite{Zhang2024} mentioned above.

The self-contained present paper rewrites that Lagrangian mechanism of \cite{Zhang2024} into an invariant, wave--Hodge formulation \eqref{eq:WaveHodgeliterature} and bridges the gap with an Eulerian Cauchy local existence and uniqueness theory. Let
\begin{equation}\label{eq:Bmult}
 \mathcal B:=\Bzero\cdot\nabla^{y}
            =\Binf\cdot\nabla^{y},
 \qquad
 \widehat{\mathcal B f}(\xi)
   =\mathrm i\lambda(\xi)\widehat f(\xi),
 \qquad
 \lambda(\xi):=\Binf\cdot\xi .
\end{equation}
Regularity along this preferred direction plays a crucial role in the
analysis and is measured by the fixed-time scale $Y^{\sreg,\preg}$
defined below in \eqref{eq:Y}; $\sreg$ is the isotropic regularity index and $\preg$ the parallel regularity index.

For the displacement $w=X-\mathrm{id}$, the ideal MHD system will be
rewritten in terms of the Alfv\'en and wave operators
\[
 \mathcal A^\pm:=\partial_t\pm\mathcal B,
 \qquad
 \Box:=\mathcal A^+\mathcal A^-
      =\partial_t^2-\mathcal B^2
\]
as the wave--Hodge system
\begin{equation}\label{eq:WaveHodgeliterature}
 \Box w=-F,\qquad
    \begin{cases}
        \curl_{\gmetric}F=0,\\
        \Div_{\gmetric}F=\mathcal Q[dw],
    \end{cases}
\end{equation}
for the metric $\gmetric=(dX)^{\top}dX$ and with $\mathcal{Q}$ carrying the
aforementioned Alfv\'en null--structure. The free evolution generated by $\Box$ is
non-dispersive: the two carried
half-waves are simply translated in opposite directions along $\Binf$.
Nevertheless, away from the characteristic set the symbol of $\Box$ is
elliptic in the joint Alfv\'en variables, so that the corresponding Duhamel
operator gains a fraction of a derivative in a modulation weight measuring
the distance to that set. Near the characteristic set no such gain is
available, and the nonlinear forcing must instead be handled through the
matching Alfv\'en null structure hidden in $\mathcal{Q}$, which cancels the worst
co-propagating interactions.  In this 1+1 hyperbolic setting, this mechanism is particularly apparent and can be made explicit with the following well-known heuristics. Let us informally write 
\[
 \mathcal Q[dw]\sim Q_0(dw,dw)\sim V^+\otimes V^-,
 \qquad
 V^\pm:=\mathcal A^\pm(dw)=d(\mathcal A^\pm w).
\]
For a free wave, after a fixed rotation for which $\Binf=v_{\rm A}e_1$ and splitting $y=y_1e_1+y'$ one sees that
\begin{equation}\label{eq:motivationintro}
 \mathcal A^\mp V^\pm=0
 \qquad\Longrightarrow\qquad
 V^\pm(t,y)=F^\pm(y_1\pm v_{\rm A}t,y').
\end{equation}
Now consider the null-coordinates change of variables
\[
 (t,y_1)\longmapsto(u,v):=(y_1+v_{\rm A}t,y_1-v_{\rm A}t),
 \qquad
 dtdy_1=\frac1{2v_{\rm A}}dudv.
\]
This is particularly helpful when trying to estimate the non-linearity in $L^2_{t,y_1}$ at fixed $y'$. Indeed, the transport condition \eqref{eq:motivationintro} gives

\begin{equation}\label{eq:keycomputation}
\begin{split}
    \int_\Real\int_\Real |V^+(t,y)|^2|V^-(t,y)|^2d td y_1 &=\int_{\Real}\int_{\Real}
 |F^+(y_1+v_{\rm A}t,y')|^2\,|F^-(y_1-v_{\rm A}t,y')|^2dtdy_1\\
 &=
 \frac1{2v_{\rm A}}
 \int_{\Real}|F^+(u,y')|^2du
 \int_{\Real}|F^-(v,y')|^2dv .
\end{split}
\end{equation}
Thus, the $L^2_{t,y_1}$-norm of the product is controlled by the product of
the two longitudinal $L^2$ norms without requiring $H^s\hookrightarrow L^\infty$. This is exactly the missing ingredient in \eqref{eq:standardHsintro}. The precise lower bound for $s$ then comes from the transverse count that follows: the differentiated unknown $dw$ in $Q_0$ carries $s-1$ isotropic derivatives
and requiring the summation of the transverse $\Real^{n-1}$ sector gives
\[
 s-1>\frac{n-1}{2},
 \qquad\text{equivalently}\qquad
 s>\frac{n-1}{2}+1=\frac{n+1}{2}.
\]
This is just the classical Sobolev well-posedness threshold with one
spatial dimension removed.  Theorem~\ref{thm:null-form} replaces the exact
free-transport relations \eqref{eq:motivationintro} by the corresponding
modulation estimate, while the wave--Hodge argument reinstates the
coefficients and contractions in $\mathcal Q$. We remark that the key computation \eqref{eq:keycomputation} also underlies the uniqueness argument of Proposition \ref{prop:unconditional-uniqueness} and thus forms the backbone of this work and \cite{Zhang2024}. See \S\ref{ssec:overview} for more details.

Loosely speaking, the \eqref{eq:mhd} system behaves like
incompressible Euler in the transverse sector, while magnetic tension and
the null-form cancellations supply the missing longitudinal control. 
No additional longitudinal regularity is required for existence or uniqueness
in Theorem~\ref{thm:main}, which we refer to as regime~\textup{(R1)}. If
\[
 \nabla v_0\in Y^{s-1,\rpar},
 \qquad 0<\rpar\leq1,
\]
then the lifespan can be bounded in terms of the base $H^s$ norm and the
corresponding anisotropic norm. We call this regime~\textup{(R2)}.

A related anisotropic condition already appears in the literature.
Definition~1.2(iv) of Ifrim, Pineau, Tataru, and Taylor~\cite{ifrim}, in the
setting of free-boundary MHD, requires
\[
 \nabla_Bv,\nabla_BB\in H^{s-1/2}
\]
for an $H^s$ state, with $s>\frac n2+1$ in their result.  Although the settings and techniques differ, both
conditions reflect the `smoothing' role of derivatives along the
characteristic magnetic field.

The threshold in \cite{ifrim} is sharp for the class treated there, which
contains the hydrodynamic case $B\equiv0$; the authors are explicit in observing that since the system contains Euler, ``the best we
could hope for in the case of MHD is also $s>\frac{n+2}{2}$'' \cite[p.~5]{ifrim}. The present result shows that once the background field is
bounded away from zero $\Bzero\equiv\Binf\ne0$, for the Cauchy problem
on $\Rn$ the threshold drops to $s>\frac{n+1}2$. In particular, Theorem~\ref{thm:main} produces solutions at an
index where, for $n=2,3$, the corresponding Euler problem on $\Rn$ is strongly
ill-posed by Bourgain--Li~\cite{BourgainLi2015Invent,BourgainLi2021IMRN}. This is the familiar situation in which a
structural hypothesis beats a threshold that is sharp for the unstructured
class. Consistently, the relevant constants below degenerate as $v_{\rm A}\downarrow0$, see \eqref{eq:keycomputation} above, so the two
regimes are separated rather than competing, and no Euler solution is obtained
in the limit. In this sense, \cite{Zhang2024} and the present work isolate the regime in which ideal
magnetohydrodynamics genuinely separates from hydrodynamics.

Beyond the invariant reformulation \eqref{eq:WaveHodgeliterature}, compared to \cite{Zhang2024}, we run the fixed point producing the Lagrangian solution, on its differentiated form but for trial \emph{exact} deformation
matrices $\Ufield$, \emph{without} imposing the volume constraint. We then prove that exactness is preserved, that the resulting displacement
satisfies $dw=\Ufield$, and that $\det d X=\det(\mathrm{Id}+\Ufield)=1$ is propagated dynamically by the coupled
wave--Hodge equations, as a theorem at the regularity of the
fixed-point class (Proposition~\ref{prop:constraint}) in every dimension $n\ge2$. The return
to Eulerian variables is then justified through the weak reconstruction of \S\ref{ssec:eulerian}. Eulerian uniqueness is finally proved in \S\ref{ssec:unconditional-uniqueness}. This is done directly
in the physical difference equations after pulling them back by the
already constructed map $X$; no flow for the comparison solution is needed. The use of an interaction-flux adapted from Ifrim Tataru~\cite{IfrimTataru2025}, substituting the role of the energy, and surfacing the null-structure again is crucial to the argument. Detailed descriptions of the above mechanisms and tools are given in \S\ref{ssec:overview}.\\

\textbf{The wave-adapted spaces and parameters.} The various operators and observations above motivate the scale of spaces we introduce next. These are an anisotropic version of the classical Bourgain $X^{s,b}$ spaces widely used in non-linear dispersive equations, see for example \cite{Tao2006Dispersive}.  

Throughout $n\ge2$, $\frac{n+1}2<s\le\frac n2+1$, $\kappa:=n/2+1-s$, $\breg\in(\frac12,1)$, $\sreg\ge0$, $\preg\in\Real$, with
$\varepsilon:=1-\breg$, and $T\le1$; we refer to $(n,s,\breg,v_{\rm A})$ as the
standing parameters.  The bilinear estimates of
\S\ref{ssec:null-form} are proved for the wider range $\breg\in(\frac12,1]$,
which costs nothing and is recorded there; the upper endpoint is excluded in the
main argument only because the linear estimate \eqref{eq:linear} gains
$T^{1-\breg}$. Moreover, we let $\Lambda:=(1-\Delta_{y})^{1/2}$ and associate to the Alfvén and magnetic derivations the self-adjoint operators
\begin{equation}\label{eq:self-adjoint-derivatives}
    D_t:=-\mathrm i\partial_t,\qquad
    D_{\parallel}:=-\mathrm i\mathcal B, \qquad
    D_{\pm}:=-\mathrm i\mathcal A^{\pm}=D_t\pm D_{\parallel},
\end{equation}
where $\parallel$ denotes the full direction $\Binf$, without normalising its
magnitude, so that the parallel and Alfv\'en/null directional symbols are given by
\begin{equation}\label{eq:taupm}
    \lambda=\Binf\cdot\xi, \qquad\tau_+:=\tau+\lambda,\qquad \tau_-:=\tau-\lambda.
\end{equation}
Define the weights
\begin{equation}\label{eq:weightshm}
     h:=\max\big(\langle\tau_+\rangle,\langle\tau_-\rangle\big),\qquad
 m:=\min\big(\langle\tau_+\rangle,\langle\tau_-\rangle\big).
\end{equation}
and the corresponding modulation-weighted spacetime and fixed-time scales
\begin{equation}\label{eq:Y}
\|f\|_{Y^{\sreg,\preg}_{\breg}}:=\big\|\langle D_{\parallel}\rangle^{\preg}m^{\breg}\Lambda^{\sreg}f\big\|_{L^{2}(\mathbb{R}^{1+n})},
\qquad
\|f\|_{Y^{\sreg,\preg}}:=\big\|\langle D_{\parallel}\rangle^{\preg}\Lambda^{\sreg}f\big\|_{L^{2}(\mathbb{R}^{n})}.
\end{equation}
We finally construct the wave-adapted class directly by its two Cauchy components:
\begin{equation}\label{eq:Hpair}
\|f\|_{H^{\sreg,\preg}_{\breg}}:=
\Big(\|f\|_{Y^{\sreg,\preg}_{\breg}}^{2}
+\|\partial_tf\|_{Y^{\sreg,\preg-1}_{\breg}}^{2}\Big)^{1/2}.
\end{equation}
This definition may be interpreted as follows. We measure
$\sreg$ isotropic derivatives on both Cauchy components, together with
$\preg$ parallel derivatives on $f$ and $\preg-1$ on $\partial_tf$
($\preg=1$ is the principal case), together with $\breg$ derivatives with respect to the regularised distance $m$ to the characteristic set $\{\tau_+=0\}\cup\{\tau_-=0\}$ of $\Box$, which for this degenerate operator is a pair of hyperplanes rather than a cone. A detailed explanation of these spaces is given in \S\ref{ssec:overview}.

To connect the wave-adapted norm with the null-form multiplier, put
$\Omega:=hm^{\breg}$. Unfolding $\Omega$ over the two Alfv\'en directions gives
\begin{equation}\label{eq:pmax}
\Omega
 =h m^{\breg}
 =\max\big(\langle\tau_+\rangle\langle\tau_-\rangle^{\breg},
             \langle\tau_+\rangle^{\breg}\langle\tau_-\rangle\big)
 =\frac{\langle\tau_+\rangle\langle\tau_-\rangle}{m^{\varepsilon}}
\end{equation}
that is, we measure one derivative along each Alfv\'en
characteristic, the smaller of the two discounted by $\varepsilon$. Secondly, from the relations
\[
\max\big(|\tau_+|,|\tau_-|\big)=|\tau|+|\lambda| ,\qquad
\min\big(|\tau_+|,|\tau_-|\big)=\big| |\tau|-|\lambda| \big| ,\qquad
h\ \le\ \langle\lambda\rangle+|\tau|\ \le\ 2h ,
\]
we obtain the equivalent single-multiplier expression
\begin{equation}\label{eq:Hsymbol}
\|f\|_{H^{\sreg,\preg}_{\breg}}\ \simeq\
\big\|\langle D_{\parallel}\rangle^{\preg-1}\Omega(D_+,D_-)\Lambda^{\sreg}f\big\|_{L^2(\mathbb{R}^{1+n})},
\end{equation}
for every $\preg\in\Real$ (again $\preg=1$ is the principal case).  Indeed, Plancherel's identity rewrites the
square of the norm in \eqref{eq:Hpair} as
\[
 \int \langle\xi\rangle^{2\sreg}m^{2\breg}
 \left(\langle\lambda\rangle^{2\preg}
       +\tau^2\langle\lambda\rangle^{2\preg-2}\right)
 |\widetilde f(\tau,\xi)|^2 d\tau d\xi .
\]
Factoring out $\langle\lambda\rangle^{2\preg-2}$, the remaining
weight is $\langle\lambda\rangle^2+\tau^2$.  The preceding comparison
$h\simeq\langle\lambda\rangle+|\tau|$ gives
\[
 (\langle\lambda\rangle^2+\tau^2)^{1/2}\simeq h,
\]
with constants independent of $(\tau,\xi)$.  Thus the integrand is
equivalent to
$\langle\xi\rangle^{2\sreg}\langle\lambda\rangle^{2\preg-2}
\Omega^2|\widetilde f|^2$, which proves \eqref{eq:Hsymbol}.  This
calculation also explains why the second Cauchy component carries one fewer
longitudinal derivative: together, $\langle D_\parallel\rangle f$ and $\partial_t f$ supply the
large characteristic weight $h$.

Finally, we define the longitudinal $H^s$ modulus of the datum by
\begin{equation}\label{eq:datum-modulus}
 \omega_{v_0}(T)
 :=\big\|\min(1,T\langle D_{\parallel}\rangle)
                 \Lambda^{s-1}\nabla v_0\big\|_{L^2},
 \qquad 0<T\le1.
\end{equation}
It satisfies $\omega_{v_0}(T)\to0$ as $T\downarrow0$, and if
$\nabla v_0\in Y^{s-1,\rpar}$, $0<\rpar\le1$, then
\begin{equation}\label{eq:omegav0bound}
    \omega_{v_0}(T)\le T^{\rpar}\|\nabla v_0\|_{Y^{s-1,\rpar}}
\end{equation}
which follows immediately from $\min(1,r)\leq r^\rpar$ for $r\geq 0$ and $0<\rpar\leq 1$.\\
 
\textbf{Conserved quantities.} A field tending to $\Binf\ne0$ carries infinite energy, so the constant part is subtracted. The finite conserved quantities are
\begin{equation}\label{eq:energy}
E:=\tfrac12\int\big(|v|^2+|b|^2\big) dx,\qquad
H_{\rm c}:=\int v \cdot b dx,
\end{equation}
and both integrals are absolutely convergent for a solution in the class of
Theorem~\ref{thm:main}.

\subsection{The main results}
With the preliminary definitions in place, we state our main result. 

\begin{theorem}[local existence and uniqueness below the classical threshold]\label{thm:main}
Fix $\breg\in(\frac12,1)$. Let $n\ge2$, let $\frac{n+1}2<s\le\frac n2+1$, let
$\Bzero\equiv\Binf\in\Rn\setminus\{0\}$, and let $v_0\in H^{s}(\Rn)$ be divergence-free. There are
constants $C\ge1$ and $\delta\in(0,1)$, depending only on
$n,s,\breg$ and $v_{\rm A}$, and there exist a time $T_*>0$, a distributional
solution $(v,B, p)$ of \eqref{eq:mhd} on $[0,T_*]\times\Rn$ with data
$(v_0,\Binf)$, with the following properties:
\begin{enumerate}
\item writing $b:=B-\Binf$, the fields $v$, $b$ and $p$ lie in
$C([0,T_*],H^{s}(\Rn))$. The pressure is normalised by
\[
 p=\Riesz_i\Riesz_j
 \big(v^iv^j-\Binf^ib^j-b^i\Binf^j-b^ib^j\big),
\]
where $\Riesz_i$ are the Euclidean Riesz transforms;
\item writing $X=\mathrm{id}+w$, the solution is carried by a Lagrangian flow
$X:[0,T_*]\times\Rn\to\Rn$ with $X(0,\cdot)=\mathrm{id}$, uniformly bi-Lipschitz
and volume-preserving, $\det dX\equiv1$, such that $v=(\partial_tX)\circ X^{-1}$ and
$B=(\mathcal BX)\circ X^{-1}$;
moreover there is a spatially exact spacetime field
$\Ufield\in H^{s-1,1}_{\breg}(\Rst)$ such that
$\Ufield=dw$ on $[0,T_*]\times\Rn$ and, for constants fixed by
the standing parameters (one may take $C=2C_{\rm lin}$, with $C_{\rm lin}$
the constant of the linear estimate \eqref{eq:linear}),
\begin{equation}\label{eq:main-adapted-bound}
 \|\Ufield\|_{H^{s-1,1}_{\breg}}
 \le C\|v_0\|_{H^s},
 \qquad
 \sup_{t\in[0,T_*]}\|dw(t)\|_{Y^{s-1,1}}\le\delta;
\end{equation}
here $\delta$ is chosen so that the second bound implies
$\|dw\|_{L^\infty_{t,y}}\le\frac12$;
\item the renormalised energy $E$ and cross helicity $H_{\rm c}$ of \eqref{eq:energy} are
conserved on $[0,T_*]$;
\item the solution is unique among all distributional solutions with the same
initial data for which $v$ and $B-\Binf$ belong to
$C([0,T_*],H^s)$. 
\end{enumerate}
The existence time is governed by the longitudinal regularity of the datum. We
distinguish the following two regimes.
\begin{enumerate}
\item[\textbf{\emph{(R1)}}]  $T_*$ depends on $n,s,\breg,
v_{\rm A}$, on $\|v_0\|_{H^{s}}$ and on the function
$T\mapsto\omega_{v_0}(T)$ defined in \eqref{eq:datum-modulus}.
\item[\textbf{\emph{(R2)}}] If $\|\nabla v_0\|_{Y^{s-1,\rpar}}<\infty$
for some $\rpar\in(0,1]$, then $T_*$ depends on $n,s,\breg,\rpar,v_{\rm A}$
and on $\|v_0\|_{H^{s}}+\|\nabla v_0\|_{Y^{s-1,\rpar}}$ alone. Moreover,
the spacetime field in~(2) satisfies
\[
 \|\Ufield\|_{H^{s-1,1+\rpar}_{\breg}}
 \le C\|\nabla v_0\|_{Y^{s-1,\rpar}}.
\]
\end{enumerate}
\end{theorem}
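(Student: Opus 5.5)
The proof assembles the ingredients announced in the introduction in four stages: Lagrangian construction, constraint propagation, Eulerian reconstruction, and Eulerian uniqueness.

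\emph{Stage 1: the fixed point.} Rewrite \eqref{eq:mhd} in labels as the wave--Hodge system \eqref{eq:WaveHodgeliterature} with $w=X-\mathrm{id}$. Then differentiate to get an equation for the deformation matrix: $\Box\Ufield=-dF[\Ufield]$, where $F[\Ufield]$ is recovered from the Hodge system $\curl_{\gmetric}F=0$, $\Div_{\gmetric}F=\mathcal Q[\Ufield]$ with $\gmetric=(\mathrm{Id}+\Ufield)^\top(\mathrm{Id}+\Ufield)$.

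The data are $\Ufield(0)=0$ and $\partial_t\Ufield(0)=d v_0$, since $\partial_t X(0)=v_0$. Localise in time with a cutoff and run a contraction on a ball of spatially exact fields in $H^{s-1,1}_{\breg}$. The linear estimate \eqref{eq:linear} bounds the free part by $C_{\rm lin}\|v_0\|_{H^s}$. It bounds the Duhamel part by $T^{1-\breg}$ times the forcing in the dual modulation space.

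The forcing is controlled as follows.
\begin{itemize}
\item The principal piece $\mathcal Q$ is handled by the null-form estimate, Theorem~\ref{thm:null-form}. This is the step where $s>\frac{n+1}2$ enters, through the transverse count.
\item The Hodge inversion with variable metric is a perturbation of the flat Riesz-type operators, since $\|\Ufield\|_{L^\infty}\le\frac12$ follows from the $Y^{s-1,1}$ smallness embedding.
\end{itemize}

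Smallness of $\sup_t\|dw(t)\|_{Y^{s-1,1}}$ cannot come from the $H^s$ size. It comes from the free evolution at short times: $\|dw(t)\|_{Y^{s-1,1}}$ is bounded by roughly the parallel-derivative size of $\int_0^t \cos(t'\mathcal B)\,dv_0\,dt'$. On the Fourier side this is $\min(1,T\langle\lambda\rangle)\langle\xi\rangle^{s-1}|\widehat{\nabla v_0}|$, that is, $\omega_{v_0}(T)$.

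This choice of $T_*$ gives (R1). Under the hypothesis of (R2), \eqref{eq:omegav0bound} makes $T_*$ explicit in $\|v_0\|_{H^s}+\|\nabla v_0\|_{Y^{s-1,\rpar}}$. Rerunning the linear and null-form estimates with $\preg=1+\rpar$, where the extra $\langle D_\parallel\rangle^{\rpar}$ falls on the high-parallel-frequency factor, gives the bound on $\Ufield$ in $H^{s-1,1+\rpar}_{\breg}$.

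\emph{Stage 2: constraint propagation.} Take the fixed point $\Ufield$ and show three things.
\begin{itemize}
\item Exactness persists: $d\Ufield=0$ holds because the equation is a $d$ of something.
\item Setting $w$ by $\Box w=-F$ with the matching data, one has $dw=\Ufield$.
\item $\det(\mathrm{Id}+\Ufield)=1$, via Proposition~\ref{prop:constraint}. There $J-1$ satisfies a linear homogeneous system derived from the $\Div_{\gmetric}$ equation, and this system has zero data.
\end{itemize}
Since $\|dw\|_{L^\infty}\le\frac12$, the map $X$ is bi-Lipschitz, and it is volume preserving by the determinant identity.

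\emph{Stage 3: Eulerian reconstruction.} Define $v=\partial_tX\circ X^{-1}$ and $B=\mathcal BX\circ X^{-1}$, and take $p$ from the Hodge potential of $F$. Test the weak formulation against $\varphi\circ X$, using $\det dX=1$ to change variables; this gives the distributional equations as in \S\ref{ssec:eulerian}.

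Item (1) then follows:
\begin{itemize}
\item $H^s$ regularity comes from composition with bi-Lipschitz $H^s$-class maps; continuity in time comes from $\Ufield\in C_tY^{s-1,1}$.
\item The pressure formula is the divergence of the momentum equation, inverted with the decay of $v$ and $b$.
\end{itemize}
Conservation of $E$ and $H_{\rm c}$ follows by approximation with smooth data using the continuous dependence of the fixed point, which also gives continuity of these quantities.

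\emph{Stage 4: uniqueness (the main obstacle).} Given a second solution $(v',B')$ in $C_tH^s$, pull the difference back by the constructed $X$; no flow for the second solution is needed. Write the difference system in Elsässer-type variables $\mathcal A^\pm$. Energy alone fails because $\nabla v'$ is not in $L^\infty$. The plan is instead to use the Ifrim--Tataru interaction flux: bound the bad products by \eqref{eq:keycomputation} in null coordinates, which needs only longitudinal $L^2$ and transverse $H^{s-1}$ with $s-1>\frac{n-1}2$, and then close a Gronwall estimate on $L^2$ of the difference. This is Proposition~\ref{prop:unconditional-uniqueness}, and I expect it to be the delicate part: the flux must be designed so that each term exhibits the $V^+\otimes V^-$ structure despite variable coefficients.
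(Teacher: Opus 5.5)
Your Stages 1--3 follow the paper's route. That route is the differentiated fixed point in an exact class, with fixed-time $Y^{s-1,1}$ smallness supplied by $\omega_{v_0}(T)$ and by the $T^{1/2}$ Duhamel gain; then reconstruction of $w$ with $dw=\Ufield$, the weak Jacobi identity $\Box\log J=0$, and testing against $\varphi\circ X$. Two small corrections:
\begin{itemize}
\item The Neumann series for the metric Hodge resolvent needs $\Kmat-\mathrm{Id}$ small in $Y^{s-1,1}$, i.e.\ as a multiplier on $H^{s-1}$. The bound $\|\Ufield\|_{L^\infty}\le\frac12$ alone is not enough.
\item The Duhamel forcing is measured in $L^2_tH^{s-1}$, not in a dual modulation space.
\end{itemize}

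\emph{First gap: clause~(3).} You obtain conservation of $E$ and $H_{\rm c}$ by approximating with smooth data and invoking continuous dependence. The paper explicitly does not establish continuity of the data-to-solution map. Suppose you extract Lagrangian Lipschitz dependence of $\Ufield$ on $v_0$ from the uniform contraction anyway. You would still need conservation for the approximating solutions on all of $[0,T_*]$. Those solutions are only known to lie in the same low-regularity class. Classical lifespans are controlled by higher norms of $v_0^{(k)}$, which are unbounded along the sequence when $v_0\notin H^{s'}$, so they can shrink to zero. Nothing in the construction propagates isotropic regularity on $[0,T_*]$. The approximation is therefore circular.

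What works is the direct Lagrangian computation:
\begin{itemize}
\item First identify $F=(\nabla p_{\mathrm R})\circ X\in L^\infty_tL^2$; the a priori bound is only $L^4$.
\item With $J\equiv1$, write $E=\frac12\int(|\vLag|^2+|\bLag|^2)\,dy$ and $H_{\rm c}=\int\vLag\cdot\bLag\,dy$, where $\partial_t\vLag=\mathcal B\bLag-F$ and $\partial_t\bLag=\mathcal B\vLag$ hold in $L^\infty_tL^2$.
\item Skew-adjointness of $\mathcal B$ removes the Alfv\'en terms.
\item Finally $\int F\cdot\vLag\,dy=\int\nabla p_{\mathrm R}\cdot v\,dx=0$, and likewise with $\bLag$.
\end{itemize}

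\emph{Second gap: the closing step of Stage~4.} A Gronwall inequality $\frac{d}{dt}\|\delta\mathsf Z\|_2^2\le a(t)\|\delta\mathsf Z\|_2^2$ with $a\in L^1_t$ needs $\nabla Z^\pm\in L^1_tL^\infty$, which is exactly the obstruction. Also, \eqref{eq:keycomputation} presupposes exact free transport, which the differences do not satisfy. The paper closes instead by absorption:
\begin{itemize}
\item The energy identity, with $E_i\mathsf Z^{\pm,j}$ only in $L^2_tL^2_{y_1}H^{s-1}_{y'}$, gives $\mathcal E_I\lesssim c_I\Phi_I$. The energy is controlled by the flux, not conversely.
\item The interaction functional $\mathscr I(t)=\iint_{r_+>r_-}\rho_+(t,r_+)\rho_-(t,r_-)\,dr_+dr_-$ is nonnegative, satisfies $\mathscr I(t_0)=0$, and $\mathscr I(t_1)+2v_{\rm A}\Phi_I^2$ equals the flux and source errors.
\item One must show these errors are at most $C(c_I+c_I^2)\Phi_I^2$.
\end{itemize}
That last step needs two ingredients your plan lacks:
\begin{itemize}
\item Smallness of $\|\delta\mathsf Z^\pm\|_{L^\infty}\lesssim\sup_I\|\delta Z^\pm\|_{H^s}$, to control the cubic self-transport flux $\Kmat^1_i\delta\mathsf Z^{\mp,i}e_\pm$. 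This holds only because the solutions agree at $t_0$ and differ continuously in $H^s$.
\item An estimate of the pressure difference in $H^\alpha$, $\frac12<\alpha<s-\frac n2$, giving $\delta\mathsf p\in L^\infty_{y_1}L^2_{y'}$ for the pressure part of the flux. Stage~4 does not mention the pressure at all.
\end{itemize}
Then $C(c_I+c_I^2)<2v_{\rm A}$ on short intervals forces $\Phi_I=\mathcal E_I=0$, and a continuation argument covers $[0,T_*]$.
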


\subsection{Overview of the proof}\label{ssec:overview}

The starting points are the Lagrangian flow equation for the velocity~$v$, namely
\[
 v\circ X=\partial_t X
\]
and the fact that reading the Maxwell--Faraday--Ohm law in \eqref{eq:mhd} as Lie transport of the magnetic field $B$ along $v$, one has
\[
 B\circ X=\mathcal BX,
 \qquad \mathcal B=\Binf\cdot\nabla^y.
\]
The momentum equation along $X$ can then be read as a \emph{constant-coefficients} wave equation:  
\[
 (\partial_tv+\nabla_vv)\circ X=\partial_t\big(v\circ X\big)=\partial_t^2X, \qquad (\nabla_BB)\circ X= \mathcal B(B\circ X)=\mathcal B^2X
\]
that is
\[
    \partial_t^2X-\mathcal B^2X=-(\nabla p)\circ X.
\]
Setting
\[
 w:=X-\mathrm{id},\qquad
 F:=(\nabla p)\circ X,\qquad
 \Box:=\partial_t^2-\mathcal B^2,
\]
and using $\mathcal B^2\mathrm{id}=0$, we obtain the invariant
wave--Hodge system:
\begin{equation}\label{eq:whintro}
 \Box w=-F,\qquad 
    \begin{cases}
        \curl_{\gmetric}F=0,\\
        \Div_{\gmetric}F=\mathcal Q[dw],
    \end{cases}
\end{equation}
for the metric $\gmetric=(dX)^{\top}dX$. The Hodge datum contains the Alfv\'en null-form
\[
 \mathcal Q[dw]
 =-\Kmat^a_i\Kmat^b_jQ_0(\partial_aw^j,\partial_bw^i),
 \qquad
 Q_0(f,g)=\partial_tf \partial_tg-\mathcal Bf \mathcal Bg, \qquad \Kmat=(d X)^{-1}.
\]
Lemma~\ref{lem:weak-pullback} gives the weak geometric calculus in which the
two Hodge equations of \eqref{eq:whintro} are posed, while
Lemma~\ref{lem:wave-hodge-reduction} supplies both the algebraic identity for the null--form and the derivation of \eqref{eq:whintro} from
smooth ideal MHD. 

Existence of a solution for this auxiliary problem is proved by a fixed-point argument run at the level of the differentiated equation for the displacement
$\Ufield$. $w$ is reconstructed only afterwards, and the constraint $dw=\Ufield$ will hold on the interval of definition. At the very end, the return to the Eulerian side from the constructed weak auxiliary solution $X=\mathrm{id}+w$ and Eulerian uniqueness are addressed.\\

\textbf{Wave-adapted spaces, linear and nonlinear wave theory.} Let us first explain what the function spaces \eqref{eq:Y},\eqref{eq:Hpair} measure.  The characteristic
derivatives and Fourier variables of $\Box$ are
\[
 \mathcal A^\pm=\partial_t\pm\mathcal B,
 \qquad \tau_\pm=\tau\pm\Binf\cdot\xi.
\]
If
\[
 h=\max(\langle\tau_+\rangle,\langle\tau_-\rangle),
 \qquad
 m=\min(\langle\tau_+\rangle,\langle\tau_-\rangle),
\]
then the equivalent description \eqref{eq:Hsymbol} says that
$H_{\breg}^{\sreg,\preg}$ measures
\[
 \langle D_\parallel\rangle^{\preg-1}
 h m^{\breg}\Lambda^{\sreg}f.
\]
The factor $\Lambda^{\sreg}$ quantifies isotropic spatial regularity,
$\langle D_\parallel\rangle^{\preg-1}$ adjusts the regularity along
$\Binf$, think of $\preg=1$, and $h m^{\breg}$ measures one full derivative in
the larger Alfv\'en direction and $\breg$ derivatives in the smaller
one.  The smaller modulation $m$ is the regularised distance to the nearer
characteristic branch $\tau_+=0$ or $\tau_-=0$, where $\Box$ is not elliptic; this modulation weight compensates for that degeneracy. This is the usual modulation mechanism for a wave equation, adapted in \cite{Zhang2024} to a
strongly anisotropic operator; the no-endpoint conditions $\frac12<\breg <1$ are spent as follows. 

\textit{Linear wave estimates.} The upper endpoint $\breg <1$ is needed to gain a positive power of $T$ in the
Duhamel/forcing term coming from the solution operator of the linear equation
\[
    \Box u=G,
\]
leading to the linear estimate \eqref{eq:linear}.  We now recall how this classical $H^{\sreg,\preg}_{\breg}$ estimate is obtained. After time
localisation at scale $T$, one splits according to the modulation $m$.  In the
high-modulation region $m>2/T$ the two Japanese brackets are comparable with
the corresponding absolute values, so $|\tau_+\tau_-|\gtrsim hm$ and
division by the (elliptic) symbol of $\Box$ against the $hm^{\breg}$ solution weight
leaves $m^{\breg-1}\lesssim T^{1-\breg}$.  In the low-modulation region
$m\le2/T$, that is, near the characteristic set, no elliptic gain is
possible. There one simply spends the modulation weight $hm^{\breg}\lesssim T^{-\breg}h$, and cures the loss exploiting time localisation which ensures the `energy bound' $\|h\widetilde u\|_{L^2}\lesssim T\|G\|_{L^2}$, on an interval of length $T$; see Lemma~\ref{lem:oscillator}.  The two regions therefore give the same gain $T^{1-\breg}$.

\textit{Wave non-linearity and null-structure.} The condition $\breg>\frac12$ is used in the non-linear part of the argument to gain from the null structure using the `stored regularity' in the modulation weight $m$. The rhs of $\Box$ in \eqref{eq:whintro} contains the bilinear form $Q_0$ which shares with $\Box$ the Alfv\'en factorization:
\[
 Q_0(f,g)=\tfrac12\big(\mathcal A^+f \mathcal A^-g
                         +\mathcal A^-f \mathcal A^+g\big).
\]
Each term pairs a $+$ derivative
of one input with a $-$ derivative of the \emph{other}, while both carry a wave class weight
\[
    hm^{\breg}=\frac{\langle\tau_+\rangle\langle\tau_-\rangle}{m^{\varepsilon}}
\]
ensuring one derivative per type is controlled, in particular the product in $Q_0$, although each factor loses one Alfv\'en derivative, still has both available and stays in the class. Quantitatively, when performing estimates in the wave class $H_{\breg}^{s-1,\preg}$ in which the linear estimate \eqref{eq:linear} from above is read,  dividing $|\tau_\pm|$ coming from the differentiation $\mathcal{A}^\pm$ by
the available weight $hm^{\breg}=\frac{\langle\tau_+\rangle\langle\tau_-\rangle}{m^{\varepsilon}}$ leaves the decay $\langle\tau_\mp\rangle^{-\breg}$
in the \emph{other} null variable. The square of this kernel is integrable in
the two null variables precisely for
$\breg>\frac12$, equivalently $\varepsilon=1-\breg<\frac12$. This is the content of Theorem~\ref{thm:null-form} and results in 
\begin{equation}\label{eq:NFintro}
    \|Q_0(\Ufield,\Ufield)\|_{L^2_tH^{s-1}}
 \lesssim \|\Ufield\|_{H_{\breg}^{s-1,1}}^2.
\end{equation}
The only multiplier in $H_{\breg}^{\sreg,\preg}$ that we haven't justified yet is the weight $\Lambda^\sreg$. Unlike the usual wave operator, $\Box$ contains no transverse spatial
derivatives, leaving the transverse frequencies as parameters.  In the differentiated equation, the null form first produces the Hodge datum for $dF$, which is estimated in $L^2_tH^{s-1}$; transverse frequencies are therefore summed with the isotropic
$\Lambda^{s-1}$ weight.  This is why the core null-form estimate contains
\[
 \int_{\Real^{n-1}}\langle\eta'\rangle^{-2(s-1)} d\eta',
\]
which is finite exactly when $s>(n+1)/2$.  At fixed time, the additional
longitudinal derivative in $Y^{s-1,1}$, we control in \eqref{eq:NFintro}, completes the missing spatial bound
and gives the $L^\infty$, algebra and multiplier estimates needed for the
elliptic/Hodge step coming afterwards.\\

\textbf{Wave estimates meet Hodge estimates.}  The above explained the wave part of \eqref{eq:whintro} and the adapted spaces $H_{\breg}^{\sreg,\preg}$. What is left to be addressed in the auxiliary system is how the Hodge part interacts with the null-form to give the forcing $F$.  From a trial field
$\Ufield$ (recall that we work at the level $\Ufield=d w$ not on \eqref{eq:whintro} directly), the null form first produces the scalar datum
$\mathcal Q[\Ufield]$,  the metric Hodge system recovers $F[\Ufield]$, and
$dF[\Ufield]$ is then the source in the differentiated \eqref{eq:whintro}, namely
\[
 \Box\Ufield=-dF[\Ufield].
\]
This concatenation in fact produces a map $\Ufield\mapsto dF[\Ufield]$ from
$H^{s-1,1}_{\breg}$ to $L^2_tH^{s-1}$ which costs \textit{no derivatives} beyond those
measured by the wave-adapted norm. In addition to the null-form gain explained above, the technical content of this statement is the $-1$ order nature of the metric Hodge system, which compensates for $d$. Expressing it in terms of the flat one, the inverse deformation matrix $\Kmat[\Ufield]=(\mathrm{Id}+\Ufield)^{-1}$ appears \emph{only} as a coefficient of $d$, and gives
\[
 (\Div_{\gmetric},\curl_{\gmetric})
 = (\Div_y,\curl_y)
   +\Mmat[\Ufield]d, \qquad
 \Mmat[\Ufield]:=\Kmat[\Ufield]-\mathrm{Id}.
\]
The second term is a principal-order perturbation, but the fixed-point class we run the argument in, namely $\mathcal{U}_{R,\delta}$ from \eqref{eq:fixedpoint-class}, ensures $\Mmat[\Ufield]$ is $\delta$ small in the right norm, and thus the operator has small coefficients. Standard perturbation theory, namely expansion in a Neumann series, can then be applied. 

Concretely, we first rewrite the Hodge system (see \eqref{eq:pressure-Hessian}) as the resolvent identity
\begin{equation}\label{eq:resolventintro}
    dF[\Ufield]
 =
 \mathscr T_0\mathcal Q[\Ufield]
 +\mathscr T_{\Mmat[\Ufield]}dF[\Ufield] \quad\leadsto \quad dF[\Ufield]=(\mathrm{Id}-\mathscr T_{\Mmat[\Ufield]})^{-1}\mathscr T_0\mathcal Q[\Ufield]
\end{equation}
where $\mathscr T_0$ is an order-zero Calderón-Zygmund type operator and
$\mathscr T_{\Mmat[\Ufield]}$ consists of multiplication by
$\Mmat[\Ufield]$ followed by an order-zero CZ type operator. This is then a small perturbation of the identity ensuring the displayed invertibility.  This rewriting together with the null--form estimate \eqref{eq:NFintro} then yields
\begin{equation}\label{eq:hessianintro}
    \|dF[\Ufield]\|_{L^2_tH^{s-1}}
 \lesssim_\delta
 \|\mathcal Q[\Ufield]\|_{L^2_tH^{s-1}}
 \lesssim_\delta R^2
\end{equation}
for $\Ufield\in \mathcal{U}_{R,\delta}$. No Christoffel symbols or derivatives of the metric coefficients (which would be of second order in $X$) enter this argument.

The identity \eqref{eq:resolventintro} also makes the dependence of the pressure Hessian $d F[\Ufield]$ quantitative. Indeed, bilinearity of $Q_0$ and of \eqref{eq:resolventintro} in both the argument and the coefficients/geometry leads to
\[
 \|dF[\Ufield]-dF[\Vfield]\|_{L^2_tH^{s-1}}
 \lesssim_\delta
 (R+R^2)\|\Ufield-\Vfield\|_{H_{\breg}^{s-1,1}}.
\]
The term proportional to $R$ comes from replacing one factor of the null form,
whereas the term proportional to $R^2$ comes from varying the geometry. This Hessian pressure-difference estimate is the additional observation that turns the
fixed-point map into a contraction.

This explains the small parameter $\delta$ absent in \cite{Zhang2024}.  It is not a smallness assumption
on the initial datum; indeed, the space--time radius $R\gtrsim\|v_0\|_{H^s}$ may be
arbitrarily large.  Instead, the fixed-point class requires
\[
 \sup_{t\in[0,T]}\|\Ufield(t)\|_{Y^{s-1,1}}\le\delta.
\]
The anisotropic embedding then makes $dX=\mathrm{Id}+\Ufield$ uniformly
invertible, so every trial map $X(t,\cdot)$ in the fixed-point class is globally bi-Lipschitz and the
pullback geometry is meaningful.  This is a small-deformation
condition and short time buys it even
for large data: the homogeneous part of the solution for $\Box$ is controlled by the longitudinal datum modulus
$\omega_{v_0}(T)\to0$, and the Duhamel/forced part gains a positive power of
$T$: $T^{1/2}$ in the fixed-time norm $Y^{s-1,1}$, by the second line of
\eqref{eq:cutoff-duhamel}, and $T^{1-\breg}$ in the spacetime norm, as explained
before.\\

\textbf{The fixed-point argument.} Starting from an exact matrix-valued $1$-form $\Ufield$, define $\mathcal M\Ufield$ on
$[0,T]$ by
\[
 \Box\mathcal M\Ufield=-dF[\Ufield],
 \qquad
 \begin{cases}
  \curl_{\gmetric[\Ufield]}F[\Ufield]=0,\\
  \Div_{\gmetric[\Ufield]}F[\Ufield]=\mathcal Q[\Ufield],
 \end{cases}
\]
with Cauchy data $(0,\nabla v_0)$. After time localisation, the two estimates
defining the class have the schematic form
\[
 \|\mathcal M\Ufield\|_{H_{\breg}^{s-1,1}}
 \le \frac R2+C_\delta T^{1-\breg}R^2, \qquad
 \sup_{t\in[0,T]}
 \|\mathcal M\Ufield(t)\|_{Y^{s-1,1}}
 \le
 C\big(
   \omega_{v_0}(T)+T^{1/2}C_\delta R^2\big).
\]
Small $T$ therefore makes $\mathcal M$ a self-map of the closed exact class
$\mathcal U_{R,\delta}$ with the prescribed Cauchy data.

For two elements of the class, their free-wave evolution parts cancel. Combining the
pressure-difference estimate with the linear Duhamel estimate gives
\[
 \|\mathcal M\Ufield-\mathcal M\Vfield\|_{H_{\breg}^{s-1,1}}
 \lesssim_\delta
 T^{1-\breg}(R+R^2)
 \|\Ufield-\Vfield\|_{H_{\breg}^{s-1,1}}.
\]
Eventually decreasing $T$, Banach's fixed-point
theorem yields a unique fixed point  $\Ufield$ in $\mathcal U_{R,\delta}$.  Uniqueness of $\Ufield$ does not enter the final Eulerian uniqueness argument.

\textit{Off-constraint class.} The fixed-point map is deliberately defined without imposing the volume
constraint on its trial fields. The point is that
the nonlinear condition
\[
 \det(\mathrm{Id}+\Ufield)=1
\]
is not known to be preserved by one application of $\mathcal M$. For
$\Vfield=\mathcal M\Ufield$, the Jacobi identity involves the metric and
Hodge datum determined by $\Vfield$, whereas the wave source used to define
$\Vfield$ was determined by $\Ufield$, see Remark \ref{rem:generalcaseJacobi} and compare to \eqref{eq:jacobiintro} below. These agree only at a fixed point. Another argument for this being the good `relaxed space' to set the contraction argument in is discussed in Remark \ref{rem:weak-limit-heuristic}.\\

\textbf{Back to $w$, the constraint $J=1$ and the Eulerian side.}
Once the fixed point is obtained, the explicit Duhamel formula reconstructs $w$,
and differentiation of that gives $dw=\Ufield$ on $[0,T]$. It remains
to prove that the auxiliary system is genuinely ideal MHD. For $X=\mathrm{id}+w$, set
\[
 J:=\det dX=\det(\mathrm{Id}+\Ufield),
 \qquad
 \varrho:=\log J.
\]
The weak Jacobi identity gives
\begin{equation}\label{eq:jacobiintro}
    \Box\varrho
 =\mathcal Q[dw]
  -\Div_{\gmetric[dw]}F[dw]
 =0.
\end{equation}
Since $\varrho$ has zero Cauchy data, it follows that
$\varrho\equiv0$, and therefore $J\equiv1$.
Together with the bi-Lipschitz bound supplied by $\delta$, this allows us to
return to Eulerian variables. At the available Sobolev regularity, the divergence conditions, the induction equation and the momentum equation are verified directly against test functions. Finally, a Liouville argument identifies the candidate pressure gradient
$F\circ X^{-1}$ with the normalised
Riesz pressure gradient $\nabla p_{\mathrm R}$. This also supplies the missing $L^2$ control on $F$ which is needed
because a priori Lemma~\ref{lem:pressure} gives only $F\in L^\infty_tL^4$ and
$dF\in L^2_tH^{s-1}$, with no $L^2$ bound on $F$ itself. A final step then puts $v,b,p\in C_tH^s$ and closes the Eulerian existence proof.\\

\textbf{The two regimes.}
The two regimes in Theorem~\ref{thm:main} use longitudinal regularity only to
determine the lifespan and the propagated norm. In \textup{(R1)} no additional assumption is made:
$\omega_{v_0}(T)\to0$ for each fixed $H^s$ datum, but its rate is not uniform
on bounded $H^s$ sets. This is because the finite $H^s$ mass may sit at arbitrarily large longitudinal frequencies by becoming more and more concentrated; see the discussion after Lemma \ref{lem:contraction}.
In \textup{(R2)}, any positive longitudinal fraction gives
\[
 \omega_{v_0}(T)
 \le
 T^{\rpar}\|\nabla v_0\|_{Y^{s-1,\rpar}},
\]
so the lifespan can be chosen in terms of norms alone. The same fraction is in fact propagated within the Picard iteration and thus
\[
 \Ufield\in H_{\breg}^{s-1,1+\rpar}
\]
and the fixed point class $\mathcal{U}_{R,\delta}$ need not be modified.\\

\textbf{Uniqueness.} This is the content of
Proposition~\ref{prop:unconditional-uniqueness} and can be seen as a sort of
weak--strong uniqueness statement: the decaying parts of both solutions
belong to the same Eulerian $C_tH^s$ class, while the constructed one is
additionally induced by a bi-Lipschitz measure-preserving map $X$.  Let
$(v,B,p)$ denote this solution and let
$(\widetilde v,\widetilde B,\widetilde p)$ be any other distributional
solution with the same initial data.  Set
\[
 Z^\pm:=v\pm B,
 \qquad
 \widetilde Z^\pm:=\widetilde v\pm\widetilde B,
 \qquad
 \delta Z^\pm:=\widetilde Z^\pm-Z^\pm,
 \qquad
 z^\pm:=\frac12\bigl(Z^\pm+\widetilde Z^\pm\bigr)\mp\Binf,
\]
and, along the reference map,
\[
 \mathsf Z^\pm:=Z^\pm\circ X=\mathcal A^\pm X,
 \qquad
 \delta\mathsf Z^\pm:=\delta Z^\pm\circ X,
 \qquad
 \delta p:=\widetilde p-p,
 \qquad
 \delta\mathsf p:=\delta p\circ X.
\]
Reading the difference equation in Lagrangian coordinates gives
\begin{equation}\label{eq:overview-conservative-difference}
 \mathcal A^\mp\delta\mathsf Z^\pm
 +\Div_{\gmetric}\bigl(
   (\mathsf Z^\pm+\delta\mathsf Z^\pm)
   \otimes\delta\mathsf Z^\mp\bigr)
 +\grad_{\gmetric}\delta\mathsf p=0.
\end{equation}
The usual difference energy does not close.  Indeed, for the full
deformation tensor we have
\[
 \bigl(E_i\mathsf Z^{\pm,j}\bigr)_{i,j}
 \in C\bigl(I;H^{s-1}_y\bigr)
 \hookrightarrow
 L^2\bigl(I;L^2_{y_1}H^{s-1}_{y'}\bigr)
 \hookrightarrow
 L^2\bigl(I;L^2_{y_1}L^\infty_{y'}\bigr),
 \qquad
 s-1>\frac{n-1}{2},
\]
but
\[
 L^2_{y_1}L^\infty_{y'}\not\hookrightarrow L^\infty_y.
\]
Thus the transverse $L^\infty$ control is available, but the longitudinal
$L^\infty$ control is missing.  Here $E_i=\Kmat_i^a\partial_a$,
$\Kmat=(dX)^{-1}$ and, after a fixed rotation,
$\Binf=v_{\rm A}e_1$.  Now writing
\[
 \mathcal E(t):=\frac12\sum_\pm
 \int_{\Real^n}|\delta\mathsf Z^\pm(t,y)|^2dy,
\]
the ordinary energy identity gives
\[
\begin{aligned}
 \partial_t\mathcal E(t)
 &=-\sum_\pm\int_{\Rn}
 E_i\mathsf Z^{\pm,j}
 \delta\mathsf Z^{\mp,i}
 \delta\mathsf Z^{\pm,j}dy\lesssim
 \mathcal E(t)\sum_\pm
 \left\|
 \bigl(E_i\mathsf Z^{\pm,j}\bigr)_{i,j}(t)
 \right\|_{L^\infty_y}.
\end{aligned}
\]
This is the same obstruction as in
\eqref{eq:standardHsintro}, which motivated the whole construction.

The replacement is an interaction functional adapted from Ifrim and
Tataru~\cite{IfrimTataru2025}, namely a transport version of the wave
null-form pairing.  First define the \textit{energy densities} and
\textit{line energies}
\[
 e_\pm:=\frac12|\delta\mathsf Z^\pm|^2,
 \qquad
 \rho_\pm(t,r):=\int_{\Real^{n-1}}e_\pm(t,r,y')dy'.
\]
For $I=[t_0,t_1]$, introduce the \textit{time-localised energy} and the
\textit{interaction flux} norm
\[
 \mathcal E_I:=\sup_{t\in I}\int_{\Rn}(e_++e_-)(t,y)dy,
 \qquad
 \Phi_I:=\left(
 \int_I\int_{\Real}\rho_+(t,r)\rho_-(t,r)dr dt
 \right)^{1/2}.
\]
Letting $t_0\in[0,T^*]$ be a time where the solutions agree, they satisfy
\begin{equation}\label{eq:fluxenergydominationintro}
 \mathcal E_I\lesssim c_I\Phi_I,
\end{equation}
where
\[
 c_I:=
 \sup_{t\in I}\bigl(
   \|\delta Z^+(t)\|_{H^s}+\|\delta Z^-(t)\|_{H^s}\bigr)+\left(\int_I
   \bigl(\|z^+(t)\|_{H^s}^2+\|z^-(t)\|_{H^s}^2\bigr)dt
   \right)^{1/2}.
\]
The usefulness of
\eqref{eq:fluxenergydominationintro} is that it brings back in the game quantities that are propagating along opposite Alfv\'en directions, and in a fashion similar to \eqref{eq:keycomputation} this will compensate for the missing longitudinal $L^\infty$ bound.

Indeed, integrating the local energy identities in $y'$ and \eqref{eq:overview-conservative-difference} give
\begin{equation}\label{eq:densitytransportintro}
 (\partial_t\mp v_{\rm A}\partial_r)\rho_\pm
 +\partial_rf_\pm=h_\pm,
\end{equation}
where $f_\pm$ contain the nonlinear transport and pressure fluxes and
$h_\pm$ are the deformation sources.  Thus the principal parts of
$\rho_\pm$ travel in opposite directions with speed $v_{\rm A}$.

To estimate $\Phi_I$ one reasons geometrically, and this is the key insight
we borrow from
\cite[Sections~6.3.2 and~6.6.3]{IfrimTataru2025}.  Define the
\textit{interaction functional}
\[
 \mathscr I(t):=\int_{r_+>r_-}
 \rho_+(t,r_+)\rho_-(t,r_-)dr_+dr_-.
\]
This counts pairs still awaiting an encounter: the $+$ energy lies to the
right and moves left, while the $-$ energy lies to the left and moves
right.  For the \emph{free transports} these pairs cross the encounter diagonal
$r_+=r_-$ with relative speed $2v_{\rm A}$, and hence
\[
 \mathscr I'(t)
 =-2v_{\rm A}\int_{\Real}\rho_+(t,r)\rho_-(t,r)dr.
\]
Thus $2v_{\rm A}\Phi_I^2$ is the total flux of the pair density through
the encounter diagonal.  Since $\rho_\pm\ge0$, one has
$\mathscr I(t)\ge0$, while agreement at $t_0$ gives
$\mathscr I(t_0)=0$.  After integrating over $I$ and including the \emph{error
terms} in \eqref{eq:densitytransportintro}, one obtains
\begin{equation}\label{eq:fluxbalanceintro}
    \mathscr I(t_1)+2v_{\rm A}\Phi_I^2
 =\text{nonlinear transport, pressure and source contributions}.
\end{equation}
For the free
transports, computing as in \eqref{eq:keycomputation} would produce only the product of the two total line energies.  In this sense the interaction functional is exactly the
transport version of the wave null-form pairing.

The nonlinear transport contribution is bounded by
$Cc_I\Phi_I^2$, while the pressure and source contributions are bounded by
$Cc_I^2\Phi_I^2$. Consequently, after discarding the non-negative endpoint contribution $\mathscr I(t_1)$ in \eqref{eq:fluxbalanceintro}, we reach 
\[
 2v_{\rm A}\Phi_I^2
 \le C(c_I+c_I^2)\Phi_I^2.
\]
Since $c_I\to0$ as $t_1\downarrow t_0$, on a sufficiently short interval
$C(c_I+c_I^2)<2v_{\rm A}$.  Hence $\Phi_I=0$, and
\eqref{eq:fluxenergydominationintro} gives $\mathcal E_I=0$.  A
continuation argument proves equality on the whole lifespan and uniqueness follows.

\subsection{What is left open} Three issues remain. 

\emph{Continuity of the data-to-solution map}: the present argument does not
prove continuity of the Eulerian data-to-solution map. Comparing solutions
arising from two different data would require controlling both the change in
the fixed-point map and composition with two different inverse flows at the
top $H^s$ regularity. We do not pursue this additional step here. Moreover, in
regime~\textup{(R1)} the available lifespan is not uniform on bounded subsets
of $H^s$, because the modulus $\omega_{v_0}$ is not uniformly controlled
there. Accordingly, no Hadamard well-posedness statement is asserted.

\emph{A variable initial field}: the Lagrangian reduction itself does not use
constancy.  For any divergence-free $B_0$, setting
$\mathcal B:=B_0\cdot\nabla^{y}$, one still has $B\circ X=dX B_0=\mathcal BX$
and $\partial_t^2X=\mathcal B^2X-(\nabla p)\circ X$.  What constancy buys is
that $\mathcal B$ is a constant-coefficient operator: $\Box$ is then a Fourier
multiplier, $\Box \mathrm{id}=0$, and $[\Lambda^{\sreg},\mathcal B]=0$ by
\eqref{eq:comm-zero}.  Each of these fails for nonconstant $B_0$.  Since
$\Box \mathrm{id}=-\nabla_{B_0}B_0$, the displacement equation acquires the
source term $\Box w=-(\nabla p)\circ X+\nabla_{B_0}B_0$, and the wave-adapted
norms acquire commutator errors; neither is estimated here.  A joint functional
calculus for the Alfv\'en operators does survive: if $B_0$ is regular enough (and still non-vanishing)
to generate a measure-preserving flow, then $D_t$ and
$D_{\parallel}=-\mathrm i\mathcal B$ are commuting self-adjoint operators, and
the spectral theorem for the associated $\Real^{2}$ unitary action supplies objects of the form $\Phi(D_+,D_-)$.  What is lost is that this action is no longer simultaneously
diagonalised with $\Lambda^{\sreg}$, so the weights
$\langle D_{\parallel}\rangle^{\preg}m^{\breg}\Lambda^{\sreg}$ of \eqref{eq:Y}
no longer combine into a single Fourier multiplier.  Determining the minimal
regularity of $B_0$ for which the commutator estimates close and thus the present construction can be carried out in this generalised setup is
left to future work.

\emph{Restarting the construction}: at a positive time, the evolved magnetic
field is generally nonconstant, so the theorem cannot simply be restarted with
the new Cauchy data. Consequently, no maximal-lifespan or continuation
criterion follows from the present result.\\

\textbf{Degeneration of the parameters and Besov regularity.} The number $\breg$ is an auxiliary proof parameter.
The trace and null-form constants diverge as $\breg\downarrow\frac12$, while the short-time gain
$T^{1-\breg}$ degenerates as $\breg\uparrow1$.  The constants also deteriorate as
$v_{\rm A}\downarrow0$: \eqref{eq:core-const} contains the factor
$v_{\rm A}^{-1/2}$, and, more robustly, the smallness parameter $\delta$ of
\eqref{eq:delta-choice} itself degenerates, since the constant in
\eqref{eq:embed-frac} at $(\sreg,\preg)=(s-1,1)$ is
$\big(\int\langle\xi\rangle^{-2(s-1)}\langle\Binf\cdot\xi\rangle^{-2}d\xi\big)^{1/2}
\simeq v_{\rm A}^{-\kappa}$ for $\kappa>0$, and
$\simeq(\log v_{\rm A}^{-1})^{1/2}$ at
$\kappa=0$. Through the third line of \eqref{eq:Tstar}, the lifespan furnished
by this construction tends to zero for nontrivial data as
$v_{\rm A}\downarrow0$: at $\Binf=0$ the system is
incompressible Euler, for which the range $s\le\frac n2+1$ is not covered by any
local existence theory of this kind.
We believe that an anisotropic Besov $\ell^1$-type adaptation may reach the
remaining modulation and isotropic regularity endpoints.

\section{Legenda of Symbols}\label{ssec:table}

The most common symbols are collected below. Fonts are used as consistently as possible: plain italic
letters denote physical fields, scalar variables and the flow; sans-serif letters denote
vector fields along $X$, label-space tensors, null-frequency variables and explicit wave propagators; calligraphic letters
denote operators and function spaces; fraktur letters denote regularity indices, auxiliary
parameters or kernels.
The letter $D$ carries the self-adjoint Fourier derivatives $D_t$, $D_{\parallel}$, $D_{\pm}$, boldface distinguishes the fixed
background field and the pullback metric, and blackboard boldface is used for the Leray
projection $\mathbb P$ and the Eulerian stress tensor $\mathbb T$ in Lemma~\ref{lem:pressure-recovery}.

\begingroup\small
\setlength{\LTleft}{0pt}\setlength{\LTright}{0pt}
\begin{table}[htbp]
  \centering
  \small
  \renewcommand{\arraystretch}{0.85}
  
  \begin{tabular}{@{}>{\raggedright\arraybackslash}p{0.205\textwidth}>{\raggedright\arraybackslash}p{0.565\textwidth}>{\raggedright\arraybackslash}p{0.155\textwidth}@{}}
    \hline
\emph{symbol} & \emph{meaning} & \emph{introduced} \\ \hline
$\Bzero\equiv\Binf$ & fixed initial field; Alfv\'en speed $v_{\rm A}:=|\Binf|$ & \S\ref{sec:introduction} \\
$v,\ B,\ b;\ \vLag,\ \BLag,\ \bLag$ & velocity, total magnetic field, perturbation $b=B-\Binf$, and the corresponding vector fields along $X$ & \S\ref{sec:introduction} \\
$s,\ \sigma,\ \kappa,\ \varepsilon$ & $\frac{n+1}2<s\le\frac n2+1$; $\sigma=s-1$; $\kappa=\frac n2+1-s$; $\varepsilon=1-\breg$ & \S\ref{sec:introduction} \\
$\sreg,\ \preg,\ \breg$ & isotropic, parallel and modulation regularity indices; $\breg\in(\frac12,1)$ in the main argument & \S\ref{sec:introduction} \\
$x,\ y;\ X,\ w$ & Eulerian coordinate, material label, flow and displacement $X-\mathrm{id}$ & \S\ref{sec:introduction} \\
$\mathcal C_Xf=f\circ X$ & precomposition with the flow & \S\ref{sec:wave-hodge} \\
$d=d_{y}$ & exterior differential and componentwise label differential of $X,w,F$ and $\Box X$ & \S\ref{sec:introduction} \\
$\nabla,\ \nabla^{y}$ & flat Eulerian and flat label connections & \S\ref{sec:introduction} \\
$\eframe_i,\ E_i$ & Cartesian vector field $\partial_{x^i}\circ X$ along $X$ and its genuine pullback $\Kmat^a_i\partial_a$ to label space & \S\ref{sec:wave-hodge}, \eqref{eq:component-derivative} \\
$\Hess$ & label Jacobian $dF$ of a vector field along $X$, $\Hess_a^i=\partial_aF^i$ & \S\ref{sec:wave-hodge}, \S\ref{ssec:pressure-hessian} \\
$\Ccoef_{\Ufield},\ N_{\Ufield}$ & null-form coefficient $\Kmat[\Ufield]^a_i\Kmat[\Ufield]^b_j$ and numerator $Q_0(\Ufield_a^j,\Ufield_b^i)$, contracted in $\mathcal Q[\Ufield]$ & Lemma~\ref{lem:pressure} \\
$\Ufield$ & spatially exact spacetime fixed-point field; at a fixed point, $\Ufield=dw$ on $[0,T]$ & \eqref{eq:fixedpoint-class}, \eqref{eq:potential-reconstruction} \\
$\gmetric[\Ufield],\ \Kmat[\Ufield],\ J,\ \mu$ & pullback metric, inverse deformation matrix, Jacobian determinant and metric measure & \S\ref{sec:wave-hodge}, \eqref{eq:datum} \\
$A,\ \Mmat$ & deformation matrix $A_a^i=\partial_aX^i$ and the deviation $\Mmat=\Kmat-\mathrm{Id}$ & \S\ref{sec:wave-hodge}, \eqref{eq:KMdef} \\
$\mathcal B,\ D_{\parallel}$ & $\Binf \cdot \nabla^{y}$, of symbol $\mathrm i\lambda$; $D_{\parallel}=-\mathrm i\mathcal B$ self-adjoint & \eqref{eq:Bmult}, \eqref{eq:self-adjoint-derivatives} \\
$\mathcal A^{\pm},\ \Box$ & $\partial_t\pm\mathcal B$, and $\mathcal A^{+}\mathcal A^{-}=\partial_t^2-\mathcal B^2$ & \S\ref{sec:introduction} \\
$D_{\pm},\ D_t$ & $-\mathrm i\mathcal A^{\pm}=D_t\pm D_{\parallel}$ and $-\mathrm i\partial_t$ & \eqref{eq:self-adjoint-derivatives} \\
$\lambda,\ \tau,\ \tau_\pm$ & symbols of $D_{\parallel}$ and $D_t$, with $\tau_\pm=\tau\pm\lambda$ & \eqref{eq:Bmult}, \eqref{eq:joint-res} \\
$\mathsf z_1,\ \mathsf z_2,\ \mathsf z$ & the two factor null frequencies and their sum in $(\tau_+,\tau_-)$ coordinates & \eqref{eq:null-coordinate-plancherel} \\
$\Lambda,\ \langle D_{\parallel}\rangle$ & $(1-\Delta_{y})^{1/2}$; the multiplier $\langle\lambda\rangle$ & \S\ref{sec:introduction} \\
$m,\ h,\ \Omega$ & the smaller and larger null weights, and $\Omega=h m^{\breg}$ & \eqref{eq:pmax} \\
$Y^{\sreg,\preg},\ Y^{\sreg,\preg}_{\breg}$ & fixed-time and modulation-weighted spacetime scales & \eqref{eq:Y} \\
$H^{\sreg,\preg}_{\breg}$ & wave-adapted spacetime Hilbert norm & \eqref{eq:Hpair} \\
$Q_0,\ \mathcal Q[\Ufield]$ & bilinear Alfv\'en null form and the nonlinear scalar pressure datum & \eqref{eq:Q0}, \eqref{eq:datum} \\
$F[\Ufield]$ & canonical $L^4$ pressure force along $X$, characterised by the metric div--curl system & \eqref{eq:FU} \\
$\mathcal P_{\gmetric},\ d\mathcal P_{\gmetric}$ & pressure-gradient operator and its label Jacobian, obtained from the flat Hodge resolvent & \eqref{eq:pressure-gradient}, \eqref{eq:pressure-Hessian} \\
$\varrho$ & $\log\det dX$ & \S\ref{sec:wave-hodge} \\
$\Phi(D_{+},D_{-})$ & joint Fourier multiplier with symbol $\Phi(\tau_+,\tau_-)$ & \eqref{eq:calculus} \\
$\nu$ & $d\tau_+ d\tau_-/(2v_{\rm A})$, the null-coordinate measure & \eqref{eq:null-coordinate-plancherel} \\
$\Cprop,\ \Sprop$ & free cosine and sine propagators for $\Box$ & \eqref{eq:propagator} \\
$c_\perp,\ c_\parallel$ & transverse Sobolev integral and one-dimensional null integral & \eqref{eq:embed-frac}, \eqref{eq:core-const} \\
$C_{\rm lin}$ & the constant of the linear estimate & \eqref{eq:linear} \\
$\mathcal U_{R,\delta},\ \mathcal Y,\ R$ & fixed-point class, ambient Hilbert space and radius & \eqref{eq:fixedpoint-class} \\
$\rpar,\ Y^{s-1,\rpar}$ & additional longitudinal regularity exponent of the datum and its fixed-time scale & \eqref{eq:Y}, Thm.~\ref{thm:main} \\
$\omega_{v_0}$ & the longitudinal high-frequency modulus of the datum & \eqref{eq:datum-modulus} \\
$E,\ H_{\rm c}$ & renormalised energy and cross helicity & \eqref{eq:energy} \\ \hline
\end{tabular}
\end{table}
\endgroup

\section{Ideal MHD as a Wave--Hodge System}
\label{sec:wave-hodge}

This section introduces the first-order weak Lagrangian geometry used below
and derives the wave--Hodge system from smooth ideal MHD.  The basic Hodge
solvability statement is proved here; the first-order derivative estimates needed for
the fixed-point argument are deferred to Section~\ref{sec:elliptic}.\\

\textbf{Standing geometric definitions.} Let $X=\mathrm{id}+w:\Real_y^n\to\Real_x^n$ be a possibly time-dependent,
orientation-preserving global bi-Lipschitz map, and suppress the time
dependence at each fixed time.  Label indices are denoted by $a,b$ and
physical Cartesian indices along $X$ by $i,j$; repeated indices are summed.  We write
\[
 \eframe_i(y):=\left.\partial_{x^i}\right|_{X(y)}
\]
for the Cartesian vector fields along $X$. The Cartesian indices $i,j$ are raised and lowered with the Euclidean fiber metric,
which is the identity in the frame $\{\eframe_i\}$, so their position is (mostly) cosmetic; raising or lowering a label index $a,b$ applies $\gmetric$ and does not
leave the components unchanged. Here and below $d=d_y$ is the exterior differential in the labels and acts
componentwise on maps and on vector fields along $X$ written in the frame
$\{\eframe_i\}$; in particular,
\[
 dF=\Hess_a^i dy^a\otimes\eframe_i
 \qquad\text{for }F=F^i\eframe_i, \qquad \Hess_a^i:=\partial_aF^i,
\]
and thus
\[
 dX=A_a^i dy^a\otimes\eframe_i, \qquad A_a^i:=\partial_aX^i
\]
The map $X$ induces the metric and volume measure
\begin{align*}
 \gmetric&:=X^*\langle\cdot,\cdot\rangle=(dX)^\top dX,
 &\gmetric_{ab}&=A_a^iA_b^i,\\
 J&:=\det dX>0,
 &d\mu&:=J dy,
 &\varrho&:=\log J.
\end{align*}
Put
\[
 \Kmat^a_i:=(A^{-1})^a_i,
 \qquad
 A_a^j\Kmat^a_i=\delta^j_i,
 \qquad
 \Kmat^b_iA_a^i=\delta^b_a.
\]
Let $\mathcal C_Xf:=f\circ X$ be the precomposition operator, acting componentwise in the Cartesian frame on Eulerian vector fields and producing vector fields along $X$. The geometry in the $\gmetric$ metric is characterised by the pullback of the Cartesian frame,
\begin{equation}\label{eq:component-derivative}
 E_i:=dX^{-1}\eframe_i
 =\mathcal C_X\partial_{x^i}\mathcal C_X^{-1}
 =\Kmat^a_i\partial_a
\end{equation}
For a vector field $F=F^i\eframe_i$ along $X$, sufficiently regular (we will address this precisely later), set
\begin{align}\label{eq:divcurlg}
 \Div_{\gmetric}F
 &:=E_iF^i
   =\Kmat^a_i\partial_aF^i,\\
 (\curl_{\gmetric}F)^{ij}
 &:=E_iF^j-E_jF^i
 =\Kmat^a_i\partial_aF^j-\Kmat^a_j\partial_aF^i.
\end{align}
For a scalar function $q$ along $X$ also write
\begin{equation}\label{eq:metricgradient}
    (\mathrm{grad}_\gmetric q)^i=E_i q=\Kmat^a_i\partial_aq.
\end{equation}
For a tensor $T=T^{ji}\eframe_j\otimes\eframe_i$ along $X$, we use the
same second-index convention
\begin{equation}\label{eq:metric-tensor-divergence}
 (\Div_{\gmetric}T)^j:=E_iT^{ji}.
\end{equation}
In particular,
\begin{equation}\label{eq:metric-transport-divergence}
 \Div_{\gmetric}(G\otimes F)
 =F^iE_iG+G\Div_{\gmetric}F.
\end{equation}

\begin{remark}[Geometric interpretation]
The almost-everywhere differential of $X$ is intrinsically a
pullback-bundle-valued $1$-form,
\[
 dX\in L^\infty_{\mathrm{loc}}\big(
 T^*\Real_y^n\otimes X^*T\Real_x^n\big),
 \qquad
 dX_y:T_y\Real_y^n\longrightarrow T_{X(y)}\Real_x^n.
\]
When $X$ is smooth, the pullback bundle $X^*T\Real_x^n$ carries the
Euclidean fiber metric and the pullback connection
$\nabla^{X^*}:=X^*\nabla$. The Cartesian frame $\{\eframe_i\}$ along $X$, is parallel, so
\[
 \nabla^{X^*}_V(F^i\eframe_i)=V(F^i)\eframe_i.
\]
Moreover, $dX$ identifies the genuinely pulled-back $\gmetric$-orthonormal frame $\{E_i\}$ of
label tangent space with the orthonormal Cartesian frame
$\{\eframe_i\}$ along $X$.  Consequently for smooth $X$,
\[
 \Div_{\gmetric}F
 =\tr_{\gmetric} \left(
   \nabla^{\mathrm{LC},\gmetric}(dX^{-1}F)\right)
 =\sum_i\left\langle
    \nabla^{X^*}_{E_i}F,\eframe_i
   \right\rangle,
\]
where the divergence in the middle is the Riemannian divergence of the label
vector field $dX^{-1}F$.  Similarly,
$(\curl_{\gmetric}F)^{ij}$ are the components of
$d((dX^{-1}F)^\flat)$ in the orthonormal frame $\{E_i\}$, with the two
Cartesian indices displayed up as fixed above. We remark that this metric curl is distinct from the rowwise label curl
$\Curl_y$ used for $\Rn$-valued $1$-forms later on, see \eqref{eq:Curl}.  Smoothness of $X$ is used in the calculation above only to justify the Levi--Civita terms involving
$d\gmetric$; the definitions
\eqref{eq:component-derivative}--\eqref{eq:divcurlg} ask no more than
$\Kmat\in L^\infty$, so no weak Levi--Civita connection or Christoffel symbols
are introduced anywhere.  Piola's identity is likewise not needed for these definitions, but will
provide their conservative form and the weak integration-by-parts
formula in Lemma~\ref{lem:weak-pullback}.
\end{remark}

\textbf{The metric Hodge system.} Given a metric $\gmetric$ as above, we now study the Hodge system for a vector field $F$ along $X$,
\begin{equation}\label{eq:invdiv}
    \begin{cases}
        \curl_{\gmetric}F=0,\\
     \Div_{\gmetric}F=q.
\end{cases}
\end{equation}
The following shows the existence and uniqueness of its solutions under sufficient integrability assumptions on the data.
\begin{lemma}[Canonical solvability of the metric Hodge system]
\label{lem:hodge-solvability}
Let $X:\Real_y^n\to\Real_x^n$ be an orientation-preserving global
bi-Lipschitz map with $d\mu=J dy$.  Fix $1<p<n$ and put
\[
 p^*:=\frac{np}{n-p}.
\]
For every $q\in L^p(\mu)$ there is a unique vector field $F=F^i\eframe_i$ along $X$, whose Cartesian components belong to
$L^{p^*}(\mu;\Rn)$, satisfying \eqref{eq:invdiv} in the conjugated
distributional sense: equivalently, $u:=F\circ X^{-1}$ satisfies
\[
 \partial_{x^i}u^j-\partial_{x^j}u^i=0,
 \qquad
 \partial_{x^i}u^i=q\circ X^{-1}
 \quad\text{in }\mathcal D'(\Real_x^n).
\]
The solution is
\begin{equation}\label{eq:pressure-gradient}
 \mathcal P_{\gmetric}q
 :=\mathcal C_X\nabla\Delta_x^{-1}\mathcal C_X^{-1}q, \qquad \mathcal{C}_Xf=f\circ X,
\end{equation}
and it satisfies
\begin{equation}\label{eq:canonical-hls}
 \|\mathcal P_{\gmetric}q\|_{L^{p^*}(\mu)}
 \le C_{n,p}\|q\|_{L^p(\mu)}.
\end{equation}
In particular, for
\begin{equation}\label{eq:rHLS}
    r_{\mathrm{HLS}}:=\frac{4n}{n+4},
\end{equation}
one has $1<r_{\mathrm{HLS}}<n$ for every $n\ge2$ and $r_{\mathrm{HLS}}^*=4$; hence
every $q\in L^{r_{\mathrm{HLS}}}(\mu)$ has a unique solution
$\mathcal P_{\gmetric}q\in L^4(\mu)$ which we refer to as the canonical Hodge vector field along $X$.
\end{lemma}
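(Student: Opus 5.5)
The plan is to transport the whole problem to Eulerian space by the conjugation $\mathcal C_X$, solve it there with the flat Hodge theory, and pull back. The one fact to check is that the measure $\mu$ in label space corresponds to Lebesgue measure in $x$. Indeed, since $X$ is bi-Lipschitz and orientation preserving, the area formula gives
\[
\int_{\Real^n_y}|f\circ X|^r J\,dy=\int_{\Real^n_x}|f|^r\,dx
\]
for every $1\le r<\infty$. Hence $\mathcal C_X:L^r(dx)\to L^r(\mu)$ is an isometric isomorphism. So $\tilde q:=q\circ X^{-1}\in L^p(\Real^n_x)$ with $\|\tilde q\|_{L^p}=\|q\|_{L^p(\mu)}$. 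The ``conjugated distributional sense'' in the statement is exactly the flat div--curl system for $u=F\circ X^{-1}$. For smooth $X$ the chain rule $E_i=\mathcal C_X\partial_{x^i}\mathcal C_X^{-1}$ of \eqref{eq:component-derivative} shows that this agrees with \eqref{eq:invdiv}. In general we simply take it as the definition, so nothing beyond $\Kmat\in L^\infty$ is needed.

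\textbf{Existence.} Set $u:=\nabla\Delta_x^{-1}\tilde q=-\nabla(c_n|x|^{2-n}*\tilde q)$, understood for $n=2$ through the gradient kernel $c\,x/|x|^2$ directly. In every dimension $u=K*\tilde q$ with $|K(x)|\lesssim|x|^{1-n}$. The Hardy--Littlewood--Sobolev inequality with $1<p<n$ then gives $\|u\|_{L^{p^*}}\le C_{n,p}\|\tilde q\|_{L^p}$. Next, $u$ is a gradient in $\mathcal D'$, so $\partial_iu^j=\partial_ju^i$. Its divergence is $\Delta\Delta^{-1}\tilde q=\tilde q$. To make this rigorous, check it first for $\tilde q\in C_c^\infty$ and then pass to the limit by density, using HLS continuity into $L^{p^*}\subset\mathcal D'$. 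Defining $F:=u\circ X$, which is \eqref{eq:pressure-gradient}, the isometry yields \eqref{eq:canonical-hls}.

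\textbf{Uniqueness.} Suppose $F_1,F_2$ are two solutions and let $u=(F_1-F_2)\circ X^{-1}\in L^{p^*}(\Real^n)$. Then $u$ is curl-free and divergence-free, so each component satisfies $\Delta u^j=\partial_i(\partial_iu^j-\partial_ju^i)+\partial_j\Div u=0$. Thus $u$ is harmonic; equivalently, $|\xi|^2\hat u=0$, so $\hat u$ is supported at the origin and $u$ is a polynomial. A polynomial in $L^{p^*}$ with $p^*<\infty$ must vanish. Hence $u=0$, and therefore $F_1=F_2$ $\mu$-a.e.

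\textbf{Final claim.} With $p=r_{\mathrm{HLS}}=4n/(n+4)$ one has $p>1$ if and only if $3n>4$, and $p<n$ if and only if $4<n+4$; both hold for $n\ge2$. Moreover $p^*=\frac{4n^2/(n+4)}{n-4n/(n+4)}=\frac{4n^2}{n^2}=4$. There is no real obstacle in this argument. The only point requiring care is the rigorous justification that $\nabla\Delta^{-1}$ solves the system on $L^p$ data and is well defined when $n=2$. Both are handled by defining the operator through the $|x|^{1-n}$ kernel and approximating by smooth data.
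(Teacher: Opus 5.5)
Your proposal is correct and follows the paper's proof essentially step for step: you conjugate by the isometry $\mathcal C_X:L^r(dx)\to L^r(\mu)$, solve with $\nabla\Delta_x^{-1}$ and Hardy--Littlewood--Sobolev, and rule out a second solution by a Liouville argument for an $L^{p^*}$ div- and curl-free (hence harmonic) field. The differences are minor. For uniqueness you use that $\hat u$ is supported at the origin, so $u$ is a polynomial, whereas the paper uses Weyl's lemma and the mean-value bound $|\tilde u(x)|\le c_nR^{-n/p^*}\|\tilde u\|_{L^{p^*}}$; and your density argument from $C_c^\infty$ data makes explicit the distributional identities, including the case $n=2$, that the paper simply asserts.
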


\begin{proof}
The change of variables $x=X(y)$ makes
$\mathcal C_X:L^r(dx)\to L^r(\mu)$ an isometry.  Put
$f:=\mathcal C_X^{-1}q$ and $u:=\nabla\Delta_x^{-1}f$.  The
Hardy--Littlewood--Sobolev inequality \cite{stein} gives
\begin{equation}\label{eq:flat-hls}
 \|u\|_{L^{p^*}(dx)}\le C_{n,p}\|f\|_{L^p(dx)},
\end{equation}
and the flat identities
\[
 \partial_{x^i}u^j-\partial_{x^j}u^i=0,
 \qquad
 \partial_{x^i}u^i=f
\]
hold in distributions.  Thus $F:=\mathcal C_Xu$ solves
\eqref{eq:invdiv} in the stated sense, and the isometry gives
\eqref{eq:canonical-hls}.

For uniqueness, let $H$ be the difference of two $L^{p^*}(\mu)$ solutions
and put $\tilde u:=H\circ X^{-1}$.  Then
$\tilde u\in L^{p^*}(\Rn)$, $\Div_x\tilde u=0$, and $\curl_x\tilde u=0$ in distributions, hence
\[
 \Delta_x\tilde u^j
 =\partial_{x^i}
   (\partial_{x^i}\tilde u^j-\partial_{x^j}\tilde u^i)
  +\partial_{x^j}\partial_{x^i}\tilde u^i
 =0.
\]
By Weyl's lemma $\tilde u$ is smooth, and the mean-value property together
with $p^*<\infty$ gives
$|\tilde u(x)|\le|B_R|^{-1}\|\tilde u\|_{L^1(B_R(x))}
\le c_nR^{-n/p^*}\|\tilde u\|_{L^{p^*}}\to0$ as $R\to\infty$; hence
$\tilde u=0$. Integrability kills the Hodge kernel.
Finally,
$p=r_{\mathrm{HLS}}$ gives $p^*=4$.
\end{proof}

\textbf{Definition of the off-constraint wave--Hodge auxiliary system.} We set some more notation and define the auxiliary wave--Hodge reformulation of \eqref{eq:mhd} without imposing $\det[d X]=1$.  This is the generalised form used later in the fixed-point argument. For scalar functions, define
\begin{equation}\label{eq:Q0}
 Q_0(f,g)
 :=\tfrac12\big(\mathcal A^+f \mathcal A^-g
                 +\mathcal A^-f \mathcal A^+g\big)
 =\partial_tf \partial_tg-\mathcal Bf \mathcal Bg.
\end{equation}
For an $\Rn$-valued $1$-form $\Ufield=(\Ufield_a^j)$ such that
$\mathrm{Id}+\Ufield$ is invertible, put
\begin{equation}\label{eq:datum}
 \Kmat[\Ufield]:=(\mathrm{Id}+\Ufield)^{-1},
 \qquad
 \mathcal Q[\Ufield]
 :=-\Kmat[\Ufield]^a_i\Kmat[\Ufield]^b_j
 Q_0(\Ufield_a^j,\Ufield_b^i).
\end{equation}
If $\Ufield$ is exact and a primitive $w$ with $dw=\Ufield$ exists such that $X=\mathrm{id}+w$ is an
orientation-preserving global bi-Lipschitz map, write
\begin{equation*}
 \gmetric[\Ufield]:= (\mathrm{Id}+\Ufield)^\top(\mathrm{Id}+\Ufield), \qquad
 J[\Ufield]:=\det(\mathrm{Id}+\Ufield), \qquad d\mu[\Ufield]:=J[\Ufield] dy.
\end{equation*}
Translations in the choice of $w$ do not affect these quantities.  Whenever
$\mathcal Q[\Ufield]\in L^{r_{\mathrm{HLS}}}(\mu[\Ufield])$, let
\begin{equation}\label{eq:FU}
    F[\Ufield]
 :=\mathcal P_{\gmetric[\Ufield]}\mathcal Q[\Ufield]
\end{equation}
denote the unique $L^4(\mu[\Ufield])$ solution of the Hodge system
\eqref{eq:invdiv} with $q=\mathcal Q[\Ufield]$, where the operators in that
system, namely \eqref{eq:divcurlg}, are formed from
$E_i[\Ufield]=\Kmat[\Ufield]^a_i\partial_a$.  Lemma~\ref{lem:hodge-solvability}
gives its existence, uniqueness and Hardy--Littlewood--Sobolev bound; its
first-order derivative estimates are proved in Subsection~\ref{ssec:pressure-hessian}.

If $\Ufield=dw$, the off-constraint auxiliary equation considered below, without imposing $J[\Ufield]=1$, is
the wave--Hodge system
\begin{subequations}\label{eq:auxiliary-system}
\begin{align}
&
\begin{cases}
    \Box w=-F[dw],\\
    (w,\partial_tw)|_{t=0}=(0,v_0),
\end{cases}\\
 &
 \begin{cases}
  \curl_{\gmetric[dw]}F[dw]&=0,\\
  \Div_{\gmetric[dw]}F[dw]&=\mathcal Q[dw]
 \end{cases}
 &\label{eq:force-hodge}
\end{align}
\end{subequations}
The last two equations, together with the normalisation
$F[dw]\in L^4(\mu[dw])$, are precisely the defining characterization of
$F[dw]$ furnished by Lemma~\ref{lem:hodge-solvability}; without the
integrability killing the harmonic components, they determine $F[dw]$ only up to $(\nabla H)\circ X$ with $H$
harmonic.\\

\textbf{From Ideal MHD to the wave--Hodge system.} In the following two lemmas, we first record the properties of the weak geometric calculus induced by a bi-Lipschitz orientation-preserving map $X$. We then show that a smooth solution $(v,B)$ of \eqref{eq:mhd} solves the auxiliary wave--Hodge system \eqref{eq:auxiliary-system} with $w=X-\textrm{id}$ and $X$ the flow map of the velocity $v$. Note that the information $\det d X=1$ seems to be missing from \eqref{eq:auxiliary-system}, but it will be recovered dynamically from divergence-free initial data; see \S \ref{ssec:constraint}. Subsection~\ref{ssec:eulerian} later proves that an auxiliary solution for
which the volume constraint has been recovered determines a solution of
ideal MHD, namely the opposite direction.

\begin{lemma}[weak pullback calculus]\label{lem:weak-pullback}
Let $X$ be an orientation-preserving global bi-Lipschitz map as above.
\begin{enumerate}
\item[(i)] $E_i$ is skew-adjoint on $L^2(\mu)$. In particular
  \[
   \int_{\Real_y^n}(E_if)\phi d\mu=-\int_{\Real_y^n}f(E_i\phi)d\mu
  \]
  whenever $f,\phi,E_if,E_i\phi\in L^2(\mu)$.
\item[(ii)] Let $F=F^i\eframe_i$ be a vector field along $X$ whose Cartesian components belong to $L^1_{\rm loc}(\Real^n_y;\Rn)$. If
  $q,c^{ij}\in L^1_{\rm loc}(\mu)$, then
  \[
   \Div_{\gmetric}F=q,
   \qquad
   (\curl_{\gmetric}F)^{ij}=c^{ij}
  \]
  in the conjugated distributional sense if and only if
  \[
   \partial_a(J\Kmat_i^aF^i)=Jq,
   \qquad
   \partial_a\!\left(J\Kmat_i^aF^j-J\Kmat_j^aF^i\right)=Jc^{ij}
   \quad\text{in }\mathcal D'(\Real^n_y).
  \]
  Equivalently, for every $\varphi\in C_c^\infty(\Real^n_y)$,
  \begin{equation}\label{eq:conservativeform}
   \int q\varphi d\mu=-\int F^i\Kmat_i^a\partial_a\varphi d\mu,\qquad
   \int c^{ij}\varphi d\mu=-\int\bigl(F^j\Kmat_i^a-F^i\Kmat_j^a\bigr)
          \partial_a\varphi d\mu.
  \end{equation}
  With the convention in \eqref{eq:metric-tensor-divergence}, the same
  calculation gives, whenever
  $H:=\Div_{\gmetric}T\in L^1_{\rm loc}(\mu)$,
  \begin{equation}\label{eq:conservative-tensor-divergence}
   JH^j
   =\partial_a(J\Kmat_i^aT^{ji})
   \quad\text{in }\mathcal D'(\Real_y^n)
  \end{equation}
  for every locally integrable tensor $T$.
\item[(iii)] If $\mathcal C_X^{-1}F\in W^{1,p}_{\rm loc}$ for some
  $1\le p\le\infty$, then 
  \begin{equation}\label{eq:biLipchainrule}
      \partial_aF^i=A_a^jE_jF^i,
  \end{equation}
  and the conjugated divergence and curl in
  (ii) agree almost everywhere with the pointwise formulas $\Kmat^a_i\partial_aF^i$ and
  $\Kmat^a_i\partial_aF^j-\Kmat^a_j\partial_aF^i$.
\item[(iv)] The conjugated derivatives commute on their common
distributional domain:
  \[
   E_iE_j=E_jE_i.
  \]
\end{enumerate}
\end{lemma}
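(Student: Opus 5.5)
The whole lemma will be transferred to the Eulerian side through the precomposition operator $\mathcal C_X$. The basic facts are three. First, $\mathcal C_X:L^r(dx)\to L^r(\mu)$ is an isometry for every $r$. Second, for a bi-Lipschitz orientation-preserving $X$ one has $X\in W^{1,\infty}_{\rm loc}$ and $X^{-1}\in W^{1,\infty}_{\rm loc}$, so $A$ and $\Kmat$ are bounded a.e. and $J>0$ a.e. Third, the conjugated derivative is defined by $E_i f:=\mathcal C_X\partial_{x^i}\mathcal C_X^{-1}f$, with $\partial_{x^i}$ taken distributionally. The single identity carrying all the content is the weak change-of-variables (Piola) formula: for $\varphi\in C_c^\infty(\Real^n_y)$ and $\psi:=\varphi\circ X^{-1}$, which is Lipschitz with compact support,
\[
 (\partial_{x^i}\psi)\circ X=\Kmat^a_i\partial_a\varphi\quad\text{a.e.}
\]
This is the chain rule for a Lipschitz function composed with a bi-Lipschitz map. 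Then, for any $g\in L^1_{\rm loc}(dx)$,
\[
 \int g\,\partial_{x^i}\psi\,dx=\int (g\circ X)\Kmat^a_i\partial_a\varphi\,J\,dy.
\]
Since Lipschitz compactly supported functions can be used as test functions against $L^1_{\rm loc}$ distributions whose derivatives are $L^1_{\rm loc}$, after mollification this lets us translate distributional identities in $x$ into identities in $y$ tested against $\varphi$.

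\emph{Part (ii).} I would first establish (ii), since (i) and (iii) follow from it. Applying the formula above with $g=u^i=F^i\circ X^{-1}$ turns $\partial_{x^i}u^i=q\circ X^{-1}$ in $\mathcal D'(\Real_x^n)$ into the first identity in \eqref{eq:conservativeform}. The map $\varphi\mapsto\varphi\circ X^{-1}$ is a bijection between Lipschitz compactly supported test functions on the two sides, and the Eulerian identity for Lipschitz tests follows from the smooth case by mollifying $\psi$, because all fields are $L^1_{\rm loc}$. This yields the equivalence. The statement $\partial_a(J\Kmat_i^aF^i)=Jq$ is just the same identity read distributionally. The curl identity and the tensor version \eqref{eq:conservative-tensor-divergence} are the same computation done componentwise.

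\emph{Parts (i) and (iii).} For (i), the hypotheses say that $\mathcal C_X^{-1}f$, $\mathcal C_X^{-1}\phi$ and their $x^i$-derivatives lie in $L^2(dx)$, so both functions belong to $H^1$ in direction $x^i$. The flat skew-adjointness $\int(\partial_i g)\psi\,dx=-\int g\,\partial_i\psi\,dx$ holds for such $g,\psi$ by mollification, and the isometry $\mathcal C_X$ transports it to $L^2(\mu)$. For (iii), if $G:=\mathcal C_X^{-1}F\in W^{1,p}_{\rm loc}$, the chain rule for a Sobolev function composed with a bi-Lipschitz map gives $\partial_a(G\circ X)=(\partial_jG)\circ X\,A^j_a$ a.e. This is the step I expect to need the most care: it requires the Lusin (N) property and the fact that bi-Lipschitz maps preserve null sets, so that $(\partial_j G)\circ X$ is well defined a.e. I would prove it by approximating $G$ by smooth $G_k\to G$ in $W^{1,p}_{\rm loc}$, using $\mathcal C_X$-isometry with bounded $J^{-1}$ to pass to the limit. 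This gives \eqref{eq:biLipchainrule}. Multiplying by $\Kmat$ and using $\Kmat^b_iA^i_a=\delta^b_a$ recovers the pointwise formulas for $\Div_{\gmetric}$ and $\curl_{\gmetric}$, so they agree with the conjugated ones.

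\emph{Part (iv).} This is immediate from the conjugation: $E_iE_j=\mathcal C_X\partial_{x^i}\partial_{x^j}\mathcal C_X^{-1}$ on the common domain, and flat distributional derivatives commute.
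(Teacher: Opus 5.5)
Your proposal is correct and follows essentially the same route as the paper: you use conjugation by the isometry $\mathcal C_X$ for (i) and (iv), the chain rule for $\varphi\circ X^{-1}$ with Lipschitz test functions plus change of variables for (ii), and the bi-Lipschitz Sobolev chain rule multiplied by $\Kmat$ for (iii). One small point is that for $p=\infty$ in (iii) smooth functions are not dense in $W^{1,\infty}_{\rm loc}$, so you should run your approximation in $W^{1,1}_{\rm loc}$, which contains every $W^{1,p}_{\rm loc}$ and where mollification does converge; your passage to the limit then goes through unchanged.
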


\begin{proof} We prove each item separately.

\emph{(i)} The change of variables $x=X(y)$ makes
$\mathcal C_X:L^2(dx)\to L^2(\mu)$ unitary, and
$E_i=\mathcal C_X\partial_{x^i}\mathcal C_X^{-1}$ by
\eqref{eq:component-derivative}.  The hypothesis $f,E_if\in L^2(\mu)$ says
exactly that $\mathcal C_X^{-1}f$ lies in the domain of $\partial_{x^i}$, and
likewise for $\phi$.  Since $\partial_{x^i}$ is skew-adjoint on $L^2(dx)$ and
unitary conjugation preserves skew-adjointness together with its domain, $E_i$
is skew-adjoint on $L^2(\mu)$, and the displayed identity is that
skew-adjointness.

\emph{(ii)} Put $u:=F\circ X^{-1}$. Because $u$ and the right-hand sides are
locally integrable, these first-order divergence and curl identities extend
from smooth to compactly supported Lipschitz tests by mollification and
dominated convergence. We may therefore test after composition with $X^{-1}$.
The weak chain rule and change of
variables show that, after precomposition with $X$, the Euclidean divergence equation
$\partial_{x^i}u^i=q\circ X^{-1}$ becomes
\[
 \int q\varphi d\mu
 =-\int F^i(E_i\varphi)d\mu
 =-\int J\Kmat_i^aF^i\partial_a\varphi dy
\]
for every $\varphi\in C_c^\infty(\Real^n_y)$. This is equivalent to the first
conservative identity. Applying the same argument to
$\partial_{x^i}u^j-\partial_{x^j}u^i=c^{ij}\circ X^{-1}$ gives the second weak
pairing and conservative identity, completing the proof of \eqref{eq:conservativeform}. In this form no derivative falls on
$\Kmat$, and we never multiply a distribution by $J^{-1}$.
Applying the divergence calculation to each row $T^{j \cdot}$ proves
\eqref{eq:conservative-tensor-divergence}.

\emph{(iii)} Put $u:=\mathcal C_X^{-1}F$. The Sobolev chain rule gives
\[
 \partial_aF^i
 =A_a^j(\partial_{x^j}u^i)\circ X
 =A_a^jE_jF^i.
\]
Multiplying by $\Kmat$ gives
\[
 \Kmat_i^a\partial_aF^j=E_iF^j.
\]
Taking the trace and the antisymmetric part gives the pointwise formulas
\[
 \Kmat_i^a\partial_aF^i,
 \qquad
 \Kmat_i^a\partial_aF^j-\Kmat_j^a\partial_aF^i.
\]
Since the distributional derivatives of $u$ have these almost-everywhere
representatives after precomposition with $X$, they agree with the conjugated
distributional divergence and curl.

\emph{(iv)} This follows directly from
$E_i=\mathcal C_X\partial_{x^i}\mathcal C_X^{-1}$ and the commutation of the
Cartesian derivatives.
\end{proof}

We now turn to the reduction of smooth ideal MHD to the wave--Hodge system.

\begin{lemma}[Wave--Hodge reduction]
\label{lem:wave-hodge-reduction}
Let $X=X(t,y)$ be smooth in spacetime, let $w:=X-\mathrm{id}$ and let
$\Kmat=(dX)^{-1}$.  Then the following statements hold.
 
\emph{(i) Algebraic pressure identity.}
Without using an evolution equation or the constraint $J=1$,
\begin{equation}\label{eq:datum-geometric}
 \mathcal Q[dw]
 =-\tr \left(
   \Kmat\mathcal A^+(dw)\Kmat\mathcal A^-(dw)\right)
 =-\Kmat^a_i\Kmat^b_j
   Q_0(\partial_aw^j,\partial_bw^i).
\end{equation}
 
\emph{(ii) Smooth MHD reduction.}
Assume in addition that $X$ is the Lagrangian flow of a smooth solution of
\eqref{eq:mhd}, with $X(0)=\mathrm{id}$ and initial magnetic field $\Binf$.
If
$\mathcal Q[dw]\in L^{r_{\mathrm{HLS}}}(\mu)$ and the pressure is normalised so
that $(\nabla p)\circ X\in L^4(\mu)$ then
\[
 (\nabla p)\circ X=F[dw]=\mathcal{P}_{\gmetric[dw]}\mathcal Q[dw],
\]
and $(w,F[dw])$ satisfies \eqref{eq:auxiliary-system}. In particular, this holds when
$v,b\in C([0,T];H^\infty)$ decay together with all their derivatives and $p$ is
the Riesz pressure \eqref{eq:return-pressure}.
\end{lemma}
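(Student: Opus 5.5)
For (i) the plan is a pure index manipulation. Since $dw=dX-\mathrm{Id}$ and $\mathrm{Id}$ is constant in $(t,y)$, we have $\mathcal A^\pm(dw)=\mathcal A^\pm(dX)$ componentwise, i.e.\ $\mathcal A^\pm(\partial_aw^j)=\partial_a\mathcal A^\pm w^j$. Expanding the trace in components,
\[
 \tr\bigl(\Kmat\mathcal A^+(dw)\Kmat\mathcal A^-(dw)\bigr)
 =\Kmat^a_i\,\mathcal A^+\partial_aw^j\,\Kmat^b_j\,\mathcal A^-\partial_bw^i .
\]
Relabelling $(a,i)\leftrightarrow(b,j)$ gives the same expression with $\pm$ swapped, so the trace equals its own symmetrisation. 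By \eqref{eq:Q0} this symmetrisation is $\Kmat^a_i\Kmat^b_jQ_0(\partial_aw^j,\partial_bw^i)$, which proves \eqref{eq:datum-geometric}. No equation and no constraint on $J$ is used.

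For (ii) the plan is to follow the derivation sketched in \S\ref{ssec:overview}. First, $\partial_tX=v\circ X$ gives $\partial_t^2X=(\partial_tv+\nabla_vv)\circ X$. Next, $B\circ X=\mathcal BX$: both sides solve the same linear ODE along trajectories, because the induction equation says $B$ is Lie-transported, i.e.\ $\partial_t(B\circ X)=(\nabla_Bv)\circ X=dX\cdot\partial_t(dX^{-1}B\circ X)+\dots$. More cleanly, $\partial_t(\mathcal BX)=\mathcal B(v\circ X)=(\nabla_{\mathcal BX}v)\circ X$, and $\partial_t(B\circ X)=(\partial_tB+\nabla_vB)\circ X=(\nabla_Bv)\circ X$. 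Hence the difference $D=B\circ X-\mathcal BX$ satisfies $\partial_tD=(\nabla v)\circ X\, D$ with $D(0)=\Binf-\Binf=0$, so $D\equiv0$. Then $(\nabla_BB)\circ X=\mathcal B(B\circ X)=\mathcal B^2X$. Using $\mathcal B^2\mathrm{id}=0=\partial_t^2\mathrm{id}$, the momentum equation becomes $\Box w=-(\nabla p)\circ X$, with data $w(0)=0$ and $\partial_tw(0)=v_0$.

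It remains to identify $G:=(\nabla p)\circ X$ with $F[dw]$. Its Eulerian form $\nabla p$ is curl-free, so $\curl_{\gmetric}G=0$ by Lemma~\ref{lem:weak-pullback}(iii). For the divergence, take the Eulerian divergence of the momentum equation using $\Div v=\Div B=0$: $-\Delta p=\partial_iv^j\partial_jv^i-\partial_iB^j\partial_jB^i$. By the chain rule, $(\partial_iv^j)\circ X=E_i\vLag^j=\Kmat^a_i\partial_a\mathcal A^\ldots$. Concretely, with $\vLag=\partial_tX$ and $\BLag=\mathcal BX$ we get $\partial_iv^j\circ X=\Kmat^a_i\partial_t\partial_aw^j$ and $\partial_iB^j\circ X=\Kmat^a_i\mathcal B\partial_aw^j$. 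Therefore
\[
 \Div_{\gmetric}G=(\Delta p)\circ X=-\Kmat^a_i\Kmat^b_jQ_0(\partial_aw^j,\partial_bw^i)=\mathcal Q[dw].
\]
So $G$ solves \eqref{eq:invdiv} with $q=\mathcal Q[dw]\in L^{r_{\rm HLS}}(\mu)$. Since $G\in L^4(\mu)$, the uniqueness part of Lemma~\ref{lem:hodge-solvability} gives $G=\mathcal P_{\gmetric}\mathcal Q[dw]=F[dw]$.

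The only point needing care, and the expected main obstacle, is verifying the integrability hypotheses in the last ``in particular'' clause. For $v,b\in C_tH^\infty$ with decay, $dX$ and $\Kmat$ are bounded and $J$ is bounded above and below, so the $\mu$- and $dy$-integrability classes agree. The datum $\mathcal Q[dw]$ is a bounded coefficient times products of first derivatives of $v$ and $b$, pulled back. Such products lie in $L^1\cap L^\infty\subset L^{r_{\rm HLS}}$. For the Riesz pressure, $\nabla p=\nabla\Delta^{-1}(\text{quadratic }H^\infty\text{ terms})$, where the quadratic terms are in divergence form, so $\nabla p=\Riesz\Riesz(\cdots)$ with argument in $L^4$. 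Hence $\nabla p\in L^4$ by Calder\'on--Zygmund, and therefore $(\nabla p)\circ X\in L^4(\mu)$.
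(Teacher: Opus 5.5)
Your proposal is correct and follows the paper's proof essentially step by step. For (i) you use the same relabelling/symmetrisation of the trace. For (ii) you use the frozen-in law, the two material-derivative identities, the identification of $\curl_{\gmetric}$ and $\Div_{\gmetric}$ of $(\nabla p)\circ X$ through the Eulerian Hessian, and the $L^4(\mu)$ uniqueness of Lemma~\ref{lem:hodge-solvability}. Your ODE argument for $B\circ X=\mathcal BX$ and your explicit check of the integrability hypotheses in the ``in particular'' clause simply spell out steps the paper asserts without detail.
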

 
\begin{proof}
For the algebraic identity let $\Ufield=dw$.  The two terms in the definition
of $Q_0$ give the same contraction after exchanging $(a,i)$ with $(b,j)$.
Therefore
\[
 \Kmat^a_i\Kmat^b_jQ_0(\Ufield_a^j,\Ufield_b^i)
 =\tr \left(
   \Kmat\mathcal A^+\Ufield\Kmat\mathcal A^-\Ufield\right),
\]
and \eqref{eq:datum-geometric} follows directly from the definition \eqref{eq:datum}.
 
It remains to derive the system from smooth ideal MHD.  Let $(v,B,p)$ be the
smooth solution, so that $\partial_tX=v\circ X$ and $X(0,\cdot)=\mathrm{id}$.
 
\emph{The frozen-in law.}  Since $\Div_xv=\Div_xB=0$, the induction equation is
$\partial_tB+[v,B]=0$ with $[v,B]=\nabla_vB-\nabla_Bv$, that is, $B$ is Lie
transported by $v$.  In labels this reads
\begin{equation}\label{eq:frozen-in}
 B\circ X=dX\Binf=\mathcal BX ,
\end{equation}
the second equality being the definition $\mathcal B=\Binf\cdot\nabla^{y}$,
since $(\mathcal BX)^i=\Binf^a\partial_aX^i=(dX\Binf)^i$. Once the induction
equation has been put in this Lie-transport form, the label identity does not
use the volume constraint $\det dX=1$.

The corresponding Elsasser fields satisfy
\[
 (v\pm B)\circ X=\partial_tX\pm\mathcal BX=\mathcal A^\pm X.
\]
Since $\Div_x(v\pm B)=0$, the weak pullback calculus gives directly
\[
 \Div_{\gmetric}(\mathcal A^\pm X)
 =\bigl(\Div_x(v\pm B)\bigr)\circ X=0.
\]
 
\emph{The two material derivative terms.}  For any smooth Eulerian field $g$ the chain
rule gives $\partial_t(g\circ X)=(\partial_tg+\nabla_vg)\circ X$.  Taking $g=v$
and using $v\circ X=\partial_tX$,
\[
 (\partial_tv+\nabla_vv)\circ X=\partial_t\big(v\circ X\big)=\partial_t^2X .
\]
For the magnetic term, $\nabla^{y}(B\circ X)=\big((\nabla_xB)\circ X\big)dX$;
contracting with $\Binf$ and using \eqref{eq:frozen-in} twice,
\[
 \mathcal B(B\circ X)
 =\big((\nabla_xB)\circ X\big)dX\Binf
 =\big((\nabla_xB)\circ X\big)(B\circ X)
 =(\nabla_BB)\circ X ,
\]
while the left-hand side is $\mathcal B(\mathcal BX)=\mathcal B^2X$ by \eqref{eq:frozen-in}.  Hence
\[
 (\nabla_BB)\circ X=\mathcal B^2X .
\]
 
\emph{The wave equation.}  Composing the momentum equation
$\partial_tv+\nabla_vv-\nabla_BB+\nabla p=0$ with $X$ and inserting the two
identities gives
\[
 \partial_t^2X-\mathcal B^2X=-(\nabla p)\circ X ,
 \qquad\text{that is}\qquad
 \Box X=-(\nabla p)\circ X ,
\]
with $\Box=\partial_t^2-\mathcal B^2=\mathcal A^+\mathcal A^-$.  Because
$\Binf$ is constant, $\mathcal B\mathrm{id}=\Binf$ and hence
$\Box\mathrm{id}=0$, so with $F:=(\nabla p)\circ X$,
\[
 \Box w=-F ,
 \qquad
 w(0,\cdot)=0 ,
 \qquad
 \partial_tw(0,\cdot)=v_0 .
\]
The vector fields along $X$ are $\vLag=v\circ X=\partial_tw$ and, by
\eqref{eq:frozen-in}, $\BLag=B\circ X=\Binf+\mathcal Bw$, so that
$\bLag=\mathcal Bw$.
 
\emph{The Hodge equations.}  By \eqref{eq:component-derivative} the precomposition operator
$\mathcal C_X$ is multiplicative and intertwines $\partial_{x^i}$ with $E_i$:
\[
 \mathcal C_X(g_1g_2)=(\mathcal C_Xg_1)(\mathcal C_Xg_2),
 \qquad
 \mathcal C_X\partial_{x^i}=E_i\mathcal C_X .
\]
Applied to $F^j=\mathcal C_X(\partial_{x^j}p)$, the second identity gives
\[
    E_iF^j=\mathcal C_X(\partial_{x^i}\partial_{x^j}p).
\]
We deduce immediately that $\curl_{\gmetric}F=0$ by
the symmetry of the Euclidean Hessian, while
$\Div_{\gmetric}F=E_iF^i=\mathcal C_X(\Delta_xp)$.  Taking $\Div_x$ of the
momentum equation and using $\Div_xv=\Div_xB=0$ gives
\[
 \Delta_xp=(\partial_{x^i}B^j)(\partial_{x^j}B^i)
           -(\partial_{x^i}v^j)(\partial_{x^j}v^i) ,
\]
Precomposing this identity with $X$, using the multiplicativity above, and
recalling that $\vLag=\partial_tw$ and $\BLag=\Binf+\mathcal Bw$, we have
\[
 E_i\vLag^j=\Kmat^a_i\partial_t\partial_aw^j ,
 \qquad
 E_i\BLag^j=\Kmat^a_i\mathcal B\partial_aw^j ,
\]
and therefore
\begin{align*}
 \Div_{\gmetric}F
 &=(E_i\BLag^j)(E_j\BLag^i)-(E_i\vLag^j)(E_j\vLag^i)\\
 &=\Kmat^a_i\Kmat^b_j\Big[
     \big(\mathcal B\partial_aw^j\big)\big(\mathcal B\partial_bw^i\big)
    -\big(\partial_t\partial_aw^j\big)\big(\partial_t\partial_bw^i\big)\Big]\\
 &=-\Kmat^a_i\Kmat^b_jQ_0\big(\partial_aw^j,\partial_bw^i\big)
  =\mathcal Q[dw]
\end{align*}
by part~(i).  The $L^4(\mu)$ uniqueness of Lemma~\ref{lem:hodge-solvability}
then gives $F=F[dw]$, and $X(0)=\mathrm{id}$, $\partial_tX(0)=v_0$ give the
Cauchy data of \eqref{eq:auxiliary-system}.
\end{proof}

\section{Wave-Adapted Anisotropic Spaces}
\label{sec:anisotropic-spaces}

Because $\mathcal B$ has constant coefficients, the Alfv\'en, longitudinal and flat calculi are
simultaneous Fourier multipliers. This section proves the three fixed-time estimates used later: the
anisotropic embedding, the trace theorem and the product estimates. Without further mention, the estimates below are first proved for Schwartz functions and eventually extended by continuity. The arguments below adapt the standard proofs for Sobolev spaces and classical
Bourgain spaces; for the latter, see for instance
\cite{Tao2006Dispersive}. Their first anisotropic versions in the present MHD
setting were established by Zhang in \cite{Zhang2024}. The formulations and
proofs given here extend that analysis to the level of generality required for
regime~\textup{(R1)} of Theorem~\ref{thm:main}.

\subsection{Embeddings and traces}

\begin{lemma}[anisotropic Sobolev embedding]\label{lem:Linftyembed}
Let
\begin{equation}\label{eq:Y-embed-range}
 \sreg>\frac{n-1}{2},\qquad \preg\ge0,\qquad \sreg+\preg>\frac n2 .
\end{equation}
Then every $f\in Y^{\sreg,\preg}(\Rn)$ satisfies
\begin{equation}\label{eq:embed-frac}
 \|f\|_{L^\infty(\Rn)}
 \le
 \frac{C(n,\sreg,\preg,v_{\rm A})}
 {(\sreg+\preg-\frac n2)^{1/2}}
 \|f\|_{Y^{\sreg,\preg}} .
\end{equation}
Both strict inequalities in \eqref{eq:Y-embed-range} are necessary.
\end{lemma}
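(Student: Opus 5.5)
The plan is the usual Fourier-inversion argument followed by Cauchy--Schwarz. For a Schwartz function $f$ write $f(y)=(2\pi)^{-n/2}\int\widehat f(\xi)e^{\mathrm i y\cdot\xi}\,d\xi$. Cauchy--Schwarz then gives
\[
 \|f\|_{L^\infty}\le(2\pi)^{-n/2}
 \Big(\int_{\Rn}\langle\xi\rangle^{-2\sreg}\langle\lambda(\xi)\rangle^{-2\preg}d\xi\Big)^{1/2}
 \|f\|_{Y^{\sreg,\preg}},
\]
so everything reduces to showing that the weight integral
$I:=\int\langle\xi\rangle^{-2\sreg}\langle\Binf\cdot\xi\rangle^{-2\preg}d\xi$ is finite, and to quantifying it. After the fixed rotation $\Binf=v_{\rm A}e_1$, we have $\lambda=v_{\rm A}\xi_1$. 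Write $\xi=(\xi_1,\eta')$ and use the splitting $\langle\xi\rangle\simeq\langle\xi_1\rangle+\langle\eta'\rangle$.

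\emph{Integrating out the transverse variable.} The first step is to integrate in $\eta'\in\Real^{n-1}$ at fixed $\xi_1$. Rescaling $\eta'=\langle\xi_1\rangle\zeta$ gives
\[
 \int_{\Real^{n-1}}(\langle\xi_1\rangle^2+|\eta'|^2)^{-\sreg}d\eta'
 =c_\perp(\sreg)\langle\xi_1\rangle^{n-1-2\sreg},
\]
with $c_\perp$ finite exactly when $\sreg>\frac{n-1}{2}$. We are then left with the one-dimensional integral
\[
 \int_{\Real}\langle\xi_1\rangle^{n-1-2\sreg}\langle v_{\rm A}\xi_1\rangle^{-2\preg}d\xi_1 .
\]
Using $\langle v_{\rm A}\xi_1\rangle\ge\min(1,v_{\rm A})\langle\xi_1\rangle$ and $\preg\ge0$, this is bounded by
$C(v_{\rm A},\preg)\int\langle\xi_1\rangle^{-1-2(\sreg+\preg-n/2)}d\xi_1\lesssim(\sreg+\preg-\frac n2)^{-1}$, since $\int\langle r\rangle^{-1-2\theta}dr\le 2+\theta^{-1}$. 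Taking square roots produces the prefactor $(\sreg+\preg-\frac n2)^{-1/2}$ in \eqref{eq:embed-frac}. The general inequality then follows by density of Schwartz functions.

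\emph{Necessity.} This is the step needing the most care, since it requires counterexamples rather than estimates. If $\sreg+\preg\le n/2$ with $\preg\ge0$, I would take $\widehat f$ to be a radial truncation of the weight. More precisely, let $\widehat f_N:=\langle\xi\rangle^{-2\sreg}\langle\lambda\rangle^{-2\preg}\mathbf 1_{|\xi|\le N}$. Then
\[
 f_N(0)=c\,\|f_N\|_{Y^{\sreg,\preg}}^2
 \quad\text{while}\quad
 \|f_N\|_{Y^{\sreg,\preg}}^2=I_N\to\infty,
\]
so the ratio $f_N(0)/\|f_N\|_{Y}=cI_N^{1/2}$ is unbounded. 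The same test family handles the case $\sreg\le\frac{n-1}{2}$: there the inner $\eta'$-integral diverges for each fixed $\xi_1$, so $I_N\to\infty$ no matter how large $\preg$ is. In both cases the divergence of $I$ follows from the computation above, now read as a lower bound. In the transverse case I would restrict to $|\xi_1|\le1$ to make this lower bound explicit.

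I expect the main obstacle to be only bookkeeping: tracking the $v_{\rm A}$ dependence of the constants (for $v_{\rm A}<1$ the comparison $\langle v_{\rm A}\xi_1\rangle\gtrsim v_{\rm A}\langle\xi_1\rangle$ loses a power of $v_{\rm A}$) and making sure the inequality is sharp enough to isolate the factor $(\sreg+\preg-\frac n2)^{-1/2}$.
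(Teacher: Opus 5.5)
Your proposal is correct and follows essentially the same route as the paper. Fourier inversion plus Cauchy--Schwarz reduces the bound to finiteness of $\int\langle\xi\rangle^{-2\sreg}\langle\lambda\rangle^{-2\preg}\,d\xi$; the rescaling $\xi'=\langle\xi_1\rangle\zeta$ splits off $c_\perp$ and leaves $\int\langle\xi_1\rangle^{-1-2(\sreg+\preg-n/2)}\,d\xi_1$; and necessity follows from truncating the reciprocal weight, which your explicit bound $2+\theta^{-1}$ and your two-case lower-bound argument simply spell out in more detail than the paper's one-line version.
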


\begin{proof}
By Fourier inversion and Cauchy--Schwarz,
\[
 \|f\|_{L^\infty}
 \le C_n\mathcal J(\sreg,\preg)^{1/2}\|f\|_{Y^{\sreg,\preg}},
 \qquad
 \mathcal J(\sreg,\preg)
 :=\int_{\Rn}
 \langle\xi\rangle^{-2\sreg}
 \langle\lambda(\xi)\rangle^{-2\preg} d\xi .
\]
Write $\xi=\mathbf e_1\xi_1+\xi'$, after rotating $\Binf$ to $v_{\rm A}\mathbf e_1$ and using
$\langle v_{\rm A}\xi_1\rangle\simeq_{v_{\rm A}}\langle\xi_1\rangle$, the change
$\xi'=\langle\xi_1\rangle\zeta$ gives
\[
 \mathcal J(\sreg,\preg)
 \simeq_{v_{\rm A},\preg}
 c_\perp(n,\sreg)
 \int_{\Real}\langle\xi_1\rangle^{n-1-2\sreg-2\preg} d\xi_1,
 \qquad
 c_\perp(n,\sreg)
 :=\int_{\Real^{n-1}}\langle\zeta\rangle^{-2\sreg} d\zeta .
\]
The two factors are finite precisely under \eqref{eq:Y-embed-range}; the last integral is
$O((\sreg+\preg-n/2)^{-1})$ near its endpoint.  For
$0\le\preg\le1$ the remaining comparison constant is uniform in $\preg$.
This proves \eqref{eq:embed-frac}.  Conversely, evaluation at a point is bounded only if the
reciprocal weight belongs to $L^2_\xi$; truncating that weight proves necessity of both strict
inequalities.
\end{proof}
\begin{corollary}[traces in the $Y$ scale]\label{cor:trace-Y}
For $\breg>\frac12$, $\sreg\ge0$, any $\preg$, and every
$f\in Y^{\sreg,\preg}_{\breg}$,
$$\sup_t\big\|\langle D_{\parallel}\rangle^{\preg}\Lambda^{\sreg}f(t)\big\|_{L^{2}}\ \le\
C_{\breg} \|f\|_{Y^{\sreg,\preg}_{\breg}},$$
and $t\mapsto f(t)$ is continuous into $Y^{\sreg,\preg}$. Consequently
\begin{equation}\label{eq:trace-class}
H^{\sreg,\preg}_{\breg}\ \hookrightarrow\ C(\Real;Y^{\sreg,\preg})\cap C^1(\Real;Y^{\sreg,\preg-1}),
\qquad
\sup_t\big(\|f(t)\|_{Y^{\sreg,\preg}}+\|\partial_tf(t)\|_{Y^{\sreg,\preg-1}}\big)
\le C_{\breg}\|f\|_{H^{\sreg,\preg}_{\breg}}.
\end{equation}
\end{corollary}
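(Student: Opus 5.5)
The plan is to reduce to a one-dimensional Fourier-in-time estimate at each fixed spatial frequency. Take the spatial Fourier transform in $y$ and set $g(t,\xi):=\langle\lambda(\xi)\rangle^{\preg}\langle\xi\rangle^{\sreg}\widehat f(t,\xi)$. By Plancherel in $\xi$,
\[
\|\langle D_\parallel\rangle^\preg\Lambda^\sreg f(t)\|_{L^2}=\|g(t,\cdot)\|_{L^2_\xi}.
\]
For fixed $\xi$, Fourier inversion in $\tau$ gives
\[
g(t,\xi)=c\int e^{\mathrm it\tau}\widetilde g(\tau,\xi)\,d\tau .
\]
Inserting $m^{\breg}m^{-\breg}$ and applying Cauchy--Schwarz yields
\[
|g(t,\xi)|^2\le \Big(\int m(\tau,\xi)^{-2\breg}d\tau\Big)\int m^{2\breg}|\widetilde g(\tau,\xi)|^2d\tau .
\]

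The key step is a bound on the first factor that is uniform in $\xi$. Recall $m=\min(\langle\tau+\lambda\rangle,\langle\tau-\lambda\rangle)$. Since $m^{-2\breg}=\max(\langle\tau_+\rangle^{-2\breg},\langle\tau_-\rangle^{-2\breg})\le\langle\tau_+\rangle^{-2\breg}+\langle\tau_-\rangle^{-2\breg}$, we get
\[
\int m^{-2\breg}\,d\tau\le 2\int_{\Real}\langle\sigma\rangle^{-2\breg}\,d\sigma=:C_\breg^2<\infty,
\]
and this is finite exactly because $\breg>\tfrac12$. The bound is independent of $\xi$ and of $v_{\rm A}$. Integrating in $\xi$ then gives the sup bound with constant $C_\breg$, for Schwartz $f$ first and then by density.

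Continuity in time follows from dominated convergence in the same representation. For each fixed $(\tau,\xi)$, $|e^{\mathrm it\tau}-e^{\mathrm it_0\tau}|\to0$. The Cauchy--Schwarz bound shows that $\int|g(t)-g(t_0)|^2d\xi$ is dominated by $C_\breg^2\int 4\,m^{2\breg}|\widetilde g|^2\,d\tau\,d\xi<\infty$. A cleaner route is to note that the linear map $f\mapsto f(\cdot)$ is continuous for Schwartz functions and uniformly bounded into $L^\infty_tY^{\sreg,\preg}$, so by density its range lies in the closed subspace $C(\Real;Y^{\sreg,\preg})$.

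For \eqref{eq:trace-class}, apply the first part to $f\in Y^{\sreg,\preg}_\breg$ and to $\partial_tf\in Y^{\sreg,\preg-1}_\breg$; both are components of the $H^{\sreg,\preg}_\breg$ norm \eqref{eq:Hpair}. This gives $f\in C(\Real;Y^{\sreg,\preg})$ and $\partial_tf\in C(\Real;Y^{\sreg,\preg-1})$. Then $f\in C^1(\Real;Y^{\sreg,\preg-1})$, because $f(t)-f(t_0)=\int_{t_0}^t\partial_tf$ holds in $Y^{\sreg,\preg-1}$; this is clear for Schwartz functions and passes to the limit by the uniform bounds.

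The only nontrivial point is the $\xi$-uniform integrability of $m^{-2\breg}$. It works because the two characteristic branches contribute separately and each is a translate of $\langle\sigma\rangle^{-2\breg}$. So I expect no real obstacle beyond handling the density and continuity bookkeeping carefully.
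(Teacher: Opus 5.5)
Your proposal is correct and follows essentially the same route as the paper. Both arguments use fixed-$\xi$ Fourier inversion in $\tau$, then Cauchy--Schwarz against $m^{-\breg}$ with the $\lambda$-uniform bound $\int m^{-2\breg}\,d\tau\le 2\int\langle z\rangle^{-2\breg}dz$, dominated convergence for continuity, and the fundamental theorem of calculus in $Y^{\sreg,\preg-1}$ (using $Y^{\sreg,\preg}\hookrightarrow Y^{\sreg,\preg-1}$) for the $C^1$ statement; your density justification of that last identity is equivalent to the paper's Bochner-sense argument.
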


\begin{proof}
Recall the definitions \eqref{eq:taupm},\eqref{eq:weightshm} and that the hat denotes the pure-space transform while the tilde denotes the time-space Fourier transform. At fixed $\xi$, Fourier inversion in $\tau$ and Cauchy--Schwarz give
\[
 |\widehat f(t,\xi)|
 \le \Big(\int_{\Real} m^{-2\breg}(\tau,\lambda) d\tau\Big)^{1/2}
      \Big(\int_{\Real} |m^{\breg}\widetilde f(\tau,\xi)|^2 d\tau\Big)^{1/2},
 \qquad
 \sup_\lambda\int_{\Real} m^{-2\breg}(\tau,\lambda) d\tau
 \le2\int_{\Real}\langle z\rangle^{-2\breg} dz.
\]
Multiply by $\langle\lambda\rangle^{\preg}\langle\xi\rangle^{\sreg}$ and integrate in
$\xi$.  Dominated convergence yields continuity.  Applying the same argument to $\partial_tf$
gives the bound and the continuity of $t\mapsto(\partial_tf)(t)$ into
$Y^{\sreg,\preg-1}$.  Since $\langle\lambda\rangle^{\preg-1}\le
\langle\lambda\rangle^{\preg}$ gives $Y^{\sreg,\preg}\hookrightarrow
Y^{\sreg,\preg-1}$, and $\partial_tf$ is the distributional time derivative of
$f$, the fundamental theorem of calculus in the Bochner sense yields
$f(t)=f(t_0)+\int_{t_0}^{t}(\partial_tf)(r) dr$ in $Y^{\sreg,\preg-1}$; hence
$f\in C^1(\Real;Y^{\sreg,\preg-1})$ and \eqref{eq:trace-class} follows.
\end{proof}

\subsection{Products}

\begin{proposition}[products in the $Y$ scale]\label{prop:tame-product}
Let $\preg\ge0$ and
\begin{equation}\label{eq:Y-product-range}
 \sreg>\frac{n-1}{2},\qquad \sreg+\preg>\frac n2.
\end{equation}
Then $Y^{\sreg,\preg}$ is a Banach algebra and is a multiplier on $H^{\sreg}$:
\begin{equation}\label{eq:Y-product}
 \|fg\|_{Y^{\sreg,\preg}}\le C\|f\|_{Y^{\sreg,\preg}}\|g\|_{Y^{\sreg,\preg}},
 \qquad
 \|fg\|_{H^{\sreg}}\le C\|f\|_{H^{\sreg}}\|g\|_{Y^{\sreg,\preg}}.
\end{equation}
By interpolation, the second estimate also holds with $H^\varsigma$ in place of $H^{\sreg}$ for
$0\le\varsigma\le\sreg$.  In particular, at $(\sreg,\preg)=(s-1,1)$,
\begin{equation}\label{eq:tame-sp}
 \|fg\|_{H^\varsigma}\le C\|f\|_{H^\varsigma}\|g\|_{Y^{s-1,1}}
 \quad(0\le\varsigma\le s-1),
\end{equation}
and, after taking $L^2$ in time and using Corollary~\ref{cor:trace-Y},
\[
 \|fg\|_{L^2_tH^{s-1}}
 \le C\|f\|_{L^2_tH^{s-1}}\|g\|_{H^{s-1,1}_{\breg}}.
\]
The hypotheses \eqref{eq:Y-product-range} coincide with the embedding range
\eqref{eq:Y-embed-range}.
\end{proposition}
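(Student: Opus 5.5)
We plan to prove both estimates in \eqref{eq:Y-product} on the Fourier side with a Kato--Ponce/Bony-type splitting, using the embedding constant $\mathcal J(\sreg,\preg)$ of Lemma~\ref{lem:Linftyembed} in place of the usual $L^\infty$ bound. By Plancherel, $\widehat{fg}=c_n\,\widehat f*\widehat g$, so we must bound $\langle\xi\rangle^{\sreg}\langle\lambda(\xi)\rangle^{\preg}\int|\widehat f(\xi-\eta)||\widehat g(\eta)|\,d\eta$ in $L^2_\xi$. The key step is to distribute the weights. Since $\langle\xi\rangle\le\langle\xi-\eta\rangle+\langle\eta\rangle$ and $\lambda$ is linear, $\langle\lambda(\xi)\rangle\lesssim\langle\lambda(\xi-\eta)\rangle+\langle\lambda(\eta)\rangle$, and because $\sreg,\preg\ge0$ we get
\[
 \langle\xi\rangle^{\sreg}\langle\lambda(\xi)\rangle^{\preg}
 \lesssim \sum_{\alpha,\beta}\langle\xi_\alpha\rangle^{\sreg}\langle\lambda(\xi_\beta)\rangle^{\preg},
 \qquad \xi_1:=\xi-\eta,\ \xi_2:=\eta,\ \alpha,\beta\in\{1,2\}.
\]
The diagonal terms ($\alpha=\beta$) put the full weight on one factor. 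Young's inequality $L^2*L^1\to L^2$ then reduces matters to $\|\widehat g\|_{L^1}\le\mathcal J^{1/2}\|g\|_{Y^{\sreg,\preg}}$, which is exactly the Cauchy--Schwarz step in Lemma~\ref{lem:Linftyembed}.

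The mixed terms, with isotropic weight on one factor and parallel weight on the other, are the main obstacle. For these we would not reduce to $L^1$ of a full Fourier transform. Instead we rotate so that $\Binf=v_{\rm A}e_1$, write $\xi=(\xi_1,\xi')$, and use Young's inequality separately in the two variables. Take the term $\langle\xi-\eta\rangle^{\sreg}|\widehat f(\xi-\eta)|\cdot\langle\eta_1\rangle^{\preg}|\widehat g(\eta)|$. We estimate the transverse convolution by $L^2_{\xi'}*L^1_{\eta'}$ and the longitudinal one by $L^2*L^1$ as well. This needs $\sup$-type control $\int\langle\eta\rangle^{-2\sreg}\langle\eta_1\rangle^{-2\preg}d\eta<\infty$, again the finite $\mathcal J$. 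However, $g$ must supply its full $\langle\eta\rangle^{\sreg}\langle\eta_1\rangle^{\preg}$ weight while $f$ keeps only $\langle\cdot\rangle^{\sreg}$. This is admissible because, when the isotropic weight sits on $f$, the parallel weight on $g$ can be upgraded to the full $Y$ weight, giving a bound of the form $\|f\|_{H^{\sreg}}\|g\|_{Y^{\sreg,\preg}}$. If that fails in some frequency region, we would split instead into the regions $|\xi-\eta|\ge|\eta|$ and $|\xi-\eta|<|\eta|$ and, in each region, move both weights to the dominant factor whenever it also dominates longitudinally. The leftover region, where one factor is isotropically large but the other is longitudinally large, is handled with the $\xi_1$ Young bound and the $\xi'$ Cauchy--Schwarz with $c_\perp(n,\sreg)<\infty$, which is where $\sreg>\frac{n-1}{2}$ enters.

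The same splitting with $\preg$-weights dropped on the output proves the $H^{\sreg}$ multiplier bound: $\langle\xi\rangle^{\sreg}\lesssim\langle\xi-\eta\rangle^{\sreg}+\langle\eta\rangle^{\sreg}$. The term with weight on $f$ uses $\|\widehat g\|_{L^1}\lesssim\|g\|_{Y^{\sreg,\preg}}$. The term with weight on $g$ uses $\|\widehat f\|_{L^2}\cdot$ the $L^1$ norm of $\widehat g$ after paying $\langle\eta\rangle^{\sreg}$, which by Cauchy--Schwarz requires only $\|g\|_{Y^{\sreg,\preg}}$ times $\mathcal J^{1/2}$ with the weight absorbed. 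Interpolation between $\varsigma=0$ (the trivial $L^2\cdot L^\infty$ bound via Lemma~\ref{lem:Linftyembed}) and $\varsigma=\sreg$ gives the intermediate range; since the map $f\mapsto fg$ is linear in $f$, complex interpolation applies. Specialising to $(\sreg,\preg)=(s-1,1)$, where $s>\frac{n+1}2$ gives both conditions, yields \eqref{eq:tame-sp}. Integrating the square in $t$ and bounding $\sup_t\|g(t)\|_{Y^{s-1,1}}$ by Corollary~\ref{cor:trace-Y} gives the spacetime version. Finally, the statement that the hypotheses coincide with \eqref{eq:Y-embed-range} is immediate, since the algebra property forces $L^\infty$ control by the usual $\|f^k\|^{1/k}$ argument.
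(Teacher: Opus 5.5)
Your diagonal terms are fine: Young $L^2*L^1$ with $\|\widehat g\|_{L^1}\le\mathcal J^{1/2}\|g\|_{Y^{\sreg,\preg}}$ closes them. The interpolation step and the $L^2_t$ step are also fine.

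\textbf{The multiplier estimate.} The gap is in every interaction where the output weight is split between the two factors, and in the multiplier estimate it is a false step. For $\int|\widehat f(\xi-\eta)|\langle\eta\rangle^{\sreg}|\widehat g(\eta)|\,d\eta$ you use $\|\widehat f\|_{L^2}\|\langle\eta\rangle^{\sreg}\widehat g\|_{L^1}$ and assert $\|\langle\eta\rangle^{\sreg}\widehat g\|_{L^1}\lesssim\mathcal J^{1/2}\|g\|_{Y^{\sreg,\preg}}$. Cauchy--Schwarz here leaves $\int_{\Rn}\langle\lambda(\eta)\rangle^{-2\preg}\,d\eta=\infty$, since the weight does not decay in the $n-1$ transverse directions. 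The bound is genuinely false, not just unproved. After rotating $\Binf$ to $v_{\rm A}e_1$, take $\langle\eta\rangle^{\sreg}\langle\lambda(\eta)\rangle^{\preg}\widehat g=\mathbf 1_{\{|\eta_1|\le1\}}\langle\eta'\rangle^{-(n-1)/2}\log^{-1}(2+|\eta'|)$. This is in $L^2$, while $\langle\eta\rangle^{\sreg}\widehat g\notin L^1$. Putting the $L^1$ norm on $f$ instead also fails, because $\|\widehat f\|_{L^1}\lesssim\|f\|_{H^{\sreg}}$ needs $\sreg>n/2$.

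\textbf{The mixed algebra term.} Your first treatment has the same error. The bound $\|\langle\eta_1\rangle^{\preg}\widehat g\|_{L^1}\le(\int\langle\eta\rangle^{-2\sreg}d\eta)^{1/2}\|g\|_{Y^{\sreg,\preg}}$ requires $\sreg>n/2$, not merely $\mathcal J<\infty$. Giving $g$ its full weight instead leaves an $L^2*L^2$ convolution, which lands in $L^\infty$, not $L^2$. For $\sreg\le n/2$, no isotropic $L^2*L^1$ bound closes these terms; you must spend the spare weights of \emph{both} factors at once. The paper does this through $|\widehat{fg}(\xi)|^2\le(\mathfrak w*\mathfrak w)(\xi)\,(|F|^2*|G|^2)(\xi)$ and the convolution inequalities \eqref{eq:weight-convolution}.

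\textbf{The fallback region split.} It can be completed, but the decisive step is missing, and the hypothesis $\sreg+\preg>n/2$ never appears where it is needed. Set $\alpha:=2\sreg-(n-1)$ and take the leftover region $|\eta|\le|\xi-\eta|$, $|\xi_1-\eta_1|<|\eta_1|$. After the transverse step (Young with the $g$-factor in $L^1_{\eta'}$, via $c_\perp$), the longitudinal kernel is $\langle\xi_1-\eta_1\rangle^{-\preg}\langle\eta_1\rangle^{-\alpha/2}$. The first factor comes from $f$'s unused parallel weight, the second from $g$'s unused isotropic weight. A plain Young bound in $\xi_1$ with this kernel needs $\preg>\frac12$ or $\sreg>\frac n2$. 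It closes in the full range only after using $\langle\eta_1\rangle\ge\langle\xi_1-\eta_1\rangle$ to move all the decay onto the $f$ side. That leaves $\langle\xi_1-\eta_1\rangle^{-\preg-\alpha/2}$, which is in $L^2(\Real)$ iff $\alpha+2\preg>1$, i.e. exactly iff $\sreg+\preg>n/2$. You attribute this region only to $\sreg>\frac{n-1}{2}$.

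The multiplier term can be repaired the same way. Put $f$ in $L^1$ transversally via $c_\perp$, then either split according to which factor is longitudinally larger or apply Cauchy--Schwarz against the one-dimensional kernel $\langle\xi_1-\eta_1\rangle^{-\alpha}\langle\eta_1\rangle^{-2\preg}$. As written, however, your multiplier argument is incorrect, and your algebra argument does not prove its key region.
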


\begin{proof}
Write
\[
 \mathfrak w_{\sreg,\preg}(\xi):=\langle\xi\rangle^{-2\sreg}\langle\lambda(\xi)\rangle^{-2\preg},
 \qquad \mathfrak w_{\sreg,0}(\xi):=\langle\xi\rangle^{-2\sreg}.
\]
We first prove the two weighted convolution inequalities, which hold in the range \eqref{eq:Y-product-range},
\begin{equation}\label{eq:weight-convolution}
 \mathfrak w_{\sreg,\preg}*\mathfrak w_{\sreg,\preg}\le C\mathfrak w_{\sreg,\preg},
 \qquad \mathfrak w_{\sreg,0}*\mathfrak w_{\sreg,\preg}\le C\mathfrak w_{\sreg,0}.
\end{equation}
Write $\xi=e_1\xi_1+\xi'$ and rotate coordinates so that $\lambda(\xi)=v_{\rm A}\xi_1$ and use
$\langle v_{\rm A}t\rangle\simeq_{v_{\rm A}}\langle t\rangle$.  Set
\[
 d:=n-1,\qquad \zeta:=\xi_1,\qquad
 A:=\langle\eta_1\rangle,\qquad
 B:=\langle\zeta-\eta_1\rangle,
 \qquad \alpha:=2\sreg-d>0.
\]
We need the following preliminary estimate for the transverse convolution. On the region $A\le B$,
\begin{equation*}\label{eq:convineq}
    \int_{\Real^{d}}(A^2+|z|^2)^{-\sreg}
 (B^2+|\xi'-z|^2)^{-\sreg} dz
 \le C A^{-\alpha}(B^2+|\xi'|^2)^{-\sreg},
\tag{*}
\end{equation*}
and the symmetric estimate holds on $B<A$.  To see this, if
$|\xi'|\le2B$, bound the second factor by $B^{-2\sreg}$ and use
$\int(A^2+|z|^2)^{-\sreg}dz=C A^{-\alpha}$.  If
$|\xi'|>2B$, split at $|z|=|\xi'|/2$.  On the first part the second
factor is $O(|\xi'|^{-2\sreg})$ and the first factor has integral
$CA^{-\alpha}$.  On the second part the first factor is
$O(|\xi'|^{-2\sreg})$, while the second has integral $CB^{-\alpha}$;
this is bounded by $CA^{-\alpha}|\xi'|^{-2\sreg}$ because $A\le B$.
This proves~\eqref{eq:convineq}.

Consider first the convolution of two copies of
$\mathfrak w_{\sreg,\preg}$.  On $A\le B$, estimate~\eqref{eq:convineq} leaves
\[
 C A^{-\alpha-2\preg}B^{-2\preg}
   (B^2+|\xi'|^2)^{-\sreg}.
\]
Since $\langle\zeta\rangle\le A+B\le2B$ there,
\[
 B^{-2\preg}(B^2+|\xi'|^2)^{-\sreg}
 \lesssim
 \langle\zeta\rangle^{-2\preg}
 (\langle\zeta\rangle^2+|\xi'|^2)^{-\sreg}.
\]
The remaining integral in $\eta_1$ is finite because
$\alpha+2\preg>1$.  The region $B<A$ is identical after interchanging
$A$ and $B$.  This proves the first inequality in
\eqref{eq:weight-convolution}.

For the mixed convolution, the factor carrying the longitudinal weight is
the one with scale $B$.  On $A\le B$, after~\eqref{eq:convineq} and after extracting
$(\langle\zeta\rangle^2+|\xi'|^2)^{-\sreg}$, it remains to verify
\[
 \sup_{\zeta\in\Real}
 \int_{A\le B}A^{-\alpha}B^{-2\preg} d\eta_1<\infty.
\]
For bounded $\zeta$ this follows directly from
$A^{-\alpha}B^{-2\preg}\lesssim
\langle\eta_1\rangle^{-\alpha-2\preg}$.  If $|\zeta|\ge2$, split at
$|\eta_1|=|\zeta|/2$.  On the central part $B\simeq\langle\zeta\rangle$,
and its contribution is bounded by
\[
 C\langle\zeta\rangle^{-2\preg}
 \int_0^{|\zeta|}\langle t\rangle^{-\alpha} dt.
\]
This is uniformly bounded when $\alpha+2\preg>1$; for $\alpha=1$ the
integral grows only logarithmically.  On the complementary part subject to
$A\le B$, one has $B\ge A\gtrsim\langle\zeta\rangle$, so the integrand is
bounded by $A^{-\alpha-2\preg}$ and is integrable.  On $B<A$, the symmetric
transverse estimate leaves $B^{-\alpha-2\preg}$ and the scale $A$ controls
$\langle\zeta\rangle$; integration after the change
$\eta_1\mapsto\zeta-\eta_1$ completes the second inequality.  Thus the two
conditions needed for \eqref{eq:weight-convolution} are exactly
$\alpha>0$ and $\alpha+2\preg>1$, namely \eqref{eq:Y-product-range}.

We now apply these inequalities to the product.  For Schwartz functions put
\[
 F(\xi):=\langle\xi\rangle^{\sreg}
          \langle\lambda(\xi)\rangle^{\preg}\widehat f(\xi),
 \qquad
 G(\xi):=\langle\xi\rangle^{\sreg}
          \langle\lambda(\xi)\rangle^{\preg}\widehat g(\xi).
\]
Writing each Fourier factor as a weighted function times
$\mathfrak w_{\sreg,\preg}^{1/2}$ and using Cauchy--Schwarz in the
convolution variable gives
\[
 |\widehat{fg}(\xi)|^2
 \le
 (\mathfrak w_{\sreg,\preg}*\mathfrak w_{\sreg,\preg})(\xi)
 (|F|^2*|G|^2)(\xi).
\]
The first inequality in \eqref{eq:weight-convolution} cancels the target
weight, and Tonelli's theorem gives
\[
 \|fg\|_{Y^{\sreg,\preg}}^2
 \le C\int (|F|^2*|G|^2)(\xi) d\xi
 =C\|F\|_2^2\|G\|_2^2.
\]
For the multiplier estimate, instead put
\[
 F_0(\xi):=\langle\xi\rangle^{\sreg}\widehat f(\xi),
 \qquad
 G_{\preg}(\xi):=\langle\xi\rangle^{\sreg}
 \langle\lambda(\xi)\rangle^{\preg}\widehat g(\xi).
\]
Writing $\widehat f=\mathfrak w_{\sreg,0}^{1/2}F_0$ and
$\widehat g=\mathfrak w_{\sreg,\preg}^{1/2}G_{\preg}$, the second
inequality in \eqref{eq:weight-convolution} gives
$\|fg\|_{H^{\sreg}}\le C\|f\|_{H^{\sreg}}
\|g\|_{Y^{\sreg,\preg}}$ in the same way.

At the other endpoint, \eqref{eq:embed-frac} gives
$\|fg\|_2\le\|f\|_2\|g\|_\infty
\le C\|f\|_2\|g\|_{Y^{\sreg,\preg}}$.
Interpolation of the multiplication operator $f\mapsto fg$ between
$H^0$ and $H^{\sreg}$ proves the asserted $H^\varsigma$ estimates.  
\end{proof}

\begin{remark}[interpolation in the parallel index]\label{rem:Y-interpolation}
At fixed $\sreg$, the scale $Y^{\sreg,\preg}$ is, on the Fourier side, a
weighted $L^2$ scale whose weight
$\langle\xi\rangle^{2\sreg}\langle\lambda\rangle^{2\preg}$ depends
multiplicatively on $\preg$.  The Stein--Weiss interpolation theorem for
weighted $L^2$ spaces, see \cite[Theorem~5.5.3]{bergh}, therefore gives, with
equal norms,
\[
 \big[Y^{\sreg,\preg_0},Y^{\sreg,\preg_1}\big]_{\theta}
 =Y^{\sreg,(1-\theta)\preg_0+\theta\preg_1},
 \qquad 0\le\theta\le1 .
\]
In particular, a linear operator that is bounded on $Y^{\sreg,0}=H^{\sreg}$
and on $Y^{\sreg,1}$ is bounded on every $Y^{\sreg,\preg}$ with
$0\le\preg\le1$, with the interpolated operator norm. In particular, assume
that $\sigma>(n-1)/2$ and $\sigma+1>n/2$. Multiplication by an element of
$Y^{\sigma,1}$ is bounded on
$Y^{\sigma,0}=H^{\sigma}$ by \eqref{eq:tame-sp} and on $Y^{\sigma,1}$ by
\eqref{eq:Y-product}, hence
\begin{equation}\label{eq:Y-multiplier-interp}
 \|fg\|_{Y^{\sigma,\preg}}
 \le C\|f\|_{Y^{\sigma,\preg}}\|g\|_{Y^{\sigma,1}},
 \qquad 0\le\preg\le1 .
\end{equation}
This is how the parallel index is interpolated in the elliptic estimates of
Section~\ref{sec:elliptic}.
\end{remark}

\section{Linear and Null-Form Estimates}\label{sec:wave-estimates}
This section first records the constant-field Fourier calculus and then establishes the
linear and bilinear estimates used by the elliptic and fixed-point arguments. The estimates in this section adapt the standard Bourgain-space arguments,
for which we refer to \cite{Tao2006Dispersive}, to the anisotropic
Alfv\'en geometry. This adaptation was first carried out by Zhang in
\cite{Zhang2024}; here we use the slightly more general formulations required
by our function-space framework.

\begin{lemma}[the constant-field Fourier representation]\label{lem:alfven-action}
With $(\tau,\xi)$ the spacetime Fourier variables, $\lambda$ as in
\eqref{eq:Bmult}, and $\tau_\pm=\tau\pm\lambda$, one has
\begin{equation}\label{eq:joint-res}
 \widetilde{\mathcal Bf}=\mathrm i\lambda\widetilde f,
 \qquad
 \widetilde{\mathcal A^+f}=\mathrm i\tau_+\widetilde f,
 \qquad
 \widetilde{\mathcal A^-f}=\mathrm i\tau_-\widetilde f,
 \qquad
 \widetilde{\Box f}=-\tau_+\tau_-\widetilde f .
\end{equation}
Consequently $D_\pm=-\mathrm i\mathcal A^\pm$ are commuting self-adjoint
operators on $L^2(dt dy)$ and, for every Borel function $\Phi$,
\begin{equation}\label{eq:calculus}
 \Phi(D_+,D_-)f
 :=\big(\Phi(\tau_+,\tau_-)\widetilde f\big)^\vee
\end{equation}
on its maximal domain.  Moreover
\[
 \mathrm e^{r\mathcal B}f(t,y)=f(t,y+r\Binf),
 \qquad
 \mathrm e^{r\mathcal A^\pm}f(t,y)=f(t+r,y\pm r\Binf),
\]
so these groups are isometries on every $L^p$ and unitary on $L^2$.  In particular,
\begin{equation}\label{eq:comm-zero}
 [\Lambda^{\sreg},\mathcal B]=0
 \qquad(\sreg\in\Real),
\end{equation}
and all corresponding Fourier-multiplier commutators vanish.
\end{lemma}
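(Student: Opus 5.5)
The proof is a direct Fourier computation. We do it first on Schwartz functions and then extend by density and duality to tempered distributions.

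\emph{Symbols.} With the unitary spacetime transform, $\widetilde{\partial_t f}=\mathrm i\tau\widetilde f$ and $\widetilde{\partial_a f}=\mathrm i\xi_a\widetilde f$. Since $\mathcal B=\Binf^a\partial_a$ has constant coefficients, linearity gives $\widetilde{\mathcal Bf}=\mathrm i\Binf\cdot\xi\,\widetilde f=\mathrm i\lambda\widetilde f$. Adding and subtracting yields the symbols $\mathrm i(\tau\pm\lambda)=\mathrm i\tau_\pm$ of $\mathcal A^\pm$. Composing gives the symbol of $\Box=\mathcal A^+\mathcal A^-$:
\[
(\mathrm i\tau_+)(\mathrm i\tau_-)=-\tau_+\tau_- .
\]

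\emph{Self-adjointness and functional calculus.} $D_\pm$ is unitarily equivalent, through the spacetime Fourier transform, to multiplication by the real function $\tau_\pm$ on $L^2(d\tau\,d\xi)$. A multiplication operator by a real measurable function is self-adjoint on its maximal domain $\{\tau_\pm\widetilde f\in L^2\}$. Two such operators commute in the strong sense, because their spectral projections are multiplications by indicator functions, and these commute.

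The joint spectral measure of the pair $(D_+,D_-)$ is therefore the pushforward under $(\tau,\xi)\mapsto(\tau_+,\tau_-)$. Consequently the Borel functional calculus $\Phi(D_+,D_-)$ is exactly the Fourier multiplier $\Phi(\tau_+,\tau_-)$ on its maximal domain, which is \eqref{eq:calculus}. This identification of the abstract joint calculus with the concrete multiplier is the only point needing care. One settles it by checking the identity on products of indicators $\mathbf 1_{E_1}(\tau_+)\mathbf 1_{E_2}(\tau_-)$, which generate the Borel $\sigma$-algebra, and then extending by the standard monotone-class argument.

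\emph{Groups.} For Schwartz $f$ the function $g(r)=f(t,y+r\Binf)$ satisfies $\partial_rg=\mathcal Bg$ with $g(0)=f$. On the Fourier side $e^{r\mathcal B}$ is multiplication by $e^{\mathrm ir\lambda}$, and $e^{\mathrm ir\Binf\cdot\xi}\widehat f$ is precisely the transform of the translate by $r\Binf$. This identifies the group on a dense set, and both sides extend to $L^2$ unitarily.

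Similarly, $e^{r\mathcal A^\pm}$ has symbol
\[
e^{\mathrm ir(\tau\pm\lambda)},
\]
which corresponds to the spacetime translation $(t,y)\mapsto(t+r,\,y\pm r\Binf)$. Translations preserve Lebesgue measure. Hence they define isometries on every $L^p$, including $p=\infty$ where translation is defined pointwise, and these agree with the exponentials on $L^2\cap L^p$.

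\emph{Commutators.} Finally, $\Lambda^{\sreg}$ and $\mathcal B$ are both Fourier multipliers, with symbols $\langle\xi\rangle^{\sreg}$ and $\mathrm i\lambda$. Multiplication of functions commutes, so $[\Lambda^{\sreg},\mathcal B]=0$ on Schwartz functions, and likewise for any two multipliers. The identity extends to the common domains by density.

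No step is a genuine obstacle. The only subtle point is the rigorous identification of the joint spectral calculus with the multiplier \eqref{eq:calculus}, which we handle through the pushforward of the spectral measure as described above.
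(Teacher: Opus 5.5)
Your proposal is correct and takes the same route as the paper, which disposes of the lemma in one line (``all statements follow directly from the Fourier symbols and the translation formulas''). You simply supply the routine details the paper leaves implicit: self-adjointness of real multiplication operators, identification of the joint spectral measure on rectangles extended by a monotone-class argument, and matching the exponentials with translations on a dense set.
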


\begin{proof}
All statements follow directly from the Fourier symbols and the translation formulas.
\end{proof}

\subsection{The linear estimate}
\label{ssec:linear}

Fix $\chi\in C^\infty_c(\Real)$, with $0\le\chi\le1$, $\chi\equiv1$ on
$[-1,1]$ and $\supp\chi\subset[-2,2]$, and recall
$\varepsilon=1-\breg$, $\breg\in(\frac12,1)$, $T\leq 1$.  

The free propagators are
\begin{equation}\label{eq:propagator}
\Cprop(t):=\cos\big(t|\mathcal B|\big)
=\tfrac12\big(\mathrm e^{t\mathcal B}+\mathrm e^{-t\mathcal B}\big),
\qquad
\Sprop(t):=\frac{\sin\big(t|\mathcal B|\big)}{|\mathcal B|}
=\int_0^{t}\tfrac12\big(\mathrm e^{r\mathcal B}+\mathrm e^{-r\mathcal B}\big) dr ,
\end{equation}
where the value of the symbol of $\Sprop(t)$ at $\lambda=0$ is $t$.  Thus $\Cprop(t)$ and
$\Sprop(t)$ are averages of translations along $\Binf$, with
$L^p$-operator norms at most $1$ and $|t|$, respectively. For
$\lambda\in\Real$, write
\[
 h_\lambda(\tau):=\max(\langle\tau+\lambda\rangle,\langle\tau-\lambda\rangle),
 \qquad
 m_\lambda(\tau):=\min(\langle\tau+\lambda\rangle,\langle\tau-\lambda\rangle),
 \qquad
 \Omega_\lambda:=h_\lambda m_\lambda^{\breg}.
\]
The entire linear estimate results from the following uniform scalar oscillator bound.

\begin{lemma}[localised scalar oscillator]\label{lem:oscillator}
Let $\breg\in(\frac12,1)$, $0<T\le1$, $a_0,a_1\in\mathbb C$, and
$g\in L^2(\Real)$.  With
\[
 C_\lambda(t):=\cos(t|\lambda|),\qquad
 S_\lambda(t):=\frac{\sin(t|\lambda|)}{|\lambda|},\quad S_0(t):=t,
\]
put
\[
 u_\lambda(t):=\chi(t)C_\lambda(t)a_0+\chi(t)S_\lambda(t)a_1
 +\chi(t/T)\int_0^tS_\lambda(t-t')g(t') dt'.
\]
Then, uniformly in $\lambda$ and $T$,
\begin{equation}\label{eq:scalar-oscillator}
 \big\|\Omega_\lambda\widetilde u_\lambda\big\|_{L^2_\tau}
 \le C(\chi,\breg)\Big(
 \langle\lambda\rangle|a_0|+|a_1|
 +T^{1-\breg}\|g\|_{L^2_t}\Big).
\end{equation}
\end{lemma}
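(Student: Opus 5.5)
We treat the three pieces of $u_\lambda$ separately. In each case we compute the one-dimensional Fourier transform in $t$ explicitly and use that $\Omega_\lambda(\tau)=h_\lambda m_\lambda^{\breg}$, where $h_\lambda\lesssim\langle\lambda\rangle+\langle\tau\rangle$.

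\emph{Homogeneous terms.} Write $C_\lambda(t)=\tfrac12(e^{\mathrm it\lambda}+e^{-\mathrm it\lambda})$. Then $\widetilde{\chi C_\lambda}(\tau)=\tfrac12(\widehat\chi(\tau-\lambda)+\widehat\chi(\tau+\lambda))$, up to the sign convention. On the bump $\widehat\chi(\tau\mp\lambda)$ we have $\langle\tau\mp\lambda\rangle\ge m_\lambda$ and $h_\lambda\lesssim\langle\lambda\rangle+\langle\tau\mp\lambda\rangle$. Hence $\Omega_\lambda\lesssim\langle\lambda\rangle\langle\tau\mp\lambda\rangle^{\breg}+\langle\tau\mp\lambda\rangle^{1+\breg}$. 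The rapid decay of $\widehat\chi$ then gives the bound $C\langle\lambda\rangle|a_0|$.

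For the sine term, split into $|\lambda|\ge1$ and $|\lambda|<1$. When $|\lambda|\ge1$, write $S_\lambda=(e^{\mathrm it|\lambda|}-e^{-\mathrm it|\lambda|})/(2\mathrm i|\lambda|)$; the same computation gives $\langle\lambda\rangle/|\lambda|\lesssim1$ times $|a_1|$. When $|\lambda|<1$, note that $\chi(t)S_\lambda(t)$ is a smooth compactly supported function of $t$ whose $C^k$ norms are uniform in $|\lambda|\le1$. Its Fourier transform is therefore Schwartz uniformly, and $\Omega_\lambda\lesssim\langle\tau\rangle^{1+\breg}$ there.

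\emph{Duhamel term.} This is the main step. We split $g=g_{\rm lo}+g_{\rm hi}$ by a time-frequency cutoff, or rather split $\widetilde u$ according to the modulation $m_\lambda\le2/T$ versus $m_\lambda>2/T$, as in the overview.

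First we reduce to $g$ supported in $[-2T,2T]$: replacing $g$ by $\chi(t/(2T))g$ does not change $u_\lambda$ on $\supp\chi(\cdot/T)$. We then write
\[
\int_0^tS_\lambda(t-t')g(t')\,dt'=\frac{1}{2\mathrm i|\lambda|}\bigl(e^{\mathrm it|\lambda|}G_+(t)-e^{-\mathrm it|\lambda|}G_-(t)\bigr),\qquad G_\pm(t)=\int_0^te^{\mp\mathrm it'|\lambda|}g(t')\,dt'.
\]
Equivalently, we work with the forced equation $(\partial_t^2+\lambda^2)v=\chi(t/T)$-localised $g$ plus commutator terms.

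The cleanest route is the standard Bourgain-space argument applied to each half-wave. Factor $\partial_t^2+\lambda^2=(\partial_t-\mathrm i|\lambda|)(\partial_t+\mathrm i|\lambda|)$. In the high-modulation region $m_\lambda>2/T$ we use the ellipticity $|\tau_+\tau_-|\gtrsim h_\lambda m_\lambda$: modulo terms that are free waves (handled as above, with amplitudes $\lesssim T^{1/2}\|g\|_{L^2}\lesssim T^{1-\breg}\|g\|$ by Cauchy--Schwarz, noting $\breg>1/2$), we have $\widetilde u\approx \widetilde{\chi_Tg}/(\tau_+\tau_-)$. This gives $\Omega_\lambda|\widetilde u|\lesssim m_\lambda^{\breg-1}|\widetilde g|\lesssim T^{1-\breg}|\widetilde g|$.

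In the low-modulation region we spend $m_\lambda^{\breg}\le(2/T)^{\breg}$ and need $\|h_\lambda\widetilde u\|_{L^2}\lesssim T\|g\|_{L^2}$. This is the energy bound. By Plancherel it amounts to $\|\partial_tu\|_{L^2}+\langle\lambda\rangle\|u\|_{L^2}\lesssim T\|g\|$ on an interval of length $T$. It follows from $|\partial_t\!\int S_\lambda g|+|\lambda|\,|\!\int S_\lambda g|\le\int_0^{|t|}|g|\le|t|^{1/2}\|g\|$, integrated over $|t|\le2T$, together with $|\!\int S_\lambda g|\le T^{3/2}\|g\|$ for the $\langle\lambda\rangle\le1$ part. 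Derivatives landing on $\chi(t/T)$ cost $T^{-1}$, which is compensated by the extra $T$ from the $L^2$ norm of $u$ on $|t|\lesssim T$.

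The main obstacle is making the high/low-modulation split rigorous for the truncated Duhamel integral, since $\chi(t/T)$ smears the Fourier support over scale $1/T$. We handle this by the usual decomposition
\[
\chi(t/T)\int_0^te^{\mathrm i(t-t')\mu}g(t')\,dt'=\chi(t/T)\int\widetilde g(\sigma)\frac{e^{\mathrm it\sigma}-e^{\mathrm it\mu}}{\mathrm i(\sigma-\mu)}\,d\sigma,\qquad \mu=\pm|\lambda|,
\]
splitting at $|\sigma-\mu|\lessgtr1/T$. For $|\sigma-\mu|\le1/T$ we Taylor expand the difference quotient, which gives a sum $\sum_k t^k\chi(t/T)$ with summable coefficients $\lesssim T^{k-1}\int_{|\sigma-\mu|\le1/T}|\widetilde g|$. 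For $|\sigma-\mu|>1/T$ we get a free-wave term plus $\chi(t/T)$ times a function with modulation weight gain $\langle\sigma-\mu\rangle^{-1}$. Each piece is then multiplied by $\Omega_\lambda$, using $\breg<1$ and Cauchy--Schwarz, which yields $T^{1-\breg}$. We then combine the two half-waves, using the prefactor $1/|\lambda|$ only when $|\lambda|\ge1$; for $|\lambda|<1$ we use the direct $S_\lambda$ formula instead.
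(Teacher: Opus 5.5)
Your treatment of the two data terms is correct and matches the paper. Your low-modulation energy bound $\|h_\lambda\widetilde u_T\|_2\lesssim T\|g\|_2$, obtained from pointwise bounds on the Duhamel integral, is also correct. The gap is in the high-modulation region $m_\lambda>2/T$ of the Duhamel term.

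First, the heuristic ``$\widetilde u\approx\widetilde{\chi_Tg}/(\tau_+\tau_-)$ modulo free waves'' is not an identity. Write $u_D:=\int_0^tS_\lambda(t-t')g(t')\,dt'$. The corrections created by the cutoff are the commutator terms $2T^{-1}\chi'(t/T)\partial_tu_D+T^{-2}\chi''(t/T)u_D$, and these are not free waves.

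Second, the route you propose to make this rigorous fails. That route is a Kenig--Ponce--Vega decomposition of each half-wave $\frac{1}{2\mathrm i|\lambda|}\chi(t/T)\int_0^t e^{\pm\mathrm i(t-t')|\lambda|}g(t')\,dt'$, followed by estimating each piece against $\Omega_\lambda$. The problem is orders of decay:
\begin{itemize}
\item The weight $\Omega_\lambda=h_\lambda m_\lambda^{\breg}$ has total order $1+\breg$ in $\tau$: for $|\tau|\ge2|\lambda|$ one has $\Omega_\lambda\gtrsim\langle\tau\rangle^{1+\breg}$.
\item A single first-order Duhamel term gains only $\langle\tau\mp|\lambda|\rangle^{-1}$ over $\widetilde g$. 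So each half-wave separately has infinite $\Omega_\lambda$-norm for generic $g\in L^2$.
\item The prefactor $1/|\lambda|$ compensates $h_\lambda$ only near the characteristic set, where $h_\lambda\simeq\langle\lambda\rangle$.
\item The second power of decay comes solely from the cancellation $\frac{1}{\sigma-|\lambda|}-\frac{1}{\sigma+|\lambda|}=\frac{2|\lambda|}{\sigma^2-\lambda^2}$ between the two half-waves.
\end{itemize}
The low-frequency Taylor pieces also lose. The natural bound for each is $\frac{\langle\lambda\rangle+T^{-1}}{|\lambda|}T^{1-\breg}\|g\|_2$. This is too large by a factor $(T|\lambda|)^{-1}$ in the range $1\le|\lambda|\ll T^{-1}$, which your case split assigns to the half-wave formula.

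The missing idea is that you need neither half-waves nor any localisation of the Fourier support of $u_T:=\chi(t/T)u_D$, so the ``smearing'' obstacle never arises. Partition $\widetilde u_T$ itself into $\{m_\lambda\le2/T\}$ and $\{m_\lambda>2/T\}$, and use a global Plancherel bound on each piece. On the second piece, apply the second-order operator to $u_T$ as a whole:
\[
(\partial_t^2+\lambda^2)u_T=\chi(t/T)g+2T^{-1}\chi'(t/T)\partial_tu_D+T^{-2}\chi''(t/T)u_D .
\]
All three terms are supported in $|t|\le2T$. You already derived the pointwise bounds $|u_D(t)|\lesssim|t|^{3/2}\|g\|_2$ and $|\partial_tu_D(t)|\lesssim|t|^{1/2}\|g\|_2$, and these give $\|(\partial_t^2+\lambda^2)u_T\|_2\lesssim\|g\|_2$. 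On this piece $m_\lambda>2/T\ge2$, so $|\tau_+\tau_-|\gtrsim h_\lambda m_\lambda$. Hence $\Omega_\lambda\lesssim m_\lambda^{\breg-1}|\tau_+\tau_-|\le T^{1-\breg}|\tau_+\tau_-|$, and Plancherel finishes.

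This is the paper's argument. It keeps the two half-waves together through the symbol $-(\tau+\lambda)(\tau-\lambda)$, which is exactly the structure your decomposition discards.
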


\begin{proof}
For either sign, setting $\rho=\tau\mp|\lambda|$ gives
\[
 m_\lambda(\tau)\le\langle\rho\rangle,\qquad
 h_\lambda(\tau)\lesssim\langle\lambda\rangle\langle\rho\rangle.
\]
Since $\chi$ is Schwartz, this bounds the two exponential pieces of
$\chi C_\lambda$ by $C\langle\lambda\rangle$.  If $|\lambda|\ge1$, the same
argument applied to
\[
 S_\lambda(t)=\frac{\mathrm e^{it|\lambda|}-\mathrm e^{-it|\lambda|}}
 {2i|\lambda|}
\]
bounds $\chi S_\lambda$ uniformly.  If $|\lambda|\le1$, the family
$\chi S_\lambda$ is bounded in $H^2_t$, while
$\Omega_\lambda(\tau)\lesssim\langle\tau\rangle^{1+\breg}$.  Since
$1+\breg<2$, the $H^2_t$ bound controls this weight and proves the two data
bounds in \eqref{eq:scalar-oscillator}.

For the source set
\[
 u_D(t):=\int_0^tS_\lambda(t-t')g(t') dt',
 \qquad u_T:=\chi(t/T)u_D .
\]
Distributionally,
$(\partial_t^2+\lambda^2)u_D=g$, with zero Cauchy data at $t=0$.
For $|t|\le2T$, the bounds
$|S_\lambda(r)|\le|r|$, $|\partial_rS_\lambda(r)|\le1$, and
$|\lambda S_\lambda(r)|\le1$ give, by Cauchy--Schwarz on the interval
between $0$ and $t$,
\[
 |u_D(t)|\lesssim |t|^{3/2}\|g\|_2,
 \qquad
 |\partial_tu_D(t)|+|\lambda u_D(t)|
 \lesssim |t|^{1/2}\|g\|_2.
\]
Integrating these pointwise bounds in $t$ gives
\begin{equation}\label{eq:oscillator-local}
 \|u_D\|_{L^2(|t|\le2T)}\lesssim T^2\|g\|_2,\qquad
 \|\partial_tu_D\|_{L^2(|t|\le2T)}
 +\|\lambda u_D\|_{L^2(|t|\le2T)}
 \lesssim T\|g\|_2 .
\end{equation}
Writing $\chi_T(t)=\chi(t/T)$, the product rule and
\eqref{eq:oscillator-local} imply
\[
 \|u_T\|_2\lesssim T^2\|g\|_2,\qquad
 \|\partial_tu_T\|_2+\|\lambda u_T\|_2\lesssim T\|g\|_2.
\]
Moreover,
\[
 (\partial_t^2+\lambda^2)u_T
 =\chi_Tg+2T^{-1}\chi'(t/T)\partial_tu_D
   +T^{-2}\chi''(t/T)u_D.
\]
All three terms are supported where $|t|\le2T$, and therefore
\begin{equation}\label{eq:oscillator-cutoff-source}
 \|(\partial_t^2+\lambda^2)u_T\|_2\lesssim\|g\|_2.
\end{equation}
This is the only point at which derivatives of the time cutoff enter.

It remains to recover the modulation weight.  Split the Fourier transform
into the regions $m_\lambda\le2/T$ and $m_\lambda>2/T$.  On the first,
$\Omega_\lambda=h_\lambda m_\lambda^{\breg}\lesssim
T^{-\breg}h_\lambda$ and
$h_\lambda\lesssim1+|\tau|+|\lambda|$.  Plancherel's identity gives
\[
 \|\mathbf 1_{\{m_\lambda\le2/T\}}
       \Omega_\lambda\widetilde u_T\|_2
 \lesssim T^{-\breg}\|h_\lambda\widetilde u_T\|_2
 \lesssim T^{1-\breg}\|g\|_2,
\]
where $T\le1$ absorbs the $T^2\|g\|_2$ contribution.  On the second
region, $m_\lambda>2/T\ge2$, so both Japanese brackets are comparable
with the corresponding absolute values and hence
\[
 |(\tau+\lambda)(\tau-\lambda)|\gtrsim h_\lambda m_\lambda,\qquad
 \frac{\Omega_\lambda}{|(\tau+\lambda)(\tau-\lambda)|}
 \lesssim m_\lambda^{\breg-1}\lesssim T^{1-\breg}.
\]
Since the Fourier symbol of $\partial_t^2+\lambda^2$ is
$-\tau^2+\lambda^2=-(\tau+\lambda)(\tau-\lambda)$,
\eqref{eq:oscillator-cutoff-source} gives
\[
 \|\mathbf 1_{\{m_\lambda>2/T\}}
       \Omega_\lambda\widetilde u_T\|_2
 \lesssim T^{1-\breg}\|g\|_2.
\]
Combining the two regions proves the source bound and the lemma.
\end{proof}

\begin{theorem}[the linear estimate]\label{thm:linear}
Let $\breg\in(\frac12,1)$, $\varepsilon:=1-\breg$,
$\sreg\ge0$, $\preg\in\Real$ and $0<T\le1$. For every
$f_0\in Y^{\sreg,\preg}$,
$f_1\in Y^{\sreg,\preg-1}$ and
$G\in L^{2}_tY^{\sreg,\preg-1}(\Rst)$ there is
$u$ on $\Rst$ with
\[
 \Box u=G\quad\text{on }[0,T],\qquad
 (u,\partial_tu)|_{t=0}=(f_0,f_1),
\]
satisfying
\begin{equation}\label{eq:linear}
\big\|u\big\|_{H^{\sreg,\preg}_{\breg}}
 \le C_{\rm lin}\Big(
 \big\|f_0\big\|_{Y^{\sreg,\preg}}
 +\big\|f_1\big\|_{Y^{\sreg,\preg-1}}
 +T^{\varepsilon}\big\|G\big\|_{L^{2}_tY^{\sreg,\preg-1}}\Big),
\end{equation}
where $C_{\rm lin}=C(\chi,\breg)$ is independent of $T\le1$, of the
field, of the data and of the indices $\sreg,\preg$. Moreover,
\[
 u\in C([0,T],Y^{\sreg,\preg})
 \cap C^1([0,T],Y^{\sreg,\preg-1}).
\]
\end{theorem}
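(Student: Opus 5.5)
We build $u$ explicitly from the free propagators and reduce \eqref{eq:linear} to Lemma~\ref{lem:oscillator} frequency by frequency. Define
\[
 u(t):=\chi(t)\Cprop(t)f_0+\chi(t)\Sprop(t)f_1
 +\chi(t/T)\int_0^t\Sprop(t-t')\big(\mathbf 1_{[0,T]}G\big)(t') dt',
\]
so that, taking the spatial Fourier transform in $y$, at each fixed $\xi$ the function $\widehat u(\cdot,\xi)$ is exactly the scalar oscillator $u_\lambda$ of Lemma~\ref{lem:oscillator}. Here $\lambda=\lambda(\xi)$, $a_0=\widehat f_0(\xi)$, $a_1=\widehat f_1(\xi)$ and $g=\mathbf 1_{[0,T]}\widehat G(\cdot,\xi)$; Lemma~\ref{lem:alfven-action} identifies the symbols of $\Cprop,\Sprop$ with $C_\lambda,S_\lambda$. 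Since $\chi\equiv1$ on $[-1,1]\supset[0,T]$ and $\chi(t/T)\equiv1$ on $[0,T]$, the Duhamel formula gives $\Box u=G$ on $[0,T]$ with Cauchy data $(f_0,f_1)$. This is checked for Schwartz data, where $(\partial_t^2+\lambda^2)u_D=g$, and extended by density.

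For the norm we use the single-multiplier form \eqref{eq:Hsymbol}:
\[
 \|u\|_{H^{\sreg,\preg}_{\breg}}\simeq\big\|\langle\lambda\rangle^{\preg-1}\langle\xi\rangle^{\sreg}\Omega_\lambda\widetilde u\big\|_{L^2_{\tau,\xi}}.
\]
By Plancherel in $\tau$ at fixed $\xi$, \eqref{eq:scalar-oscillator} gives
\[
 \|\Omega_\lambda\widetilde u(\cdot,\xi)\|_{L^2_\tau}\le C(\chi,\breg)\big(\langle\lambda\rangle|\widehat f_0(\xi)|+|\widehat f_1(\xi)|+T^{1-\breg}\|\widehat G(\cdot,\xi)\|_{L^2_t}\big),
\]
uniformly in $\lambda$ and $T$. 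We multiply by $\langle\lambda\rangle^{\preg-1}\langle\xi\rangle^{\sreg}$, which is independent of $\tau$, square, and integrate in $\xi$. The three terms then become $\|f_0\|_{Y^{\sreg,\preg}}$, $\|f_1\|_{Y^{\sreg,\preg-1}}$ and $T^\varepsilon\|G\|_{L^2_tY^{\sreg,\preg-1}}$, using Fubini and Plancherel in $y$ for the last. This yields \eqref{eq:linear}. The constant is $C(\chi,\breg)$ times the equivalence constant in \eqref{eq:Hsymbol}, which is absolute, so it does not depend on $\sreg,\preg,T$ or the data. The weights are pure Fourier multipliers commuting with everything, so no commutator errors arise.

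The continuity statement follows from Corollary~\ref{cor:trace-Y}: $u\in H^{\sreg,\preg}_{\breg}$ embeds into $C(\Real;Y^{\sreg,\preg})\cap C^1(\Real;Y^{\sreg,\preg-1})$, and we restrict to $[0,T]$.

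The main obstacle is the rigorous justification of the $\xi$-by-$\xi$ reduction. This requires measurability in $\xi$ and a check that the distributional time derivative and the equation on $[0,T]$ hold for general data, not only Schwartz data. I would prove everything for Schwartz $f_0,f_1,G$ and pass to the limit using the linear bound itself together with the trace embedding. The substantive analytic content, the two-region modulation split, is already contained in Lemma~\ref{lem:oscillator}.
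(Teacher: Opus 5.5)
Your proposal is correct and follows the paper's proof essentially verbatim. It uses the same propagator/Duhamel formula for $u$, applies Lemma~\ref{lem:oscillator} at each spatial frequency with $\lambda=\Binf\cdot\xi$, multiplies by $\langle\lambda\rangle^{\preg-1}\langle\xi\rangle^{\sreg}$ and integrates via \eqref{eq:Hsymbol}, and takes continuity from Corollary~\ref{cor:trace-Y}. Your extra indicator $\mathbf 1_{[0,T]}$ in the Duhamel term is harmless: it leaves $u$ unchanged on $[0,T]$ and only sharpens the right-hand side.
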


\begin{proof}
Take
\[
 u=\chi(t)\Cprop(t)f_0+\chi(t)\Sprop(t)f_1
   +\chi(t/T)\int_0^t\Sprop(t-t')G(t') dt'.
\]
It has the asserted equation and data on $[0,T]$.  At each spatial frequency
$\xi$, apply Lemma~\ref{lem:oscillator} with
$\lambda=\Binf\cdot\xi$, multiply by
$\langle\lambda\rangle^{\preg-1}\langle\xi\rangle^{\sreg}$, and
integrate in $\xi$.  The equivalent multiplier description \eqref{eq:Hsymbol}
then gives \eqref{eq:linear}.  The continuity statement follows from
Corollary~\ref{cor:trace-Y}.
\end{proof}

The following uniqueness statement for the free evolution is used twice
below: to identify auxiliary solutions with the fixed point in
Theorem~\ref{thm:existence}, and to propagate the volume constraint in
Proposition~\ref{prop:constraint}.

\begin{lemma}[uniqueness for the homogeneous wave equation]\label{lem:wave-uniqueness}
Let $0<T<\infty$ and let $u\in C^1([0,T];L^2(\Rn))$ satisfy
$\mathcal Bu\in C([0,T];L^2(\Rn))$,
\[
 \Box u=0\quad\text{in }\mathcal D'((0,T)\times\Rn),
 \qquad
 u(0)=\partial_tu(0)=0 .
\]
Then $u\equiv0$ on $[0,T]$. The same conclusion holds componentwise for
vector- and matrix-valued $u$.
\end{lemma}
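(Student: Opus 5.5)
The plan is to pass to the spatial Fourier transform and reduce to a family of scalar ODEs, one for each frequency $\xi$. Since $u\in C^1([0,T];L^2)$, the function $\widehat u(t,\xi)$ is defined for each $t$ as an $L^2_\xi$ function. Moreover $t\mapsto\widehat u(t)$ lies in $C^1([0,T];L^2_\xi)$. The hypothesis $\mathcal Bu\in C([0,T];L^2)$ means that $\lambda\widehat u\in C([0,T];L^2_\xi)$.

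Testing $\Box u=0$ against $\varphi(t)\psi(y)$, with $\psi$ Schwartz, and using Plancherel in $y$ gives the following statement: for a.e. $\xi$, or more precisely after pairing with arbitrary $\widehat\psi\in L^2_\xi$, the function $a(t):=\widehat u(t,\xi)$ satisfies $a''+\lambda(\xi)^2a=0$ in $\mathcal D'(0,T)$.

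The cleanest way to carry this out avoids pointwise-in-$\xi$ issues by working with the pairing $g(t):=\langle \widehat u(t),\widehat\psi\rangle$ for $\widehat\psi$ in $L^2_\xi$ with compact support. Then $g\in C^1$ and $g''=-\langle \widehat u(t),\lambda^2\widehat\psi\rangle$ distributionally, and the right-hand side is continuous. Hence $g\in C^2$, so the equation holds classically at the level of weak pairings.

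The key step is an energy identity. I would use
\[
\mathcal E(t):=\|\partial_t u(t)\|_{L^2}^2+\|\mathcal Bu(t)\|_{L^2}^2=\int\big(|\partial_t\widehat u|^2+\lambda^2|\widehat u|^2\big)\,d\xi ,
\]
which is finite and continuous by hypothesis, and show that it is constant. Here $\mathcal E(0)=0$ because $u(0)=0$ forces $\mathcal Bu(0)=0$. To justify constancy at this low regularity, I would mollify. Let $P_N$ be the Fourier multiplier by $\mathbf 1_{\{|\xi|\le N\}}$. It commutes with $\Box$, and $P_Nu$ is smooth in $y$ with values in every $H^k$. Distributionally, $\partial_t^2P_Nu=\mathcal B^2P_Nu\in C([0,T];L^2)$, so $P_Nu\in C^2([0,T];L^2)$. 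A direct differentiation then gives
\[
\tfrac{d}{dt}\mathcal E_N=2\,\mathrm{Re}\langle\partial_t^2P_Nu-\mathcal B^2P_Nu,\partial_tP_Nu\rangle=0,
\]
using the skew-adjointness of $\mathcal B$. So $\mathcal E_N(t)=\mathcal E_N(0)=0$, and letting $N\to\infty$ by monotone convergence gives $\partial_tu\equiv0$.

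Combined with $u(0)=0$ and $u\in C^1([0,T];L^2)$, this yields $u\equiv0$. The vector- and matrix-valued cases follow by applying the argument to each component.

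The main obstacle is justifying the distributional-to-classical upgrade for $P_Nu$, that is, showing $\partial_t^2P_Nu=\mathcal B^2P_Nu$ as a $C([0,T];L^2)$ identity. I would handle it by testing against $\varphi(t)\psi(y)$ and observing that $\mathcal B^2P_N$ is a bounded operator on $L^2$. Alternatively, one can avoid the energy identity altogether: at each frequency, solve the scalar ODE $a''=-\lambda^2a$ with zero Cauchy data using the uniqueness of distributional solutions of linear constant-coefficient ODEs, which gives $a\equiv0$.
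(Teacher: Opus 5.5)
Your proposal is correct and follows essentially the same route as the paper: a frequency truncation $P_N$ that makes $\mathcal B^2P_N$ bounded on $L^2$, the upgrade $P_Nu\in C^2([0,T];L^2)$, conservation of the truncated energy $\|\partial_tP_Nu\|_2^2+\|\mathcal BP_Nu\|_2^2$ via skew-adjointness of $\mathcal B$, and then $N\to\infty$. The only cosmetic difference is that you truncate isotropically in $|\xi|$, whereas the paper uses the spectral cutoff $\mathbf 1_{[0,N]}(|\mathcal B|)$; either choice works, and your Fourier-side preliminaries and the alternative per-frequency ODE argument are not needed.
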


\begin{proof}
Let
\[
 |\mathcal B|:=(-\mathcal B^2)^{1/2},
 \qquad P_N:=\mathbf 1_{[0,N]}(|\mathcal B|),
 \qquad
 u_N:=P_Nu
\]
The Fourier multiplier $P_N$ commutes with $\partial_t$ and $\mathcal B$.
Since $\mathcal B^2P_N$ is bounded on $L^2$, the equation
$\partial_t^2u_N=\mathcal B^2u_N$ and the assumed time continuity give
$u_N\in C^2([0,T];L^2)$. Hence
\[
 \mathcal E_N(t):=\frac12\left(
   \|\partial_tu_N(t)\|_2^2
   +\|\mathcal Bu_N(t)\|_2^2
 \right).
\]
By the skew-adjointness of $\mathcal B$,
\begin{align*}
 \mathcal E_N'(t)
 &=\langle\partial_tu_N,\mathcal B^2u_N\rangle
   +\langle\mathcal Bu_N,\mathcal B\partial_tu_N\rangle\\
 &=-\langle\mathcal B\partial_tu_N,\mathcal Bu_N\rangle
   +\langle\mathcal Bu_N,\mathcal B\partial_tu_N\rangle
 =0.
\end{align*}
Thus $\mathcal E_N(t)=\mathcal E_N(0)=0$. Since $P_N\to\mathrm{Id}$ strongly
on $L^2$, letting $N\to\infty$ gives $\partial_tu=0$, and $u(0)=0$ gives
$u\equiv0$.
\end{proof}

\subsection{The null-form estimate}
\label{ssec:null-form}

The nonlinearity in \eqref{eq:auxiliary-system} is the bilinear Alfv\'en null form $Q_0$ from \eqref{eq:Q0}. Without further mention, the estimates below are first proved for Schwartz functions and eventually extended by continuity.

\begin{theorem}[fractional null-form estimate]\label{thm:null-form}
Let $n\ge2$, $\breg\in(\frac12,1]$, $s>\frac{n+1}2$ and
$\rpar\in[0,1]$. Then
\begin{equation}\label{eq:null-form}
 \|Q_0(f,g)\|_{L^2_tY^{s-1,\rpar}}
 \le C\Big(
   \|f\|_{H^{s-1,1+\rpar}_{\breg}}
   \|g\|_{H^{s-1,1}_{\breg}}
  +\|f\|_{H^{s-1,1}_{\breg}}
   \|g\|_{H^{s-1,1+\rpar}_{\breg}}
 \Big),
\end{equation}
where $C=C(n,s,\breg,v_{\rm A})$ is uniform for
$\rpar\in[0,1]$. In particular,
\begin{equation}\label{eq:diagonalNF}
     \|Q_0(f,f)\|_{L^2_tY^{s-1,\rpar}}
 \le 2C\|f\|_{H^{s-1,1}_{\breg}}
          \|f\|_{H^{s-1,1+\rpar}_{\breg}}.
\end{equation}
\end{theorem}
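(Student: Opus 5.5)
We would prove \eqref{eq:null-form} by a Fourier-side weighted convolution argument, in the same spirit as the proof of Proposition~\ref{prop:tame-product}. The first step uses the factorisation \eqref{eq:Q0}: $Q_0(f,g)=\tfrac12(\mathcal A^+f\,\mathcal A^-g+\mathcal A^-f\,\mathcal A^+g)$. By symmetry it suffices to bound $\mathcal A^+f\,\mathcal A^-g$.

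The parallel weight is split first. Since $\langle\lambda\rangle^{\rpar}\lesssim\langle\lambda_1\rangle^{\rpar}+\langle\lambda_2\rangle^{\rpar}$ for $\lambda=\lambda_1+\lambda_2$ and $0\le\rpar\le1$, the extra $\rpar$ longitudinal derivatives can be placed on one factor. This produces the two terms on the right-hand side. The constants are uniform in $\rpar$, so it is enough to treat $\rpar=0$ with one factor measured in $H^{s-1,1+\rpar}_\breg$. Hence we reduce to the core estimate
\[
 \|\mathcal A^+f\,\mathcal A^-g\|_{L^2_tH^{s-1}}\lesssim\|f\|_{H^{s-1,1}_\breg}\|g\|_{H^{s-1,1}_\breg}.
\]
The isotropic weight is handled in the same way. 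We use $\langle\xi\rangle^{s-1}\lesssim\langle\xi_1\rangle^{s-1}+\langle\xi_2\rangle^{s-1}$ and put $\Lambda^{s-1}$ on one factor. That factor is then measured in $L^2_{t,y}$ with weight $\langle\lambda\rangle^{0}$. The other factor keeps its full weight $\langle\xi_2\rangle^{s-1}\langle\lambda_2\rangle^{0}$ and its $\Omega$ weight.

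Next we write $F_1=\Lambda^{s-1}\Omega f$ on the Fourier side and $G_2=\langle\xi\rangle^{s-1}\Omega\widetilde g$. Then, using \eqref{eq:Hsymbol} at $\preg=1$,
\[
 |\widetilde{\mathcal A^+f}|=\frac{|\tau_+|}{\Omega}\,|\widetilde F_1|\lesssim\langle\tau_-\rangle^{-\breg}|\widetilde F_1|,\qquad |\widetilde{\mathcal A^-g}|\lesssim\langle\tau_+\rangle^{-\breg}\langle\xi\rangle^{-(s-1)}|G_2|.
\]
This holds because $\Omega=\langle\tau_+\rangle\langle\tau_-\rangle m^{-\varepsilon}\ge\langle\tau_+\rangle\langle\tau_-\rangle^{\breg}$. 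Applying Cauchy--Schwarz in the convolution variable $(\tau_2,\xi_2)$ reduces matters to a uniform bound
\[
 \sup_{\tau,\xi}\int \langle\tau_{-}-\tau_{2,-}\rangle^{-2\breg}\langle\tau_{2,+}\rangle^{-2\breg}\langle\xi_2\rangle^{-2(s-1)}\,d\tau_2\,d\xi_2<\infty.
\]
We pass to null coordinates as in \eqref{eq:keycomputation}. Fix the transverse frequency $\xi_2'$. The map $(\tau_2,\xi_{2,1})\mapsto(\tau_{2,+},\tau_{2,-})$ has Jacobian $(2v_{\rm A})^{-1}$, and the two factors then depend on independent variables. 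Each one-dimensional integral is finite exactly when $2\breg>1$. The remaining integral $\int_{\Real^{n-1}}\langle\xi_2'\rangle^{-2(s-1)}d\xi_2'$ is finite exactly when $s>\frac{n+1}2$. Bounding $\langle\xi_2\rangle\ge\langle\xi_2'\rangle$ loses nothing here. Tonelli's theorem then gives the estimate with constant $\lesssim v_{\rm A}^{-1/2}c_\parallel(\breg)c_\perp^{1/2}$. The diagonal bound \eqref{eq:diagonalNF} is immediate.

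The main obstacle is the bookkeeping of the weights. After distributing $\langle\xi\rangle^{s-1}$, the factor that carries the isotropic weight is bounded using only its modulation weight. The transverse decay must then come from the other factor. This works because every weight in $H^{s-1,1}_\breg$ is at least the unweighted $\Omega$, so the two cases are genuinely symmetric. I would also check carefully that $|\tau_\pm|/\Omega\le\langle\tau_\mp\rangle^{-\breg}$ holds in both regions $m=\langle\tau_+\rangle$ and $m=\langle\tau_-\rangle$. It does: in one region the ratio is $\langle\tau_\mp\rangle^{-1}$, and in the other it is $\langle\tau_\mp\rangle^{-\breg}$. This is the step where $\breg\le1$ enters.
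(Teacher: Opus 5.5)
Your proposal is correct and is essentially the paper's proof. Like the paper, you split $\langle\xi\rangle^{s-1}$ and $\langle\lambda\rangle^{\rpar}$ over the two factors, bound the kernel by $\langle\tau_-(\mathsf z_1)\rangle^{-\breg}\langle\tau_+(\mathsf z_2)\rangle^{-\breg}\langle\eta'\rangle^{-(s-1)}$ (Lemma~\ref{lem:pointwise}), and close with Cauchy--Schwarz in the convolution variable and the null-coordinate change (Proposition~\ref{prop:core}), which gives the same constant $c_\perp c_\parallel^2/(2v_{\rm A})$.

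Your closing side remark has a small slip. In the region $m=\langle\tau_\pm\rangle$ the ratio $|\tau_\pm|/\Omega$ is $\langle\tau_\pm\rangle^{1-\breg}\langle\tau_\mp\rangle^{-1}$, not $\langle\tau_\mp\rangle^{-1}$. It is bounded by $\langle\tau_\mp\rangle^{-\breg}$ precisely because $\breg\le1$, which your main-line inequality $\Omega\ge\langle\tau_\pm\rangle\langle\tau_\mp\rangle^{\breg}$ already covers.
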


Recall the notation in \eqref{eq:taupm} and \eqref{eq:self-adjoint-derivatives}. Put $\mathbf e=\Binf/v_{\rm A}$, write
$\xi=\xi_1\mathbf e+\xi'$, and set
$\mathsf z=(\tau_+,\tau_-)$.  The change
$(\tau,\xi_1)\mapsto(\tau_+,\tau_-)$ has Jacobian
$2v_{\rm A}$, so
\begin{equation}\label{eq:null-coordinate-plancherel}
 d\nu(\mathsf z):=\frac{d\tau_+ d\tau_-}{2v_{\rm A}},
 \qquad
 \|f\|_{L^2(\Rst)}^2
 =\iint|\widetilde f(\mathsf z,\xi')|^2 d\nu(\mathsf z) d\xi'.
\end{equation}
For a total frequency $(\mathsf z,\xi')$, let
$(\mathsf z_2,\eta')$ denote the frequency of the $g$-factor and
$(\mathsf z_1,\xi'-\eta')$ that of the $f$-factor. Then
$\mathsf z=\mathsf z_1+\mathsf z_2$, and the full spatial frequency associated
with $\eta=(\mathsf z_2,\eta')$ is
\[
 \eta=(\xi_1(\mathsf z_2),\eta')=\left(\frac{\tau_+(\mathsf z_2)-\tau_-(\mathsf z_2)}
 {2v_{\rm A}}, \eta'\right).
\]
For use below, retain the multiplier notation
\begin{equation}\label{eq:null-weight-operator}
 \Omega_{\sreg,\preg}
 :=\langle D_\parallel\rangle^{\preg-1}
   \Omega(D_+,D_-)\Lambda^{\sreg},
 \qquad \Omega=hm^{\breg},
\end{equation}
so that, by \eqref{eq:Hsymbol},
$\|\Omega_{\sreg,\preg}f\|_2\simeq
\|f\|_{H^{\sreg,\preg}_{\breg}}$.

\begin{lemma}[the resonance identity]\label{lem:resonance}
At factor frequencies $\mathsf z_1,\mathsf z_2$, the symbol of $Q_0$ is
\[
 -q_0(\mathsf z_1,\mathsf z_2),\qquad
 q_0:=\tfrac12\big(
 \tau_+(\mathsf z_1)\tau_-(\mathsf z_2)
 +\tau_-(\mathsf z_1)\tau_+(\mathsf z_2)\big).
\]
\end{lemma}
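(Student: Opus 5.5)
The identity is a direct computation with the spacetime Fourier transform, using the symbols recorded in Lemma~\ref{lem:alfven-action}. I would start from the factorised form of $Q_0$ in \eqref{eq:Q0},
\[
 Q_0(f,g)=\tfrac12\big(\mathcal A^+f\,\mathcal A^-g+\mathcal A^-f\,\mathcal A^+g\big),
\]
so that each term is a product of one Alfv\'en derivative of $f$ and one of $g$. The spacetime Fourier transform turns products into convolutions. With the unitary normalisation this costs a constant $(2\pi)^{-(1+n)/2}$, which is absorbed into the definition of ``symbol'' of a bilinear operator. In this convention the symbol is the function $\sigma(\zeta_1,\zeta_2)$ with
\[
 \widetilde{Q_0(f,g)}(\zeta)=c\int\sigma(\zeta_1,\zeta_2)\,\widetilde f(\zeta_1)\,\widetilde g(\zeta_2)\,d\zeta_2,
 \qquad \zeta=\zeta_1+\zeta_2 .
\]
Here $\zeta_1$ is the frequency of the $f$-factor and $\zeta_2$ that of the $g$-factor, matching the notation just fixed before the lemma.

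By \eqref{eq:joint-res}, $\widetilde{\mathcal A^\pm f}(\zeta_1)=\mathrm i\tau_\pm(\mathsf z_1)\widetilde f(\zeta_1)$, and likewise for $g$ at $\mathsf z_2$. Each product $\mathcal A^+f\,\mathcal A^-g$ therefore contributes the symbol
\[
 (\mathrm i\tau_+(\mathsf z_1))(\mathrm i\tau_-(\mathsf z_2))=-\tau_+(\mathsf z_1)\tau_-(\mathsf z_2),
\]
and the other term contributes $-\tau_-(\mathsf z_1)\tau_+(\mathsf z_2)$. Averaging the two gives exactly $-q_0(\mathsf z_1,\mathsf z_2)$.

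It remains to check that the maps $\zeta\mapsto\tau_\pm$ are linear, so that $\tau_\pm(\mathsf z_1)$ is unambiguously the null coordinate of the factor frequency and $\mathsf z=\mathsf z_1+\mathsf z_2$. This is immediate from $\tau_\pm=\tau\pm\Binf\cdot\xi$. The transverse frequency $\xi'-\eta'$, respectively $\eta'$, is carried along passively, since $\mathcal A^\pm$ do not act on it. As a consistency check, the same computation on the form $\partial_tf\partial_tg-\mathcal Bf\mathcal Bg$ gives the symbol $-\tau^{(1)}\tau^{(2)}+\lambda^{(1)}\lambda^{(2)}$, and expanding $q_0$ reproduces this.

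There is no real obstacle. The only care needed is bookkeeping: the sign coming from $\mathrm i^2=-1$, and the normalisation constant of the convolution, which is conventional and should be stated explicitly.
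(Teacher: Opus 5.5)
Your proposal is correct and follows the paper's argument: the paper's proof simply says the identity is immediate from the Fourier symbols \eqref{eq:joint-res} applied to the factorised definition \eqref{eq:Q0}. Your explicit bookkeeping of the $\mathrm i^2=-1$ sign, the convolution normalisation, and the check against the $\partial_tf\,\partial_tg-\mathcal Bf\,\mathcal Bg$ form just spells that out.
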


\begin{proof}
This is immediate from \eqref{eq:joint-res} and the definition \eqref{eq:Q0}.
\end{proof}

\begin{lemma}[the pointwise null-weight bound]\label{lem:pointwise}
For $0\le\breg\le1$,
\begin{equation}\label{eq:null-weight-pointwise}
 \frac{|\tau_+|}{\Omega}\le\langle\tau_-\rangle^{-\breg},
 \qquad
 \frac{|\tau_-|}{\Omega}\le\langle\tau_+\rangle^{-\breg}.
\end{equation}
\end{lemma}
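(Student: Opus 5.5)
The bound \eqref{eq:null-weight-pointwise} is a pointwise inequality between Fourier weights, so I will prove it by comparing the two sides case by case at fixed $(\tau,\xi)$. By the symmetry $\tau_+\leftrightarrow\tau_-$, which exchanges the roles of $h$ and $m$ consistently, it suffices to prove the first inequality, $|\tau_+|\langle\tau_-\rangle^{\breg}\le\Omega$. The starting point is the unfolded form \eqref{eq:pmax}:
\[
\Omega=hm^{\breg}=\max\big(\langle\tau_+\rangle\langle\tau_-\rangle^{\breg},\ \langle\tau_+\rangle^{\breg}\langle\tau_-\rangle\big).
\]
This identity itself is checked directly. If $\langle\tau_+\rangle\ge\langle\tau_-\rangle$, then $h=\langle\tau_+\rangle$ and $m=\langle\tau_-\rangle$. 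Moreover the first entry dominates the second, because
\[
\langle\tau_+\rangle\langle\tau_-\rangle^{\breg}\Big/\langle\tau_+\rangle^{\breg}\langle\tau_-\rangle=(\langle\tau_+\rangle/\langle\tau_-\rangle)^{1-\breg}\ge1,
\]
using $\breg\le1$. The other ordering is symmetric.

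Given this, the first inequality is immediate:
\[
|\tau_+|\langle\tau_-\rangle^{\breg}\le\langle\tau_+\rangle\langle\tau_-\rangle^{\breg}\le\Omega,
\]
since the middle term is one of the two entries of the maximum. Dividing by $\Omega\langle\tau_-\rangle^{\breg}>0$ gives $|\tau_+|/\Omega\le\langle\tau_-\rangle^{-\breg}$. The second inequality follows identically from the second entry of the maximum, $\langle\tau_+\rangle^{\breg}\langle\tau_-\rangle$.

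I do not expect a real obstacle here. The only point needing care is the range $0\le\breg\le1$: it is used exactly where the exponent $1-\breg$ must be nonnegative in order to identify the maximum in \eqref{eq:pmax}. At $\breg=0$ the bound degenerates to $|\tau_\pm|\le h$, which is still valid. The constant is exactly $1$, with no hidden dependence on $v_{\rm A}$ or $\xi$.
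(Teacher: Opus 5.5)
Your argument is correct and is essentially the paper's: the paper reads the bound off the quotient form $\Omega=\langle\tau_+\rangle\langle\tau_-\rangle/m^{1-\breg}$ of \eqref{eq:pmax} together with $m\le\langle\tau_-\rangle$, while you use the equivalent max form, and in both cases the only input is $1-\breg\ge0$. One cosmetic point: the swap $\tau_+\leftrightarrow\tau_-$ leaves $h$, $m$ and $\Omega$ invariant rather than exchanging $h$ and $m$, but since you verify the second inequality directly this does not matter.
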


\begin{proof}
Since $\Omega=\langle\tau_+\rangle\langle\tau_-\rangle/m^{1-\breg}$,
\[
 \frac{|\tau_+|}{\Omega}
 \le\frac{m^{1-\breg}}{\langle\tau_-\rangle}
 \le\langle\tau_-\rangle^{-\breg},
\]
and the other inequality is symmetric.
\end{proof}

Set
\[
 \mathfrak k(\mathsf z_1,\mathsf z_2;\eta)
 :=\frac{|q_0(\mathsf z_1,\mathsf z_2)|}
 {\Omega(\mathsf z_1)\Omega(\mathsf z_2)\langle\eta\rangle^{s-1}}.
\]
The following estimate contains the whole bilinear machinery.

\begin{proposition}[the core estimate]\label{prop:core}
For $\breg\in(\frac12,1]$ and $s>\frac{n+1}2$,
\begin{equation}\label{eq:core}
 \left\|
 \iint \mathfrak k(\mathsf z-\mathsf z_2,\mathsf z_2;\eta)
 F(\mathsf z-\mathsf z_2,\xi'-\eta')G(\mathsf z_2,\eta')
  d\eta' d\nu(\mathsf z_2)
 \right\|_{L^2_{\mathsf z,\xi'}}
 \le C\|F\|_2\|G\|_2 .
\end{equation}
One may take
\begin{equation}\label{eq:core-const}
 C^2\le
 \frac{c_\perp(n,s-1)c_\parallel(\breg)^2}{2v_{\rm A}},
 \quad
 c_\perp(n,s-1):=\int_{\Real^{n-1}}\langle\eta'\rangle^{-2(s-1)} d\eta',
 \quad
 c_\parallel(\breg):=\int_{\Real}\langle r\rangle^{-2\breg} dr .
\end{equation}
\end{proposition}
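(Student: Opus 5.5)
The plan is to prove \eqref{eq:core} by a single weighted Cauchy--Schwarz in the convolution variables $(\mathsf z_2,\eta')$. The weight has to be chosen so that the squared kernel integrates to a constant uniformly in the output frequency $(\mathsf z,\xi')$, while the $F,G$ part integrates in $(\mathsf z,\xi')$ to $\|F\|_2^2\|G\|_2^2$ by Fubini. The point is that an ordinary Schur test fails: after Lemma~\ref{lem:pointwise} each term of $q_0$ decays in only one null variable of each factor. The decay must therefore be split between the two factors, one null direction from each, and it has to be measured relative to the output frequency $\mathsf z$, not relative to $\mathsf z_1$ alone.

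\emph{Step 1: bound the kernel by two separated terms.} Using Lemma~\ref{lem:resonance}, split
\[
|q_0(\mathsf z_1,\mathsf z_2)|\le\tfrac12|\tau_+(\mathsf z_1)||\tau_-(\mathsf z_2)|+\tfrac12|\tau_-(\mathsf z_1)||\tau_+(\mathsf z_2)|.
\]
For the first term, apply \eqref{eq:null-weight-pointwise} to each factor separately:
\[
|\tau_+(\mathsf z_1)|/\Omega(\mathsf z_1)\le\langle\tau_-(\mathsf z_1)\rangle^{-\breg},\qquad |\tau_-(\mathsf z_2)|/\Omega(\mathsf z_2)\le\langle\tau_+(\mathsf z_2)\rangle^{-\breg}.
\]
The second term is handled symmetrically. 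For the transverse weight, use $\langle\eta\rangle\ge\langle\eta'\rangle$. This gives
\[
\mathfrak k\le\tfrac12\big(\mathfrak k_1+\mathfrak k_2\big),\qquad
\mathfrak k_1:=\langle\tau_--\sigma_-\rangle^{-\breg}\langle\sigma_+\rangle^{-\breg}\langle\eta'\rangle^{-(s-1)},
\]
where I write $\mathsf z=(\tau_+,\tau_-)$ and $\mathsf z_2=(\sigma_+,\sigma_-)$, so that $\tau_-(\mathsf z_1)=\tau_--\sigma_-$. Likewise $\mathfrak k_2:=\langle\tau_+-\sigma_+\rangle^{-\breg}\langle\sigma_-\rangle^{-\breg}\langle\eta'\rangle^{-(s-1)}$. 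The crucial observation is that, for fixed output $\mathsf z$, each $\mathfrak k_j$ is a product of a function of $\sigma_+$ alone and a function of $\sigma_-$ alone.

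\emph{Step 2: Cauchy--Schwarz with the separated kernel.} For each $j$ and each fixed $(\mathsf z,\xi')$, Cauchy--Schwarz in $(\mathsf z_2,\eta')$ gives
\[
\Big|\iint\mathfrak k_jF(\mathsf z-\mathsf z_2,\xi'-\eta')G(\mathsf z_2,\eta')\,d\eta'd\nu\Big|^2
\le\Big(\iint\mathfrak k_j^2\,d\eta'd\nu(\mathsf z_2)\Big)\iint|F(\mathsf z-\mathsf z_2,\xi'-\eta')|^2|G(\mathsf z_2,\eta')|^2\,d\eta'd\nu .
\]
Since $d\nu=d\sigma_+d\sigma_-/(2v_{\rm A})$, the first factor equals
\[
\frac{1}{2v_{\rm A}}\int\langle\tau_--\sigma_-\rangle^{-2\breg}d\sigma_-\int\langle\sigma_+\rangle^{-2\breg}d\sigma_+\int\langle\eta'\rangle^{-2(s-1)}d\eta'=\frac{c_\parallel(\breg)^2c_\perp(n,s-1)}{2v_{\rm A}}.
\]
This is independent of $(\mathsf z,\xi')$ by translation invariance in $\sigma_-$. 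It is finite exactly because $\breg>\frac12$, which gives $c_\parallel(\breg)<\infty$, and $s-1>\frac{n-1}2$, which gives $c_\perp(n,s-1)<\infty$. Integrating the second factor over $(\mathsf z,\xi')$ and substituting $\mathsf z_1=\mathsf z-\mathsf z_2$, $\zeta'=\xi'-\eta'$ (Tonelli) yields $\|F\|_2^2\|G\|_2^2$. Combining the two terms with the factor $\tfrac12$ through Minkowski's inequality gives \eqref{eq:core} with the constant \eqref{eq:core-const}.

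\emph{Main obstacle.} The only delicate step is the choice made in Step 2. If one measures $\mathfrak k$ in the factor variables $(\mathsf z_1,\mathsf z_2)$, or puts all the kernel weight on one input as in a Schur test, then for fixed $\mathsf z_1$ the integral $\int\langle\sigma_+\rangle^{-2\breg}d\sigma_+d\sigma_-$ diverges. The decay in $\tau_-(\mathsf z_1)$ must instead be read as decay in $\sigma_-$ after fixing the output $\mathsf z$. This is exactly the Fourier-side version of the null-coordinate factorisation \eqref{eq:keycomputation}. Everything else reduces to Lemma~\ref{lem:pointwise} and Fubini.
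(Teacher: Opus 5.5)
Your proposal is correct and follows the paper's proof essentially verbatim. You use the same splitting of $q_0$ into two cross terms, apply Lemma~\ref{lem:pointwise} to each factor, use Cauchy--Schwarz in $(\mathsf z_2,\eta')$ with the squared kernel integrated at fixed output $\mathsf z$, and finish the $F,G$ factor with Tonelli, which yields the same constant $c_\perp c_\parallel^2/(2v_{\rm A})$. The only difference is cosmetic: you keep the factor $\tfrac12$ outside $\mathfrak k_j$ rather than inside it.
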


\begin{proof}
By the triangle inequality applied to the two terms of $q_0$,
$\mathfrak k\le\mathfrak k_1+\mathfrak k_2$, where
\[
 \mathfrak k_1:=\frac{\frac12|\tau_+(\mathsf z_1)\tau_-(\mathsf z_2)|}
 {\Omega(\mathsf z_1)\Omega(\mathsf z_2)\langle\eta\rangle^{s-1}},
 \qquad
 \mathfrak k_2:=\frac{\frac12|\tau_-(\mathsf z_1)\tau_+(\mathsf z_2)|}
 {\Omega(\mathsf z_1)\Omega(\mathsf z_2)\langle\eta\rangle^{s-1}} .
\]
It therefore suffices to bound each of the two corresponding convolutions and to
add the results.  We treat $\mathfrak k_1$; the argument for $\mathfrak k_2$ is
identical after interchanging the signs $+$ and $-$.
Lemma~\ref{lem:pointwise} and
$\langle\eta\rangle\ge\langle\eta'\rangle$ give
\[
 \mathfrak k_1
 \le\tfrac12
 \langle\tau_-(\mathsf z_1)\rangle^{-\breg}
 \langle\tau_+(\mathsf z_2)\rangle^{-\breg}
 \langle\eta'\rangle^{-(s-1)}.
\]
Denote the right-hand side by
$K_{\mathsf z}(\mathsf z_2,\eta')$, and let $H$ be the convolution
on the left of \eqref{eq:core} with this single cross-term in place of
$\mathfrak k$.  At every total frequency, Cauchy--Schwarz in
$(\mathsf z_2,\eta')$ gives
\begin{align*}
 |H(\mathsf z,\xi')|^2
 &\le
 \left(\iint K_{\mathsf z}(\mathsf z_2,\eta')^2
        d\eta' d\nu(\mathsf z_2)\right)\\
 &\quad\times
 \left(\iint
 |F(\mathsf z-\mathsf z_2,\xi'-\eta')|^2
 |G(\mathsf z_2,\eta')|^2
  d\eta' d\nu(\mathsf z_2)\right).
\end{align*}
The first factor is independent of $\xi'$ and is bounded uniformly in
$\mathsf z$.  Indeed, writing
$r_\pm=\tau_\pm(\mathsf z_2)$, its value is
\[
 \frac{c_\perp(n,s-1)}{8v_{\rm A}}
 \int_{\Real^2}
 \langle\tau_-(\mathsf z)-r_-\rangle^{-2\breg}
 \langle r_+\rangle^{-2\breg} dr_+ dr_-
 =
 \frac{c_\perp(n,s-1)c_\parallel(\breg)^2}
 {8v_{\rm A}}.
\]
Here $1/(2v_{\rm A})$ comes from $d\nu(\mathsf z_2)$, and the additional
factor $1/4$ is the square of the factor $1/2$ in the resonance
identity.

It remains only to integrate the second Cauchy--Schwarz factor.  Tonelli's
theorem and the translation
\[
 (\mathsf z_1,\tilde\xi')
 :=(\mathsf z-\mathsf z_2,\xi'-\eta')
\]
give
\begin{align*}
 &\iiiint
 |F(\mathsf z-\mathsf z_2,\xi'-\eta')|^2
 |G(\mathsf z_2,\eta')|^2
  d\eta' d\nu(\mathsf z_2) d\xi' d\nu(\mathsf z)\\
 &\qquad=
 \left(\iint|F(\mathsf z_1,\tilde \xi')|^2
        d\tilde \xi' d\nu(\mathsf z_1)\right)
 \left(\iint|G(\mathsf z_2,\eta')|^2
        d\eta' d\nu(\mathsf z_2)\right)\\
 &\qquad=\|F\|_2^2\|G\|_2^2.
\end{align*}
This proves \eqref{eq:core} for the cross-term $\mathfrak k_1$, with constant
$\big(c_\perp c_\parallel^2/(8v_{\rm A})\big)^{1/2}$.  Interchanging the
signs $+$ and $-$ proves the same estimate for $\mathfrak k_2$; adding the two
gives \eqref{eq:core-const} and completes the proof.
\end{proof}

\begin{proof}[Proof of Theorem~\ref{thm:null-form}]
Put $a=s-1$.  At the two factor frequencies write
$\zeta_1=\xi-\eta$, $\zeta_2=\eta$ and
$\lambda_j=\Binf\cdot\zeta_j$, so that
$\xi=\zeta_1+\zeta_2$ and $\lambda=\lambda_1+\lambda_2$.
We use
\[
 \langle\xi\rangle^a
 \lesssim_a\langle\zeta_1\rangle^a+\langle\zeta_2\rangle^a,
 \qquad
 \langle\lambda_1+\lambda_2\rangle^{\rpar}
 \le\langle\lambda_1\rangle^{\rpar}
    +\langle\lambda_2\rangle^{\rpar}
\]
where the second inequality is uniform for $0\le\rpar\le1$. By
Lemma~\ref{lem:resonance}, it is enough to estimate the four Fourier integrals
obtained from
$\langle\zeta_i\rangle^a\langle\lambda_j\rangle^{\rpar}$,
$1\le i,j\le2$. For $\theta=0,\rpar$, set
\begin{align*}
 F_\theta(\mathsf z_1,\zeta_1')
 &:=\Omega(\mathsf z_1)\langle\zeta_1\rangle^a
    \langle\lambda_1\rangle^\theta
    |\widetilde f(\mathsf z_1,\zeta_1')|,
 \\
 G_\theta(\mathsf z_2,\zeta_2')
 &:=\Omega(\mathsf z_2)\langle\zeta_2\rangle^a
    \langle\lambda_2\rangle^\theta
    |\widetilde g(\mathsf z_2,\zeta_2')|.
\end{align*}
Since $\zeta_2=\eta$, Proposition~\ref{prop:core} bounds the two terms with
isotropic weight $\langle\zeta_1\rangle^a$ by
\[
 C\|F_{\rpar}\|_2\|G_0\|_2,
 \qquad
 C\|F_0\|_2\|G_{\rpar}\|_2.
\]
The same bounds hold for the two terms with isotropic weight
$\langle\zeta_2\rangle^a$. Indeed,
$q_0(\mathsf z_1,\mathsf z_2)=q_0(\mathsf z_2,\mathsf z_1)$, and at fixed
$(\mathsf z,\xi')$ the
substitution $(\mathsf z_2,\eta')\mapsto(\mathsf z-\mathsf z_2,\xi'-\eta')$
preserves $d\nu d\eta'$ and exchanges the factor frequencies, giving the
version of Proposition~\ref{prop:core} with
$\langle\zeta_1\rangle^a$ in the denominator. Finally,
\[
 \|F_\theta\|_2\simeq\|f\|_{H^{a,1+\theta}_{\breg}},
 \qquad
 \|G_\theta\|_2\simeq\|g\|_{H^{a,1+\theta}_{\breg}}
\]
by \eqref{eq:Hsymbol}. Summing the four estimates and applying Plancherel proves
\eqref{eq:null-form}. The weight inequalities are uniform for
$\rpar\in[0,1]$, and the equivalence constants in \eqref{eq:Hsymbol} are
independent of the parallel index because the factor
$\langle\lambda\rangle^{2\preg-2}$ is extracted exactly. This gives the stated
uniformity.
\end{proof}

The conditions on the parameters become transparent by looking at \eqref{eq:core-const}:
$s>\frac{n+1}2$ is precisely $c_\perp(n,s-1)<\infty$, while
$\breg>\frac12$ is precisely $c_\parallel(\breg)<\infty$.
The dependence on the background field enters only through the $(2v_{\rm A})^{-1}$ density acquired from the change of variables performed after applying the Plancherel identity.  Throughout, the joint functional calculus in $(D_+,D_-)$ used above is legitimate by Lemma~\ref{lem:alfven-action}.

\section{Elliptic Estimates}\label{sec:elliptic}

With the pullback geometry and canonical pressure force along $X$ constructed in
Section~\ref{sec:wave-hodge}, this section develops the first-order derivative
estimates for the Hodge system, that is, the bounds for the Lagrangian pressure
Hessian.  Two preliminaries come first: stability of Sobolev regularity under
the flow, and a coefficient calculus with base space $Y^{s-1,1}$.  The
fixed-time, product, and null-form estimates used here have already been
established in Section~\ref{sec:anisotropic-spaces} and
Subsection~\ref{ssec:null-form}.

\subsection{Sobolev stability under the flow}

\begin{lemma}[properties of precomposition operators]\label{lem:composition}
Let $X=\mathrm{id}+w:\Rn_y\to\Rn_x$ be an orientation-preserving
bi-Lipschitz homeomorphism such that
\[
 \|dX\|_{L^\infty}+\|(dX)^{-1}\|_{L^\infty}\le\mathfrak m,
 \qquad 0<J_-\le\det dX\le J_+.
\]
Put $\Ufield:=dw$ and $\Kmat:=(dX)^{-1}$, and assume
$\Ufield,\Kmat-\mathrm{Id}\in Y^{s-1,1}$.  Recall $\mathcal{C}_{X}^\pm f=f\circ X^\pm$. Then, for every $0\le r\le s$,
\begin{equation}\label{eq:sobolev-composition}
 \|\mathcal C_Xf\|_{H^r_y}\le C_{\mathrm{comp}}(X,r)\|f\|_{H^r_x},
 \qquad
 \|\mathcal C_X^{-1}u\|_{H^r_x}\le C_{\mathrm{comp}}(X,r)\|u\|_{H^r_y},
\end{equation}
where, with $k(r):=\max\{\lceil r\rceil-1,0\}$,
\[
 C_{\mathrm{comp}}(X,r)\le C(n,s,r,v_{\rm A},\mathfrak m,J_\pm)
 \big(1+\|\Ufield\|_{Y^{s-1,1}}
        +\|\Kmat-\mathrm{Id}\|_{Y^{s-1,1}}\big)^{k(r)}.
\]
If these bounds hold uniformly for $X=X(t)$ on a compact interval $I$ and, in addition,
\[
 w\in C(I;H^s),\qquad
 \Ufield,\Kmat-\mathrm{Id}\in C(I;Y^{s-1,1}),
\]
then $u\in C(I;H^r_y)$ implies
$\mathcal C_{X(t)}^{-1}u(t)\in C(I;H^r_x)$. Independently of the Sobolev
time-regularity hypothesis above, suppose that $X(t)$ and $X(t)^{-1}$ vary
locally uniformly on $I$ and that the Jacobian and bi-Lipschitz bounds are
uniform. Then, for every $1\le q<\infty$, every $t_0\in I$, and every
$g\in L^q$,
\begin{equation}\label{eq:composition-Lr-continuity}
 \sup_{t\in I}\|\mathcal C_{X(t)}^{\pm1}\|_{L^q\to L^q}<\infty,\qquad
 \lim_{t\to t_0}\|\mathcal C_{X(t)}^{\pm1}g
 -\mathcal C_{X(t_0)}^{\pm1}g\|_{L^q}=0.
\end{equation}
\end{lemma}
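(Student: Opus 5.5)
The plan is to prove \eqref{eq:sobolev-composition} by induction on integer levels, then interpolate.

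\emph{Integer levels.} At $r=0$ the change of variables $x=X(y)$ gives
\[
 \|\mathcal C_Xf\|_{L^2_y}^2=\int|f|^2J^{-1}\circ X^{-1}\,dx\le J_-^{-1}\|f\|_{L^2_x}^2 .
\]
Symmetrically, $\|\mathcal C_X^{-1}u\|_{L^2_x}^2\le J_+\|u\|_{L^2_y}^2$. At $r=1$ the chain rule of Lemma~\ref{lem:weak-pullback}(iii),
\[
 \partial_a(f\circ X)=A_a^j\,(\partial_jf)\circ X ,
\]
together with $|A|,|\Kmat|\le\mathfrak m$, gives the bound with $k=0$, i.e.\ with no $Y$-norm. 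This is consistent with $k(1)=0$.

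For integer $2\le r\le s$ (only relevant when $s\ge2$), one would write
\[
 \partial_a(f\circ X)=(\delta+\Ufield)_a^j\,\mathcal C_X(\partial_jf).
\]
Then one estimates in $H^{r-1}_y$ using the multiplier estimate \eqref{eq:tame-sp}:
\[
 \|\Ufield g\|_{H^{\varsigma}}\lesssim\|\Ufield\|_{Y^{s-1,1}}\|g\|_{H^{\varsigma}},
 \qquad \varsigma\le s-1 .
\]
Applying this with $g=\mathcal C_X\partial_jf$ and the induction hypothesis at level $r-1$ gains one factor $(1+\|\Ufield\|_{Y^{s-1,1}})$ per step. Since $r-1\le s-1$, the multiplier range is respected. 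The inverse map is treated identically via
\[
 \partial_{x^i}(u\circ X^{-1})=\mathcal C_X^{-1}\bigl(\Kmat_i^a\partial_au\bigr)
 =\mathcal C_X^{-1}\bigl((\delta+(\Kmat-\mathrm{Id}))_i^a\partial_au\bigr),
\]
where now the multiplier $\Kmat-\mathrm{Id}$ is applied on the label side before composing. The induction then runs on $\mathcal C_X^{-1}$ at level $r-1$.

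\emph{Fractional levels.} These follow by complex interpolation of the linear operator $\mathcal C_X$ between consecutive integer levels $\lfloor r\rfloor$ and $\lceil r\rceil$. This requires the top level $\lceil r\rceil$ to be available, which fails when $r>\lfloor s\rfloor$ and $s$ is not an integer. In that range I would instead use the characterization
\[
 \|f\|_{H^r}\simeq\|f\|_{H^{r-1}}+\|\nabla f\|_{H^{r-1}}
\]
and apply the chain-rule identity once more. The factor $\mathcal C_X(\partial_jf)$ lies in $H^{r-1}$ with $r-1\in(0,s-1]$, and its bound comes from the already interpolated levels in $[0,1]$ or from the previous step. Multiplication by $\Ufield$ is controlled by \eqref{eq:tame-sp} at index $\varsigma=r-1\le s-1$. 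This produces exactly $k(r)=\lceil r\rceil-1$ powers.

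\emph{Main obstacle.} The step I expect to be delicate is the fractional $H^{\varsigma}$ bound for $\mathcal C_X$ with $0<\varsigma<1$ and no extra factor. I would obtain it by interpolating the $L^2$ and $H^1$ bounds above, both of which are genuine bounds for one fixed linear operator.

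\emph{Time continuity.} For $u\in C(I;H^r_y)$, write
\[
 \mathcal C_{X(t)}^{-1}u(t)-\mathcal C_{X(t_0)}^{-1}u(t_0)
 =\mathcal C_{X(t)}^{-1}\bigl(u(t)-u(t_0)\bigr)
 +\bigl(\mathcal C_{X(t)}^{-1}-\mathcal C_{X(t_0)}^{-1}\bigr)u(t_0).
\]
The first term tends to zero by the uniform bound \eqref{eq:sobolev-composition}. For the second, approximate $u(t_0)$ by Schwartz functions $\phi$; the uniform operator bound again makes the approximation error small. For smooth $\phi$, the difference tends to zero in $H^{r}$ by the same induction. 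The coefficients $\Kmat(t)\to\Kmat(t_0)$ converge in $Y^{s-1,1}$, and the lower-order compositions converge by the $L^q$ claim below together with uniform boundedness in a higher norm, giving convergence by interpolation.

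\emph{The $L^q$ statement.} The uniform operator bound follows from the change of variables with the Jacobian bounds:
\[
 \|\mathcal C_{X(t)}^{\pm1}\|_{L^q\to L^q}\le \max\bigl(J_-^{-1},J_+\bigr)^{1/q}.
\]
For strong continuity, approximate $g$ by $\phi\in C_c$ in $L^q$. The local uniform convergence $X(t)^{\pm1}\to X(t_0)^{\pm1}$ gives $\phi\circ X(t)^{\pm1}\to\phi\circ X(t_0)^{\pm1}$ uniformly. The supports stay in a fixed compact set by the bi-Lipschitz bounds, so the convergence also holds in $L^q$. A $3\varepsilon$ argument with the uniform operator bound then concludes.
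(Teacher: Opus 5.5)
Your proposal is correct and follows essentially the same route as the paper: change of variables at $r=0$, the weak chain rules at $r=1$, and interpolation on $(0,1)$. Above that, both arguments run an induction on $k(r)$ through $\|\phi\|_{H^r}\simeq\|\phi\|_{H^{r-1}}+\|\nabla\phi\|_{H^{r-1}}$, applying \eqref{eq:tame-sp} to $\Ufield$ after composing and to $\Kmat-\mathrm{Id}$ before composing, and both continuity claims follow from density plus uniform operator bounds.

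The differences are minor. The paper uses the derivative characterization for every $1<r\le s$ and never interpolates above level one, whereas you interpolate between integer levels where they are available. In your time-continuity step, interpolation against a higher norm only controls the $H^{r-1}$ part of $\mathcal C_{X(t)}^{-1}\phi-\mathcal C_{X(t_0)}^{-1}\phi$. The top-order term $(\mathcal C^{-1}_{X(t)}-\mathcal C^{-1}_{X(t_0)})(\nabla\phi\,\Kmat(t_0))$ has to come from the level-$(r-1)$ continuity statement, extended by density to non-smooth arguments, because at $r=s$ no norm above $H^{s-1}$ is available for it.
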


\begin{proof}
For $r=0$, \eqref{eq:sobolev-composition} is the change-of-variables formula;
for $r=1$ it follows from the weak chain rules
\begin{equation}\label{eq:pullback-chain-rule}
 \nabla^y(\mathcal C_Xf)=\mathcal C_X(\nabla f)dX,
 \qquad
 \nabla(\mathcal C_X^{-1}u)=\mathcal C_X^{-1}\big((\nabla^yu)\Kmat\big),
\end{equation}
together with the bounds on $\|dX\|_{L^\infty}$, $\|\Kmat\|_{L^\infty}$ and
$\det dX$; and interpolation gives $0<r<1$.  For $1<r\le s$, use the elementary
characterisation
\[
 \|\phi\|_{H^r}\simeq\|\phi\|_{H^{r-1}}+\|\nabla \phi\|_{H^{r-1}}
\]
and induct on $k(r)$.  In the first chain rule, apply the product estimate
\eqref{eq:tame-sp} at $\varsigma=r-1\le s-1$ to $dX=\mathrm{Id}+\Ufield$ after composing; in the second,
apply it to $\Kmat=\mathrm{Id}+(\Kmat-\mathrm{Id})$ before composing.
Each step lowers the Sobolev index by one and introduces one factor in the
displayed bound for $C_{\mathrm{comp}}(X,r)$; the derivatives of $dX$ and
$\Kmat$ arising in the Sobolev product are controlled by \eqref{eq:tame-sp}
through $\Ufield,\Kmat-\mathrm{Id}\in Y^{s-1,1}$.

For the time-dependent statement, the uniform operator bounds reduce the claim by
density to a fixed smooth compactly supported function.  The hypotheses imply
$X(t)^{\pm1}\to X(t_0)^{\pm1}$ locally uniformly and
$\Kmat(t)-\Kmat(t_0)\to0$ in $L^\infty\cap Y^{s-1,1}$.  Dominated
convergence proves the claim for $r=0$, the chain rules give $r=1$, and interpolation
gives $0<r<1$; by the product estimate \eqref{eq:tame-sp} and induction on $k(r)$ the case $1<r\le s$ follows.

For \eqref{eq:composition-Lr-continuity}, the operators
$\mathcal C_{X(t)}^{\pm1}$ are uniformly bounded on every $L^q$,
$1\le q<\infty$, because $J$ is bounded above and below. For fixed
$g\in C_c(\Rn)$, strong continuity follows from dominated convergence and the
assumed locally uniform convergence of $X(t)^{\pm1}$; density of $C_c(\Rn)$ in
$L^q$ and the uniform operator bounds do the rest.
\end{proof}

\subsection{Coefficient calculus}\label{ssec:coeffcalculus}

By Proposition~\ref{prop:tame-product}, $Y^{s-1,1}$ is a Banach algebra. We are now going to study its behaviour under analytic functions, which in the applications will be restrictions to a small ball in the space of matrices $M$ of $\det(\mathrm{Id}+M)$, its cofactors,
$\log\det(\mathrm{Id}+M)$, $(\mathrm{Id}+M)^{-1}$ and their products. Thus the following statement and \eqref{eq:coeff-lipschitz} in particular control $J-1$, $\Kmat-\mathrm{Id}$, the two
inverse factors in $\mathcal Q$, and $\varrho$ from one statement.

\begin{lemma}[coefficient calculus]\label{lem:coeff-calculus}
Let $\Psi$ be real-analytic near the origin in a finite-dimensional matrix space. There is
$\delta_\Psi=\delta_\Psi(\Psi,n,s,v_{\rm A})>0$ such that, if
$\|M_1\|_{Y^{s-1,1}}+\|M_2\|_{Y^{s-1,1}}\le\delta_\Psi$, then
\begin{equation*}\tag{P}\label{eq:coeff-lipschitz}
\|\Psi(M_1)-\Psi(M_2)\|_{Y^{s-1,1}}
\ \le\ C_\Psi\|M_1-M_2\|_{Y^{s-1,1}}.
\end{equation*}
In particular
$\|\Psi(M_1)-\Psi(0)\|_{Y^{s-1,1}}\le C_\Psi\|M_1\|_{Y^{s-1,1}}$.

In addition, suppose that the pointwise ranges of $M_1,M_2$ lie in a fixed
compact convex set $\mathscr K$ on a neighbourhood of which $\Psi$ is defined and
$C^1$.  Then, for every $1\le p\le\infty$,
\[
\|\Psi(M_1)-\Psi(M_2)\|_{L^p}\le C_{\Psi,\mathscr K}\|M_1-M_2\|_{L^p},
\]
and, separately, for every $\partial\in\{\partial_t,\partial_a\}$, $1\le a\le n$,
for which the indicated weak derivative of $M_1$ belongs to $L^1_{\rm loc}$,
\[
\partial\Psi(M_1)=\Psi'(M_1)\partial M_1\quad\text{in }\mathcal D'.
\]
Consequently, the same difference estimates hold with $C_tL^p$ in place of
$L^p$. If $M\in C_tY^{s-1,1}$, then
$\Psi(M)-\Psi(0)\in C_tY^{s-1,1}$; more generally, differences
$\Psi(M_1)-\Psi(M_2)$ are continuous there by
\eqref{eq:coeff-lipschitz}. This formulation is necessary because
$Y^{s-1,1}$ is nonunital when $\Psi(0)\ne0$.
\end{lemma}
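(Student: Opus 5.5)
The Lipschitz estimate \eqref{eq:coeff-lipschitz} will come from a power-series expansion, using the Banach algebra property of $Y^{s-1,1}$ from Proposition~\ref{prop:tame-product}. Write $\Psi(M)=\sum_\alpha c_\alpha M^\alpha$, where $\alpha$ ranges over multi-indices in the matrix entries. The series converges absolutely for $|M|<\rho$, so $|c_\alpha|\le C\rho^{-|\alpha|}$. Let $C_{\rm alg}\ge1$ be the algebra constant in \eqref{eq:Y-product}, so that $\|fg\|_{Y^{s-1,1}}\le C_{\rm alg}\|f\|_{Y^{s-1,1}}\|g\|_{Y^{s-1,1}}$. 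For each monomial we telescope
\[
M_1^\alpha-M_2^\alpha=\sum_{k}M_1^{\alpha'}(M_{1,k}-M_{2,k})M_2^{\alpha''},
\]
a sum of $|\alpha|$ products in which exactly one factor is the difference. Every factor lies in the nonunital algebra, so iterating \eqref{eq:Y-product} gives
\[
\|M_1^\alpha-M_2^\alpha\|_{Y^{s-1,1}}\le|\alpha|\,C_{\rm alg}^{|\alpha|-1}\delta_\Psi^{|\alpha|-1}\|M_1-M_2\|_{Y^{s-1,1}}.
\]
The constant term $c_0$ cancels in the difference, which is exactly why only differences, and not $\Psi(M)$ itself, can be placed in $Y^{s-1,1}$. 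Summing over $\alpha$, the resulting series $\sum_\alpha|\alpha|C\rho^{-|\alpha|}(C_{\rm alg}\delta_\Psi)^{|\alpha|-1}$ converges once $C_{\rm alg}\delta_\Psi<\rho/2$, say. The number of multi-indices of a given length grows only polynomially, so this choice absorbs it after adjusting the constant. This fixes $\delta_\Psi$, which depends on $n,s,v_{\rm A}$ through $C_{\rm alg}$, and it also fixes $C_\Psi$. Taking $M_2=0$ gives the special case.

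The only point needing care is that the partial sums converge in $Y^{s-1,1}$ to the function $\Psi(M_1)-\Psi(M_2)$ defined pointwise. This follows from two facts. The series is Cauchy in $Y^{s-1,1}$, hence in $L^\infty$ by Lemma~\ref{lem:Linftyembed}. The same embedding shows that the pointwise values satisfy $|M_i|\lesssim\delta_\Psi<\rho$, so the pointwise series converges to $\Psi(M_i)$. The two limits therefore agree almost everywhere. I expect this identification, together with bookkeeping the embedding constant inside $\delta_\Psi$, to be the only real obstacle.

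The $L^p$ statements are elementary. Since $\mathscr K$ is convex, the mean value theorem along the segment joining $M_2(y)$ to $M_1(y)$ gives
\[
|\Psi(M_1)-\Psi(M_2)|\le\sup_{\mathscr K}|\Psi'|\,|M_1-M_2|
\]
pointwise, and integrating yields the $L^p$ bound. The same pointwise bound holds at each time, so the $C_tL^p$ difference estimate follows as well. For the chain rule, fix a bounded locally integrable $M_1$ with $\partial M_1\in L^1_{\rm loc}$ and mollify it to $M_1^\epsilon$; convexity keeps the range in $\mathscr K$. Then $\partial\Psi(M_1^\epsilon)=\Psi'(M_1^\epsilon)\partial M_1^\epsilon$ holds classically. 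Along a subsequence, $\Psi'(M_1^\epsilon)\to\Psi'(M_1)$ boundedly almost everywhere, and $\partial M_1^\epsilon\to\partial M_1$ in $L^1_{\rm loc}$. Dominated convergence then passes both sides to the limit in $\mathcal D'$. The chain rule is only needed for a $\Psi$ that is $C^1$ near $\mathscr K$, so the argument uses nothing more.

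Finally, time continuity follows from \eqref{eq:coeff-lipschitz}. If $M\in C_tY^{s-1,1}$, choose $\delta_\Psi$ so that $\|M(t)\|_{Y^{s-1,1}}\le\delta_\Psi/2$ on the interval, or argue locally in time. Applying \eqref{eq:coeff-lipschitz} with $M_1=M(t)$ and $M_2=M(t_0)$ gives $\|\Psi(M(t))-\Psi(M(t_0))\|_{Y^{s-1,1}}\lesssim\|M(t)-M(t_0)\|_{Y^{s-1,1}}$. Hence $\Psi(M)-\Psi(0)\in C_tY^{s-1,1}$, and the same argument applied to differences $\Psi(M_1)-\Psi(M_2)$ gives their continuity.
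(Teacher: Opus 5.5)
Your proposal is correct and follows the paper's argument: power-series expansion with each monomial difference telescoped so that it contains one factor $M_1-M_2$, the algebra bound \eqref{eq:Y-product} giving a geometric majorant, the pointwise mean-value theorem on the convex set $\mathscr K$, and the mollification chain rule. Your identification of the $Y^{s-1,1}$-limit with the pointwise value via \eqref{eq:embed-frac} is a detail the paper leaves implicit.

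One correction in the time-continuity step. $\delta_\Psi$ is already fixed by the first part, so you cannot ``choose'' it to fit $M$; you must assume $\sup_t\|M(t)\|_{Y^{s-1,1}}\le\delta_\Psi/2$, which is what the factor $\frac12$ in \eqref{eq:delta-choice} secures in the applications. ``Arguing locally in time'' cannot replace this assumption, since $\Psi(M)$ need not even be defined for large $M$.
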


\begin{proof}
Expand $\Psi$ at the origin.  Carefully rewriting each polynomial to highlight differences $M_1^j-M_2^j$ in
$\Psi(M_1)-\Psi(M_2)$ and using the $Y^{s-1,1}$ algebra estimate \eqref{eq:Y-product} gives a summable geometric series majorant as soon as
$\|M_1\|_{Y^{s-1,1}}+\|M_2\|_{Y^{s-1,1}}\le\delta_\Psi$, proving \eqref{eq:coeff-lipschitz}; take $M_2=0$ for the one-function estimate.

For the $L^p$ clause, $\Psi'$ is bounded on $\mathscr K$ and the segment joining
$M_2(y)$ to $M_1(y)$ stays in $\mathscr K$ by convexity, so the mean-value
theorem applies pointwise; no derivative hypothesis is used here.  The
chain rule is the Lipschitz chain rule obtained by mollification and a.e. convergence (rather than by density, which would fail for $p=\infty$).
\end{proof}

\subsection{The pressure Hessian}
\label{ssec:pressure-hessian}

Fix an orientation-preserving global bi-Lipschitz map $X$ and the associated
precomposition operator $\mathcal C_X$, metric $\gmetric$ and measure $\mu$ from
Section~\ref{sec:wave-hodge}.  The canonical vector field
$\mathcal P_{\gmetric}q$ along $X$ and its Hardy--Littlewood--Sobolev bound are given by
Lemma~\ref{lem:hodge-solvability}.  We now estimate its componentwise label
derivative, namely the Lagrangian pressure Hessian; this is the only (derivative) elliptic estimate the paper needs. 

The idea is that $\mathcal P_{\gmetric}$ has order $-1$, so $d\mathcal P_{\gmetric}$
should behave like an operator of order zero.  To make this rigorous one needs
only observe that, in label coordinates, the metric Hodge operator is a
principal but small-coefficient perturbation of the flat Hodge operator in the
labels; the smallness will be guaranteed by the parameter $\delta$ of the
fixed-point class $\mathcal U_{R,\delta}$ of \S\ref{sec:fixedpoint}.

\begin{lemma}[Hodge a priori estimates: general rhs]\label{lem:label-hodge}
Let $\Ufield=dw$ be exact, let $X=\mathrm{id}+w$, recall $\sigma=s-1>(n-1)/2$,
and put
\begin{equation}\label{eq:KMdef}
    \Kmat=\Kmat[\Ufield]:=(\mathrm{Id}+\Ufield)^{-1},\qquad
    \Mmat=\Mmat[\Ufield]:=\Kmat[\Ufield]-\mathrm{Id}.
\end{equation}
There is $\delta_{\rm H}>0$, depending only on the standing parameters, such that if
\begin{equation}\label{eq:hodge-smallness}
 \|\Ufield\|_{Y^{\sigma,1}}\le\delta_{\rm H},
\end{equation}
then $X$ is an orientation-preserving global bi-Lipschitz map, and there is a
bounded linear operator $d\mathcal P_{\gmetric}$ on $Y^{\sigma,\preg}$,
$0\le\preg\le1$, with
\begin{equation}\label{eq:elliptic-Y}
 \|d\mathcal P_{\gmetric}q\|_{Y^{\sigma,\preg}}
 \le C_{\rm H}\|q\|_{Y^{\sigma,\preg}},
 \qquad 0\le\preg\le1.
\end{equation}
In particular, since $Y^{\sigma,0}=H^\sigma$,
\begin{equation}\label{eq:elliptic-Hsigma}
 \|d\mathcal P_{\gmetric}q\|_{H^\sigma}
 \le C_{\rm H}\|q\|_{H^\sigma}.
\end{equation}
The constant is uniform on the ball \eqref{eq:hodge-smallness}.  If moreover
$q$ belongs to $L^p(\mu)$ for some $1<p<n$, which is automatic with $p=2$ when
$n\ge3$, then
\begin{equation}\label{eq:elliptic-identification}
 (d\mathcal P_{\gmetric}q)_a^i
 =\partial_a(\mathcal P_{\gmetric}q)^i
 \qquad\text{in }\mathcal D',
\end{equation}
that is, on this class $d\mathcal P_{\gmetric}$ is the distributional label
derivative of the canonical vector field along $X$ defined by \eqref{eq:pressure-gradient}, as the
notation indicates.  For $n\ge3$ this identifies the operator completely; for
$n=2$ the canonical vector field along $X$ need not be defined for a general
$q\in Y^{\sigma,\preg}$, and $d\mathcal P_{\gmetric}$ is its bounded extension.
The operator depends only on $\Ufield$, not on the spatially constant part of
$w$.
\end{lemma}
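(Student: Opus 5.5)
The approach is to realise $d\mathcal P_{\gmetric}$ as the solution of a fixed-point (resolvent) identity in the label variables, as anticipated in \eqref{eq:resolventintro}. First, the bi-Lipschitz claim. By Lemma~\ref{lem:Linftyembed} at $(\sreg,\preg)=(\sigma,1)$, which is admissible since $\sigma>(n-1)/2$ and $\sigma+1>n/2$, we have $\|\Ufield\|_{L^\infty}\le C\delta_{\rm H}$. Choosing $\delta_{\rm H}$ small gives $|\Ufield|\le\frac12$ pointwise. Then $X=\mathrm{id}+w$ satisfies $|X(y)-X(y')-(y-y')|\le\frac12|y-y'|$, so it is globally bi-Lipschitz and onto (a Lipschitz perturbation of the identity). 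Moreover $\det(\mathrm{Id}+\Ufield)>0$ by continuity along $\mathrm{Id}+\theta\Ufield$, so $X$ is orientation-preserving. Lemma~\ref{lem:coeff-calculus} with $\Psi(M)=(\mathrm{Id}+M)^{-1}$ gives $\|\Mmat\|_{Y^{\sigma,1}}\le C\|\Ufield\|_{Y^{\sigma,1}}$, and likewise for the cofactor and Jacobian coefficients.

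Next comes the label form of the Hodge system. Let $F$ be a smooth solution of $\curl_{\gmetric}F=0$, $\Div_{\gmetric}F=q$, and put $\Hess=dF$. By Lemma~\ref{lem:weak-pullback}(iii) the equations read $\Kmat^a_i\Hess_a^i=q$ and $\Kmat^a_i\Hess_a^j-\Kmat^a_j\Hess_a^i=0$. Splitting $\Kmat=\mathrm{Id}+\Mmat$ gives the flat system
\[
 \tr\Hess=q-\Mmat^a_i\Hess^i_a,\qquad
 \Hess^j_i-\Hess^i_j=-(\Mmat^a_i\Hess^j_a-\Mmat^a_j\Hess^i_a).
\]
This is supplemented by the exactness $\Curl_y\Hess=0$ (rows of $\Hess$ are gradients). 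The flat div--curl system for an exact matrix field with prescribed trace $g$ and antisymmetric part $\omega$ is inverted by order-zero Riesz-transform combinations. Indeed, writing $\Hess^j_a=\partial_aF^j$, we have $\Delta F^j=\partial_j(\tr\Hess)+\partial_a(\Hess^j_a-\Hess^a_j)$, hence $\Hess=\mathscr T_0(g,\omega)$ with $\mathscr T_0$ built from $\Riesz_a\Riesz_b$. We therefore define $d\mathcal P_{\gmetric}q$ as the solution of
\[
 \Hess=\mathscr T_0 q+\mathscr T_{\Mmat}\Hess,\qquad \mathscr T_{\Mmat}\Hess:=-\mathscr T_0\big(\Mmat\cdot\Hess\big).
\]
Fourier multipliers of order zero are bounded on every $Y^{\sigma,\preg}$ with constant one-ish. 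The multiplier bound \eqref{eq:Y-multiplier-interp} gives $\|\Mmat\Hess\|_{Y^{\sigma,\preg}}\le C\|\Mmat\|_{Y^{\sigma,1}}\|\Hess\|_{Y^{\sigma,\preg}}$ for $0\le\preg\le1$. So $\|\mathscr T_{\Mmat}\|\le CC'\delta_{\rm H}\le\frac12$ after shrinking $\delta_{\rm H}$. The Neumann series then defines $d\mathcal P_{\gmetric}=(\mathrm{Id}-\mathscr T_{\Mmat})^{-1}\mathscr T_0$ with the uniform bound \eqref{eq:elliptic-Y}, and \eqref{eq:elliptic-Hsigma} is the case $\preg=0$. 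The operator depends only on $\Ufield$.

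The main obstacle is the identification \eqref{eq:elliptic-identification}, i.e.\ showing that the Neumann solution really equals $d$ of the canonical $L^{p^*}(\mu)$ field from Lemma~\ref{lem:hodge-solvability}. A priori $F=\mathcal P_{\gmetric}q$ has $\mathcal C_X^{-1}F=\nabla\Delta_x^{-1}(q\circ X^{-1})$, so for $q\in L^p$ the field $\nabla(\mathcal C_X^{-1}F)\in L^p$ by Calder\'on--Zygmund. The weak chain rule \eqref{eq:pullback-chain-rule} then gives $dF\in L^p_y$, and $dF$ satisfies the label system above a.e.\ by Lemma~\ref{lem:weak-pullback}(iii). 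Hence $dF$ solves the same resolvent identity, but in $L^p$ rather than $Y^{\sigma,\preg}$. To close the argument I would show that $\mathscr T_{\Mmat}$ is also a contraction on $L^p$ (using $\|\Mmat\|_{L^\infty}\lesssim\delta_{\rm H}$ and $L^p$-boundedness of $\mathscr T_0$, shrinking $\delta_{\rm H}$ depending on $p$). Then the resolvent identity has a unique solution in $L^p+Y^{\sigma,\preg}\subset L^p+L^2$. Running the Neumann series on $L^p\cap L^2$ data, both solutions coincide. For general $q$ we then argue by density, approximating in $L^p\cap Y^{\sigma,\preg}$ and passing to distributional limits. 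For $n\ge3$, $p=2<n$ covers all $q\in H^\sigma$. For $n=2$ we simply take the bounded extension, as the statement asserts.
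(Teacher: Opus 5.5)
Your construction of $d\mathcal P_{\gmetric}$ is essentially the paper's: the $L^\infty$ embedding for the bi-Lipschitz and orientation claims, the splitting $\Kmat=\mathrm{Id}+\Mmat$, the flat Hodge reconstruction, and the Neumann series on $Y^{\sigma,\preg}$ via \eqref{eq:Y-multiplier-interp}.

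\textbf{The identification step.} Here you take a genuinely different route. The paper stays in the $L^2$-based scale throughout:
\begin{itemize}
\item it mollifies $w$ and $q$;
\item it uses Lemma~\ref{lem:composition} and \eqref{eq:tame-sp} to show $dF_\ell\in H^\sigma$ for the smooth approximants, so that uniqueness of the resolvent identity in $H^\sigma$ applies;
\item it extracts weak limits in $H^\sigma$ and $L^{p^*}$;
\item it re-identifies the limit field as the canonical one through the conservative identities of Lemma~\ref{lem:weak-pullback}(ii) and $L^{p^*}$ uniqueness.
\end{itemize}
You instead work directly at the rough level. Calder\'on--Zygmund gives $\nabla_x^2\Delta_x^{-1}(q\circ X^{-1})\in L^p$, and bi-Lipschitz composition then gives $dF\in L^p$. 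A second Neumann argument in $L^p$, which only needs $\|\Mmat\|_{L^\infty}\lesssim\delta_{\rm H}$, yields uniqueness of the resolvent identity in $L^p+L^2$. Hence $dF=d\mathcal P_{\gmetric}q$ at once. This is shorter and avoids approximating the geometry. It also makes your final density step superfluous, since every $q\in L^p\cap Y^{\sigma,\preg}$ is already covered. One point you should add is why $dF$ satisfies the flat reconstruction $dF=\mathscr H(\Div_yF,\curl_yF)$. This holds because the difference lies in $L^p$ and has Fourier support at the origin, so it vanishes.

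\textbf{Uniformity of $\delta_{\rm H}$.} The price of your route is that the smallness condition now involves the $L^p$ Calder\'on--Zygmund constant $C_p$, which blows up as $p\downarrow1$. So ``shrinking $\delta_{\rm H}$ depending on $p$'' contradicts the statement, where a single $\delta_{\rm H}$, depending only on the standing parameters, serves every $1<p<n$. The paper's route has no such issue, because there $p$ enters only through the HLS bound used for compactness, never through the contraction.

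This is easily repaired. Since $q\in Y^{\sigma,\preg}\subset L^2$ anyway, interpolation gives $q\in L^{p_1}(\mu)$, where you take $p_1=2$ if $n\ge3$ and $p_1=\max(p,\tfrac32)$ if $n=2$. The canonical field $\mathcal C_X\nabla\Delta_x^{-1}\mathcal C_X^{-1}q$ is the same Riesz-potential convolution for either exponent. The Calder\'on--Zygmund constants are uniform for $p_1$ in this fixed compact subrange of $(1,n)$, so one $\delta_{\rm H}$ suffices.
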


\begin{proof}
We construct a candidate for $d\mathcal P_{\gmetric}q$ at the level of the
differentiated Hodge system, by means of Riesz-transform type operators, and
then identify it with the distributional derivative of the canonical vector field
$\mathcal P_{\gmetric}q$ along $X$ from Lemma~\ref{lem:hodge-solvability}.  The threshold
$\delta_{\rm H}$ is shrunk finitely many times below.

\emph{Geometry and coefficients.}
By the embedding \eqref{eq:embed-frac} at $(\sreg,\preg)=(\sigma,1)$ we may
assume $\|\Ufield\|_{L^\infty}\le\tfrac12$.  Then
$w$ has a Lipschitz representative with constant $\tfrac12$, so
\[
 \tfrac12|y-z|\le|X(y)-X(z)|\le\tfrac32|y-z|,
\]
and for every $x$ the contraction $y\mapsto x-w(y)$ of the complete space
$\Rn$ produces the unique solution of $X(y)=x$; hence $X$ is a global
bi-Lipschitz homeomorphism.  Since $\det(\mathrm{Id}+\theta\Ufield)$ vanishes
nowhere for $0\le\theta\le1$, the Jacobian retains the sign of the identity,
so $X$ is orientation-preserving with $2^{-n}\le J\le(3/2)^n$.  Finally, Lemma~\ref{lem:coeff-calculus}, applied to the real-analytic map
$\Psi(M):=(\mathrm{Id}+M)^{-1}-\mathrm{Id}$, gives the first of
\begin{equation}
    \|\Mmat\|_{Y^{\sigma,1}}\le C_\Psi\|\Ufield\|_{Y^{\sigma,1}},
 \qquad
 \|\Kmat\|_{L^\infty;\mathrm{op}}\le2 ,
\end{equation}
the second being the Neumann series bound
$\|(\mathrm{Id}+\Ufield)^{-1}\|_{L^\infty;\mathrm{op}}
\le(1-\|\Ufield\|_{L^\infty;\mathrm{op}})^{-1}$.

\emph{The differentiated resolvent.}
From the Cartesian calculus for vector fields along $X$ in \S\ref{sec:wave-hodge}, we deduce that the metric Hodge system \eqref{eq:invdiv} is exactly
\begin{equation}\label{eq:label-hodge-perturbation}
 \begin{cases}
  \Div_{y}F
    &=q-\Mmat^a_i\partial_aF^i,\\
  (\curl_{y}F)^{ij}
    &=-\Mmat^a_i\partial_aF^j
      +\Mmat^a_j\partial_aF^i.
 \end{cases}
\end{equation}
Here $\Div_y$ and $\curl_y$ denote $\Div_{\gmetric}$ and $\curl_{\gmetric}$ with
$\Kmat^a_i$ replaced by $\delta^a_i$, that is
$\Div_yF=\partial_iF^i$ and $(\curl_yF)^{ij}=\partial_iF^j-\partial_jF^i$, label and Cartesian indices being identified in these operators and
throughout the flat construction that follows, so that a label derivative and
the Fourier variable may carry a Cartesian index; this is used nowhere else.
Note that $\curl_y$ antisymmetrises the Cartesian index of $F$ against the
derivative index, whereas the rowwise label curl $\Curl_y$ of
\eqref{eq:Curl} antisymmetrises the two label indices of an
$\Rn$-valued $1$-form and leaves its Cartesian index untouched.  This makes
precise the perturbation structure announced at the start of the subsection,
\[
 (\Div_{\gmetric},\curl_{\gmetric})
 = (\Div_{y},\curl_{y})+(\Kmat-\mathrm{Id})d .
\]
The remainder is of principal order, rather than a lower-order Christoffel
term, but its coefficient is small.

Motivated by
\begin{equation}\label{eq:motivation}
    \Delta_yF^j=\partial_i^2F^j=\partial_i(\partial_iF^j-\partial_jF^i)+\partial_i\partial_jF^i=\partial_i(\curl_y F)^{ij}+\partial_j\Div_y F
\end{equation}
giving
\[
\partial_aF^j =\partial_a\Delta^{-1}_y\left[\partial_i(\curl_y F)^{ij}+\partial_j\Div_y F\right],
\]
we define, for a scalar $r$ and an antisymmetric $2$-tensor $c=(c^{ij})$, the flat
Hodge reconstruction operator
\begin{equation}\label{eq:Hodgereconstruction}
    \big(\mathscr H(r,c)\big)_a^j
 :=\partial_a\Delta_y^{-1}\big(\partial_jr+\partial_ic^{ij}\big).
\end{equation}
Indeed, if $r=\Div_{y}F$ and
$c^{ij}=\partial_iF^j-\partial_jF^i$, then \eqref{eq:motivation} shows that
$dF=\mathscr H(r,c)$.  Recalling $\Hess=dF$, the system
\eqref{eq:label-hodge-perturbation} becomes the resolvent identity
\begin{equation}\label{eq:pressure-Hessian}
 \Hess=\mathscr T_0q+\mathscr T_{\Mmat}\Hess,
\end{equation}
where
\[
 \mathscr T_{\Mmat}\Hess
 :=\mathscr H\big(-\Mmat^a_i\Hess_a^i,
       -\Mmat^a_i\Hess_a^j+\Mmat^a_j\Hess_a^i\big), \qquad \mathscr T_0q:=\mathscr H(q,0).
\]
The entries of $\mathscr H$ are Fourier multipliers with the
homogeneous symbols $\xi_a\xi_j/|\xi|^2$, with the value at $\xi=0$ fixed
to be zero. Equivalently, they are compositions of Riesz transforms and are
bounded by one at the symbol level. Since every
$Y^{\sigma,\preg}$ is a weighted $L^2$ space
on the Fourier side, with weight depending only on $\xi$, each entry is
bounded on every $Y^{\sigma,\preg}$ with norm at most one.  By \eqref{eq:Y-multiplier-interp} with $g=\Mmat$, the operator norm of
$\mathscr T_{\Mmat}$ on $Y^{\sigma,\preg}$
is at most $C_0\|\Mmat\|_{Y^{\sigma,1}}$, with $C_0=C_0(n,s,v_{\rm A})$
independent of $\preg$.  Shrinking $\delta_{\rm H}$ so that
$C_0C_\Psi\delta_{\rm H}\le\tfrac12$, the operator
$\mathrm{Id}-\mathscr T_{\Mmat}$ is inverted by its Neumann series, uniformly
for $0\le\preg\le1$; in particular \eqref{eq:pressure-Hessian} has at most one
solution in $Y^{\sigma,\preg}$.  Define
\begin{equation}\label{eq:neumann-hodge-derivative}
 d\mathcal P_{\gmetric}q
 :=(\mathrm{Id}-\mathscr T_{\Mmat})^{-1}\mathscr T_0q .
\end{equation}
The preceding bounds prove \eqref{eq:elliptic-Y}--\eqref{eq:elliptic-Hsigma},
and \eqref{eq:pressure-Hessian} contains only $\Ufield$ and $\Kmat$, so the
operator does not see the spatially constant part of $w$.

\emph{Identification.}
It remains to prove \eqref{eq:elliptic-identification} when, in addition,
$q\in L^p(\mu)$ for some $1<p<n$. Choose a standard nonnegative unit-mass
mollifier $\rho_\ell$ and set
$w_\ell:=w*\rho_\ell$, so that each $\Ufield_\ell:=dw_\ell=\Ufield*\rho_\ell$
is smooth and exact, satisfies \eqref{eq:hodge-smallness} and
$\|\Ufield_\ell\|_{L^\infty}\le\tfrac12$, and $\Ufield_\ell\to\Ufield$ in
$Y^{\sigma,1}$; choose also smooth, compactly supported $q_\ell\to q$ in
$H^\sigma\cap L^p$.  Lemma~\ref{lem:coeff-calculus} gives
$\Kmat_\ell\to\Kmat$, $\Mmat_\ell\to\Mmat$ and $J_\ell\to J$ in $L^\infty$,
with $\Mmat_\ell\to\Mmat$ also in $Y^{\sigma,1}$; the maps
$X_\ell=\mathrm{id}+w_\ell$ are orientation-preserving and uniformly
bi-Lipschitz.

For these smooth pairs the canonical vector field
$F_\ell:=\mathcal P_{\gmetric_\ell}q_\ell$ along $X_\ell$ from
Lemma~\ref{lem:hodge-solvability} is smooth.  Put
$u_\ell:=\nabla_x\Delta_x^{-1}\mathcal C_{X_\ell}^{-1}q_\ell$, so that
$F_\ell=\mathcal C_{X_\ell}u_\ell$ by \eqref{eq:pressure-gradient}.  Applying
the first identity of \eqref{eq:pullback-chain-rule} componentwise gives
$dF_\ell=\mathcal C_{X_\ell}(\nabla_xu_\ell) dX_\ell$, that is
\[
 \Hess_\ell:=dF_\ell
 =\mathcal C_{X_\ell}\big(\nabla_x^2\Delta_x^{-1}\mathcal C_{X_\ell}^{-1}q_\ell\big) dX_\ell ,
\]
which belongs to $H^\sigma$ by Lemma~\ref{lem:composition} and
\eqref{eq:tame-sp}.
In this class the computation \eqref{eq:motivation} is justified, so
$\Hess_\ell$ solves \eqref{eq:pressure-Hessian} with coefficient $\Mmat_\ell$
and datum $q_\ell$; by the uniqueness just established,
$\Hess_\ell=d\mathcal P_{\gmetric_\ell}q_\ell$.  The uniform resolvent bound
and \eqref{eq:canonical-hls} give, after extraction,
\[
 \Hess_\ell\rightharpoonup \overline{\Hess}\quad\hbox{in }H^\sigma,
 \qquad
 F_\ell\rightharpoonup F\quad\hbox{in }L^{p^*} .
\]
Passing to the limit in \eqref{eq:pressure-Hessian}, with the coefficient
converging strongly in $Y^{\sigma,1}$ and the solution weakly in $H^\sigma$,
shows that $\overline{\Hess}=d\mathcal P_{\gmetric}q$, while
$\Hess_\ell=dF_\ell$ gives $\overline{\Hess}=dF$ in distributions.

It remains to identify $F$. For each $\ell$, the conservative weak identities
in Lemma~\ref{lem:weak-pullback}(ii) hold for $F_\ell$, with divergence datum
$q_\ell$ and zero curl. Since $F_\ell\rightharpoonup F$ in $L^{p^*}$,
$q_\ell\to q$ in $L^p$, and $J_\ell\Kmat_\ell\to J\Kmat$ and
$J_\ell\to J$ in $L^\infty$, these identities pass directly to the limit
against every compactly supported smooth test function.
Lemma~\ref{lem:weak-pullback}(ii) therefore shows that $F$ solves the metric
Hodge system in the conjugated distributional sense of
Lemma~\ref{lem:hodge-solvability}. The uniform Jacobian bounds give
$F\in L^{p^*}(\mu)$, and uniqueness identifies $F=\mathcal P_{\gmetric}q$.
\end{proof}

In the remainder of this subsection $I=[0,T]$, and $\Ufield,\Vfield$ denote
spatially exact $\Rn$-valued $1$-forms in $H^{\sigma,1}_{\breg}(\Rst)$
satisfying
\begin{equation}\label{eq:admissible}
 \|\Ufield\|_{H^{\sigma,1}_{\breg}}\le R,
 \qquad
 \sup_{t\in I}\|\Ufield(t)\|_{Y^{\sigma,1}}\le\delta ,
\end{equation}
and likewise for $\Vfield$.  Here $\delta>0$ is small enough, depending only
on the standing parameters, that the following conditions are met. First, every $\Ufield(t)$ satisfies
\eqref{eq:hodge-smallness}, hence $\|\Ufield(t)\|_{L^\infty}\le\frac12$ by
\eqref{eq:embed-frac}. Second, Lemma~\ref{lem:coeff-calculus} applies to pairs of such coefficients; the
explicit choice \eqref{eq:delta-choice} is made in \S\ref{sec:fixedpoint}.

All time norms are taken over $I$, and $C_\delta$, $C_{\delta,\rpar}$ denote
constants depending only on $\delta$, $\rpar$ and the standing parameters,
never on $R$, on $T$ or on the fields.  We abbreviate, repeated indices being
summed,
\[
 N_{\Ufield,ab}^{ij}:=Q_0(\Ufield_a^j,\Ufield_b^i),
 \qquad
 \Ccoef_{\Ufield,ij}^{ab}:=\Kmat[\Ufield]^a_i\Kmat[\Ufield]^b_j,
 \qquad
 \widetilde{\Ccoef}_{\Ufield,ij}^{ab}
 :=\Ccoef_{\Ufield,ij}^{ab}-\delta_i^a\delta_j^b,
 \qquad
 \mathcal Q[\Ufield]=-\Ccoef_{\Ufield}:N_{\Ufield},
\]
and write $D:=\|\Ufield-\Vfield\|_{H^{\sigma,1}_{\breg}}$,
$\Hess_{\Ufield}:=dF[\Ufield]$.

\begin{lemma}[Hodge a priori estimates: null-form rhs]\label{lem:pressure}
Let $\Ufield,\Vfield$ be as above, recall $\sigma=s-1>(n-1)/2$, and let
$F[\Ufield]$ be the canonical Hodge representative \eqref{eq:FU}.  Then $F[\Ufield]\in C(I;L^4)$, with
\begin{equation}\label{eq:timecontL4}
 \|F[\Ufield]\|_{L^\infty(I;L^4)}\le C_\delta R^2 ,
\end{equation}
and, for every $\rpar\in[0,1]$ such that
$\Ufield\in H^{\sigma,1+\rpar}_{\breg}$,
\begin{equation}\label{eq:pressure-anisotropic}
 \|dF[\Ufield]\|_{L^2(I;Y^{\sigma,\rpar})}
 \le C_{\delta,\rpar} 
 \|\Ufield\|_{H^{\sigma,1}_{\breg}} 
 \|\Ufield\|_{H^{\sigma,1+\rpar}_{\breg}} .
\end{equation}
In particular at $\rpar=0$,
\begin{equation}\label{eq:pressure-isotropic}
 \|dF[\Ufield]\|_{L^2(I;H^\sigma)}\le C_\delta R^2 .
\end{equation}
Finally,
\begin{equation}\label{eq:pressure-lipschitz}
 \|dF[\Ufield]-dF[\Vfield]\|_{L^2(I;H^\sigma)}
 \le C_\delta R(1+R) D .
\end{equation}
\end{lemma}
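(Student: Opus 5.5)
The plan is to reduce everything to three ingredients proved earlier: the null-form estimate (Theorem~\ref{thm:null-form}), the multiplier/algebra calculus in the $Y$ scale (Proposition~\ref{prop:tame-product}, Remark~\ref{rem:Y-interpolation}, Lemma~\ref{lem:coeff-calculus}), and the resolvent bounds of Lemma~\ref{lem:label-hodge}.

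\textbf{Step 1: the scalar datum.} First I estimate $\mathcal Q[\Ufield]=-\Ccoef_\Ufield:N_\Ufield$ in $L^2(I;Y^{\sigma,\rpar})$. Write $\Ccoef_\Ufield=\delta\delta+\widetilde\Ccoef_\Ufield$. Applying Lemma~\ref{lem:coeff-calculus} to the analytic map $M\mapsto(\mathrm{Id}+M)^{-1}\otimes(\mathrm{Id}+M)^{-1}-\mathrm{Id}\otimes\mathrm{Id}$ gives
\[
 \sup_t\|\widetilde\Ccoef_\Ufield(t)\|_{Y^{\sigma,1}}\le C\delta .
\]
By \eqref{eq:Y-multiplier-interp}, multiplication by $\Ccoef_\Ufield$ is then bounded on $Y^{\sigma,\rpar}$ uniformly in $t$. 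Taking $L^2$ in time and applying \eqref{eq:diagonalNF} componentwise to $N_\Ufield$ yields
\[
 \|\mathcal Q[\Ufield]\|_{L^2Y^{\sigma,\rpar}}\le C_\delta\|\Ufield\|_{H^{\sigma,1}_\breg}\|\Ufield\|_{H^{\sigma,1+\rpar}_\breg}.
\]
Feeding this into \eqref{eq:elliptic-Y} at each time gives \eqref{eq:pressure-anisotropic}, and hence \eqref{eq:pressure-isotropic}. This relies on identifying $dF[\Ufield]=d\mathcal P_\gmetric\mathcal Q[\Ufield]$. For that I will use \eqref{eq:elliptic-identification}, for which I need $\mathcal Q[\Ufield](t)\in L^{r_{\rm HLS}}$.

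\textbf{Step 2: the $L^4$ bound, which I expect to be the main obstacle.} A fixed-time $L^{r_{\rm HLS}}$ bound is not furnished by $L^2_t$ estimates. What is available pointwise in time is $\partial_t\Ufield(t)\in Y^{\sigma,0}=H^\sigma$ and $\mathcal B\Ufield(t)\in H^\sigma$, by the trace \eqref{eq:trace-class}. So $N_\Ufield(t)$ is a sum of products of two $H^\sigma$ functions and lies in $L^1\cap L^{q}$ for $q$ up to the Sobolev exponent of $H^\sigma$, with $\sigma>(n-1)/2$. Since $r_{\rm HLS}=4n/(n+4)\in[1,2)$, Hölder and Sobolev give
\[
 \|N_\Ufield(t)\|_{L^{r_{\rm HLS}}}\lesssim\|\Ufield\|_{H^{\sigma,1}_\breg}^2\le R^2 .
\]
The coefficient is bounded since $|\Ccoef|\le4$, and $J$ is comparable to $1$, so \eqref{eq:canonical-hls} gives \eqref{eq:timecontL4}. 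Time continuity into $L^4$ follows in the same way. The trace is continuous in time, so $\mathcal Q[\Ufield]\in C(I;L^{r_{\rm HLS}})$. Writing
\[
 F=\mathcal C_X\nabla\Delta^{-1}\mathcal C_X^{-1}q ,
\]
I then combine HLS with the strong continuity \eqref{eq:composition-Lr-continuity} of $\mathcal C_{X(t)}^{\pm1}$. The locally uniform variation of $X(t)$ comes from $\Ufield\in C_tL^\infty$ together with $w$ being Lipschitz in time in $L^\infty$, which needs a short check. The dimension $n=2$ needs care: there $r_{\rm HLS}=4/3$, which is still fine through the $L^1\cap L^2$ interpolation.

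\textbf{Step 3: the difference estimate \eqref{eq:pressure-lipschitz}.} I use the resolvent identity \eqref{eq:pressure-Hessian} for both fields:
\[
 \Hess_\Ufield-\Hess_\Vfield=(\mathrm{Id}-\mathscr T_{\Mmat[\Ufield]})^{-1}\Big[\mathscr T_0\big(\mathcal Q[\Ufield]-\mathcal Q[\Vfield]\big)+\mathscr T_{\Mmat[\Ufield]-\Mmat[\Vfield]}\Hess_\Vfield\Big].
\]
For the datum difference I split
\[
 \mathcal Q[\Ufield]-\mathcal Q[\Vfield]=-(\Ccoef_\Ufield-\Ccoef_\Vfield):N_\Ufield-\Ccoef_\Vfield:(N_\Ufield-N_\Vfield).
\]
The second piece is bounded by $C_\delta RD$ using bilinearity of $Q_0$ and Theorem~\ref{thm:null-form} at $\rpar=0$. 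For the first piece, \eqref{eq:coeff-lipschitz} together with the trace bound gives $\sup_t\|\Ccoef_\Ufield-\Ccoef_\Vfield\|_{Y^{\sigma,1}}\lesssim D$. Multiplying, via \eqref{eq:tame-sp}, against $\|N_\Ufield\|_{L^2H^\sigma}\lesssim R^2$ then gives $C_\delta R^2D$. For the geometric term, $\|\mathscr T_{\Mmat[\Ufield]-\Mmat[\Vfield]}\|_{H^\sigma\to H^\sigma}\lesssim D$, again by \eqref{eq:coeff-lipschitz} and the trace. Combined with $\|\Hess_\Vfield\|_{L^2H^\sigma}\le C_\delta R^2$ from Step~1, this term is also $C_\delta R^2 D$. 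Summing the three contributions gives $C_\delta R(1+R)D$.
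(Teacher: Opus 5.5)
Your proposal is correct and follows the paper's proof essentially step by step. The datum is handled by the split $\Ccoef_{\Ufield}=(\delta_i^a\delta_j^b)+\widetilde{\Ccoef}_{\Ufield}$, the interpolated multiplier bound and the null-form estimate; the fixed-time $L^{r_{\mathrm{HLS}}}$ bound comes from the traces plus HLS and the strong continuity of $\mathcal C_{X(t)}^{\pm1}$, which is also what licenses $dF[\Ufield]=d\mathcal P_{\gmetric}\mathcal Q[\Ufield]$; and the Lipschitz bound comes from the subtracted resolvent identity, where your mirror-image splitting of the datum difference is immaterial.

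Two small slips. First, $r_{\mathrm{HLS}}=4n/(n+4)$ is not below $2$ once $n\ge4$; what you actually need, $H^\sigma\hookrightarrow L^{2r_{\mathrm{HLS}}}$, holds in every dimension because $\sigma>(n-1)/2\ge(3n-4)/8$. Second, the locally uniform variation of $X(t)$ needs no time-Lipschitz bound on the primitive. The homotopy formula with the unit-ball normalisation gives $|\bar w_{\Ufield}(t,y)-\bar w_{\Ufield}(t_0,y)|\le(1+|y|)\|\Ufield(t)-\Ufield(t_0)\|_{L^\infty}$ together with $\Ufield\in C(I;C_b)$, and the uniform bi-Lipschitz bounds then handle the inverse maps.
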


\begin{proof}
Everything below depends only on the values of the fields on $I$.  We first
record three facts.

\emph{(a) Time regularity.}  Corollary~\ref{cor:trace-Y} gives
$\Ufield\in C(I;Y^{\sigma,1})$ and
$\partial_t\Ufield,\mathcal B\Ufield\in C(I;H^\sigma)$, the latter two with
norm at most $C_{\breg}R$.  Since
$\Psi(Z):=(\mathrm{Id}+Z)^{-1}-\mathrm{Id}$ is Lipschitz on the $\delta$-ball
of $Y^{\sigma,1}$ by \eqref{eq:coeff-lipschitz},
$\Mmat[\Ufield]\in C(I;Y^{\sigma,1})$ with norm at most $C_\delta$, and
likewise $\widetilde{\Ccoef}_{\Ufield}\in C(I;Y^{\sigma,1})$; moreover
\begin{equation}\label{eq:coeff-difference}
 \|\Mmat_{\Ufield}-\Mmat_{\Vfield}\|_{L^\infty_tY^{\sigma,1}}
 +\|\Ccoef_{\Ufield}-\Ccoef_{\Vfield}\|_{L^\infty_tY^{\sigma,1}}
 \le C_\delta D .
\end{equation}
In particular $t\mapsto(\mathrm{Id}-\mathscr T_{\Mmat[\Ufield](t)})^{-1}$ is
norm-continuous, so every time-dependent quantity below is (strongly)
measurable.

\emph{(b) The coefficient step.} Write
$\Ccoef_{\Ufield}:N=(\delta_i^a\delta_j^b):N
+\widetilde{\Ccoef}_{\Ufield}:N$. The constant contraction is bounded directly,
whereas the remainder is controlled by \eqref{eq:Y-multiplier-interp}. Together
with (a) and \eqref{eq:coeff-difference}, this gives, uniformly in
$\rpar\in[0,1]$ and for every tensor $N$,
\begin{equation}\label{eq:coefficient-step}
 \|\Ccoef_{\Ufield}:N\|_{L^2_tY^{\sigma,\rpar}}
 \le C_\delta\|N\|_{L^2_tY^{\sigma,\rpar}},
 \qquad
 \|(\Ccoef_{\Ufield}-\Ccoef_{\Vfield}):N\|_{L^2_tY^{\sigma,\rpar}}
 \le C_\delta D \|N\|_{L^2_tY^{\sigma,\rpar}} .
\end{equation}

\emph{(c) The null-form step.}  Applying the bilinear estimate
\eqref{eq:null-form} to each pair of components,
\begin{equation}\label{eq:nullform-step}
 \|N_{\Ufield}\|_{L^2_tY^{\sigma,\rpar}}
 \le C\|\Ufield\|_{H^{\sigma,1}_{\breg}}
      \|\Ufield\|_{H^{\sigma,1+\rpar}_{\breg}},
 \qquad
 \|N_{\Ufield}-N_{\Vfield}\|_{L^2_tH^\sigma}\le CRD ,
\end{equation}
the second because bilinearity gives
$N_{\Ufield}-N_{\Vfield}
=Q_0(\Ufield-\Vfield,\Ufield)+Q_0(\Vfield,\Ufield-\Vfield)$ componentwise.

\emph{Fixed-time integrability and \eqref{eq:timecontL4}.}  Since
\[
 \sigma>\frac{n-1}2\ \ge\ \frac{3n-4}{8}=\frac n2-\frac n{2r_{\mathrm{HLS}}}
 \qquad(n\ge2),
\]
Sobolev embedding gives $H^\sigma\hookrightarrow L^{2r_{\mathrm{HLS}}}$, with
$r_{\mathrm{HLS}}$ as in \eqref{eq:rHLS}.  By (a), each factor of
$N_{\Ufield}=\partial_t\Ufield \partial_t\Ufield
-\mathcal B\Ufield \mathcal B\Ufield$ lies in
$C(I;L^{2r_{\mathrm{HLS}}})$ with norm at most $CR$, so H\"older gives
$N_{\Ufield}\in C(I;L^{r_{\mathrm{HLS}}})$ with norm at most $CR^2$; since
$\Ccoef_{\Ufield}\in C(I;L^\infty)$ with norm at most $C_\delta$, by (a) and
\eqref{eq:embed-frac},
\begin{equation}\label{eq:datum-fixed-time}
 \mathcal Q[\Ufield]\in C(I;L^{r_{\mathrm{HLS}}}),
 \qquad
 \sup_{t\in I}\|\mathcal Q[\Ufield](t)\|_{L^{r_{\mathrm{HLS}}}}
 \le C_\delta R^2 .
\end{equation}
In particular, the canonical vector field along $X(t)$ given by
$F[\Ufield](t)=\mathcal C_{X(t)}\nabla\Delta_x^{-1}
\mathcal C_{X(t)}^{-1}\mathcal Q[\Ufield](t)$ is defined at every time, since
$1<r_{\mathrm{HLS}}<n$ and $r_{\mathrm{HLS}}^*=4$. Normalize a primitive of
$\Ufield(t)$ by zero mean on the unit ball. The homotopy formula used in
\S \ref{sec:fixedpoint}, together with
$\Ufield\in C(I;Y^{\sigma,1})\hookrightarrow C(I;C_b)$, shows that $X(t)$
varies locally uniformly; the uniform bi-Lipschitz bounds give the same for
$X(t)^{-1}$. By \eqref{eq:composition-Lr-continuity}, the operators
$\mathcal C_{X(t)}^{\pm1}$ are uniformly bounded on $L^{r_{\mathrm{HLS}}}$ and
on $L^4$, and strongly continuous in $t$.  Since $\nabla\Delta_x^{-1}$ is
bounded from $L^{r_{\mathrm{HLS}}}$ to $L^4$ by \eqref{eq:flat-hls} at
$p=r_{\mathrm{HLS}}$, the composition is uniformly bounded and strongly
continuous in $t$, and \eqref{eq:datum-fixed-time} gives
$F[\Ufield]\in C(I;L^4(dy))$ together with \eqref{eq:timecontL4}. The
fixed-time HLS estimate also gives a uniform $L^4(\mu[\Ufield](t))$ bound, and
the two bounds are equivalent uniformly because the Jacobians are bounded
above and below.

\emph{Proof of \eqref{eq:pressure-anisotropic} and \eqref{eq:pressure-isotropic}.}
The continuous representative from \eqref{eq:datum-fixed-time} belongs to
$L^{r_{\mathrm{HLS}}}$ at every time, while (b)--(c) give
$\mathcal Q[\Ufield](t)\in Y^{\sigma,\rpar}$ for almost every time. Thus
\eqref{eq:elliptic-identification} applies for almost every $t$ and gives
$dF[\Ufield](t)=d\mathcal P_{\gmetric(t)}\mathcal Q[\Ufield](t)$. Combining
(c) with the first estimate of \eqref{eq:coefficient-step} bounds
$\|\mathcal Q[\Ufield]\|_{L^2_tY^{\sigma,\rpar}}$ by the right-hand side of
\eqref{eq:pressure-anisotropic}; squaring the fixed-time estimate
\eqref{eq:elliptic-Y} at $\preg=\rpar$ and integrating in $t$ converts this
into \eqref{eq:pressure-anisotropic}.  Taking $\rpar=0$ and
$\|\Ufield\|_{H^{\sigma,1}_{\breg}}\le R$ gives \eqref{eq:pressure-isotropic}.

\emph{Proof of \eqref{eq:pressure-lipschitz}.}  The datum difference
decomposes as
\begin{equation}\label{eq:datum-difference-decomposition}
 \mathcal Q[\Ufield]-\mathcal Q[\Vfield]
 =-(\Ccoef_{\Ufield}-\Ccoef_{\Vfield}):N_{\Vfield}
   -\Ccoef_{\Ufield}:(N_{\Ufield}-N_{\Vfield}) .
\end{equation}
By \eqref{eq:coefficient-step} and \eqref{eq:nullform-step} at $\rpar=0$, the
first term costs $C_\delta R^2D$, because the geometry changes, and the
second $C_\delta RD$, because one factor of the quadratic null form is
replaced by the difference; hence
\begin{equation}\label{eq:datum-lipschitz}
 \|\mathcal Q[\Ufield]-\mathcal Q[\Vfield]\|_{L^2_tH^\sigma}
 \le C_\delta R(1+R)D .
\end{equation}
Subtracting the two resolvent identities \eqref{eq:pressure-Hessian},
\[
 (\mathrm{Id}-\mathscr T_{\Mmat_{\Ufield}})
 \big(\Hess_{\Ufield}-\Hess_{\Vfield}\big)
 =\mathscr T_0\big(\mathcal Q[\Ufield]-\mathcal Q[\Vfield]\big)
  +\mathscr T_{\Mmat_{\Ufield}-\Mmat_{\Vfield}}\Hess_{\Vfield} .
\]
The inverse on the left has norm at most $2$, by the Neumann argument of
Lemma~\ref{lem:label-hodge}.  The first term on the right is bounded by
$C_\delta R(1+R)D$ through \eqref{eq:datum-lipschitz}, and the second, by the
linearity of $\mathscr T_{\Mmat}$ in its coefficient together with
\eqref{eq:coeff-difference} and \eqref{eq:pressure-isotropic} applied to
$\Vfield$, by
\[
 C\|\Mmat_{\Ufield}-\Mmat_{\Vfield}\|_{L^\infty_tY^{\sigma,1}}
   \|\Hess_{\Vfield}\|_{L^2_tH^\sigma}
 \le C_\delta R^2D .
\]
Combining the two contributions proves \eqref{eq:pressure-lipschitz}.
\end{proof}

\section{The Differentiated Fixed Point}
\label{sec:fixedpoint}

We combine the linear, null-form and elliptic estimates to produce a
spatially exact spacetime field $\Ufield\in H^{s-1,1}_{\breg}$ solving the differentiated wave-Hodge system. The fixed-time smallness $\delta$ built into the class
supplies the coefficient and Hodge estimates, while short time and the datum
modulus $\omega_{v_0}$ yield this smallness for arbitrary-size data $R$.  The displacement $w$ is
reconstructed by the Duhamel formula only after the differentiated fixed point $\Ufield$ is found, and it will be such that $dw=\Ufield$ on the interval $[0,T_*]$ of definition.

\subsection{Preliminaries} We first introduce all the needed objects.\\

\textbf{The class.} Fix $0<T\le1$ and write $I:=[0,T]$. Let $\mathcal{Y}$ be the Hilbert space
\[
 \mathcal Y:=H^{s-1,1}_{\breg}(\Rst)
\]
and for $v_0\in H^{s}(\Rn)$ divergence-free, of arbitrary size, we let
$C_{\rm lin}$ be the constant of \eqref{eq:linear} at $(\sreg,\preg)=(s-1,1)$, and put
\begin{equation}\label{eq:Rdefn}
    R:=2C_{\rm lin}\|v_0\|_{H^s}.
\end{equation}
For an $\Rn$-valued $1$-form $\Ufield$, write
\begin{equation}\label{eq:Curl}
    (\Curl_{y}\Ufield)^j_{ab}
 :=\partial_a\Ufield_b^j-\partial_b\Ufield_a^j, 
\end{equation}

we then let the fixed-point class be:
\begin{equation}\label{eq:fixedpoint-class}
  \mathcal U_{R,\delta} := \left\{ \Ufield \in \mathcal Y : 
  \begin{aligned}
    &\Curl_{y}\Ufield = 0, \quad (\Ufield,\partial_t\Ufield)(0) = (0,\nabla v_0), \\
    &\|\Ufield\|_{\mathcal Y} \le R, \quad \sup_{t\in I}\|\Ufield(t)\|_{Y^{s-1,1}} \le \delta
  \end{aligned}
  \right\}.
\end{equation}
Note that the exactness and the trace conditions are closed. An element of $\mathcal U_{R,\delta}$ should be thought of as a `trial field' for the differentiated wave Hodge system before a fixed point is found.  Moreover, at $t=0$ the identity $X_\Ufield(0,y)=y$ for any trial flow map (see \eqref{eq:trial-bilipschitz} below) identifies Eulerian initial data with label fields, namely $x=y$, and we can write $(\nabla v_0)_a^i=\partial_av_0^i$ unequivocally.\\

\textbf{Reconstruction of the displacement $\bar w_\Ufield$ and map $X_\Ufield$ for a trial field $\Ufield$.} We now explain how from the exactness condition of a trial field $\Ufield$ we can go back to a (primitive) displacement $\bar w_\Ufield$ and map $X_\Ufield$; this is important because these objects will enter the definition of the fixed-point map $\mathcal{M}$, which will be defined next.  By the distributional Poincar\'e
lemma, see for example
\cite[Theorem~2.1]{Mardare2008}, an exact $\Ufield$ admits a primitive
$\bar w_{\Ufield}$ with $d\bar w_{\Ufield}=\Ufield$.  Here the primitive can
be written down explicitly: by Corollary~\ref{cor:trace-Y},
$\Ufield\in C(\Real;Y^{s-1,1})$. For every compactly supported smooth spatial
test tensor $\Phi$, the map
\[
 t\longmapsto\langle\Curl_y\Ufield(t),\Phi\rangle
\]
is continuous. Its pairing against every compactly supported smooth time test
vanishes because $\Curl_y\Ufield=0$ in spacetime distributions. It therefore
vanishes identically. Thus $\Curl_y\Ufield(t)=0$ in $\mathcal D'(\Rn)$ for
every $t$, so the fixed-time Poincar\'e lemma and the homotopy formula below
apply at every time. The proof of
Lemma~\ref{lem:Linftyembed} shows that elements of $Y^{s-1,1}$ have integrable
Fourier transforms, so that $Y^{s-1,1}\hookrightarrow C_b(\Rn)$; hence
$\Ufield$ is jointly continuous and bounded on $I\times\Rn$, and the homotopy
formula
\[
 \bar w_{\Ufield}^{ j}(t,y):=\int_0^1\Ufield_a^j(t,\theta y) y^a d\theta
\]
defines a jointly continuous function.  It is a primitive: mollifying
$\Ufield(t)$ in $y$ yields smooth closed forms $\Ufield_\varepsilon$, for which
the formula gives a primitive $\bar w_\varepsilon$ classically,
$d\bar w_\varepsilon=\Ufield_\varepsilon$.  As $\varepsilon\to0$ both
$\Ufield_\varepsilon\to\Ufield(t)$ and $\bar w_\varepsilon\to\bar
w_{\Ufield}(t)$ locally uniformly, so this identity passes to the limit in
$\mathcal D'$ and $d\bar w_{\Ufield}=\Ufield$.  Since $\Ufield(t)\in L^\infty_y$,
$\bar w_{\Ufield}(t)$ is Lipschitz in $y$ for every $t$.
We then fix the remaining spatial translation gauge by subtracting the mean over the
unit ball, so that
\[
 |B_1|^{-1}\int_{B_1}\bar w_{\Ufield}(t,y) dy=0.
\]
For $t\in I$ the anisotropic embedding \eqref{eq:embed-frac} and the definition
of the class give
\[
 L_t:=\|d\bar w_{\Ufield}(t)\|_{L^\infty}
 \le C_{\rm e}\|\Ufield(t)\|_{Y^{s-1,1}}\le C_{\rm e}\delta\le\tfrac12,
\]
by the definition of $\delta$ which for convenience will be given later in \eqref{eq:delta-choice}.  Off $I$ the class imposes no
fixed-time smallness, and the discussion below is not claimed there.
Consequently, for all $t\in I$ and all $y,z\in\Rn$,
\begin{equation}\label{eq:trial-bilipschitz}
 (1-L_t)|y-z|
 \le|X_{\Ufield}(t,y)-X_{\Ufield}(t,z)|
 \le(1+L_t)|y-z|,
 \qquad X_{\Ufield}:=\mathrm{id}+\bar w_{\Ufield}.
\end{equation}
This proves injectivity.  Surjectivity follows separately: for each $x\in\Rn$, the map
$y\mapsto x-\bar w_{\Ufield}(t,y)$ is a contraction of the complete space
$\Rn$ and therefore has a unique fixed point, which is precisely the unique
solution of $X_{\Ufield}(t,y)=x$.  Thus $X_{\Ufield}(t,\cdot)$ is a global
bi-Lipschitz homeomorphism; its inverse has Lipschitz constant at most
$(1-L_t)^{-1}$, and the singular values of $dX_{\Ufield}$ lie between
$1-L_t$ and $1+L_t$.  The path
$\mathrm{Id}+\theta\Ufield$, $0\le\theta\le1$, remains invertible, so its
determinant has the positive sign of the identity.  In particular,
\[
 0<(1-L_t)^n\le J[\Ufield](t,y)\le(1+L_t)^n
\]
almost everywhere and the map is orientation preserving. Any other primitive
differs by a spatial translation. Such a translation leaves $\Ufield$,
$\Kmat$, $\gmetric$, $\mu$, $\mathcal Q[\Ufield]$ and the operators $E_i$
unchanged. The $L^4(\mu)$ uniqueness in Lemma~\ref{lem:hodge-solvability}
therefore shows that the canonical field $F[\Ufield]$ is unchanged as well.
Hence the construction below is independent of the chosen normalisation,
denoted by the overbar in $\bar w_\Ufield$.\\

\textbf{The map.} For $\Ufield\in\mathcal U_{R,\delta}$, let $F[\Ufield]$ be the canonical
$L^4$ representative on $I$ characterised by the Hodge system
\eqref{eq:force-hodge} for the metric $\gmetric[\Ufield]$, namely \eqref{eq:FU}. By
\eqref{eq:pressure-gradient}, it is given by
\[
 F[\Ufield]
 =\mathcal C_{X_{\Ufield}}\nabla\Delta_{x}^{-1}\mathcal C_{X_{\Ufield}}^{-1}\mathcal Q[\Ufield],
\]
here $X_\Ufield$ was constructed above in \eqref{eq:trial-bilipschitz} and Lemma~\ref{lem:pressure} provides the needed bounds on this forcing.  With this at hand, we define the time-localised map
\begin{equation}\label{eq:differential-map}
 (\mathcal M\Ufield)(t)
 :=\chi(t)\Sprop(t)\nabla v_0
 -\chi(t/T)\int_0^t\Sprop(t-t')
       \mathbf 1_I(t')dF[\Ufield](t') dt'.
\end{equation}
Since $\chi$ fixed in \S \ref{ssec:linear} satisfies $\chi\equiv1$ on $[-1,1]$ and $T\le1$, both cutoffs equal one on $I$, and
the field defined by $\Vfield(t):=(\mathcal M\Ufield)(t)$ solves
\[
    \Box\Vfield=-dF[\Ufield] \quad \text{on} \quad I \quad \text{with data} \quad (0,\nabla v_0).
\]
In particular, a fixed point of $\mathcal{M}$ solves the differentiated wave
equation associated with \eqref{eq:auxiliary-system}; reconstruction of $w$ and
the identity $dw=\Ufield$ are proved below.

We now show some preliminary bounds and make some observations. Firstly, since both terms in \eqref{eq:differential-map} are spatial differentials, we obtain
$\Curl_y(\mathcal M\Ufield)=0$ and thus $\mathcal{M}$ preserves the exactness condition of $\mathcal U_{R,\delta}$. Moreover, for $G\in L^2(I;H^{s-1})$, the time-localised Duhamel term satisfies
\begin{equation}\label{eq:cutoff-duhamel}
\begin{aligned}
 \left\|\chi(t/T)\int_0^t\Sprop(t-t')
       \mathbf 1_I(t')G(t') dt'\right\|_{\mathcal Y}
 &\le C_{\rm lin}T^\varepsilon\|G\|_{L^2_tH^{s-1}},\\
 \sup_{t\in I}\left\|\int_0^t\Sprop(t-t')G(t') dt'\right\|_{Y^{s-1,1}}
 &\le CT^{1/2}\|G\|_{L^2_tH^{s-1}}.
\end{aligned}
\end{equation}
To see this, we first note that the free propagator \eqref{eq:propagator} satisfies the elementary scalar bound
\begin{equation}\label{eq:prop-gain}
\langle\lambda\rangle \Big|\frac{\sin(t\lambda)}{\lambda}\Big|\ \le\
\sqrt2 \min\big(1,\ |t| \langle\lambda\rangle\big),\qquad |t|\le1 ,
\end{equation}
obtained from $|\sin(t\lambda)|\le\min(1,|t\lambda|)$ in the regions
$|\lambda|\ge1$ and $|\lambda|\le1$. Then, for the first estimate, extend $G$ by zero outside $I$ and apply Theorem~\ref{thm:linear} with zero Cauchy data and
$(\sreg,\preg)=(s-1,1)$.  This is where time localisation gives the factor
$T^\varepsilon=T^{1-\breg}$.  For the second, we use the scalar multiplier estimate
\[
 \|\Sprop(r)f\|_{Y^{s-1,1}}
 \le C\|f\|_{H^{s-1}},\qquad |r|\le1,
\]
which follows directly from \eqref{eq:prop-gain} and the definition \eqref{eq:Y}, together with  Minkowski and Cauchy--Schwarz,
uniformly for $0\le t\le T$, to bound
\[
 \left\|\int_0^t\Sprop(t-t')G(t') dt'\right\|_{Y^{s-1,1}}
 \le C\int_0^t\|G(t')\|_{H^{s-1}} dt'
 \le CT^{1/2}\|G\|_{L^2(I;H^{s-1})}.
\]
\\

\textbf{The smallness condition $\delta$.} We now define $\delta$ and explain why this condition comes for free in our setup. This was used already to ensure that $X_\Ufield=\mathrm{id}+\bar w_\Ufield$ is globally bi-Lipschitz with two-sided Jacobian
bounds at every $t\in I$. 

Let $\Psi_1,\dots,\Psi_N$ be the finitely many real-analytic functions used
below, namely $Z\mapsto\det(\mathrm{Id}+Z)$, its cofactors,
$\log\det(\mathrm{Id}+Z)$, $(\mathrm{Id}+Z)^{-1}-\mathrm{Id}$ and the pairwise
products of the entries of the last, and let $C_{\rm e}$ be the constant of
\eqref{eq:embed-frac} at $(\sreg,\preg)=(s-1,1)$.  Fix
\begin{equation}\label{eq:delta-choice}
 \delta:=\min\Big\{\tfrac1{2C_{\rm e}},\
 \tfrac12\min_{1\le k\le N}\delta_{\Psi_k},\
 \delta_{\rm H}\Big\}>0,
\end{equation}
which depends only on the standing parameters.  The first entry makes
\eqref{eq:embed-frac} give $\|\Ufield\|_{L^\infty(I\times\Rn)}\le\frac12$ whenever
$\sup_{t\in I}\|\Ufield(t)\|_{Y^{s-1,1}}\le\delta$; the factor $\frac12$ in the
second entry is what allows Lemma~\ref{lem:coeff-calculus} to be applied to
pairs $\Ufield(t),\Vfield(t)$ of trial fields, as is done in the
difference estimate \eqref{eq:pressure-lipschitz}; the third entry places
every $\Ufield(t)$ in the ball \eqref{eq:hodge-smallness} of
Lemma~\ref{lem:label-hodge}, whose conclusions then apply at each time.\\
We emphasise that $\delta$ can be fixed once and for all,
independently of $R$ and $T$. This is because the constants $C_\delta$ of
Lemma~\ref{lem:pressure} depend on neither, and we can ensure that the norm of the free evolution of the initial data in the definition \eqref{eq:differential-map} of $\mathcal{M}$, given $R$, does not exceed $\delta$ simply by controlling $T$.  Indeed, Plancherel, the definition of the space \eqref{eq:Y} and
estimate \eqref{eq:prop-gain} give, for $0\le t\le T$,
\begin{align}
 \|\Sprop(t)\nabla v_0\|_{Y^{s-1,1}}^2
 &\le C\int_{\Rn}
   \min(1,T\langle\lambda(\xi)\rangle)^2
   \langle\xi\rangle^{2(s-1)}
   |\widehat{\nabla v_0}(\xi)|^2 d\xi \notag\\
 &=C\omega_{v_0}(T)^2.
 \label{eq:free-fixed-time-modulus}
\end{align}
Thus the free wave part in the map $\mathcal{M}$ is small in the fixed-time geometric norm even though its
spacetime norm is of size $\|v_0\|_{H^s}\simeq R$.  This is the only point at which
regime~\textup{(R1)} needs the datum-dependent modulus.  We use the two
properties recorded after \eqref{eq:datum-modulus}:
$\omega_{v_0}(T)\downarrow0$, and
$\omega_{v_0}(T)\le T^{\rpar}
\|\nabla v_0\|_{Y^{s-1,\rpar}}$ when that norm is finite.\\

We are now ready to state the existence result.
\begin{lemma}[the map is a self-map of the class]\label{lem:selfmap}
Let $v_0\in H^{s}(\Rn)$ be divergence-free,
of arbitrary size. Then, for every $T\le T_*$, $\mathcal M$ is a well-defined self-map of the
nonempty class $\mathcal U_{R,\delta}$, with $T_*>0$ depending on
$n,s,\breg,v_{\rm A}$, on $\|v_0\|_{H^{s}}$ and on the modulus $\omega_{v_0}$.
\end{lemma}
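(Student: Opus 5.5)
Three things have to be checked: the class $\mathcal U_{R,\delta}$ is nonempty, $\mathcal M\Ufield$ is well defined for every trial field, and $\mathcal M\Ufield$ satisfies each constraint in \eqref{eq:fixedpoint-class}.

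\emph{Nonemptiness.} The natural candidate is the free wave $\Ufield_0:=\chi(t)\Sprop(t)\nabla v_0$. Its spatial curl vanishes because $\Sprop(t)$ commutes with $d$. Its Cauchy data at $t=0$ are $(0,\nabla v_0)$. Theorem~\ref{thm:linear} with $(\sreg,\preg)=(s-1,1)$ gives
\[
\|\Ufield_0\|_{\mathcal Y}\le C_{\rm lin}\|\nabla v_0\|_{Y^{s-1,0}}\le C_{\rm lin}\|v_0\|_{H^s}=R/2 .
\]
For $t\in I$ one has $\chi(t)=1$, so \eqref{eq:free-fixed-time-modulus} yields
\[
\sup_{t\in I}\|\Ufield_0(t)\|_{Y^{s-1,1}}\le C\,\omega_{v_0}(T).
\]
Choose $T_1$ so that $C\omega_{v_0}(T_1)\le\delta/2$. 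Since $\omega_{v_0}$ is nondecreasing, the same bound holds for every $T\le T_1$, and then $\Ufield_0\in\mathcal U_{R,\delta}$.

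\emph{Well-definedness.} Take $\Ufield\in\mathcal U_{R,\delta}$. As explained in the preliminaries, the reconstruction $X_\Ufield$ is a global bi-Lipschitz map on $I$. Every $\Ufield(t)$ lies in the ball \eqref{eq:hodge-smallness}, because $\delta\le\delta_{\rm H}$. Lemma~\ref{lem:pressure} therefore applies with $\Vfield=\Ufield$ and gives
\[
\|dF[\Ufield]\|_{L^2(I;H^{s-1})}\le C_\delta R^2 .
\]
So $G:=\mathbf 1_I\,dF[\Ufield]$, extended by zero outside $I$, lies in $L^2_tH^{s-1}=L^2_tY^{s-1,0}$, and \eqref{eq:differential-map} defines an element of $\mathcal Y$.

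\emph{The constraints.} Exactness was already observed: both terms of \eqref{eq:differential-map} are spatial differentials. The Duhamel term and its time derivative vanish at $t=0$, so the data are $(0,\nabla v_0)$. For the spacetime radius, the first line of \eqref{eq:cutoff-duhamel} gives
\[
\|\mathcal M\Ufield\|_{\mathcal Y}\le \tfrac R2+C_{\rm lin}C_\delta T^{\varepsilon}R^2 .
\]
For the fixed-time bound, the second line of \eqref{eq:cutoff-duhamel} together with \eqref{eq:free-fixed-time-modulus} gives
\[
\sup_{t\in I}\|\mathcal M\Ufield(t)\|_{Y^{s-1,1}}\le C\omega_{v_0}(T)+CC_\delta T^{1/2}R^2 .
\]
Now choose $T_*\le\min(1,T_1)$ so that $C_{\rm lin}C_\delta T_*^{\varepsilon}R^2\le R/2$ and $CC_\delta T_*^{1/2}R^2\le\delta/2$. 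These conditions are monotone in $T$, so they hold for every $T\le T_*$, and $T_*$ depends only on the standing parameters, on $R\simeq\|v_0\|_{H^s}$ and on $\omega_{v_0}$.

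\emph{Main obstacle.} The delicate point is that none of the constants depends on $R$ or $T$, except through the explicit powers shown above. The fixed-time smallness cannot come from the size of the data, since the free part has spacetime norm of order $R$. It can only come from the modulus $\omega_{v_0}(T)\to0$, which is why $T_*$ depends on $\omega_{v_0}$ and not on $\|v_0\|_{H^s}$ alone. I would also check two smaller points: $\omega_{v_0}$ is monotone, so a single $T_*$ serves all smaller $T$; and $dF[\Ufield]$ is strongly measurable in time, which was recorded in part (a) of the proof of Lemma~\ref{lem:pressure}.
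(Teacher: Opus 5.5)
Your proposal is correct and follows the paper's proof. Nonemptiness comes from the free wave $\chi(t)\Sprop(t)\nabla v_0$. The two bounds come from Lemma~\ref{lem:pressure} together with \eqref{eq:cutoff-duhamel} and \eqref{eq:free-fixed-time-modulus}, and $T_*$ is fixed by the first three lines of \eqref{eq:Tstar}. The paper also builds the contraction condition (fourth line of \eqref{eq:Tstar}) into $T_*$ here, while you spell out the monotonicity of $\omega_{v_0}$ that lets one $T_*$ serve every $T\le T_*$.
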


\begin{proof}
For $\Ufield\in\mathcal U_{R,\delta}$, Lemma~\ref{lem:pressure}, the definition of $R$ in \eqref{eq:Rdefn} and the bounds \eqref{eq:free-fixed-time-modulus},
\eqref{eq:cutoff-duhamel} give
\[
 \|\mathcal M\Ufield\|_{\mathcal Y}
 \le \frac R2+C_\delta T^\varepsilon R^2,
 \qquad
 \sup_{t\in I}\|\mathcal M\Ufield(t)\|_{Y^{s-1,1}}
 \le C\big(\omega_{v_0}(T)+T^{1/2}C_\delta R^2\big).
\]
The map has the prescribed time traces because both cutoffs are constant near
$t=0$, the Duhamel integral and its first time derivative vanish there, and
$\partial_t\Sprop(0)=\mathrm{Id}$.  It is exact because the free term is
$d(\chi\Sprop(t)v_0)$ and the source in the second term is $dF[\Ufield]$.

We now fix the lifespan once and for all. After enlarging the common constants in the preceding estimates, choose
$T_*\in(0,1]$ so that
\begin{equation}\label{eq:Tstar}
\begin{aligned}
 C_\delta T_*^\varepsilon R^2&\le\tfrac R2,
 &&\text{spacetime radius of the self-map},\\
 C\omega_{v_0}(T_*)&\le\tfrac\delta2,
 &&\text{fixed-time size of the free wave},\\
 CC_\delta T_*^{1/2}R^2&\le\tfrac\delta2,
 &&\text{fixed-time size of the Duhamel term},\\
 C_\delta T_*^\varepsilon R(1+R)&\le\tfrac12,
 &&\text{contraction constant}.
\end{aligned}
\end{equation}
Note that the fourth line is recorded already here so that one choice of $T_*$ works for
both the self-map and contraction lemmas.  Such a positive time exists in
regime~\textup{(R1)} because $\omega_{v_0}(T)\to0$; the first, third and fourth
left-hand sides are ordinary positive powers of $T$.  For every $T\le T_*$,
the first three lines give respectively
\[
 \|\mathcal M\Ufield\|_{\mathcal Y}\le R,
 \qquad
 \sup_{t\in I}\|\mathcal M\Ufield(t)\|_{Y^{s-1,1}}\le\delta.
\]
Together with exactness and the traces, these are precisely the defining
conditions of $\mathcal U_{R,\delta}$.

For non-emptiness, take the free evolution
$\Ufield^{(0)}(t):=\chi(t)\Sprop(t)\nabla v_0$.  The linear estimate and the
definition of $R$ give
$\|\Ufield^{(0)}\|_{\mathcal Y}\le R/2$, while
\eqref{eq:free-fixed-time-modulus} and the second line of
\eqref{eq:Tstar} give its fixed-time bound by $\delta/2$.  It is exact and has
the prescribed data, hence belongs to the class.
\end{proof}

\begin{lemma}[the map is a contraction]\label{lem:contraction}
With $T_*$ chosen by \eqref{eq:Tstar}, the map $\mathcal M$ satisfies
\begin{equation}\label{eq:map-contraction}
 \|\mathcal M\Ufield-\mathcal M\Vfield\|_{\mathcal Y}
 \le C_\delta T^\varepsilon R(1+R)
       \|\Ufield-\Vfield\|_{\mathcal Y}
 \le\tfrac12\|\Ufield-\Vfield\|_{\mathcal Y}
\end{equation}
for all $\Ufield,\Vfield\in\mathcal U_{R,\delta}$ and every $T\le T_*$.
\end{lemma}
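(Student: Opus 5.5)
The argument will rest on the fact that the free-wave parts of $\mathcal M\Ufield$ and $\mathcal M\Vfield$ coincide: both equal $\chi(t)\Sprop(t)\nabla v_0$, since the two trial fields carry the same Cauchy data. Subtracting the two instances of \eqref{eq:differential-map} therefore leaves only the Duhamel part,
\[
 \mathcal M\Ufield-\mathcal M\Vfield
 =-\chi(t/T)\int_0^t\Sprop(t-t')\mathbf 1_I(t')
   \big(dF[\Ufield]-dF[\Vfield]\big)(t') dt'.
\]
This is exactly the time-localised Duhamel term covered by the first line of \eqref{eq:cutoff-duhamel}, applied with $G:=dF[\Ufield]-dF[\Vfield]$ extended by zero outside $I$. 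To use that line we need $G\in L^2(I;H^{s-1})$. Both terms lie there individually by \eqref{eq:pressure-isotropic}, so the application is legitimate. It gives
\[
 \|\mathcal M\Ufield-\mathcal M\Vfield\|_{\mathcal Y}
 \le C_{\rm lin}T^\varepsilon\|dF[\Ufield]-dF[\Vfield]\|_{L^2(I;H^{s-1})}.
\]

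Next, both $\Ufield$ and $\Vfield$ belong to $\mathcal U_{R,\delta}$. Hence they satisfy the admissibility conditions \eqref{eq:admissible} with the $\delta$ of \eqref{eq:delta-choice}. In particular, the factor $\tfrac12$ in that choice allows Lemma~\ref{lem:coeff-calculus} to be applied to the pair. We may therefore invoke the pressure-difference estimate \eqref{eq:pressure-lipschitz} of Lemma~\ref{lem:pressure} with $D=\|\Ufield-\Vfield\|_{\mathcal Y}$. Recall that $\mathcal Y=H^{s-1,1}_{\breg}$ and $\sigma=s-1$. Combining this with the previous bound, and absorbing $C_{\rm lin}$ into $C_\delta$, yields the first inequality in \eqref{eq:map-contraction}:
\[
 \|\mathcal M\Ufield-\mathcal M\Vfield\|_{\mathcal Y}
 \le C_\delta T^\varepsilon R(1+R)\|\Ufield-\Vfield\|_{\mathcal Y}.
\]
Here $C_\delta$ depends only on the standing parameters. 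The second inequality then follows from $T\le T_*$ and the fourth line of \eqref{eq:Tstar}, because $T^\varepsilon\le T_*^\varepsilon$. The constant in that line was enlarged precisely to dominate $C_{\rm lin}$ times the constant of \eqref{eq:pressure-lipschitz}.

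The only real point to check is that \eqref{eq:pressure-lipschitz} applies to the difference as stated. Lemma~\ref{lem:pressure} measures $D$ in the full spacetime norm $\mathcal Y$, whereas the pressure estimates use only the values of the fields on $I$. This mismatch causes no problem, since the Duhamel term only sees $\mathbf 1_I G$. A second point concerns the primitives: the canonical fields $F[\Ufield]$ and $F[\Vfield]$ are defined through different primitives $\bar w_\Ufield$ and $\bar w_\Vfield$. However, $dF$ depends only on $\Ufield$ through the resolvent identity \eqref{eq:pressure-Hessian}, as noted after \eqref{eq:neumann-hodge-derivative}. So no comparison of the two flow maps is needed. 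All the analytic work is already contained in \eqref{eq:pressure-lipschitz} and the linear estimate, and the lemma follows.
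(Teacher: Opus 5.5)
Your proposal is correct and follows the paper's proof exactly. The free terms cancel, the first line of \eqref{eq:cutoff-duhamel} handles the Duhamel difference, \eqref{eq:pressure-lipschitz} bounds the pressure-Hessian difference, and the fourth line of \eqref{eq:Tstar} gives the factor $\tfrac12$; your remarks on measuring $D$ in the full $\mathcal Y$-norm and on $dF$ not depending on the chosen primitive are accurate and only make explicit what the paper leaves implicit.
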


\begin{proof}
The free terms in \eqref{eq:differential-map} cancel. Applying the first
estimate in \eqref{eq:cutoff-duhamel} to the difference and then
\eqref{eq:pressure-lipschitz} gives
\[
 \|\mathcal M\Ufield-\mathcal M\Vfield\|_{\mathcal Y}
 \le C_{\rm lin}T^\varepsilon
 \|dF[\Ufield]-dF[\Vfield]\|_{L^2(I;H^{s-1})}
 \le C_\delta T^\varepsilon R(1+R)
       \|\Ufield-\Vfield\|_{\mathcal Y}.
\]
The last line of \eqref{eq:Tstar} makes the coefficient at most $1/2$.
\end{proof}

Let us make the distinction between the two existence regimes quantitative.
In regime~\textup{(R1)}, the second condition in \eqref{eq:Tstar} depends on the
actual high-frequency tail of the datum.  Indeed, for every $\lambda_0\ge1$,
\[
 \omega_{v_0}(T)
 \le T\lambda_0 \|v_0\|_{H^{s}}
 +\big\|\mathbf 1_{\{\langle D_{\parallel}\rangle>\lambda_0\}}
       \Lambda^{s-1}\nabla v_0\big\|_{L^2}.
\]
For a fixed datum, first choose $\lambda_0$ so that the second term is small and
then choose $T$ so that the first is small.  The tail is not uniformly
controlled on bounded subsets of $H^s$, which is why the regime~\textup{(R1)}
lifespan is not a function of $\|v_0\|_{H^s}$ alone.

In regime~\textup{(R2)}, put
$D_{\rpar}:=\|\nabla v_0\|_{Y^{s-1,\rpar}}$.  The estimate
$\omega_{v_0}(T)\le T^{\rpar}D_{\rpar}$ from \eqref{eq:omegav0bound} shows that the second line of
\eqref{eq:Tstar} is ensured by the explicit norm condition
\begin{equation}\label{eq:Tstar-R2}
 CT_*^{\rpar}D_{\rpar}\le\tfrac\delta2.
\end{equation}
All quantities in \eqref{eq:Tstar} and \eqref{eq:Tstar-R2} are then controlled
by $n,s,\breg,\rpar,v_{\rm A}$ and
$\|v_0\|_{H^s}+D_{\rpar}$.  This is the asserted norm dependence of the
regime~\textup{(R2)} lifespan.  To propagate the stronger spacetime norm we
will separately impose
\begin{equation}\label{eq:Tstar-enhanced}
 C_{\rm lin}C_{\delta,\rpar}T_*^\varepsilon R\le\tfrac12.
\end{equation}
Condition \eqref{eq:Tstar-enhanced} is not needed to construct a fixed point by Picard iteration; it only ensures the improved regularity estimate is propagated along the Picard sequence.

\subsection{Lagrangian existence} With the contractive self-map $\mathcal{M}$ at hand,  existence in regime \textup{(R1)} follows quickly from Banach's fixed-point Theorem once $w$ is reconstructed from the fixed point $\Ufield$. Propagation of any positive amount of parallel regularity under Picard iterations addresses regime \textup{(R2)}.

\begin{theorem}[existence for the auxiliary system]\label{thm:existence}
Let $n\ge2$, $\frac{n+1}2<s\le\frac n2+1$, $\breg\in(\frac12,1)$, $\varepsilon=1-\breg$,
let $\Binf\in\Rn\setminus\{0\}$ be fixed, and let
$v_0\in H^{s}(\Rn)$ be divergence-free, of arbitrary size, and with no relation to
$\Binf$. Put
$R:=2C_{\rm lin}\|v_0\|_{H^{s}}$ and let $\delta$ be as
fixed above. There is $T_*>0$, depending on $n,s,\breg,v_{\rm A}$, on
$\|v_0\|_{H^{s}}$ and on the modulus $\omega_{v_0}$, such that $\mathcal M$ carries $\mathcal U_{R,\delta}$
into itself and has a unique fixed point $\Ufield$ in it.  The potential reconstructed from this
fixed point solves \eqref{eq:auxiliary-system} on $[0,T_*]$ and satisfies
$\Ufield=dw$ on $[0,T_*]\times\Rn$. Moreover,
\begin{equation}\label{eq:existence-traces}
 dw\in C\big([0,T_*],Y^{s-1,1}\big)\cap C^1\big([0,T_*],H^{s-1}\big),
 \qquad
 E_i\partial_tX^j\in C\big([0,T_*],H^{s-1}\big).
\end{equation}
If $\|\nabla v_0\|_{Y^{s-1,\rpar}}<\infty$ for some
$\rpar\in(0,1]$, then $T_*$ may be chosen to satisfy
\eqref{eq:Tstar}, \eqref{eq:Tstar-R2} and
\eqref{eq:Tstar-enhanced}; in particular, it depends only on the listed
parameters and on
$\|v_0\|_{H^s}+\|\nabla v_0\|_{Y^{s-1,\rpar}}$.  For this choice,
\begin{equation}\label{eq:longitudinal-propagation}
 \|\Ufield\|_{H^{s-1,1+\rpar}_{\breg}}
 \le 2C_{\rm lin}\|\nabla v_0\|_{Y^{s-1,\rpar}}.
\end{equation}
This is additional Lagrangian spacetime regularity of the differentiated
displacement; no Eulerian anisotropic norm is asserted.
\end{theorem}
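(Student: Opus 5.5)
The plan has four steps: obtain the fixed point, reconstruct $w$ from it, read off the trace regularity, and propagate the extra parallel regularity in regime (R2).

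\textbf{Fixed point.} Lemma~\ref{lem:selfmap} and Lemma~\ref{lem:contraction} show that, for $T\le T_*$ chosen by \eqref{eq:Tstar}, $\mathcal M$ is a $\frac12$-contraction of $\mathcal U_{R,\delta}$ into itself. The class is closed in the Hilbert space $\mathcal Y$: exactness and the traces are closed conditions by Corollary~\ref{cor:trace-Y}, and the ball constraints are closed. Banach's theorem then gives a unique fixed point $\Ufield$.

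\textbf{Reconstruction of $w$.} Rather than the homotopy primitive, I will define $w$ by the undifferentiated Duhamel formula
\[
 w(t):=\chi(t)\Sprop(t)v_0-\chi(t/T)\int_0^t\Sprop(t-t')\mathbf 1_I(t')F[\Ufield](t')\,dt'.
\]
The source is meaningful because $F[\Ufield]\in C(I;L^4)$ by \eqref{eq:timecontL4}, and $\Sprop$ is an average of translations, so it is bounded on $L^4$ with norm at most $|t|$. Since $d$ commutes with $\Sprop$, and $d F[\Ufield]$ is the distributional label derivative by \eqref{eq:elliptic-identification}, we get $dw=\mathcal M\Ufield=\Ufield$ on $I$. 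For $n=2$, where only the bounded extension is available, I would instead pass to the limit through the mollified approximations used in that lemma.

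\textbf{Solving the auxiliary system.} With $dw=\Ufield$, the quantities $\gmetric[dw]$, $\mathcal Q[dw]$ and $F[dw]$ coincide with those built from $\Ufield$. The canonical field $F$ is invariant under the translation gauge, so $w$ and $\bar w_\Ufield$ differ only by a spatial constant at each time. Hence $\Box w=-F[dw]$ on $I$ with data $(0,v_0)$, and \eqref{eq:force-hodge} holds by construction of $F$. The free part lies in $C_tL^2+C_tH^s$, so Lemma~\ref{lem:wave-uniqueness} is not even needed.

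\textbf{Trace regularity \eqref{eq:existence-traces}.} Corollary~\ref{cor:trace-Y} gives $dw=\Ufield\in C(I;Y^{s-1,1})$ and $\partial_t\Ufield\in C(I;Y^{s-1,0})=C(I;H^{s-1})$. For the last claim, $E_i\partial_tX^j=\Kmat^a_i\partial_t\Ufield^j_a$, because $\partial_a\partial_tX^j=\partial_t\Ufield_a^j$. The coefficient $\Kmat=\mathrm{Id}+\Mmat$ with $\Mmat\in C(I;Y^{s-1,1})$, and \eqref{eq:tame-sp} makes it a continuous multiplier on $H^{s-1}$.

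\textbf{Regime (R2).} The main obstacle is this step, because the $H^{s-1,1+\rpar}_\breg$ norm is not part of the class, so its bound has to be carried along the Picard iteration. Starting from $\Ufield^{(0)}$ and setting $\Ufield^{(k+1)}=\mathcal M\Ufield^{(k)}$, I apply Theorem~\ref{thm:linear} at $(\sreg,\preg)=(s-1,1+\rpar)$ together with \eqref{eq:pressure-anisotropic}. This gives
\[
 \|\Ufield^{(k+1)}\|_{H^{s-1,1+\rpar}_\breg}\le C_{\rm lin}\|\nabla v_0\|_{Y^{s-1,\rpar}}+C_{\rm lin}C_{\delta,\rpar}T^\varepsilon R\,\|\Ufield^{(k)}\|_{H^{s-1,1+\rpar}_\breg}.
\]
By \eqref{eq:Tstar-enhanced} the bound $2C_{\rm lin}\|\nabla v_0\|_{Y^{s-1,\rpar}}$ is preserved by induction. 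Since $\Ufield^{(k)}\to\Ufield$ in $\mathcal Y$, weak lower semicontinuity of the Hilbert norm transfers the bound to the limit, proving \eqref{eq:longitudinal-propagation}. The lifespan dependence in (R2) follows from \eqref{eq:Tstar-R2}.
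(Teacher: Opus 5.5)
Your proposal is correct and follows the paper's proof essentially step by step. The steps match: Banach's theorem on the closed class, reconstruction of $w$ from the undifferentiated Duhamel formula (your cutoffs are harmless because they equal one on $I$), trace regularity via Corollary~\ref{cor:trace-Y} and \eqref{eq:tame-sp}, and propagation of the $H^{s-1,1+\rpar}_{\breg}$ bound along the Picard iterates under \eqref{eq:Tstar-enhanced}. Your separate $n=2$ fallback is unnecessary: by \eqref{eq:datum-fixed-time}, $\mathcal Q[\Ufield](t)\in L^{r_{\mathrm{HLS}}}$ with $1<r_{\mathrm{HLS}}<n$ for every $n\ge2$, so \eqref{eq:elliptic-identification} applies in all dimensions, exactly as it is used in the proof of Lemma~\ref{lem:pressure}.
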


\begin{proof}
Take $T=T_*$ and $I=[0,T_*]$. By Corollary~\ref{cor:trace-Y}, convergence in
$\mathcal Y$ implies convergence in $C_tY^{s-1,1}$ and of the first time
derivatives in $C_tH^{s-1}$. Thus the traces and fixed-time bound in
\eqref{eq:fixedpoint-class} are closed. The norm bound is closed as well, and
the continuity of $\Curl_y$ from $\mathcal Y$ to distributions makes the
exactness condition closed. Hence $\mathcal U_{R,\delta}$ is complete.
Lemmas~\ref{lem:selfmap} and~\ref{lem:contraction} and Banach's theorem give a
unique fixed point $\Ufield\in\mathcal U_{R,\delta}$.

\textit{Back to the undifferentiated wave--Hodge system.} Reconstruct the displacement by
\begin{equation}\label{eq:potential-reconstruction}
 w(t):=\Sprop(t)v_0-\int_0^t\Sprop(t-t')F[\Ufield](t') dt'.
\end{equation}
Lemma~\ref{lem:pressure} gives $F[\Ufield]\in C(I;L^4)$.  Since
$v_0\in L^4$, $\partial_t\Sprop(r)=\Cprop(r)$, and
\[
 \|\Sprop(r)\|_{L^4\to L^4}\le |r|,
 \qquad
 \|\Cprop(r)\|_{L^4\to L^4}\le 1,
\]
the Bochner integral in \eqref{eq:potential-reconstruction} is continuously
differentiable in $L^4$, with derivative
\[
 \partial_tw(t)=\Cprop(t)v_0
 -\int_0^t\Cprop(t-t')F[\Ufield](t') dt'.
\]
Thus $w\in C^1(I;L^4)$. Then
$(w,\partial_tw)(0)=(0,v_0)$ and, by \eqref{eq:differential-map},
\[
 dw(t)=\Sprop(t)\nabla v_0
       -\int_0^t\Sprop(t-t')dF[\Ufield](t') dt'
       =(\mathcal M\Ufield)(t)=\Ufield(t)
\]
on $I$ in distributions, where the cutoffs in \eqref{eq:differential-map}
equal one.

This identity is important: it says
that the primitive used to define the geometry of the trial field and the
dynamical displacement reconstructed from the wave equation have the same
spatial differential.  They can differ only by a spatial translation, which
does not change $\gmetric$, $\mathcal Q$ or the Hodge system.  Consequently, $F[\Ufield]=F[dw]$, and this identity recovers the full
system \eqref{eq:auxiliary-system}.
Corollary~\ref{cor:trace-Y} applied to the fixed point gives
$dw\in C_tY^{s-1,1}$ and $\partial_tdw\in C_tH^{s-1}$, which is the first
statement in \eqref{eq:existence-traces}.  By \eqref{eq:component-derivative} we have
\[
 E_i\partial_tX^j
 =\Kmat^a_i\partial_t\Ufield_a^j
 =\partial_t\Ufield_i^j
  +(\Kmat^a_i-\delta_i^a)\partial_t\Ufield_a^j,
\]
then Lemma~\ref{lem:coeff-calculus} gives
$\Kmat-\mathrm{Id}\in C_tY^{s-1,1}$, and the product estimate \eqref{eq:tame-sp} gives the
second statement in \eqref{eq:existence-traces}.

\textit{Uniqueness in the adapted fixed-point class.} Let $\widetilde w$ be another solution of
\eqref{eq:auxiliary-system} on $I$ with the same Cauchy data, in the sense that
$\widetilde w\in C^1(I;L^1_{\rm loc})$,
$\Box\widetilde w=-F[d\widetilde w]$ in $\mathcal D'((0,T_*)\times\Rn)$ and
$d\widetilde w\in C(I;Y^{s-1,1})\cap C^1(I;H^{s-1})$, and suppose that
$d\widetilde w$ agrees on $I$ with the restriction of some
$\widetilde\Ufield\in\mathcal U_{R,\delta}$. Set
$\Vfield:=\mathcal M\widetilde\Ufield$. On $I$, both $\Vfield$ and
$d\widetilde w$ solve
\[
 \Box Z=-dF[\widetilde\Ufield],
 \qquad
 (Z,\partial_tZ)(0)=(0,\nabla v_0).
\]
Lemma~\ref{lem:wave-uniqueness} gives $\Vfield=d\widetilde w$ on $I$. Since
$\Vfield=\mathcal M\widetilde\Ufield$, the self-map property puts $\Vfield$ in
$\mathcal U_{R,\delta}$. Since $\mathcal M$ depends only on the restriction to
$I$, $\mathcal M\Vfield=\mathcal M\widetilde\Ufield=\Vfield$, so fixed-point uniqueness gives
$\Vfield=\Ufield$. Thus $d\widetilde w=dw$ and
$\widetilde w-w=c(t)$. Subtracting the two wave equations gives $c''=0$, and
the common Cauchy data give $c=0$. Hence $\widetilde w=w$.

\textit{Additional parallel regularity \textup{(R2)}.} Suppose now that
$D_{\rpar}:=\|\nabla v_0\|_{Y^{s-1,\rpar}}<\infty$ for some
$0<\rpar\le1$. Start the Picard iteration from the free field
\[
 \Ufield^{(0)}(t):=\chi(t)\Sprop(t)\nabla v_0,
 \qquad
 \Ufield^{(j+1)}:=\mathcal M\Ufield^{(j)}.
\]
The proof of Lemma~\ref{lem:selfmap} puts the free field in
$\mathcal U_{R,\delta}$, and the self-map property keeps every iterate there.
The contraction estimate makes the sequence converge strongly in $\mathcal Y$
to the fixed point.
To obtain a bound in the stronger space, put
\[
 C_j:=\|\Ufield^{(j)}\|_{H^{s-1,1+\rpar}_{\breg}}.
\]
The linear estimate with $(\sreg,\preg)=(s-1,1+\rpar)$ first gives
$C_0\le C_{\rm lin}D_{\rpar}$.  For the next iterate, the same estimate
places the source in $Y^{s-1,\rpar}$, and
\eqref{eq:pressure-anisotropic}, together with the uniform bound
$\|\Ufield^{(j)}\|_{\mathcal Y}\le R$, guaranteed by the class, gives
\[
 C_{j+1}
 \le C_{\rm lin}D_{\rpar}
   +C_{\rm lin}C_{\delta,\rpar}T^\varepsilon R C_j.
\]
Condition \eqref{eq:Tstar-enhanced} makes the coefficient of $C_j$ at most
$1/2$. Starting from $C_0\le C_{\rm lin}D_{\rpar}$, induction gives
$C_j\le2C_{\rm lin}D_{\rpar}$ for every $j$. A subsequence therefore converges
weakly in the Hilbert space $H^{s-1,1+\rpar}_{\breg}$. Its continuous embedding
into $\mathcal Y$ and the strong convergence there identify its weak limit with
$\Ufield$. Weak lower semicontinuity proves
\eqref{eq:longitudinal-propagation}.
\end{proof}

\section{Determinant Constraint and Return to MHD}\label{sec:return-mhd}
\subsection{The volume constraint propagates}
\label{ssec:constraint}

The system \eqref{eq:auxiliary-system} does not contain $\det dX=1$; however, any solution
propagates it dynamically, at the regularity of the class.

\begin{proposition}[weak Jacobi's identity]\label{prop:constraint}
Let $w$ solve \eqref{eq:auxiliary-system} on $[0,T]$ with $v_0$ divergence-free and
\begin{equation}\label{eq:constraint-hyp}
\Ufield=dw\ \text{on }[0,T]\times\Rn
\ \text{for some }\Ufield\in H^{s-1,1}_{\breg},
\qquad \big\|dw\big\|_{L^\infty}\le\tfrac12,\qquad
d(\Box X)\in L^1_tL^2_{y} .
\end{equation}
Then $\det dX\equiv1$ on $[0,T]$.
\end{proposition}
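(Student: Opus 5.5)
The plan is to show that $\varrho=\log J$, with $J=\det(\mathrm{Id}+\Ufield)$, solves the homogeneous Alfv\'en wave equation $\Box\varrho=0$ with zero Cauchy data. Lemma~\ref{lem:wave-uniqueness} then gives $\varrho\equiv0$, hence $J\equiv1$. The computation is the weak version of \eqref{eq:jacobiintro}, and it uses the factorisation $\Box=\mathcal A^+\mathcal A^-$.

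\textbf{Step 1: the first derivatives.} Since $\|dw\|_{L^\infty}\le\frac12$, the function $\Psi(Z)=\log\det(\mathrm{Id}+Z)$ is analytic on the relevant compact convex range. The chain rule clause of Lemma~\ref{lem:coeff-calculus} then gives
\[
 \mathcal A^\pm\varrho=\Kmat^a_i\,\mathcal A^\pm\Ufield_a^i
\]
in $\mathcal D'$. By Corollary~\ref{cor:trace-Y}, $\partial_t\Ufield$ and $\mathcal B\Ufield$ lie in $C_tH^{s-1}$, and $\Kmat-\mathrm{Id}\in C_tY^{s-1,1}$. The product estimate \eqref{eq:tame-sp} therefore gives $\partial_t\varrho,\mathcal B\varrho\in C([0,T];L^2)$. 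Since $\varrho=\Psi(\Ufield)-\Psi(0)\in C_tY^{s-1,1}\subset C_tL^2$ and $\partial_t\varrho$ is its distributional derivative, we get $\varrho\in C^1([0,T];L^2)$, which are exactly the hypotheses of Lemma~\ref{lem:wave-uniqueness}. The Cauchy data are $\varrho(0)=\log\det\mathrm{Id}=0$ and $\partial_t\varrho(0)=\tr\nabla v_0=\Div v_0=0$.

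\textbf{Step 2: the second derivative, formally.} Apply $\mathcal A^+$ to $\mathcal A^-\varrho=\Kmat^a_i\mathcal A^-\Ufield_a^i$. Using $\mathcal A^+\Kmat=-\Kmat(\mathcal A^+\Ufield)\Kmat$, this gives
\[
 \Box\varrho=\Kmat^a_i\,\partial_a(\Box w^i)-\tr\big(\Kmat\mathcal A^+(dw)\Kmat\mathcal A^-(dw)\big)
 =-\Kmat^a_i\partial_aF[dw]^i+\mathcal Q[dw],
\]
where the second equality uses $\Box w=-F[dw]$ and \eqref{eq:datum-geometric}. By Lemma~\ref{lem:weak-pullback}(iii), the first term equals $-\Div_{\gmetric}F[dw]$; the required membership $\mathcal C_X^{-1}F\in W^{1,p}_{\rm loc}$ holds because $dF\in L^2_tH^{s-1}$ and $F\in L^4$, using the chain rule \eqref{eq:pullback-chain-rule}. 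The Hodge equation \eqref{eq:force-hodge} then says that this term is exactly $-\mathcal Q[dw]$, so $\Box\varrho=0$. Notice that only $\Kmat$, first derivatives of $\Ufield$, and $d(\Box X)=-dF$ appear; there are no Christoffel terms.

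\textbf{Step 3: justification, the main obstacle.} The delicate point is the product rule $\mathcal A^+(\Kmat\,\mathcal A^-\Ufield)$. Here $\mathcal A^-\Ufield$ is only in $C_tH^{s-1}$ and $\mathcal A^+\mathcal A^-\Ufield=d(\Box w)$ is only in $L^1_tL^2$, which is exactly the hypothesis $d(\Box X)\in L^1_tL^2$ in \eqref{eq:constraint-hyp}. I would mollify in $y$ (and if needed in $t$) with $\rho_\ell$. Since $\mathcal A^\pm$ have constant coefficients, they commute with the mollifier, so $\Box(\rho_\ell*\Ufield)=\rho_\ell*d\Box w$ and the identity of Step 2 holds classically for $\Ufield_\ell$ and $\varrho_\ell=\Psi(\Ufield_\ell)$. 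To pass $\ell\to0$ in $\mathcal D'$ I need four convergences:
\begin{itemize}
\item $\Kmat_\ell\to\Kmat$ in $L^\infty_{\rm loc}$, which holds because $\Ufield$ is continuous and bounded;
\item $\mathcal A^\pm\Ufield_\ell\to\mathcal A^\pm\Ufield$ in $L^2_{t,y}$;
\item $\rho_\ell*d\Box w\to d\Box w$ in $L^1_tL^2$;
\item convergence of the quadratic term $\tr(\Kmat_\ell\mathcal A^+\Ufield_\ell\Kmat_\ell\mathcal A^-\Ufield_\ell)$ in $L^1_{\rm loc}$.
\end{itemize}
Each is a product of one $L^\infty$ factor with strongly convergent $L^2$ or $L^1L^2$ factors. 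The conclusion is $\Box\varrho=0$ in $\mathcal D'((0,T)\times\Rn)$, and Lemma~\ref{lem:wave-uniqueness} finishes the proof.
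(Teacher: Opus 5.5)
Your proposal is correct and follows the paper's proof essentially step for step. You mollify, obtain $\Box\varrho_\ell=\mathcal Q[(dw)_\ell]+\tr(\Kmat_\ell\, d(\Box X)_\ell)$ from Jacobi's formula and $\Box=\mathcal A^+\mathcal A^-$, and pass to the limit using bounded matrix factors against strongly convergent $L^2$ (resp.\ $L^1_tL^2$) factors. You then identify $\tr(\Kmat\,dF)$ with $\Div_{\gmetric}F$ via Lemma~\ref{lem:weak-pullback}(iii) and close with Lemma~\ref{lem:wave-uniqueness}.

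One small repair is needed. The proposition gives no $Y^{s-1,1}$-smallness of $dw$, so the analytic calculus and \eqref{eq:tame-sp} are not available. Instead, obtain $\varrho,\partial_t\varrho,\mathcal B\varrho\in C([0,T];L^2)$ directly from $\Kmat\in C([0,T];L^\infty)$, the Lipschitz bound on the compact matrix range, and $\partial_t dw,\mathcal B dw\in C_tH^{s-1}$. Likewise, use the hypothesis $dF=-d(\Box X)\in L^1_tL^2$, rather than $L^2_tH^{s-1}$, for the $W^{1,2}_{\rm loc}$ membership; this is exactly what the paper does.
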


\begin{proof}
Recall the notation
\[
 \sigma=s-1
 ,
 \qquad
 A=dX=\mathrm{Id}+dw,
 \qquad
 \Kmat=A^{-1},
 \qquad
 \varrho:=\log\det A.
\]
The trace estimate of Corollary~\ref{cor:trace-Y} applied to $\Ufield$, together with
$\Ufield=dw$ on $[0,T]$, gives
\[
 dw\in C_tY^{\sigma,1},
 \qquad
 \partial_tdw,\ \mathcal Bdw\in C_tH^\sigma.
\]
Consequently,
\[
 \mathcal A^\pm dw
 =\partial_tdw\pm\mathcal Bdw
 \in L^2_tH^\sigma
 \hookrightarrow L^2_tL^4_y,
\]
because $\sigma>(n-1)/2\ge n/4$.  The pointwise assumption
$\|dw\|_{L^\infty}\le 1/2$ keeps $A$ in a fixed compact subset of
the invertible matrices and therefore gives
\[
 \Kmat\in L^\infty.
\]

For smooth $X$ the following computation is immediate from Jacobi's formula.
At the present regularity, differentiating the first-order identity once more
and identifying the resulting quadratic product are not automatic, so we
justify the full second-order identity by mollification. Fix
$I'\Subset(0,T)$, mollify $dw$ in spacetime using only values inside
$(0,T)$, and set
\[
 A_\ell:=\mathrm{Id}+(dw)_\ell,
 \qquad
 \Kmat_\ell:=A_\ell^{-1},
 \qquad
 \varrho_\ell:=\log\det A_\ell.
\]
The mollifiers preserve
$\|A_\ell-\mathrm{Id}\|_{L^\infty}\le1/2$, so the matrices
$\Kmat_\ell$ are uniformly bounded. All the differential operators involved
have constant coefficients and commute with mollification. Jacobi's formula
and differentiation of $\Kmat_\ell A_\ell=\mathrm{Id}$ give
\[
 \mathcal A^\pm\varrho_\ell
 =\tr\bigl(\Kmat_\ell\mathcal A^\pm A_\ell\bigr),
 \qquad
 \mathcal A^+\Kmat_\ell
 =-\Kmat_\ell(\mathcal A^+A_\ell)\Kmat_\ell.
\]
Therefore
\begin{align*}
 \Box\varrho_\ell
 &=\mathcal A^+\tr\bigl(\Kmat_\ell\mathcal A^-A_\ell\bigr)\\
 &=\tr\bigl(\Kmat_\ell d(\Box X)_\ell\bigr)
   -\tr\bigl(
       \Kmat_\ell(\mathcal A^+A_\ell)
       \Kmat_\ell(\mathcal A^-A_\ell)
     \bigr).
\end{align*}
Here $\mathcal A^\pm A_\ell=\mathcal A^\pm(dw)_\ell$ and
$\Box A_\ell=d(\Box X)_\ell$. We use the cyclicity of the trace explicitly:
\[
 \tr\bigl(
   \Kmat_\ell\mathcal A^+(dw)_\ell
   \Kmat_\ell\mathcal A^-(dw)_\ell
 \bigr)
 =\tr\bigl(
   \Kmat_\ell\mathcal A^-(dw)_\ell
   \Kmat_\ell\mathcal A^+(dw)_\ell
 \bigr).
\]
After averaging these equal expressions, the definition of $Q_0$ gives
\begin{align*}
 \tr(\Kmat_\ell \mathcal A^+(dw)_\ell \Kmat_\ell \mathcal A^-(dw)_\ell)
 &=\frac12\tr\bigl(
      \Kmat_\ell \mathcal A^+(dw)_\ell \Kmat_\ell \mathcal A^-(dw)_\ell
      +\Kmat_\ell \mathcal A^-(dw)_\ell \Kmat_\ell \mathcal A^+(dw)_\ell
    \bigr)\\
 &=\frac12\Kmat_{\ell,i}^a\Kmat_{\ell,j}^b
   \Big[
      (\mathcal A^+(dw)_\ell)_a^j
      (\mathcal A^-(dw)_\ell)_b^i
     +(\mathcal A^-(dw)_\ell)_a^j
      (\mathcal A^+(dw)_\ell)_b^i
   \Big]\\
 &=\Kmat_{\ell,i}^a\Kmat_{\ell,j}^b
   Q_0\bigl((dw)_{\ell,a}^{ j},(dw)_{\ell,b}^{ i}\bigr)\\
 &=-\mathcal Q[(dw)_\ell].
\end{align*}
Hence
\begin{equation*}
    \Box\varrho_\ell
 =\mathcal Q[(dw)_\ell]
  +\tr\bigl(\Kmat_\ell d(\Box X)_\ell\bigr).
\end{equation*}
Put
\[
 P_\ell^\pm:=\Kmat_\ell\mathcal A^\pm(dw)_\ell,
 \qquad
 P^\pm:=\Kmat\mathcal A^\pm dw,
 \qquad
 G_\ell:=d(\Box X)_\ell,
 \qquad
 G:=d(\Box X).
\]
After passing to a subsequence, $\Kmat_\ell\to\Kmat$ almost everywhere,
and the matrices $\Kmat_\ell$ are uniformly bounded. Standard mollifier
convergence gives
$\mathcal A^\pm(dw)_\ell\to\mathcal A^\pm dw$ in
$L^2(I';L^4)$ and $G_\ell\to G$ in $L^1(I';L^2)$. The decompositions
\begin{align*}
 P_\ell^\pm-P^\pm
 &=\Kmat_\ell\bigl(\mathcal A^\pm(dw)_\ell
                    -\mathcal A^\pm dw\bigr)
   +(\Kmat_\ell-\Kmat)\mathcal A^\pm dw,\\
 \Kmat_\ell G_\ell-\Kmat G
 &=\Kmat_\ell(G_\ell-G)+(\Kmat_\ell-\Kmat)G
\end{align*}
and dominated convergence therefore give
\[
 \begin{gathered}
  \varrho_\ell\longrightarrow\varrho
  \quad\text{in }L^2_{\mathrm{loc}}(I'\times\Rn),\\
  P_\ell^\pm\longrightarrow P^\pm
  \quad\text{in }L^2(I';L^4_y),\\
  \tr(\Kmat_\ell G_\ell)\longrightarrow\tr(\Kmat G)
  \quad\text{in }L^1(I';L^2_y).
 \end{gathered}
\]
Here the first convergence follows as well from the Lipschitz continuity of
$M\mapsto\log\det(\mathrm{Id}+M)$ on the fixed compact matrix range. Since
$\mathcal Q$ is a finite sum of products of $P_\ell^+$ and $P_\ell^-$,
the $L^2_tL^4_y$ convergence gives
$\mathcal Q[(dw)_\ell]\to\mathcal Q[dw]$ in $L^1(I';L^2_y)$. Since $I'$ was
arbitrary, passing to the limit gives
\begin{equation}\label{eq:weak-jacobi}
 \Box\varrho
 =\mathcal Q[dw]+\tr\bigl(\Kmat d(\Box X)\bigr)
 \qquad\text{in }\mathcal D'((0,T)\times\Rn).
\end{equation}

Passing to the limit in the first-order Jacobi formulas also gives
\begin{equation}\label{eq:constraint-first-order-jacobi}
 \partial_t\varrho
 =\tr(\Kmat\partial_tdw),
 \qquad
 \mathcal B\varrho
 =\tr(\Kmat\mathcal Bdw).
\end{equation}
Since $dw\in C_tY^{\sigma,1}\hookrightarrow C_tL^\infty$ and the maps
$A\mapsto A^{-1}$ and $A\mapsto\log\det A$ are Lipschitz on the fixed set of
matrices under consideration, we have
\[
 \Kmat\in C([0,T];L^\infty),
 \qquad
 \varrho\in C([0,T];L^2).
\]
Together with $\partial_tdw,\mathcal Bdw\in C_tL^2$, the first-order
identities imply
\[
 \varrho\in C^1([0,T];L^2),
 \qquad
 \mathcal B\varrho\in C([0,T];L^2).
\]
In particular, they hold also at $t=0$ and $t=T$.

The trace bounds give
$\mathcal Q[dw]\in L^\infty_tL^{r_{\mathrm{HLS}}}_y$, so the
Hardy--Littlewood--Sobolev estimate and the uniform Jacobian bounds give
$F\in L^\infty_tL^4_y$. Together with
$dF=-d(\Box X)\in L^1_tL^2_y$, this gives
$F\in L^1_tW^{1,2}_{\rm loc}$. Bi-Lipschitz composition gives the same local
regularity for $\mathcal C_X^{-1}F$, so Lemma~\ref{lem:weak-pullback}
identifies $\tr(\Kmat dF)$ with $\Div_{\gmetric}F$. Using the Hodge equation
in \eqref{eq:weak-jacobi}, we conclude that
\[
 \Box\varrho
 =\mathcal Q[dw]-\Div_{\gmetric}F
 =0
 \qquad\text{in }\mathcal D'((0,T)\times\Rn).
\]
The Cauchy data for $X$ give
\[
 A(0)=\mathrm{Id},
 \qquad
 \varrho(0)=0,
\]
and \eqref{eq:constraint-first-order-jacobi} gives
\[
 \partial_t\varrho(0)
 =\tr(dv_0)
 =\Div_yv_0
 =0.
\]

Since $\varrho\in C^1([0,T];L^2)$ with $\mathcal B\varrho\in C([0,T];L^2)$,
$\Box\varrho=0$ in $\mathcal D'$ and $\varrho(0)=\partial_t\varrho(0)=0$,
Lemma~\ref{lem:wave-uniqueness} gives $\varrho\equiv0$.
Therefore $\det dX=\exp(\varrho)\equiv1$, as claimed.
\end{proof}

\begin{remark}[generalized Jacobi identity]\label{rem:generalcaseJacobi}
More generally, \eqref{eq:weak-jacobi} gives
$\Box\varrho=\mathcal Q[dw]-q$ whenever
$\Box X=-F$ and $\Div_{\gmetric}F=q$. Thus the inverse-deformation factors
$\Kmat=A^{-1}$ in \eqref{eq:datum}, rather than the cofactor factors
$J\Kmat$, are exactly what makes the unconstrained extension propagate volume
in contrast with the constrained formulation of \cite{Zhang2024}.
\end{remark}

\begin{remark}[weak limits and the relaxation of the volume constraint]
\label{rem:weak-limit-heuristic}
The following heuristic, stated in $n=3$ for concreteness, indicates that
dropping the volume constraint on the trial fields is the natural relaxation.
Let $(v_m,B_m)$ be smooth solutions of \eqref{eq:mhd} and let
$E_m:=B_m\times v_m$ be the associated electric fields, as prescribed by
the ideal Ohm law. Assume $v_m$, $B_m$ and $E_m$ are bounded in
$L^2_{\rm loc}(\Real^{1+3})$ and converge weakly there:
\[
 v_m\rightharpoonup v,\qquad B_m\rightharpoonup B,\qquad
 E_m\rightharpoonup E,
\]
with $B\neq0$ almost everywhere.  Write $\mathrm{vol}_x=dx^1\wedge dx^2\wedge
dx^3$ and let
\[
 F_m:=\iota_{B_m}\mathrm{vol}_x+E_m^\flat\wedge dt
\]
be the Faraday $2$-form. Then $dF_m=0$: the $dx$-part is $\Div_xB_m=0$ and the
$dt$-part is Faraday's law $\partial_tB_m+\curl E_m=0$. Moreover
$F_m\wedge F_m=-2(B_m\cdot E_m)dt\wedge\mathrm{vol}_x=0$. The div--curl lemma of Murat and
Tartar~\cite{Murat1978,Tartar1979}, in the form of the weak continuity of wedge
products of closed forms, therefore passes this to the limit:
\[
 B\cdot E=0 .
\]
 
What compensated compactness does \emph{not} give is the Ohm law itself: there
is no reason for $B_m\times v_m$ to converge weakly to $B\times v$. All that
survives is that $E$ lies pointwise in the range of $B\times(\cdot)$,
which is the plane $B^\perp$. Pointwise almost everywhere, choose a measurable
scalar $\alpha$ and set
\[
 V_{\rm eff}:=\frac{E\times B}{|B|^2}+\alpha B,
 \qquad
 u:=V_{\rm eff}-v.
\]
Then $E=B\times V_{\rm eff}=B\times(v+u)$ and letting
$\beta:=\iota_B\mathrm{vol}_x$ be the
$2$-form dual to $B$, after passing by linearity to the limit in the Faraday--Ohm systems one formally obtains
\[
 \partial_t\beta+\mathcal L_{V_{\rm eff}}\beta=0.
\]
There is no reason, however, to have $\Div V_{\rm eff}=0$ and this could be read as the Eulerian counterpart of running the fixed point on trial
deformations with $\det(\mathrm{Id}+\Ufield)\neq1$.
\end{remark}

\subsection{Eulerian reconstruction and existence}
\label{ssec:eulerian}

We now show that an auxiliary solution determines a distributional solution of
\eqref{eq:mhd}.
Here $\nabla v$ and $\nabla B$ lie only in $C_tH^{s-1}$ and $\nabla^2p$ only in
$C_tH^{s-2}$, so the Eulerian derivatives entering the formal smooth change-of-variables
calculation that leads to \eqref{eq:auxiliary-system} have no pointwise meaning,
and that calculation cannot simply be reversed through $X^{-1}$.  The four Eulerian statements, that is, the two
divergence conditions, the induction equation and the momentum equation, are therefore
verified below against test functions in a weak sense. 

Combining the work from \S \ref{sec:fixedpoint} with the volume constraint of
Proposition~\ref{prop:constraint}, it remains to pass to Eulerian variables
and prove the conservation laws. We use the trace of
Corollary~\ref{cor:trace-Y} and the multiplier bound of
Proposition~\ref{prop:tame-product} from Section~\ref{sec:anisotropic-spaces}.

We also recall that the component derivations $E_i$ are skew-adjoint on $L^2(\mu)$ by the
pullback calculus of Lemma~\ref{lem:weak-pullback}.
Constancy gives $\nabla\Binf=0$ and makes $\mathcal B$ skew-adjoint. We finally remark that the
estimates depend on the field only through $v_{\rm A}$.

\begin{proof}[Proof of clauses~\textup{(1)}--\textup{(3)} and the existence
statements in Theorem~\ref{thm:main}]
Set $T:=T_*$. Let $\Ufield$ be the fixed point of Theorem~\ref{thm:existence}, let $w$ be its
reconstructed potential, and put
\[
 X=\mathrm{id}+w,\qquad F=F[dw],\qquad
 \vLag=\partial_tX,\qquad \BLag=\mathcal BX,\qquad
 v=\vLag\circ X^{-1},\qquad B=\BLag\circ X^{-1}.
\]
Then $(w,\partial_tw)(0)=(0,v_0)$, so
$(v,B)|_{t=0}=(v_0,\Binf)$.  Moreover
$\Ufield=dw$ on $[0,T_*]\times\Rn$, with
$\Ufield\in\mathcal U_{R,\delta}$, where
$R=2C_{\rm lin}\|v_0\|_{H^s}$; this is \eqref{eq:main-adapted-bound}.

\emph{The constraint and preliminary bounds.}
The fixed-point bounds and \eqref{eq:embed-frac} give
$\|dw\|_{L^\infty}\le\frac12$, while Lemma~\ref{lem:pressure} gives
$dF\in L^2_tH^{s-1}\subset L^1_tL^2$.  Proposition~\ref{prop:constraint}
therefore yields
\[
 \det dX=1.
\]
The bound $\|dw\|_{L^\infty}\le\frac12$ and arguing as in \eqref{eq:trial-bilipschitz} gives that
$X(t,\cdot)$ is a volume-preserving bi-Lipschitz homeomorphism, which proves
the geometric assertions in clause~(2).

Recall $\sigma=s-1$ and
$\bLag:=b\circ X=\BLag-\Binf=\mathcal Bw=(dw)\Binf$.
The trace estimates \eqref{eq:existence-traces} give
\begin{equation}\label{eq:preliminaryeulerian}
    \partial_tdw,\ \mathcal Bdw\in C_tH^\sigma,\qquad
 \bLag\in C_tH^\sigma,
\end{equation}
and $H^\sigma\hookrightarrow L^4$ because
$\sigma>(n-1)/2\ge n/4$; in particular $v_0\in L^4$.  Also
$\mathcal B^2w=(\mathcal Bdw)\Binf\in C_tH^\sigma$, while
$F\in C_tL^4$ by Lemma~\ref{lem:pressure}.  Integrating
$\partial_t\vLag=\mathcal B^2w-F$ from $\vLag(0)=v_0$ gives $\vLag\in C_tL^4$;
together with $\bLag\in C_tH^\sigma$ this yields
\begin{equation}\label{eq:return-preliminary}
 \vLag,\bLag\in C_tL^4,\qquad
 d\vLag=\partial_tdw,\quad d\bLag=\mathcal Bdw
 \quad\text{in }C_tH^\sigma.
\end{equation}

\emph{Return to Eulerian variables.}
Set $g:=F\circ X^{-1}$, the candidate Eulerian pressure gradient. For
$\varphi\in C_c^\infty([0,T)\times\Rn;\Rn)$, divergence-free or not, put
$\Phi:=\varphi\circ X$. The Sobolev chain rule gives
\begin{equation}\label{eq:sobolevchainrule}
 \partial_t\Phi=(\partial_t\varphi)\circ X
 +((\nabla\varphi)\circ X)\vLag,
 \qquad
 \mathcal B\Phi=((\nabla\varphi)\circ X)\BLag
 \quad\text{in }\mathcal D'\text{ and a.e.}
\end{equation}
These compositions have supports contained in a fixed compact set in label
space. Indeed, \eqref{eq:return-preliminary} and $w(0)=0$ give
\[
 w(t)=\int_0^t\vLag(r)dr\in L^\infty_tL^4,
\]
and the Gagliardo--Nirenberg inequality gives
\[
 \|w\|_{L^\infty_{t,y}}
 \lesssim
 \|w\|_{L^\infty_tL^4}^{4/(4+n)}
 \|dw\|_{L^\infty_{t,y}}^{n/(4+n)}<\infty.
\]
Thus $\supp(\varphi\circ X)(t)$ lies in a fixed compact set. Since
$\Phi\in W^{1,2}$ has fixed compact support and
$\vLag,\BLag,F\in L^2_{\rm loc}$, the distributional identity extends to
$\Phi$ by density; the initial term is justified by the $W^{1,2}$ trace
theorem. The same argument applies to $\Psi=\psi\circ X$ below.

Using volume preservation, $\Phi(0)=\varphi(0)$ and
$\partial_t\vLag-\mathcal B\BLag=-F$, we obtain
\begin{equation}\label{eq:return-momentum}
\begin{aligned}
 &\iint v\cdot\partial_t\varphi  dx dt
   +\iint (v\otimes v-B\otimes B):\nabla\varphi  dx dt
   +\int v_0\cdot\varphi(0) dx\\
 &\qquad=\iint \vLag\cdot\partial_t\Phi  dy dt
   -\iint \BLag\cdot\mathcal B\Phi  dy dt
   +\int \vLag(0)\cdot\Phi(0) dy\\
 &\qquad=-\iint(\partial_t\vLag-\mathcal B\BLag)\cdot\Phi  dy dt\\
 &\qquad=\iint F\cdot\Phi  dy dt
 =\iint g\cdot\varphi  dx dt.
\end{aligned}
\end{equation}
For tests supported in $(0,T)\times\Rn$, this says
\[
 \partial_tv+\Div_x(v\otimes v-B\otimes B)=-g
 \quad\text{in }\mathcal D'.
\]

For $\psi\in C_c^\infty([0,T)\times\Rn;\Rn)$, put
$\Psi:=\psi\circ X$. Since $\BLag(0)=\Binf$ and
$\partial_t\BLag=\mathcal B\vLag$, the same calculation gives
\begin{equation}\label{eq:return-induction}
\begin{aligned}
 &\iint B\cdot\partial_t\psi  dx dt
   +\iint (B\otimes v-v\otimes B):\nabla\psi  dx dt
   +\int\Binf\cdot\psi(0) dx\\
 &\qquad=\iint \BLag\cdot\partial_t\Psi  dy dt
   -\iint \vLag\cdot\mathcal B\Psi  dy dt
   +\int\BLag(0)\cdot\Psi(0) dy\\
 &\qquad=-\iint(\partial_t\BLag-\mathcal B\vLag)\cdot\Psi  dy dt=0.
\end{aligned}
\end{equation}
This is the weak induction equation of Section~\ref{sec:introduction}.

The first-order identities in \eqref{eq:constraint-first-order-jacobi} from the proof of Proposition~\ref{prop:constraint} together with the definition \eqref{eq:divcurlg} now read
\[
 \Div_{\gmetric}\vLag=\partial_t\log\det dX=0,\qquad
 \Div_{\gmetric}\BLag=\mathcal B\log\det dX=0
\]
and we can conclude the corresponding Eulerian ones. Indeed, for every scalar
test function $\zeta\in C_c^\infty(\Rn)$, put $Z:=\zeta\circ X$. The
conservative weak formulation in Lemma~\ref{lem:weak-pullback}(ii), extended
from smooth tests to compactly supported Lipschitz tests by density, gives
\begin{equation}\label{eq:diveulerian}
    \langle\Div_xv,\zeta\rangle
 =-\int v^i\partial_i\zeta dx
 =-\int \vLag^i\Kmat_i^a\partial_aZ dy=0.
\end{equation}
The identical calculation with $\BLag$ proves $\Div_xB=0$.

\emph{Pressure recovery.}
Define
\begin{equation}\label{eq:return-pressure}
 p_{\mathrm R}:=\Riesz_i\Riesz_j
 \big(v^iv^j-\Binf^ib^j-b^i\Binf^j-b^ib^j\big),
 \qquad b:=B-\Binf.
\end{equation}
The preliminary bounds, the component-change identity and volume preservation
give
\[
 (\partial_{x^i}v^j)\circ X
 =E_i\vLag^j=\Kmat_i^a\partial_a\vLag^j,
 \qquad
 (\partial_{x^i}b^j)\circ X
 =E_i\bLag^j=\Kmat_i^a\partial_a\bLag^j.
\]
Consequently,
\[
 \|\nabla p_{\mathrm R}\|_{L^2}
 \lesssim\|v\|_{L^4}\|\nabla v\|_{L^4}
 +\|b\|_{L^4}\|\nabla b\|_{L^4}
 +v_{\rm A}\|\nabla b\|_{L^2},
\]
uniformly in time. By the conjugated component calculus, the Hodge equation
$\curl_{\gmetric}F=0$ gives $\curl_xg(t)=0$ for almost every $t$. Taking
$\Div_x$ in the interior distributional form of \eqref{eq:return-momentum}
and using $\Div_xv=0$ gives
\[
 \Div_xg
 =-\partial_i\partial_j(v^iv^j-B^iB^j)
 =\Delta_xp_{\mathrm R}
 \quad\text{in }\mathcal D'((0,T)\times\Rn).
\]
Consequently, for almost every $t$, the field
$g_0:=g-\nabla p_{\mathrm R}$ belongs to $L^4+L^2$ and satisfies
$\curl_xg_0=\Div_xg_0=0$. Hence $\Delta_xg_0=0$ in distributions. Since
$g_0$ is tempered, its Fourier transform is supported at the origin and
$g_0$ is a polynomial. No nonzero polynomial belongs to $L^4+L^2$, so
$g_0=0$. Therefore
\begin{equation}\label{eq:return-force}
 g=\nabla p_{\mathrm R},\qquad
 F=(\nabla p_{\mathrm R})\circ X\in L^\infty_tL^2.
\end{equation}
Taking $p=p_{\mathrm R}$, equations \eqref{eq:return-momentum} and
\eqref{eq:return-induction}, together with the two divergence conditions,
show that $(v,B,p)$ solves \eqref{eq:mhd} in distributions with data
$(v_0,\Binf)$ and normalised pressure $p$.

\textit{Final regularity statements.} We now upgrade the regularity of the objects to the claimed $C_t H^s$.  Since by \eqref{eq:preliminaryeulerian} we have $\mathcal B^2w=(\mathcal Bdw)\Binf\in C_tH^\sigma \subset  C_tL^2$, the newly established
\eqref{eq:return-force} gives
\[
 \partial_t\vLag=\mathcal B^2w-F\in L^\infty_tL^2.
\]
Since $\vLag(0)=v_0\in L^2$, it follows by time integration that
$\vLag\in W^{1,\infty}_tL^2\subset C_tL^2$. Moreover,
$\bLag\in C_tH^\sigma\subset C_tL^2$ by
\eqref{eq:preliminaryeulerian}. Combining these facts with
$d\vLag,d\bLag\in C_tH^\sigma$ and the elementary characterisation
$\|f\|_{H^s}\simeq\|f\|_{L^2}+\|df\|_{H^{s-1}}$, this yields
\[
 \vLag,\bLag\in C_tH^s.
\]
Finally, $\partial_tw=\vLag$ and $w(0)=0$ give $w\in C_tH^s$, and
Lemma~\ref{lem:coeff-calculus} gives $\Kmat-\mathrm{Id}\in C_tY^{s-1,1}$; with
$dw\in C_tY^{s-1,1}$ these are the hypotheses of Lemma~\ref{lem:composition},
which applied to $v=\mathcal C_X^{-1}\vLag$ and $b=\mathcal C_X^{-1}\bLag$
gives $v,b\in C_tH^s$.  Since $H^s$ is an algebra and the Riesz transforms
are bounded on it, \eqref{eq:return-pressure} gives $p\in C_tH^s$.  This proves clause~(1).

\emph{Conservation.}
Since $J=1$, the quantities in \eqref{eq:energy} are
\[
 E=\tfrac12\int\big(|\vLag|^2+|\bLag|^2\big) dy,\qquad
 H_{\rm c}=\int\vLag\cdot\bLag dy.
\]
They are absolutely continuous, and
\[
 \partial_t\vLag=\mathcal B\bLag-F,\qquad
 \partial_t\bLag=\mathcal B\vLag.
\]
Indeed, the right-hand sides belong to $L^\infty_tL^2$, so
$\vLag,\bLag\in W^{1,\infty}_tL^2$ and the Hilbert-space product rule
applies.  Skew-adjointness of $\mathcal B$ gives
\[
\begin{aligned}
 \frac{dE}{dt}
 &=\langle\mathcal B\bLag,\vLag\rangle
   +\langle\mathcal B\vLag,\bLag\rangle
   -\langle F,\vLag\rangle,\\
 \frac{dH_{\rm c}}{dt}
 &=\langle\mathcal B\bLag,\bLag\rangle
   +\langle\vLag,\mathcal B\vLag\rangle
   -\langle F,\bLag\rangle.
\end{aligned}
\]
The two Alfv\'en terms in the first line cancel, while the first two terms in
the second line vanish separately.  It remains only to use the orthogonality
of the exact force $F$ to the $\gmetric$-divergence-free vector fields along $X$,
$\vLag$ and $\bLag$.  More explicitly,
\[
 \int F\cdot\vLag dy
 =\int\nabla p_{\mathrm R}\cdot v dx=0,
 \qquad
 \int F\cdot\bLag dy
 =\int\nabla p_{\mathrm R}\cdot b dx=0.
\]
Therefore
\[
 \frac{dE}{dt}=0,\qquad \frac{dH_{\rm c}}{dt}=0.
\]
This proves clause~(3); subtracting $\Binf$ is essential because the
unrenormalised magnetic energy is infinite.

\emph{The existence regimes.}
Regime~\textup{(R1)} has the lifespan furnished by
Theorem~\ref{thm:existence} and Lemma~\ref{lem:selfmap}.  If the datum has a positive
longitudinal fraction, then
$\omega_{v_0}(T)\le T^{\rpar}
\|\nabla v_0\|_{Y^{s-1,\rpar}}$; the Picard argument in
Theorem~\ref{thm:existence} also gives \eqref{eq:longitudinal-propagation}.
This proves regime~\textup{(R2)}.
\end{proof}

\subsection{Eulerian uniqueness}
\label{ssec:unconditional-uniqueness}

Banach's theorem gives uniqueness of the differentiated solution inside the
fixed-point class. We now prove the comparison statement needed for uniqueness
in the full Eulerian $C_tH^s$ class. The preceding construction has already
shown that the map $X$ is volume preserving and that its induced Eulerian
fields solve MHD. No flow is constructed for the comparison solution.

We use italic letters for Eulerian fields and sans serif letters for fields
composed with $X$. We first recover the pressure directly from the weak
formulation.

\begin{lemma}[the pressure of an $H^s$ distributional solution]
\label{lem:pressure-recovery}
Let $s>\frac n2$ and let $(v,B)$ be a distributional solution of
\eqref{eq:mhd} on $[0,T]\times\Rn$ with data $(v_0,\Binf)$, in the sense of
Section~\ref{sec:introduction}, such that
$v,b=B-\Binf\in C([0,T],H^s)$. Define
\[
 p_{\mathrm R}:=\Riesz_i\Riesz_j
 \big(v^iv^j-\Binf^ib^j-b^i\Binf^j-b^ib^j\big).
\]
Then $p_{\mathrm R}\in C([0,T],H^s)$,
$\nabla p_{\mathrm R}\in C([0,T],H^{s-1})$, and
\[
 \partial_tv+\Div_x(v\otimes v-B\otimes B)+\nabla p_{\mathrm R}=0
 \quad\text{in }\mathcal D'((0,T)\times\Rn).
\]
\end{lemma}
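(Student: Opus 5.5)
The plan has three steps: first the regularity of $p_{\mathrm R}$, then the identification of the weak pressure, then the elimination of a harmonic ambiguity.

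\emph{Regularity.} Since $s>\frac n2$, $H^s$ is an algebra, so $v\otimes v$ and $b\otimes b$ lie in $C([0,T],H^s)$. The terms $\Binf^ib^j$ and $b^i\Binf^j$ are linear in $b$ and lie in $C([0,T],H^s)$ trivially. The Riesz transforms $\Riesz_i\Riesz_j$ are bounded on every $H^s$ and commute with the time variable. Continuity in time of the products follows from bilinearity together with the algebra bound. This gives $p_{\mathrm R}\in C([0,T],H^s)$ and hence $\nabla p_{\mathrm R}\in C([0,T],H^{s-1})$.

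\emph{Recovering the pressure.} The weak momentum formulation only tests against divergence-free $\varphi$. Set
\[
G:=\partial_tv+\Div_x(v\otimes v-B\otimes B),
\]
a distribution on $(0,T)\times\Rn$. Since $\Div_x(\Binf\otimes\Binf)=0$, we have $B\otimes B-\Binf\otimes\Binf=\Binf\otimes b+b\otimes\Binf+b\otimes b$, so the flux $\mathbb T:=v\otimes v-\Binf\otimes b-b\otimes\Binf-b\otimes b$ lies in $C_tH^s\subset C_tL^2$. Thus $G=\partial_tv+\Div_x\mathbb T$. The hypothesis says $\langle G,\varphi\rangle=0$ for all divergence-free $\varphi\in C_c^\infty$.

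Rather than invoking de Rham's theorem, I would apply the Leray projector $\mathbb P=\mathrm{Id}-\nabla\Delta^{-1}\Div$ directly. Take a general test $\varphi\in C_c^\infty((0,T)\times\Rn;\Rn)$. Then $\mathbb P\varphi$ is Schwartz-class in $x$ and divergence-free, but it is no longer compactly supported, so the weak identity must be extended to such tests. This extension is justified by approximating with cutoffs $\chi_R\mathbb P\varphi$ corrected to be divergence-free, for instance via Bogovskii. A simpler route is the tempered pairing: $v$ and $\mathbb T$ are in $L^2$ with bounded time, so the pairings with Schwartz-in-$x$ tests converge as the cutoff is removed. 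The cutoff error $\nabla\chi_R\cdot\mathbb P\varphi$ is handled by the decay $|\mathbb P\varphi|\lesssim\langle x\rangle^{-n}$.

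This yields $\mathbb PG=0$, so $G=(\mathrm{Id}-\mathbb P)G=\nabla\Delta^{-1}\Div G$. Now compute $\Div G$. Because $\Div_xv=0$ in $\mathcal D'$, we get $\Div G=\partial_i\partial_j\mathbb T^{ji}$. Therefore $\nabla\Delta^{-1}\Div G=\nabla\Riesz_i\Riesz_j\mathbb T^{ij}$ up to sign, using $\Riesz_i\Riesz_j=\partial_i\partial_j\Delta^{-1}$ with the sign convention $\Riesz_j$ having symbol $-\mathrm i\xi_j/|\xi|$. This equals $-\nabla p_{\mathrm R}$, which is exactly the claimed identity.

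\emph{Main obstacle.} The delicate step is making $\mathbb P$ act on the time-dependent distribution and removing any harmonic ambiguity. I expect to handle it in the style of the Liouville argument of \eqref{eq:return-force}. Let $q:=G+\nabla p_{\mathrm R}$, which lies in $W^{-1,\infty}_tH^{-1}$ by the bounds above. Its curl vanishes, because $G$ annihilates divergence-free tests, and its divergence vanishes by the computation of $\Div G$. Hence for each time test, $q$ paired in $t$ is a tempered harmonic field in $H^{-1}$. Such a field has Fourier transform supported at the origin, so it is a polynomial, and no nonzero polynomial lies in $H^{-1}$, forcing $q=0$. The first part of this argument uses the standard fact that a distribution annihilating divergence-free $C_c^\infty$ fields is a gradient, so it is curl-free. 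This makes the Bogovskii/tempered extension unnecessary: curl-free plus divergence identity plus integrability gives the conclusion.
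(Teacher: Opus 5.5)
Your proposal is correct. The regularity step and your first route match the paper's proof: you show $\mathbb P G=0$ by extending the weak identity beyond compactly supported solenoidal tests, then read off $G=\nabla\Delta^{-1}\Div_x G=-\nabla p_{\mathrm R}$. The paper handles that extension in one line, by ``a standard solenoidal cutoff''.

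If you keep that route, two corrections are needed.
\begin{itemize}
\item $\mathbb P\varphi$ is not Schwartz. Its symbol is discontinuous at $\xi=0$, so it decays only like $\langle x\rangle^{-n}$, which is what you actually use.
\item The ``tempered pairing'' shortcut fails as written. The field $\chi_R\mathbb P\varphi$ is not divergence-free, and the action of $G$ on non-solenoidal fields is precisely the unknown pressure. A divergence correction is therefore indispensable. You can use Bogovskii on the annulus, or more simply write $(\mathbb P\varphi)^j=\partial_i\Phi^{ij}$ with $\Phi^{ij}:=\Delta^{-1}(\partial_i\varphi^j-\partial_j\varphi^i)$ antisymmetric, and truncate $\Phi$ instead of $\mathbb P\varphi$.
\end{itemize}

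Your final Liouville route is genuinely different from the paper's proof of this lemma. It is, however, the same mechanism the paper uses to obtain \eqref{eq:return-force} in the existence proof. It can be made entirely elementary:
\begin{itemize}
\item Testing with $\varphi^j=\partial_i\phi^{ij}$, with $\phi$ antisymmetric and compactly supported, gives $\curl_x G=0$ directly. No de Rham theorem is needed.
\item Your computation gives $\Div_x(G+\nabla p_{\mathrm R})=0$.
\item For $\theta\in C_c^\infty(0,T)$, the time pairing $q_\theta=-\int v\theta'\,dt+\int(\Div_x\mathbb T+\nabla p_{\mathrm R})\theta\,dt$ lies in $L^2(\Rn)$, which is better than $H^{-1}$. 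It is also harmonic, hence zero.
\item Totality of the tensor products $\theta\otimes\phi$ in $C_c^\infty((0,T)\times\Rn)$ then gives $q=0$.
\end{itemize}

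Compared with the paper, this route never leaves the class of compactly supported tests. It therefore needs neither the solenoidal density step nor any decay of $\mathbb P\varphi$, but it only verifies a prescribed pressure candidate. The Leray route is the standard textbook argument and derives the pressure formula $\partial_tv=-\mathbb P\Div_x\mathbb T$ directly. It pays for this with the approximation step, which the paper leaves implicit.
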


\begin{proof}
Since $H^s$ is an algebra and the Riesz transforms are bounded on $H^s$, the
tensor
\[
 \mathbb T:=v\otimes v-B\otimes B+\Binf\otimes\Binf
 =v\otimes v-\Binf\otimes b-b\otimes\Binf-b\otimes b
\]
and $p_{\mathrm R}=\Riesz_i\Riesz_j\mathbb T^{ij}$ belong to
$C([0,T],H^s)$. Put $G:=\Div_x\mathbb T\in C([0,T],H^{s-1})$. The weak
momentum equation says that $\partial_tv+G$ annihilates every compactly
supported smooth divergence-free test field. A standard solenoidal cutoff
approximates every divergence-free Schwartz field by such tests. Hence, with
$\mathbb P$ the Euclidean Leray projector,
\[
 \mathbb P(\partial_tv+G)=0
 \quad\text{in }\mathcal S'((0,T)\times\Rn).
\]
Since $\Div_xv=0$, one has $\mathbb P\partial_tv=\partial_tv$ in
distributions, and therefore
\[
 \partial_tv=-\mathbb P G.
\]
Moreover,
\[
 G-\mathbb PG=\nabla\Delta_x^{-1}\Div_xG=-\nabla p_{\mathrm R}.
\]
Thus $\partial_tv+G+\nabla p_{\mathrm R}=0$. The constant tensor
$\Binf\otimes\Binf$ has zero divergence, so this is the required momentum
equation.
\end{proof}

\begin{proposition}[weak-strong uniqueness]
\label{prop:unconditional-uniqueness}
Let $(v,B,p)$ and $X$ be the solution and the volume-preserving map
constructed in \S\ref{ssec:eulerian} on $[0,T_*]$.
If $(\widetilde v,\widetilde B)$ is any other distributional solution with
the same initial data and
\[
 \widetilde v,\ \widetilde B-\Binf\in C([0,T_*],H^s),
\]
then
\[
 \widetilde v=v,\qquad \widetilde B=B
 \quad\text{on }[0,T_*].
\]
The two normalised Riesz pressures also agree. In particular, clause~\textup{(4)} of Theorem~\ref{thm:main} holds.
\end{proposition}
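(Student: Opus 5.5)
The plan is to follow the interaction-flux route sketched in \S\ref{ssec:overview}. I work in Elsässer variables and pull back every difference by the already constructed volume-preserving map $X$, so that $dy=dx$ and no flow is needed for the comparison solution. First, Lemma~\ref{lem:pressure-recovery} applied to both $(v,B)$ and $(\widetilde v,\widetilde B)$ puts both momentum equations in the interior distributional form with the normalised pressures $p_{\mathrm R},\widetilde p_{\mathrm R}\in C_tH^s$. Subtracting and adding the induction equations gives, for $\delta Z^\pm=\widetilde Z^\pm-Z^\pm\in C_tH^s$, the Eulerian conservative difference system
\[
\partial_t\delta Z^\pm+\Div_x\bigl(\delta Z^\pm\otimes \widetilde Z^\mp+Z^\pm\otimes\delta Z^\mp\bigr)+\nabla\delta p=0 .
\]
Because $\widetilde Z^\mp=Z^\mp+\delta Z^\mp$ and $Z^\mp\circ X$ moves with $\partial_t X\mp\mathcal BX$, composing with $X$ via the chain rule \eqref{eq:sobolevchainrule} turns this into \eqref{eq:overview-conservative-difference}. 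The transport by $Z^\mp$ becomes $\mathcal A^\mp$, and all remaining terms carry a $\Div_{\gmetric}$ or $\grad_{\gmetric}$ in the weak sense of Lemma~\ref{lem:weak-pullback}(ii). Since $\delta Z^\pm\in C_tH^s$ and Lemma~\ref{lem:composition} applies, every product is in $L^1_tL^1$ after pairing with $\delta\mathsf Z^\pm$. This justifies the local energy identities
\[
\mathcal A^\mp e_\pm+E_i(\dots)=-(E_i\mathsf Z^{\pm,j})\,\delta\mathsf Z^{\mp,i}\delta\mathsf Z^{\pm,j}+\dots
\]
by mollification in $y$ and the skew-adjointness in Lemma~\ref{lem:weak-pullback}(i).

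Next I fix a time $t_0$ up to which the two solutions agree, with $t_0=0$ initially, and an interval $I=[t_0,t_1]$. Rotating so that $\Binf=v_{\rm A}e_1$, I integrate the local identities over $y'$ to obtain \eqref{eq:densitytransportintro} for the line energies $\rho_\pm$. I then prove two estimates.

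The first is the energy domination \eqref{eq:fluxenergydominationintro}. Integrating the energy identity over $[t_0,t]\times\Rn$, the principal transport terms vanish. The source $\int (E_i\mathsf Z^{\pm,j})\delta\mathsf Z^{\mp,i}\delta\mathsf Z^{\pm,j}$ is bounded by Hölder in the splitting $y=(r,y')$: the coefficient lies in $L^2_tL^2_rL^\infty_{y'}$ by $H^{s-1}_{y'}\hookrightarrow L^\infty_{y'}$ (here $s-1>\tfrac{n-1}2$), and $\|\delta\mathsf Z^+\|_{L^2_{y'}}\|\delta\mathsf Z^-\|_{L^2_{y'}}\simeq(\rho_+\rho_-)^{1/2}$. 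This yields a bound of the form $c_I\Phi_I$. The coefficient is expressed via $z^\pm$ and $\delta Z^\pm$ after symmetrising the two solutions, which is why $c_I$ contains $z^\pm$ in $L^2_tH^s$ and $\delta Z^\pm$ in $L^\infty_tH^s$. The pressure term is handled by the same Hölder split, since $\nabla\delta p$ is a Riesz transform of a bilinear expression in $(z^\pm,\delta Z^\mp)$.

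The second is the flux balance \eqref{eq:fluxbalanceintro}. I differentiate $\mathscr I(t)=\iint_{r_+>r_-}\rho_+(r_+)\rho_-(r_-)$ using \eqref{eq:densitytransportintro}; the free parts give $-2v_{\rm A}\int\rho_+\rho_-\,dr$. For the errors, each flux $\partial_r f_\pm$ integrates to a boundary term on the diagonal $r_+=r_-$, giving $\int f_\pm\rho_\mp\,dr$. Each $f_\pm$ is quadratic in $\delta\mathsf Z$ with an $L^\infty_{y'}$-type coefficient, so its contribution is bounded by $c_I\Phi_I^2$. The sources $h_\pm$ contribute $\int h_+\,\rho_-\,\le\,\|h_+\|_{L^1_{t,r}}\,\sup_t\|\rho_-\|_{L^1}$, and combining with the energy domination bounds this by $c_I^2\Phi_I^2$. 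Since $\mathscr I(t_0)=0$ and $\mathscr I(t_1)\ge0$, these give $2v_{\rm A}\Phi_I^2\le C(c_I+c_I^2)\Phi_I^2$.

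To close, I note that $c_I\to0$ as $t_1\downarrow t_0$. This uses continuity of $\delta Z^\pm$ in $H^s$ with $\delta Z^\pm(t_0)=0$, together with absolute continuity of the $L^2_tH^s$ integral of $z^\pm$. Choosing $t_1$ close to $t_0$ forces $\Phi_I=0$ and hence $\mathcal E_I=0$. The set of agreement times is closed by time continuity and, by the argument just given, relatively open, so it is all of $[0,T_*]$. Equal fields then give equal Riesz pressures.

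I expect the main obstacle to be the pressure and source contributions, that is, showing that $\nabla\delta\mathsf p$ and $h_\pm$ can be paired against $\rho_\mp$ with only $(\rho_+\rho_-)^{1/2}$-type products. Self-interactions of the form $\delta\mathsf Z^+\otimes\delta\mathsf Z^+$ must not appear without a factor of the opposite family. My first attempt is to write $\Delta\delta p=-\partial_i\partial_j\bigl(z^{+,i}\delta Z^{-,j}+\delta Z^{+,i}z^{-,j}\bigr)$ using the divergence-free conditions, so that every pressure term pairs a difference field of one family with a coefficient field. When this is not enough, I will absorb the remaining terms through the energy domination estimate, which is what produces the $c_I^2$ factor.
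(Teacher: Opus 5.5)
Your architecture is the paper's: pull back the Els\"asser difference system by $X$, derive local energy identities, pass to the line energies $\rho_\pm$, prove $\mathcal E_I\lesssim c_I\Phi_I$ by the transverse H\"older split, differentiate $\mathscr I$, and close by continuation. Two parts go through as in the paper. The advective part of $f_\pm$ is bounded by $\|\delta\mathsf Z^\mp\|_{L^\infty}\rho_\pm\lesssim c_I\rho_\pm$. The sources combine $\|h_\pm\|_{L^1_{t,r}}\lesssim c_I\Phi_I$ with $\mathcal E_I\lesssim c_I\Phi_I$. The gap is the pressure, which you treat incorrectly in both estimates.

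\emph{Energy domination.} No H\"older split is needed here, and yours would not work. Since $\Div_{\gmetric}\delta\mathsf Z^\pm=0$, one has $\delta\mathsf Z^{\pm,j}E_j\delta\mathsf p=E_j(\delta\mathsf p\,\delta\mathsf Z^{\pm,j})$, which integrates to zero because $J=1$. By contrast, $\nabla\delta p$ is nonlocal in $r$ and contains $\nabla(-\Delta_x)^{-1}\Div_x\bigl((\delta Z^+\cdot\nabla)z^-\bigr)$. Paired with $\delta\mathsf Z^+$, that term has no $(\rho_+\rho_-)^{1/2}$ structure.

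\emph{Flux balance: the substantive gap.} The pressure part $P_\pm:=\int_{\Real^{n-1}}\delta\mathsf p\,\Kmat^1_j\delta\mathsf Z^{\pm,j}\,dy'$ of $f_\pm$ is not ``quadratic with an $L^\infty_{y'}$ coefficient''. Because $\delta\mathsf p$ depends nonlocally on $\delta Z$, there is no bound $|P_\pm(r)|\lesssim c_I\rho_\pm(r)$, so your claimed $c_I\Phi_I^2$ fails. Rewriting $\Delta\delta p$ does not help, since it still contains $\delta Z^{+,i}z^{-,j}$. Your fallback of absorbing through the energy domination needs an input you do not supply. You need a bound on $\delta\mathsf p$, uniform along the lines $\{y_1=r\}$, in terms of the $L^2$ size of the difference alone.

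The paper fixes $\frac12<\alpha<s-\frac n2$, which is possible exactly because $s>\frac{n+1}2$, and proves
\[
 \|\delta p(t)\|_{H^\alpha}\lesssim A(t)\bigl(\|\delta Z^+(t)\|_2+\|\delta Z^-(t)\|_2\bigr),
 \qquad A:=\|z^+\|_{H^s}+\|z^-\|_{H^s}.
\]
The $L^2$ part comes from the Riesz representation of $\delta p$. The derivative part comes from the divergence form $\nabla\delta p=\nabla(-\Delta_x)^{-1}\Div_x\bigl((\delta Z^+\cdot\nabla)z^-+(\delta Z^-\cdot\nabla)z^+\bigr)$ together with $\|ag\|_{H^{\alpha-1}}\lesssim\|a\|_{H^{s-1}}\|g\|_2$. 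Composition with $X$ and \eqref{eq:bochner-embedding} then give $|P_\pm|\lesssim A\,\mathcal E_I^{1/2}\rho_\pm^{1/2}$. Cauchy--Schwarz gives $\int\rho_\mp\rho_\pm^{1/2}\,dr\le\bigl(\int\rho_+\rho_-\,dr\bigr)^{1/2}\mathcal E_I^{1/2}$. The time-integrated pressure flux is therefore $\lesssim\|A\|_{L^2(I)}\mathcal E_I\Phi_I\lesssim c_I^2\Phi_I^2$.

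This $L^2$-level control of $\delta p$ is essential, not cosmetic. Using only $\delta p\in C_tH^s$, i.e.\ a bound by $\|\delta Z\|_{H^s}\lesssim c_I$, produces $c_I^{5/2}\Phi_I^{3/2}$. That term is not homogeneous of degree two in $\Phi_I$ and cannot be absorbed into $2v_{\rm A}\Phi_I^2$.
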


\begin{proof}
\emph{Step 1: the difference equation in the reference geometry.}
Apply Lemma~\ref{lem:pressure-recovery} to both solutions and call the two
normalised pressures $p$ and $\widetilde p$.  Let
\[
 Z^\pm=v\pm B,
 \qquad
 \widetilde Z^\pm=\widetilde v\pm\widetilde B,
 \qquad
 \delta Z^\pm=\widetilde Z^\pm-Z^\pm,
\]
put $\delta p:=\widetilde p-p$, and, after composition with $X$, set
\[
 \mathsf Z^\pm:=Z^\pm\circ X=\mathcal A^\pm X,
 \qquad
 \delta\mathsf Z^\pm:=\delta Z^\pm\circ X,
 \qquad
 \delta\mathsf p:=\delta p\circ X.
\]
Subtracting the two weak Elsasser systems gives in $\mathcal D'$
\[
 \partial_t\delta Z^\pm
 +\Div_x\bigl(\delta Z^\pm\otimes Z^\mp
 +(Z^\pm+\delta Z^\pm)\otimes\delta Z^\mp\bigr)
 +\nabla\delta p=0.
\]
The recovered momentum equation and the weak induction equation imply
$\partial_t\delta Z^\pm\in C_tH^{s-1}$, and hence
$\delta Z^\pm\in W^{1,2}_{\mathrm{loc}}$ in spacetime.  Moreover, the map
$(t,y)\mapsto(t,X(t,y))$ is bi-Lipschitz because $X(t,\cdot)$ is uniformly
bi-Lipschitz and $\partial_tX=v\circ X\in L^\infty$.  The Sobolev chain
rule therefore gives, componentwise,
\[
 \partial_t\delta\mathsf Z^\pm
 =(\partial_t\delta Z^\pm)\circ X
   +(\partial_tX)^jE_j\delta\mathsf Z^\pm,
 \qquad
 \mathcal B\delta\mathsf Z^\pm
 =(\mathcal BX)^jE_j\delta\mathsf Z^\pm
 \quad\text{a.e. and in }\mathcal D'.
\]
Since $\mathcal A^\mp X=\mathsf Z^\mp=Z^\mp\circ X$ and
$\Div_xZ^\mp=0$, these identities identify the pullback of
$\partial_t\delta Z^\pm+
\Div_x(\delta Z^\pm\otimes Z^\mp)$ with
$\mathcal A^\mp\delta\mathsf Z^\pm$.  The volume-preserving weak change of
variables pulls back the remaining tensor divergence and pressure gradient,
with $(\nabla\delta p)\circ X=\grad_{\gmetric}\delta\mathsf p$, and gives
\begin{equation}\label{eq:reference-difference}
 \mathcal A^\mp\delta\mathsf Z^\pm
 +\Div_{\gmetric}\bigl(
   (\mathsf Z^\pm+\delta\mathsf Z^\pm)
   \otimes\delta\mathsf Z^\mp\bigr)
 +\grad_{\gmetric}\delta\mathsf p=0.
\end{equation}
Here $\Div_{\gmetric}\delta\mathsf Z^\pm=0$ and
$\delta\mathsf Z^\pm(0)=0$.
Lemma~\ref{lem:composition} and \eqref{eq:reference-difference} yield
\[
 \delta\mathsf Z^\pm
 \in C_tH^s\cap C_t^1H^{s-1}\subset C_t^1L^2,
\]
which justifies the distributional identities and Hilbert-space product
rules used below.

\emph{Step 2: transport of the energy density and the interaction flux.}
After an orthogonal change of variables, assume $\Binf=v_{\rm A}e_1$, and put
\[
    e_\pm=\frac12 |\delta\mathsf Z^\pm|^2.
\]
Recall $\Kmat=(dX)^{-1}$.  Taking the scalar product of
\eqref{eq:reference-difference} with
$\delta\mathsf Z^\pm$, using
$\Div_{\gmetric}\delta\mathsf Z^\pm=0$ and writing the metric
divergences in conservative form, gives
\[
 (\partial_t\mp v_{\rm A}\partial_1)e_\pm
 +\partial_a\left(
   \Kmat_i^a\delta\mathsf Z^{\mp,i}e_\pm
   +\Kmat_j^a\delta\mathsf p \delta\mathsf Z^{\pm,j}\right)
 =-E_i\mathsf Z^{\pm,j}
   \delta\mathsf Z^{\mp,i}\delta\mathsf Z^{\pm,j}.
\]
Thus the constant transports send the two line energies in opposite
directions.

Let $t_0$ be a time through which the two solutions agree and put
$I=[t_0,t_1]\subset[0,T_*]$.  Writing $r=y_1$, define
\[
\begin{aligned}
 \rho_\pm(t,r)
 &:=\int_{\Real^{n-1}}e_\pm(t,r,y') dy',\\
 f_\pm(t,r)
 &:=\int_{\Real^{n-1}}
   \left(\Kmat_i^1\delta\mathsf Z^{\mp,i}e_\pm
   +\delta\mathsf p \Kmat_j^1\delta\mathsf Z^{\pm,j}\right)
   (t,r,y') dy',\\
 h_\pm(t,r)
 &:=-\int_{\Real^{n-1}}
   E_i\mathsf Z^{\pm,j}
   \delta\mathsf Z^{\mp,i}\delta\mathsf Z^{\pm,j}
   (t,r,y') dy'.
\end{aligned}
\]
Integration in $y'$ gives the balance laws:
\begin{equation}\label{eq:balancelaw}
    (\partial_t\mp v_{\rm A}\partial_r)\rho_\pm
 +\partial_rf_\pm=h_\pm.
\end{equation}
Introduce the interaction functional and the interaction flux
\[
 \mathscr I(t):=\int_{r_+>r_-}
 \rho_+(t,r_+)\rho_-(t,r_-) dr_+ dr_-,
 \qquad
 \Phi_I:=\left(
 \int_I\int_{\Real}\rho_+(t,r)\rho_-(t,r) dr dt\right)^{1/2}.
\]
The quantity $\Phi_I$ is finite by applying
\eqref{eq:bochner-embedding} below to $\delta\mathsf Z^\pm$.  These definitions were motivated geometrically in \S\ref{ssec:overview}.

For a rigorous derivation of the interaction identity, define the cumulative
density and source
\[
 R_-(t,r):=\int_{-\infty}^r\rho_-(t,q)dq,
 \qquad
 H_-(t,r):=\int_{-\infty}^rh_-(t,q)dq.
\]
Then
\[
 \mathscr I(t)=\int_{\Real}\rho_+(t,r)R_-(t,r)dr.
\]
Indeed, the Hilbert-valued one-dimensional Sobolev embedding and
$\delta\mathsf Z^\pm\in C_tH^s$ give
\[
 \rho_\pm\in C(I;L^1(\Real))\cap L^\infty(I\times\Real),
 \qquad
 \sup_{r\in\Real}\rho_\pm(t,r)
 \lesssim\|\delta\mathsf Z^\pm(t)\|_{H^s}^2.
\]
Moreover, the definitions of $f_\pm,h_\pm$, Sobolev embedding, the uniform
bound for $\Kmat$, the fact that $\delta\mathsf p\in L^\infty_tL^2$, and H\"older's
inequality give
\[
 f_\pm,h_\pm\in L^1(I\times\Real).
\]
Thus all the products below are integrable. Taking the primitive from
$-\infty$ in the minus balance law in \eqref{eq:balancelaw} gives
\[
 \partial_tR_-=-v_{\rm A}\rho_--f_-+H_-
 \quad\text{in }\mathcal D'((t_0,t_1)\times\Real),
\]
while the plus balance law reads
\[
 \partial_t\rho_+
 =v_{\rm A}\partial_r\rho_+-\partial_rf_++h_+.
\]

We next justify the product rule and the integrations by parts. Let
$\eta_\varepsilon$ be a standard nonnegative mollifier in $r$, append a
subscript $\varepsilon$ to denote convolution with $\eta_\varepsilon$, and
take $\chi_R(r)=\chi(r/R)$, where $\chi\in C_c^\infty(\Real)$ equals one on
$[-1,1]$. Set
\[
 \mathscr I_{\varepsilon,R}(t)
 :=\int_{\Real}\chi_R(r)
 \rho_{+,\varepsilon}(t,r)R_{-,\varepsilon}(t,r)dr.
\]
For fixed $\varepsilon>0$, the spatially mollified balance laws give the
time regularity needed for the usual product rule. For almost every
$t\in(t_0,t_1)$, that rule and integration by parts give
\[
\begin{aligned}
 \mathscr I_{\varepsilon,R}'(t)
 &=
 \int_{\Real}\chi_R
 \bigl(
  v_{\rm A}\partial_r\rho_{+,\varepsilon}
  -\partial_rf_{+,\varepsilon}
  +h_{+,\varepsilon}
 \bigr)R_{-,\varepsilon}dr+
 \int_{\Real}\chi_R\rho_{+,\varepsilon}
 \bigl(
  -v_{\rm A}\rho_{-,\varepsilon}
  -f_{-,\varepsilon}
  +H_{-,\varepsilon}
 \bigr)dr\\
 &=
 -2v_{\rm A}\int_{\Real}\chi_R
     \rho_{+,\varepsilon}\rho_{-,\varepsilon}dr+
 \int_{\Real}\chi_R
 \bigl(
  f_{+,\varepsilon}\rho_{-,\varepsilon}
  -\rho_{+,\varepsilon}f_{-,\varepsilon}
 \bigr)dr+
 \int_{\Real}\chi_R
 \bigl(
  h_{+,\varepsilon}R_{-,\varepsilon}
  +\rho_{+,\varepsilon}H_{-,\varepsilon}
 \bigr)dr\\
 &\quad+
 \int_{\Real}\chi_R'
 \bigl(f_{+,\varepsilon}
       -v_{\rm A}\rho_{+,\varepsilon}\bigr)
 R_{-,\varepsilon}dr.
\end{aligned}
\]
Since
\[
 \|R_{-,\varepsilon}(t)\|_{L^\infty_r}
 \le\|\rho_-(t)\|_{L^1_r},
 \qquad
 \|\chi_R'\|_\infty\lesssim R^{-1},
\]
by dominated convergence, the last line tends to zero in $L^1(I)$ as $R\to\infty$. Letting first
$R\to\infty$ and then $\varepsilon\to0$ is justified by the stated
$L^1$ and $L^\infty$ bounds together with
\[
 \|R_{-,\varepsilon}(t)-R_-(t)\|_{L^\infty_r}
 \le\|\rho_{-,\varepsilon}(t)-\rho_-(t)\|_{L^1_r},
 \qquad
 \|H_{-,\varepsilon}(t)-H_-(t)\|_{L^\infty_r}
 \le\|h_{-,\varepsilon}(t)-h_-(t)\|_{L^1_r}.
\]
The same bounds give
$\mathscr I_{\varepsilon,R}\to\mathscr I$ in $L^1(I)$. Consequently,
$\mathscr I\in W^{1,1}(I)$ and, for almost every $t\in I$,
\[
\begin{aligned}
 \mathscr I'(t)
 &=
 -2v_{\rm A}\int_{\Real}\rho_+(t,r)\rho_-(t,r)dr\\
 &\quad+
 \int_{\Real}\bigl(f_+(t,r)\rho_-(t,r)
                  -\rho_+(t,r)f_-(t,r)\bigr)dr\\
 &\quad+
 \int_{\Real}h_+(t,r)R_-(t,r)dr
 +\int_{\Real}\rho_+(t,r)H_-(t,r)dr.
\end{aligned}
\]
Expanding the two cumulative functions, this is equivalently
\begin{subequations}\label{eq:interaction-flux-identity}
\begin{align}
 \mathscr I'(t)+2v_{\rm A}\int_{\Real}\rho_+\rho_- dr
 =&\int_{\Real}\bigl(f_+\rho_--\rho_+f_-\bigr) dr \label{eq:longitudinalflux}\\
 &+\int_{\Real}h_+(r_+)
       \int_{r_-<r_+}\rho_-(r_-) dr_- dr_+\label{eq:sourceterm1}\\
 &+\int_{\Real}\rho_+(r_+)
       \int_{r_-<r_+}h_-(r_-) dr_- dr_+ \label{eq:sourceterm2}.
\end{align}
\end{subequations}
At $t=t_0$, $\mathscr I(t_0)=0$.  Integrating
\eqref{eq:interaction-flux-identity}, discarding the nonnegative value
$\mathscr I(t_1)$, and estimating the time-integrated right-hand side will
allow us to conclude $\Phi_I=0$.  In the next step, we show that
we can control the energy by this auxiliary quantity and then provide the
missing bounds in the following ones.

\emph{Step 3: control of the energy by the flux.}
Define the averaged decaying Elsasser fields
\[
 z^\pm:=\frac12\bigl(Z^\pm+\widetilde Z^\pm\bigr)\mp\Binf
\]
and, for $|I|\le1$, set
\[
 c_I:=
 \sup_{t\in I}\bigl(
  \|\delta Z^+(t)\|_{H^s}+\|\delta Z^-(t)\|_{H^s}\bigr)+\left(\int_I
  \bigl(\|z^+(t)\|_{H^s}^2+\|z^-(t)\|_{H^s}^2\bigr) dt
  \right)^{1/2}.
\]
Since the difference vanishes at $t_0$ and belongs to $C_tH^s$, one has
$c_I\to0$ as $t_1\downarrow t_0$.  The identities
\[
 Z^\pm\mp\Binf=z^\pm-\frac12\delta Z^\pm,
 \qquad
 E_i\mathsf Z^{\pm,j}=(\partial_iZ^{\pm,j})\circ X
\]
Lemma~\ref{lem:composition} and the uniform bound on $\Kmat$ imply
\begin{equation}\label{eq:uniqueness-small-coefficients}
 \sup_{t\in I}\bigl(
  \|\delta\mathsf Z^+(t)\|_\infty+
  \|\delta\mathsf Z^-(t)\|_\infty\bigr)+\sum_{\pm}
 \left\|\bigl(E_i\mathsf Z^{\pm,j}\bigr)_{i,j}\right\|_
 {L^2(I;L^2_{y_1}H^{s-1}_{y'})}
 \lesssim c_I .
\end{equation}
For the second term, we also used $|I|\le1$ to bound the contribution of
$\delta Z^\pm$ in $L^2_tH^s$ by its time supremum.
All implicit constants in the remainder of the proof depend only on the
standing parameters and on the uniform bounds for $X$ and $\Kmat$ on
$[0,T_*]$; in particular, they are independent of the interval $I$.

Write the time-- localised energy
\[
 \mathcal E_I
 :=\sup_{t\in I}\int_{\Rn}(e_++e_-)(t,y) dy.
\]
Taking the $L^2$ inner products of \eqref{eq:reference-difference} with
$\delta\mathsf Z^\pm$, integrating from $t_0$ to $t$, and using the same
divergence cancellations as above, transverse Sobolev embedding and
Cauchy--Schwarz in $(t,r)$ gives
\[
\begin{aligned}
\mathcal E_I
&\leq
\sup_{t\in I}\sum_{\pm}
\left|
\int_{t_0}^{t}\int_{\Rn}
\Bigl[
\delta\mathsf Z^{\pm,j}\delta\mathsf Z^{\mp,i}
E_i\bigl(\mathsf Z^{\pm,j}+\delta\mathsf Z^{\pm,j}\bigr)
+\delta\mathsf Z^{\pm,j}E_j\delta\mathsf p
\Bigr] dy d\tau
\right|                                                        \\
&=
\sup_{t\in I}\sum_{\pm}
\left|
\int_{t_0}^{t}\int_{\Rn}
\Bigl[
\delta\mathsf Z^{\pm,j}\delta\mathsf Z^{\mp,i}
E_i\mathsf Z^{\pm,j}
+\Div_{\gmetric}\bigl(e_\pm\delta\mathsf Z^\mp\bigr)
+\Div_{\gmetric}\bigl(\delta\mathsf p \delta\mathsf Z^\pm\bigr)
\Bigr] dy d\tau
\right|                                                        \\
&=
\sup_{t\in I}\sum_{\pm}
\left|
\int_{t_0}^{t}\int_{\Rn}
\delta\mathsf Z^{\pm,j}\delta\mathsf Z^{\mp,i}
E_i\mathsf Z^{\pm,j} dy d\tau
\right|                                                        \\
&\leq
\int_{I\times\Real}
\|\delta\mathsf Z^+(t,r,\cdot)\|_{L^2_{y'}}
\|\delta\mathsf Z^-(t,r,\cdot)\|_{L^2_{y'}}
\sum_{i,j}\Bigl(
\|E_i\mathsf Z^{+,j}(t,r,\cdot)\|_{L^\infty_{y'}}
+\|E_i\mathsf Z^{-,j}(t,r,\cdot)\|_{L^\infty_{y'}}
\Bigr) dr dt                                                \\
&=
2\int_{I\times\Real}
\bigl(\rho_+(t,r)\rho_-(t,r)\bigr)^{1/2}
\sum_{i,j}\Bigl(
\|E_i\mathsf Z^{+,j}(t,r,\cdot)\|_{L^\infty_{y'}}
+\|E_i\mathsf Z^{-,j}(t,r,\cdot)\|_{L^\infty_{y'}}
\Bigr) dr dt                                                \\
&\lesssim
\Phi_I
\sum_{\pm}
\left\|
\bigl(E_i\mathsf Z^{\pm,j}\bigr)_{i,j}
\right\|_{L^2(I;L^2_{y_1}H^{s-1}_{y'})}
\lesssim c_I\Phi_I .
\end{aligned}
\]
Here we used $s-1>(n-1)/2$.  Thus
\begin{equation}\label{eq:energy-by-interaction}
 \mathcal E_I\lesssim c_I\Phi_I
\end{equation}
as desired.

\emph{Step 4: control of the source terms
\eqref{eq:sourceterm1}--\eqref{eq:sourceterm2}.}
By the definition of $h_\pm$ and the estimate above,
\[
\begin{aligned}
 \int_{I\times\Real}(|h_+|+|h_-|) dr dt
 &\le
 \int_{I\times\Rn}
 |\delta\mathsf Z^+||\delta\mathsf Z^-|
 \sum_{i,j}\bigl(
 |E_i\mathsf Z^{+,j}|+|E_i\mathsf Z^{-,j}|
 \bigr) dy dt\\
 &\lesssim c_I\Phi_I.
\end{aligned}
\]
Moreover, for every $t\in I$,
\[
 \sup_{r_+\in\Real}\int_{r_-<r_+}\rho_-(t,r_-) dr_-
 \le\mathcal E_I,
 \qquad
 \sup_{r_-\in\Real}\int_{r_+>r_-}\rho_+(t,r_+) dr_+
 \le\mathcal E_I.
\]
Consequently, after applying Fubini to the second source term
\eqref{eq:sourceterm2}, we obtain
\begin{equation}\label{eq:sourcetermbound}
\begin{split}
    &\int_I
 \left|
 \int_{\Real}h_+(t,r_+)
     \int_{r_-<r_+}\rho_-(t,r_-) dr_- dr_+
 \right| dt+
 \int_I
 \left|
 \int_{\Real}\rho_+(t,r_+)
     \int_{r_-<r_+}h_-(t,r_-) dr_- dr_+
 \right| dt\\
 &\le
 \mathcal E_I
 \int_{I\times\Real}(|h_+|+|h_-|) dr dt\\
 &\le C\mathcal E_Ic_I\Phi_I
 \lesssim c_I^2\Phi_I^2,
 \end{split}
\end{equation}
where the last inequality follows from
\eqref{eq:energy-by-interaction}.

\emph{Step 5: control of the full longitudinal flux
\eqref{eq:longitudinalflux}.}
The fluxes $f_\pm$ contain $\delta\mathsf p$ itself, so we
estimate $\delta p$ first rather than projecting the equation onto
divergence-free fields in Lagrangian coordinates.

The constant-field terms disappear under the two Riesz transforms by
incompressibility.  Pressure recovery and its divergence form give the two
representations
\[
\begin{aligned}
 \delta p
 &=\Riesz_i\Riesz_j\bigl(
   \delta Z^{+,i}z^{-,j}+z^{+,i}\delta Z^{-,j}\bigr),\\
 \nabla\delta p
 &=\nabla(-\Delta_x)^{-1}\Div_x\bigl(
   (\delta Z^+\cdot\nabla)z^-
   +(\delta Z^-\cdot\nabla)z^+\bigr).
\end{aligned}
\]
Choose
\[
 \frac12<\alpha<s-\frac n2 .
\]
Since $s-\alpha>n/2$, duality and the usual Sobolev product estimate
give
\[
 \|ag\|_{H^{\alpha-1}}
 \le C\|a\|_{H^{s-1}}\|g\|_2.
\]
The standard inhomogeneous elliptic estimate
\[
 \|q\|_{H^\alpha}
 \le C\bigl(\|q\|_2+\|\nabla q\|_{H^{\alpha-1}}\bigr)
\]
follows directly from Plancherel and
$\langle\xi\rangle^{2\alpha}\lesssim
1+|\xi|^2\langle\xi\rangle^{2(\alpha-1)}$.  The two representations above
yield separately
\[
\begin{aligned}
 \|\delta p\|_2
 &\le C\|\delta Z^+\otimes z^-+z^+\otimes\delta Z^-\|_2,\\
 \|\nabla\delta p\|_{H^{\alpha-1}}
 &\le C\|(\delta Z^+\cdot\nabla)z^-
          +(\delta Z^-\cdot\nabla)z^+\|_{H^{\alpha-1}}.
\end{aligned}
\]
Combining these two bounds with the inhomogeneous elliptic estimate,
$H^s\hookrightarrow L^\infty$, and the product estimate yields
\[
 \|\delta p(t)\|_{H_x^\alpha}
 \le C\bigl(\|z^+(t)\|_{H^s}+\|z^-(t)\|_{H^s}\bigr)
 \bigl(\|\delta Z^+(t)\|_2+\|\delta Z^-(t)\|_2\bigr).
\]

Since $\alpha>1/2$, we have, with equivalence of norms,
\begin{equation}\label{eq:bochner-embedding}
H^\alpha(\mathbb{R}^n)
=H^\alpha\bigl(\mathbb{R}_{y_1};L^2(\mathbb{R}^{n-1})\bigr)
 \cap L^2\bigl(\mathbb{R}_{y_1};H^\alpha(\mathbb{R}^{n-1})\bigr)
\hookrightarrow L^\infty_{y_1}L^2_{y'}.
\end{equation}
Now Lemma~\ref{lem:composition}, volume preservation, and
the Hilbert-valued one-dimensional Sobolev embedding imply
\[
 \sup_{r\in\Real}
 \|\delta\mathsf p(t,r,\cdot)\|_{L^2_{y'}}
 \le C\bigl(\|z^+(t)\|_{H^s}+\|z^-(t)\|_{H^s}\bigr)
 \left(\int_{\Rn}(e_++e_-)(t,y) dy\right)^{1/2}.
\]
Write the pressure part of $f_\pm$ as
\[
 P_\pm(t,r):=
 \int_{\Real^{n-1}}
 \delta\mathsf p\Kmat_j^1\delta\mathsf Z^{\pm,j}
 (t,r,y')dy',
 \qquad
 A(t):=\|z^+(t)\|_{H^s}+\|z^-(t)\|_{H^s}.
\]
The inverse deformation $\Kmat$ is uniformly bounded, and hence
\[
 |P_\pm(t,r)|
 \le CA(t)
 \mathcal E_I^{1/2}\rho_\pm(t,r)^{1/2}.
\]
Here $\|\delta\mathsf Z^\pm(t,r,\cdot)\|_{L^2_{y'}}
 =\left(\int_{\Real^{n-1}}
 |\delta\mathsf Z^\pm(t,r,y')|^2dy'\right)^{1/2}
 =\bigl(2\rho_\pm(t,r)\bigr)^{1/2}$.  Since
$\int_{\Real}\rho_\pm(t,r)dr\le\mathcal E_I$, Cauchy--Schwarz in $r$
gives, for each $t\in I$,
\[
\begin{aligned}
 \int_{\Real}\rho_-(t,r)|P_+(t,r)|dr
 &\le CA(t)\mathcal E_I^{1/2}
 \int_{\Real}\rho_-(t,r)\rho_+(t,r)^{1/2}dr\\
 &\le CA(t)\mathcal E_I^{1/2}
 \left(\int_{\Real}\rho_+(t,r)\rho_-(t,r)dr\right)^{1/2}
 \left(\int_{\Real}\rho_-(t,r)dr\right)^{1/2}\\
 &\le CA(t)\mathcal E_I
 \left(\int_{\Real}\rho_+(t,r)\rho_-(t,r)dr\right)^{1/2}.
\end{aligned}
\]
The same estimate holds with the signs interchanged.  Therefore,
Cauchy--Schwarz in $t$, the definition of $c_I$, and
\eqref{eq:energy-by-interaction} show that the time-integrated pressure
contribution to \eqref{eq:longitudinalflux} satisfies
\begin{equation}\label{eq:pressurefluxbound}
\begin{aligned}
 &\int_I\int_{\Real}
 \bigl(\rho_-(t,r)|P_+(t,r)|
       +\rho_+(t,r)|P_-(t,r)|\bigr)drdt\\
 &\quad\le C\mathcal E_I
 \|A\|_{L^2(I)}
 \left(\int_I\int_{\Real}\rho_+(t,r)\rho_-(t,r)drdt\right)^{1/2}\\
 &\quad\le Cc_I\mathcal E_I\Phi_I
 \lesssim c_I^2\Phi_I^2.
\end{aligned}
\end{equation}

It remains to control the advective parts of $f_\pm$.  The uniform bound
for $\Kmat$ and \eqref{eq:uniqueness-small-coefficients} give, pointwise
in $(t,r)$,
\[
\begin{aligned}
 \left|
 \int_{\Real^{n-1}}
 \Kmat_i^1\delta\mathsf Z^{\mp,i}e_\pm(t,r,y') dy'
 \right|
 &\le
 \|\Kmat\|_{L^\infty}
 \|\delta\mathsf Z^\mp(t)\|_{L^\infty}
 \int_{\Real^{n-1}}e_\pm(t,r,y') dy'\\
 &\le Cc_I\rho_\pm(t,r).
\end{aligned}
\]
Therefore their contribution to
$f_+\rho_--\rho_+f_-$ satisfies
\begin{equation}\label{eq:advectivefluxbound}
\begin{split}
 &\int_I\int_{\Real}
 \rho_-(t,r)
 \left|
 \int_{\Real^{n-1}}
 \Kmat_i^1\delta\mathsf Z^{-,i}e_+(t,r,y') dy'
 \right| dr dt\\
 &\quad+
 \int_I\int_{\Real}
 \rho_+(t,r)
 \left|
 \int_{\Real^{n-1}}
 \Kmat_i^1\delta\mathsf Z^{+,i}e_-(t,r,y') dy'
 \right| dr dt\\
 &\le
 Cc_I\int_I\int_{\Real}\rho_+(t,r)\rho_-(t,r) dr dt
 \lesssim c_I\Phi_I^2.   
\end{split}
\end{equation}

\emph{Step 6: conclusion.}  Integrating
\eqref{eq:interaction-flux-identity} over $I=[t_0,t_1]$ and using
$\mathscr I(t_0)=0$ gives
\[
\begin{aligned}
 \mathscr I(t_1)+2v_{\rm A}\Phi_I^2
 &=\int_I\int_{\Real}
   \bigl(f_+\rho_--\rho_+f_-\bigr)drdt\\
 &\quad+
 \int_I\int_{\Real}h_+(t,r_+)
       \int_{r_-<r_+}\rho_-(t,r_-)dr_-dr_+dt\\
 &\quad+
 \int_I\int_{\Real}\rho_+(t,r_+)
       \int_{r_-<r_+}h_-(t,r_-)dr_-dr_+dt.
\end{aligned}
\]
We now bound each part of this time-integrated right-hand side.  Splitting
$f_\pm$ into the advective and pressure terms in its definition,
\eqref{eq:advectivefluxbound} and \eqref{eq:pressurefluxbound} give
\[
 \int_I\left|
 \int_{\Real}\bigl(f_+\rho_--\rho_+f_-\bigr)dr
 \right|dt
 \le Cc_I\Phi_I^2+Cc_I^2\Phi_I^2.
\]
The two remaining integrals are precisely the source contributions bounded
in \eqref{eq:sourcetermbound}; together they are at most
$Cc_I^2\Phi_I^2$.  Since $\mathscr I(t_1)\ge0$, the integrated identity
therefore yields
\[
 2v_{\rm A}\Phi_I^2
 \le
 \bigl(Cc_I\Phi_I^2+Cc_I^2\Phi_I^2\bigr)
 +Cc_I^2\Phi_I^2
 \le C(c_I+c_I^2)\Phi_I^2.
\]
Since $c_I\to0$ as $t_1\downarrow t_0$, we may choose $I$ so that
$C(c_I+c_I^2)<2v_{\rm A}$.  The preceding inequality then forces
$\Phi_I=0$, and \eqref{eq:energy-by-interaction} gives
$\mathcal E_I=0$.  Thus the two solutions agree on $I$.

Finally, define
\[
 \mathcal S:=\left\{t\in[0,T_*]:
 \widetilde v=v\ \text{and}\ \widetilde B=B\ \text{on }[0,t]\right\}.
\]
The set $\mathcal S$ is nonempty and closed by continuity.  If
$\tau:=\sup\mathcal S<T_*$, then $\tau\in\mathcal S$ and restarting the
same short-interval argument at $t_0=\tau$ extends the agreement beyond
$\tau$, a contradiction.  Thus the solutions agree on $[0,T_*]$.  Their
normalised pressures agree by Lemma~\ref{lem:pressure-recovery}. 
\end{proof}

\bibliographystyle{amsplain}
\bibliography{mhd-constant-field}

\end{document}